\documentclass[a4paper,11pt]{amsart}

\usepackage[left=2.7cm,right=2.7cm,top=3.5cm,bottom=3cm]{geometry}
\usepackage{graphicx}
\newcommand{\Bmu}{\mbox{$\raisebox{-0.59ex}
  {$l$}\hspace{-0.18em}\mu\hspace{-0.88em}\raisebox{-0.98ex}{\scalebox{2}
  {$\color{white}.$}}\hspace{-0.416em}\raisebox{+0.88ex}
  {$\color{white}.$}\hspace{0.46em}$}{}}

\numberwithin{equation}{section}

\usepackage{amsthm,amssymb,amsmath,amsfonts,mathrsfs,amscd,amsbsy,dsfont,verbatim,relsize,extarrows}
\usepackage{stmaryrd,dsfont}
\usepackage[utf8]{inputenc}
\usepackage[T1]{fontenc}
\usepackage[all,cmtip]{xy}  
\usepackage{latexsym}
\usepackage{longtable}
\usepackage{mathtools}
\usepackage{marginnote}

\usepackage{graphicx} 
\usepackage{stmaryrd}
\usepackage{amsmath}
\usepackage{amsthm}
\usepackage{amssymb}
\usepackage{mathrsfs}
\usepackage{tikz-cd}
\usepackage{verbatim}

\usepackage[pagebackref]{hyperref}

\mathtoolsset{showonlyrefs}

\theoremstyle{plain}
\newtheorem{theorem}{Theorem}[section]
\newtheorem{proposition}[theorem]{Proposition}
\newtheorem{lemma}[theorem]{Lemma}
\newtheorem{corollary}[theorem]{Corollary}

\newcounter{lettera}

\newtheorem{itheorem}[lettera]{Theorem}

\theoremstyle{definition}
\newtheorem{definition}[theorem]{Definition}
\newtheorem{assumption}[theorem]{Assumption}

\theoremstyle{remark}
\newtheorem{remark}[theorem]{Remark}

\newtheorem{remark/notation}[theorem]{Remark/Notation}
\newtheorem{notation/convention}[theorem]{Notation/Convention}

\newcommand{\longmono}{\mbox{\;$\lhook\joinrel\longrightarrow$\;}}

\newcommand{\longepi}{\mbox{\;$\relbar\joinrel\twoheadrightarrow$\;}}

\newfont{\cyr}{wncyr10 scaled 1100}
\newcommand{\Sha}{\mbox{\cyr{X}}}

\newcommand{\cyc}{\mathrm{cyc}}
\newcommand{\unr}{\mathrm{unr}}

\DeclareMathOperator{\Fitt}{Fitt}
\DeclareMathOperator{\Gal}{Gal}
\DeclareMathOperator{\Sel}{Sel}
\DeclareMathOperator{\ind}{ind}
\DeclareMathOperator{\ord}{ord}
\DeclareMathOperator{\deff}{def}
\DeclareMathOperator{\loc}{loc}
\DeclareMathOperator{\length}{length}
\DeclareMathOperator{\Frob}{Frob}

\DeclareMathOperator{\Frac}{Frac}
\DeclareMathOperator{\divv}{div}
\DeclareMathOperator{\Hom}{Hom}
\DeclareMathOperator{\coker}{coker}
\DeclareMathOperator{\Cor}{Cor}

\DeclareMathOperator{\im}{im}
\DeclareMathOperator{\M}{M}
\DeclareMathOperator{\charr}{char}
\DeclareMathOperator{\Supp}{Supp}
\DeclareMathOperator{\tors}{tors}
\DeclareMathOperator{\Tor}{Tor}
\DeclareMathOperator{\Gr}{Gr}
\DeclareMathOperator{\GL}{GL}
\DeclareMathOperator{\Weil}{Weil}

\newcommand{\defeq}{\vcentcolon=}

\newcommand{\Q}{\mathds Q}
\newcommand{\N}{\mathds N}
\newcommand{\Z}{\mathds Z}

\newcommand{\F}{\mathds F}

\newcommand{\p}{\mathfrak p}

\makeatletter
\@namedef{subjclassname@2020}{%
  \textup{2020} Mathematics Subject Classification}
\makeatother

\newcommand{\enrico}[1]{{\color{red} $\clubsuit \clubsuit \clubsuit$ Enrico: [#1]}}

\begin{document}

\title[Higher Fitting ideals of anticyclotomic Shafarevich--Tate groups]{Higher Fitting ideals and the structure\\of anticyclotomic Shafarevich--Tate groups}
\author{Enrico Da Ronche, Matteo Longo and Stefano Vigni}

\thanks{The authors are partially supported by PRIN 2022 ``The arithmetic of motives and $L$-functions'' and by the GNSAGA group of INdAM. The research by the first and third authors is partially supported by the MUR Excellence Department Project awarded to Dipartimento di Matematica, Universit\`a di Genova, CUP D33C23001110001.}


\address{Dipartimento di Matematica, Universit\`a di Genova, Via Dodecaneso 35, 16146 Genova, Italy}
\email{enrico.daronche@edu.unige.it}
\address{Dipartimento di Matematica, Universit\`a di Padova, Via Trieste 63, 35121 Padova, Italy}
\email{matteo.longo@unipd.it}
\address{Dipartimento di Matematica, Universit\`a di Genova, Via Dodecaneso 35, 16146 Genova, Italy}
\email{stefano.vigni@unige.it}

\subjclass[2020]{11G05, 11R23}

\keywords{Fitting ideals, Shafarevich--Tate groups, bipartite Euler systems, elliptic curves.}

\begin{abstract}
Let $p$ be a prime number. We investigate a refined version of the Iwasawa main conjectures for rational elliptic curves (and more general Galois representations) over anticyclotomic $\Z_p$-extensions of imaginary quadratic fields, both in the definite and in the indefinite settings. In order to do this, we describe (under mild arithmetic assumptions) all the higher Fitting ideals of Pontryagin duals of Selmer and Shafarevich--Tate groups over anticyclotomic $\Z_p$-extensions in terms of the bipartite Euler systems introduced by Bertolini and Darmon. As an application of our work on Fitting ideals, we offer new results on the structure of (Pontryagin duals of) anticyclotomic Selmer and Shafarevich--Tate groups of elliptic curves.
\end{abstract}

\maketitle


\section{Introduction}

Anticyclotomic Iwasawa main conjectures for elliptic curves (and, more generally, abelian varieties of $\GL_2$-type) have a long and distinguished history that dates back at least to the mid '80s of the past century. Here we would like to mention (in rough chronological order) work of Mazur (\cite{mazur}), Perrin-Riou (\cite{PR4}, \cite{PR3}, \cite{PR1}, \cite{PR2}), Rubin (\cite{Ru2}, \cite{Ru1}), Bertolini (\cite{Bertolini}), Bertolini--Darmon (\cite{BD5}, \cite{BD3}, \cite{BD-IMC}), Darmon--Iovita (\cite{DI}), Howard (\cite{HoHeeg}, \cite{howard-duke}, \cite{howard2012bipartite}), Burungale--Castella--Kim (\cite{BCK}), Burungale--B\"{u}y\"{u}kboduk--Lei (\cite{BBL}) and, recently, Bertolini--Longo--Venerucci (\cite{BLV}); in particular, the contribution by Bertolini--Longo--Venerucci represents the culmination of the techniques (and the underlying philosophy) originally introduced by Bertolini--Darmon and offers (under mild technical assumptions) a proof of many cases of the anticyclotomic main conjectures for elliptic curves. The goal of our article is to investigate a refined version of these conjectures by describing all the higher Fitting ideals of Pontryagin duals of Selmer and Shafarevich--Tate groups of elliptic curves (and more general $p$-adic Galois representations) over anticyclotomic $\Z_p$-extensions of imaginary quadratic fields, where $p$ is a prime number; classical anticyclotomic Iwasawa main conjectures can then be seen as a special case of our results when the Fitting ideals we consider are the initial ones. 

The strategy towards the anticyclotomic Iwasawa main conjectures proposed by Bertolini--Darmon in \cite{BD-IMC} uses level raising arguments to produce classes in the Galois cohomology of an elliptic curve. Later on, the approach of Bertolini--Darmon was formalized by Howard in his theory of \emph{bipartite Euler systems} (\cite{howard2012bipartite}). In the present paper, we adopt the abstract viewpoint of Howard and use bipartite Euler systems to study higher Fitting ideals. More precisely, in Section \ref{sec::4} we obtain results for a broader class of $p$-adic Galois representations admitting a bipartite Euler system, which we specialize to the case of elliptic curves in Section \ref{sec::5} (notice that the general formulation in Section \ref{sec::4} can also be adapted, for example, to case of the $p$-adic Galois representations attached to higher weight modular forms). In order to simplify the exposition, in this introduction we focus on our results for elliptic curves exclusively. 

Let $E$ be an elliptic curve over $\Q$ of conductor $N$ and let $K$ be an imaginary quadratic field of class number $h_K$ and discriminant $D_K$ coprime to $N$. Fix a prime number $p\geq5$ of good ordinary reduction for $E$ such that $p\nmid D_Kh_K$ and let $T$ be the $p$-adic Tate module of $E$. Write $N=N^+N^-$, where a prime number divides $N^+$ if it splits in $K$ and divides $N^-$ if it is inert in $K$. Let $G_\Q$ be the absolute Galois group of $\Q$; we assume throughout that $N^-$ is square-free and that 
\begin{itemize}
\item the representation of $G_\Q$ on the $p$-torsion $E[p]$ of $E$ is surjective;
\item $E[p]$ is ramified at all the prime factors of $N^+$;
\item $E[p]$ is ramified at all the prime factors $\ell$ of $N^-$ with $\ell\equiv\pm 1\pmod p$.
\end{itemize}
Let $\bar E_p$ be the reduction of $E$ at $p$ and set $a_p(E)\defeq1+p-\#\bar E_p(\F_p)\in\Z$; we also assume 
\begin{itemize}
\item $a_p(E)\not\equiv 1 \pmod p$ if $p$ splits in $K$;
\item $a_p(E)\not\equiv\pm1 \pmod p$ if $p$ is inert in $K$
\end{itemize}
Notice that conditions analogous to those appearing in the lists of assumptions above are natural to impose when studying questions in the arithmetic of Galois representations attached to abelian varieties or modular forms (see, \emph{e.g.}, \cite{BD-IMC}, \cite{BLV}, \cite{Zhang}).

Write $\nu(N^-)$ for the number of prime factors of $N^-$: we say that we are in the \emph{definite} (respectively, \emph{indefinite}) case if $\nu(N^-)$ is \emph{odd} (respectively, \emph{even}). Finally, let $K_\infty$ be the anticyclotomic $\Z_p$-extension of $K$, whose $m$-th finite layer will be denoted by $K_m$, and write $G_\infty\defeq\Gal(K_\infty/K)$ for its Galois group and $\Lambda\defeq\Z_p[\![G_\infty]\!]$ for its Iwasawa algebra over $\Z_p$. 

Consider the Greenberg Selmer group $\Sel_{\Gr}\bigl(K_\infty,E[p^\infty]\bigr)$, where $E[p^\infty]$ is the $p$-primary torsion subgroup of $E$. In the definite (respectively, indefinite) setting, the Pontryagin dual of $\Sel_{\Gr}\bigl(K_\infty,E[p^\infty]\bigr)$ is a torsion $\Lambda$-module (respectively, $\Lambda$-module of rank $1$) whose $\Lambda$-torsion submodule $\mathcal X$ is isomorphic to the Pontryagin dual of the Shafarevich--Tate group $\Sha_{\Gr}\bigl(K_\infty,E[p^\infty]\bigr)$, \emph{i.e.}, the quotient of $\Sel_{\Gr}\bigl(K_\infty,E[p^\infty]\bigr)$ by its maximal $\Lambda$-divisible submodule. It turns out that $\mathcal X$ is pseudo-isomorphic to $M\oplus M$ for a suitable (finitely generated, torsion) $\Lambda$-module $M$. The constructions in \cite{BD-IMC} and \cite{BLV} produce a bipartite Euler system: we have a distinguished Iwasawa element $\lambda_1\in\Lambda$ in the definite case and a distinguished class $\kappa_1\in\Sel_{\Gr}(K_\infty,T)$ in the indefinite case. More precisely, denoting by $T$ the $p$-adic Tate module of $E$ and by $\Sel_{\Gr}(K_\infty,T)$ the corresponding Selmer group \emph{\`a la} Greenberg, $\lambda_1$ is an anticyclotomic $p$-adic $L$-function, whereas $\kappa_1$ is defined in terms of a trace-compatible system of Heegner points of $p$-power conductor. Given a torsion $\Lambda$-module $M$, denote by $\charr(M)$ the characteristic ideal of $M$; in the present context, the Iwasawa main conjecture is the (predicted) equality 
\[
\charr(\mathcal X)=\begin{cases}\charr\bigl(\Lambda/(\lambda_1)\bigr)^2 & \text{if $\nu(N^-)$ is odd},\\[3mm]\charr\bigl(\Sel_{\Gr}(K_\infty,T)/\Lambda\cdot \kappa_1\bigr)^2 & \text{if $\nu(N^-)$ is even}.
\end{cases}
\]
The main result of this paper is a description of all the higher Fitting ideals of the $\Lambda$-module $\mathcal X$, which allows us to determine the pseudo-isomorphism class of $\mathcal X$.

To state our main result, for all integers $k\geq 0$ let $\mathcal{N}_k$ be the set of products of distinct $k$-admissible prime numbers (in the sense of \cite[p. 18]{BD-IMC}) for the $E$, $p$ and $K$. Define $\epsilon\defeq1$ (respectively, $\epsilon\defeq0$) in the definite (respectively, indefinite) setting and denote by $\mathcal{N}_k^\epsilon$ the subset of $\mathcal{N}_k$ consisting of $n$ such that $\nu(n)\equiv \epsilon\pmod2$, where, as before, $\nu(n)$ is the number of distinct prime factors of $n$; by convention, $1\in\mathcal{N}_k$ and $\nu(1)\defeq0$. As alluded to before, we consider the bipartite Euler system introduced in \cite{BD-IMC} and \cite{BLV}, which consists of two sets 
\[ 
\kappa\defeq\Bigl\{\kappa_n \in\textstyle{\varprojlim_m} H^1(K_m,{T}/p^k{T})\;\Big|\;n \in \mathcal{N}_k^\epsilon\Bigr\},\quad\lambda\defeq\Bigl\{\lambda_n\in\Lambda/p^k\Lambda\;\Big|\;n\in\mathcal{N}_k^{\epsilon+1}\Bigr\},
\]
the inverse limit being taken with respect to the corestriction maps. The sets $\kappa$ and $\lambda$ are related to each other via the two explicit reciprocity laws described in \cite[Theorems 4.1 and 4.2]{BD-IMC} and \cite[\S6.2]{BLV} (see also Definition \ref{defBES}). For all integers $k\geq 1$ and even integers $i\geq 0$, let 
\[
\mathfrak{C}_i(k)\defeq(\lambda_n)_{n \in \mathcal{N}_{2k,}\nu(n) \leq i}\subset \Lambda/(p^k) 
\]
be the ideal of $\Lambda/(p^k)$ that is generated by all the $\lambda_n$ for $n$ a product of at most $i$ distinct $p^k$-admissible primes; the inverse limit $\mathfrak{C}_i\defeq \varprojlim_{k}\mathfrak{C}_i(k)$ is an ideal of $\Lambda$. Moreover, define $\Phi\defeq\Hom_{\Lambda/(p^k)}\bigl(\,\varprojlim_m H^1(K_m,T/p^kT),\Lambda/(p^k)\bigr)$, then for all integers $k\geq1$ and even integers $i\geq0$ set  
\[
\mathfrak{D}_i(k)\defeq\bigl(\bigl\{f(\kappa_n)\mid\text{$f\in\Phi$, $n\in\mathcal{N}_{2k}$, $\nu(n)\leq i$}\bigr\}\bigr)\subset\Lambda/(p^k).
\]
The inverse limit $\mathfrak{D}_i\defeq\varprojlim_k \mathfrak{D}_i(k)$ is an ideal of $\Lambda$.

Now, given proper ideals $I,J$ of $\Lambda$, write $I \sim J$ if there are ideals $\mathfrak{a}$ and $\mathfrak{b}$ of height $2$ of $\Lambda$ such that $\mathfrak{a}I \subset J$ and $\mathfrak{b}J\subset I$. The pseudo-isomorphism class of a finitely generated torsion $\Lambda$-module is uniquely determined by the classes modulo $\sim$ of its (higher) Fitting ideals (see \cite[Lemma 9.2]{KU2012} or Theorem \ref{Thm-pseudo}; the theory of Fitting ideals that is relevant for this paper is reviewed in \S \ref{sec:FittingLambda}). Taking crucial advantage of the pseudo-isomorphism $\mathcal X\sim M\oplus M$, in Proposition \ref{even-odd-prop} we show that the only Fitting ideals of $X$ that are genuinely interesting are the even index ones, as the odd index ones can be recovered from them. In proving this property, we are led to introduce an apparently new algebraic notion (Definition \ref{pseudo-def}) that we call \emph{pseudo square-root} (of certain ideals of $\Lambda$), which we believe may be of independent interest. 

Our main result is a description of the higher Fitting ideals of $\mathcal X$ in terms of the ideals $\mathfrak{C}_i$ and $\mathfrak{D}_i$ introduced above. In the definite case, we prove

\begin{itheorem} \label{iLambda} 
Let $i\geq0$ be an integer. In the definite case, there is an equivalence
\[
\Fitt_i(\mathcal X)\sim\begin{cases}\mathfrak{C}_i^2&\text{if $i$ is even},\\[3mm] \mathfrak{C}_{i-1}\cdot\mathfrak{C}_{i+1}&\text{if $i$ is odd}. \end{cases}
\]
\end{itheorem}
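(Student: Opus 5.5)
We plan to reduce the statement to a finite-level computation with bipartite Euler systems over the rings $\Lambda/(p^k)$, and then pass to the limit in $k$ using the relation $\sim$.

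\textbf{Step 1: reduction to even indices.} By Proposition \ref{even-odd-prop}, the pseudo-isomorphism $\mathcal X\sim M\oplus M$ lets the odd-index Fitting ideals be recovered from the even-index ones. Concretely, if $\Fitt_{2j}(\mathcal X)\sim\mathfrak{a}_j^2$ with $\mathfrak{a}_j$ a pseudo square-root in the sense of Definition \ref{pseudo-def}, then $\mathfrak{a}_j\sim\Fitt_j(M)$. From the decomposition $\Fitt_{2j+1}(M\oplus M)=\Fitt_j(M)\Fitt_{j+1}(M)$ one then gets
\[
\Fitt_{2j+1}(\mathcal X)\sim\mathfrak{a}_j\mathfrak{a}_{j+1}.
\]
It therefore suffices to prove $\Fitt_i(\mathcal X)\sim\mathfrak{C}_i^2$ for even $i$. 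One also has to check that $\mathfrak{C}_i$ is a pseudo square-root of $\Fitt_i(\mathcal X)$, i.e.\ that $\mathfrak{C}_i\sim\Fitt_{i/2}(M)$. This should follow from the even case once we know the pseudo square-root is unique up to $\sim$, which we expect Definition \ref{pseudo-def} and the accompanying results to guarantee.

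\textbf{Step 2: the finite-level statement.} For fixed $k$, and $n\in\mathcal N_{2k}$ with $\nu(n)$ odd, consider the Selmer group $\Sel_n$ of $T/p^kT$ over $K_\infty$, with relaxed/ordinary conditions at primes dividing $n$. Using the parity and self-duality of the local conditions, we show that its dual is $\Lambda/(p^k)$-isomorphic, up to controlled error, to $N_n\oplus N_n$. The first explicit reciprocity law (Definition \ref{defBES}) should then give $\Fitt_0$ of this dual $\doteq(\lambda_n)^2$, the Iwasawa-theoretic analogue of Howard's rigidity result.

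The key mechanism is as follows. Adding an admissible prime $\ell$ (or a pair of primes) chosen by Chebotarev so that its localization kills a chosen generator of the dual Selmer group changes the Fitting ideals by a shift of index. This is the Kurihara/Mazur--Rubin argument transposed to bipartite systems; the second reciprocity law links $\lambda_{n\ell\ell'}$ with $\lambda_n$ via $\kappa_{n\ell}$. By induction on $i$, choosing sequences of primes $\ell_1,\dots,\ell_i$ that successively cut down the Selmer module, we obtain two inclusions in $\Lambda/(p^k)$.

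For \emph{$\mathfrak{C}_i(k)^2\subset\Fitt_i$}: each $\lambda_n^2$ with $\nu(n)\le i$ generates $\Fitt_0(\Sel_n^\vee)$. Comparing $\Sel_n^\vee$ with $\Sel_1^\vee$, which differ by at most $\nu(n)$ copies of $\Lambda/(p^k)$ on each side, gives $\lambda_n^2\in\Fitt_{\nu(n)}(\Sel_1^\vee)\subset\Fitt_i$. Products $\lambda_n\lambda_{n'}$ are handled through the $M\oplus M$ structure, i.e.\ working with $\Fitt_{i/2}(N)$ instead of squares.

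For \emph{$\Fitt_i\subset\mathfrak{C}_i(k)^2$, up to a fixed error}: choose primes realizing the minimal generators of the dual module, so that the relevant $\lambda_n$ generates $\Fitt_{i/2}(N)$.

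\textbf{Step 3: passage to the limit and the main obstacle.} We compare $\Sel_1$ of $T/p^kT$ with $\mathcal X/p^k$ via control theorems, and then take $\varprojlim_k$. The errors must be bounded uniformly in $k$ by a height-two ideal, such as $(p^c,\gamma-1)$-type ideals coming from finite submodules and local Tamagawa/torsion terms. This is precisely why the statement only holds up to $\sim$. We expect this to be the main difficulty: over $\Lambda/(p^k)$, a two-dimensional non-principal ring, the Chebotarev choice of primes can only kill elements modulo height-one primes. We would try to work instead over the discrete valuation rings $\mathcal O_\mathfrak{P}/\varpi^k$ obtained by specializing at height-one primes $\mathfrak P$ (as in Howard and \cite{BD-IMC}), prove the equality of Fitting ideals localized at each $\mathfrak P$, and use that two ideals agreeing after localization at every height-one prime are equivalent under $\sim$ (Theorem \ref{Thm-pseudo}).
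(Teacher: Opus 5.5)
Your Step 1 is correct and is how the paper passes from even to odd indices: Propositions~\ref{even-odd-prop} and~\ref{propX=M+M}, together with uniqueness of pseudo-square roots (Proposition~\ref{square-prop}). One correction: $\Fitt_{2j+1}(M\oplus M)=\Fitt_j(M)\Fitt_{j+1}(M)$ holds only up to $\sim$, after localizing at height-one primes, not as an equality of ideals.

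The real gap is the even-index case, which is the whole content of the theorem. In Step 2 you assert that the reciprocity laws give $\Fitt_0(\Sel_n^\vee)=(\lambda_n)^2$. They do not. Even over a principal artinian ring, Howard's rigidity only says that $\lambda_n$ generates $\mathfrak m^{\delta}\cdot\mathrm{Stub}(n)$, where $\delta$ is the minimal divisibility index of the whole system. Over a DVR (definite case, $e=0$), Theorem~\ref{thmDVR} gives $\Fitt_i(X(T))=\mathfrak m^{2(\delta^{(i)}(\lambda)-\delta(\lambda))}$, while the ideal generated by the $\lambda_n$ with $\nu(n)\le i$ is $\mathfrak m^{\delta^{(i)}(\lambda)}$. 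So the formalism by itself yields only $\mathfrak C_i^2\prec\Fitt_i(\mathcal X)$. That is your ``easy'' inclusion, and it is the unconditional half of Theorem~\ref{highFittLambda}. The reverse inclusion requires showing that the defect $\delta$ contributes no power of $\mathfrak P$. This is Assumption~\ref{kolyvagin}: some $\lambda_n$ is non-zero modulo $\mathfrak P+(\pi^{k_{\mathfrak P}})$. For elliptic curves this is a deep input (Gross's formula, Skinner--Urban, rank lowering, \cite[\S7.2.4]{BLV}). For $i=0$ it is exactly the missing divisibility of the main conjecture, and no Chebotarev choice of primes supplies it. Your step ``choose primes realizing the minimal generators'' silently assumes it.

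The second missing piece is the mechanism that turns specializations into information about $\Fitt_i(\mathcal X)\Lambda_{\mathfrak P}$. The $\Lambda/(p^k)$ route fails for the reason you give: Howard's structure theory (the decomposition $R^{e}\oplus M\oplus M$, length counting, choice of primes) needs a principal artinian ring. Your fallback of specializing at $\mathfrak P$ itself fails exactly where it matters. If $\mathfrak P$ is in the support of $\mathcal X$, then $\lambda_1\in\mathfrak P$ by the easy divisibility, so $\lambda_{\mathfrak P,1}=0$ and $\mathfrak P\in\Sigma_{\mathrm{EX}}$. Moreover $\mathcal X/\mathfrak P\mathcal X$ has positive rank over $\Lambda/\mathfrak P$, and the finite quotient $X(T_{\mathfrak P})$ forgets precisely the $\mathfrak P$-part you want to measure.

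The paper instead specializes at nearby primes $\mathfrak P_j=(f+\pi^j)$ (or $(\pi+T^j)$), which avoid the exceptional set, and applies Theorem~\ref{thmDVR} there. It then recovers the exponent at $\mathfrak P$ as a growth rate: by Proposition~\ref{fittingspec}, the length of $\Fitt_i(\mathcal X\otimes\mathcal O_j)$ is $\sim\alpha_ie_{\mathfrak P}j$. This requires:
\begin{itemize}
\item control maps whose kernels and cokernels are bounded uniformly in $j$ (Proposition~\ref{CTSha1}, Lemmas~\ref{extScalars} and~\ref{pseudoKerCoker});
\item Lemma~\ref{kj0}, to identify $s_j(\mathfrak C_i)\mathcal O_j$ with a finite-level ideal of length $\delta^{(i)}(\lambda_j)$;
\item boundedness of $\delta(\lambda_j)$ in $j$, from Assumption~\ref{kolyvagin}, so that the slopes of $\Fitt_i(\mathcal X)$ and $\mathfrak C_i^2$ agree.
\end{itemize}
Comparing at every height-one prime then gives $\sim$ by Proposition~\ref{prop-prec}, not Theorem~\ref{Thm-pseudo}.
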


The next result takes care of the indefinite case.

\begin{itheorem} \label{iKappa} 
Let $i\geq0$ be an integer. In the indefinite case, there is an equivalence
\[
\Fitt_i(\mathcal X)\sim\begin{cases}\mathfrak{D}_i^2&\text{if $i$ is even},\\[3mm] \mathfrak{D}_{i-1}\cdot\mathfrak{D}_{i+1}&\text{if $i$ is odd}. \end{cases}
\]
\end{itheorem}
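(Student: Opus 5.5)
The plan is to reduce Theorem \ref{iKappa} to a statement about finite-level Selmer modules over the Artinian rings $\Lambda/(p^k)$, and then to pass to the limit. The finite-level statement is a higher-Fitting-ideal version of Howard's bipartite Euler system argument, mirroring the proof of Theorem \ref{iLambda}.

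\emph{Reduction to even indices.} Write $\mathcal X\sim M\oplus M$. By Proposition \ref{even-odd-prop} and the notion of pseudo square-root (Definition \ref{pseudo-def}), the odd-index ideals satisfy $\Fitt_{2j+1}(\mathcal X)\sim\Fitt_{2j}(M)\Fitt_{2j+2}(M)$, while $\Fitt_{2j}(\mathcal X)\sim\Fitt_{2j}(M)^2$ up to the appropriate mixed terms. So it suffices to prove the even case, namely
\[
\Fitt_i(\mathcal X)\sim\mathfrak D_i^2\quad\text{for $i$ even,}
\]
equivalently that $\mathfrak D_i$ is a pseudo square-root of $\Fitt_i(\mathcal X)$. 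The odd case then follows formally.

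\emph{Finite-level statement.} Fix $k$ and work with a height-one prime $\mathfrak P$ of $\Lambda$ distinct from $(p)$. Equivalently, specialize along characters or work over the localization $\Lambda_{\mathfrak P}$, which is a DVR; the relation $\sim$ only sees height-one primes. For $n\in\mathcal N_{2k}$ the Selmer module $\Sel_{\mathcal F(n)}$ over $R=\Lambda_{\mathfrak P}/\mathfrak P^{k}$, or its finite-level analogue, decomposes as $R^{e}\oplus M_n\oplus M_n$. Here $e=0$ or $1$ according to the parity of $\nu(n)$, and this uses the self-duality and the surjectivity of $E[p]$.

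I would prove two things.
\begin{itemize}
\item[(a)] For $n\in\mathcal N^0_{2k}$, the ideal generated by $\{f(\kappa_n)\}$ is contained in $\Fitt_0$ of $M_n$, computed via the "length" $\mathrm{ind}(\kappa_n)$. This is Howard's rigidity: $\mathrm{ind}(\kappa_n)=\length(M_n)$ after normalization.
\item[(b)] Adding a prime $\ell$ to $n$ changes the module by at most one cyclic summand. This uses the two explicit reciprocity laws together with Chebotarev choices of $\ell$, where $\mathrm{loc}_\ell$ kills or detects a prescribed generator.
\end{itemize}

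By induction on $i$, as in Kurihara and Mazur--Rubin's "$\Fitt_i=\sum_{\nu(n)\le i}$ of the initial ideals at level $n$", one deduces
\[
\mathfrak D_i(k)=\Fitt_i(M_1)\ \text{over } R, \quad\text{up to bounded error}.
\]
For the inclusion $\supseteq$ one chooses, inductively, admissible primes whose Frobenius classes kill the largest remaining summands of $M_n$. This uses Chebotarev in $K_m(E[p^k])$ and is the standard step. The inclusion $\subseteq$ follows from $\Fitt_0(M_n)\subset\Fitt_{\nu(n)}(M_1)$, which holds because $M_1$ is obtained from $M_n$ by relaxing and restricting at the $\nu(n)$ primes, each operation changing at most one generator.

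\emph{Limit and main obstacle.} Taking $\varprojlim_k$ gives $\mathfrak D_i\Lambda_{\mathfrak P}=\Fitt_i(M)\Lambda_{\mathfrak P}$ for every height-one $\mathfrak P\neq(p)$. The main obstacle I expect is uniformity: the error terms coming from the finite index of the Heegner module, from control theorems, and from the non-free part of $\Lambda$-cohomology must be bounded independently of $k$. They must also be supported in height $\ge2$, including at $\mathfrak P=(p)$, which needs separate treatment via the $\mu$-part. I would handle this with the same control lemmas used in \cite{BLV} and in the definite case, comparing $\varprojlim_m H^1(K_m,T/p^k)$ with $\Sel(K_\infty,T)/p^k$ up to pseudo-null kernels. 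Finally, squaring gives $\Fitt_i(\mathcal X)\sim\mathfrak D_i^2$.
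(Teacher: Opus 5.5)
Your reduction of odd $i$ to even $i$ via Proposition~\ref{even-odd-prop} and $\mathcal X\sim M\oplus M$ is fine. The core finite-level step, however, has a genuine gap: the setting you propose cannot carry Howard's argument.

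\emph{The finite-level ring is wrong, and the key device is missing.} The ring $\Lambda/(p^k)\simeq(\Z/p^k\Z)[\![T]\!]$ has Krull dimension one. It is not Artinian, and for $k\geq2$ it is not principal. So neither the decomposition $R^{e}\oplus M_n\oplus M_n$ nor the index calculus is available there. Passing to $\Lambda_{\mathfrak P}/\mathfrak P^k$ is worse. For $\mathfrak P\neq(p)$ the element $p$ becomes a unit, whereas $\kappa_n$, $\lambda_n$ ($n\neq1$) and the structures $\mathcal F(n)$ exist only modulo $I_n$, which is a power of $p$. Hence all Euler-system data beyond $n=1$ are killed. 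In every case the residue field $\Frac(\Lambda/\mathfrak P)$ is infinite, so the Chebotarev choices of $\ell$ in your step (b) are not available.

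What you are missing is the device used in Theorem~\ref{highFittLambda}:
\begin{enumerate}
\item Approximate $\mathfrak P=(f)$ by height-one primes $\mathfrak P_j=(f+\pi^j)$, or $(\pi+T^j)$ when $\mathfrak P=(\pi)$. The rings $\mathcal O_j$ are DVRs with finite residue field, so Theorem~\ref{dvrKappa} applies to $T_j$.
\item Compare $\mathcal X\otimes_\Lambda\mathcal O_j$ with $X(T_j)$ through control maps whose kernels and cokernels are bounded \emph{independently of $j$} (Proposition~\ref{CTSha1}, Lemma~\ref{extScalars}).
\item Read off the $\mathfrak P$-exponents of $\Fitt_i(\mathcal X)$ and of $\mathfrak D_i^2$ from the linear growth in $j$ of the $\pi_j$-adic valuations (Proposition~\ref{fittingspec}, Lemma~\ref{kj0}).
\end{enumerate}
The uniformity that matters is in $j$, not in $k$. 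The prime $\mathfrak P=(\pi)$ is handled by the same mechanism, with no separate $\mu$-argument.

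\emph{Two further problems.} First, your (a) discards the offset under ``after normalization''. At a specialization, $\ind(\kappa_n)=\delta+\length(M_n)$ up to truncation, where $\delta$ is the minimal index over the whole system. Theorem~\ref{dvrKappa} gives $\Fitt_i(X(T_j))=\mathfrak m^{2(\delta^{(i)}(\kappa_j)-\delta(\kappa_j))}$, while $\mathfrak D_i$ only sees $\delta^{(i)}(\kappa_j)$. Your argument can therefore give at best $\mathfrak D_i^2\prec\Fitt_i(\mathcal X)$. The reverse needs $\delta(\kappa_j)$, which equals $\delta(\lambda_j)$ by Theorem~\ref{artKappa}, to be bounded in $j$. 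That is Assumption~\ref{kolyvagin}, an arithmetic input (Skinner--Urban, Gross's formula, level raising; \cite[\S7.2.4]{BLV}) that is absent from your plan.

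Second, your indexing is off by a factor of two. A summand of $M_n$ disappears only after \emph{two} admissible primes are added (Lemmas~\ref{lemmaind} and~\ref{lemmadef}), and $\Fitt_{2s}(M\oplus M)\sim\Fitt_s(M)^2$. So $\mathfrak D_{2s}$ corresponds to $\Fitt_s(M)$, up to the $\delta$-offset, not to $\Fitt_{2s}(M)$. Consequently the following claims are false as stated:
\[
\Fitt_{2j}(\mathcal X)\sim\Fitt_{2j}(M)^2,\qquad \Fitt_{2j+1}(\mathcal X)\sim\Fitt_{2j}(M)\Fitt_{2j+2}(M),\qquad \mathfrak D_i\Lambda_{\mathfrak P}=\Fitt_i(M)\Lambda_{\mathfrak P}.
\]
With them, your final ``squaring'' step would not produce $\Fitt_i(\mathcal X)$.
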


Theorems \ref{iLambda} and \ref{iKappa} are essentially a reformulation of Corollary \ref{final-coro}, which in turn is a consequence of Theorems \ref{final-thm1} and \ref{final-thm2}.

A few remarks are in order. First, of all, we would like to bring to the reader's attention that we obtain results analogous to Theorems \ref{iLambda} and \ref{iKappa} in an abstract situation in which we describe Fitting ideals via the theory of $\Lambda$-adic bipartite Euler systems for a rather general free $\Lambda$-module $\mathbf{T}$ of ranks $2$ equipped with a continuous action of the absolute Galois group of $K$. This abstract result is Theorem \ref{highFittLambda}, which is really the crucial novelty of this paper, from which Theorem \ref{iLambda} follows immediately (upon setting the stage and verifying the various assumptions needed to develop our general strategy). From our point of view, the interest in such an abstract formulation lies in future possible applications to other contexts, \emph{e.g.}, to the case of higher (even) weight modular forms or to the study of supersingular elliptic curves. As will be apparent, our strategy towards Theorem \ref{highFittLambda} is inspired by ideas of Mazur--Rubin (\cite{MR}) and of Howard (\cite{HoHeeg}); in a nutshell, we combine information on Selmer groups of the specializations $T_\mathfrak{P}$ of $\mathbf{T}$ at (all but finitely many) height $1$ prime ideals $\mathfrak{P}$ of $\Lambda$, where $T_\mathfrak{P}\defeq\mathbf{T}\otimes_{\Lambda}\mathcal{O}_\mathfrak{P}$ with $\mathcal{O}_\mathfrak{P}$ the integral closure of the domain $\Lambda/\mathfrak{P}$. Thus, part of this paper is devoted to obtaining (again, using an approach via bipartite Euler systems) structure theorems for Selmer groups of Galois representations over artinian local rings and over discrete valuations rings: our main results, which are similar in flavour to results of Kim (\cite{kim2024higher}), are Theorem \ref{artSel} and Corollary \ref{artKappa} (respectively, Theorems \ref{thmDVR} and \ref{dvrKappa}) in the artinian (respectively, discrete valuation ring) case. The reader is then encouraged to compare our results with \cite[Theorem 4.19]{kim2024higher} and \cite[Proposition 4.5.8]{MR} in the case of artinian local rings and with \cite[Theorem 4.19]{kim2024higher} and \cite[Proposition 4.5.8]{MR} in the case of discrete valuation rings. 

A second remark is the following. Although we insisted, in this introduction, on separating the definite case and the indefinite case, it is worth observing that one interesting feature of the formalism of bipartite Euler system that we develop 
in this paper is that, actually, our results in the indefinite setting can be alternatively stated in terms of the ideals $\mathfrak{C}_i$; more precisely, in both settings the equivalence $\Fitt_i(\mathcal X)\sim \mathfrak{C}_i^2$ holds true for all even $i\geq0$ (with $\nu(n) \leq i+1$ in the definition of $\mathfrak{C}_i(k)$ in the indefinite case), and then (as pointed out above) a formula for odd Fitting ideals involving the $\mathfrak C_i$ follows. In other words, quite surprisingly, the theta elements $\lambda_n$ control Fitting ideals not only in the definite case (as it is natural to expect), but also in the indefinite setting. Unfortunately, as of today we have no theoretical explanation of this phenomenon. 

We conclude this introduction by briefly commenting on the existing literature on higher Fitting ideals in an Iwasawa-theoretic setting. The first source of inspiration for this project was the paper \cite{ohshita2021higher} by Ohshita, which combines techniques due to Kurihara (\cite{KU2012}, \cite{kurihara2014refined}) with others coming from foundational work of Mazur--Rubin (\cite{MR}). One important difference to bear in mind when approaching our paper is that all results in \cite{KU2012}, \cite{kurihara2014refined} and \cite{ohshita2021higher} can be applied only to representations that are \emph{not} self-dual, while self-duality (which is responsible, for example, for the pseudo-isomorphism $\mathcal X\sim M\oplus M$) is a crucial feature of the representations studied in our article. Moreover, in \cite{KU2012}, \cite{kurihara2014refined} and \cite{ohshita2021higher} the statement of the relevant main conjecture is taken as an \emph{assumption} and results on the ideals $\Fitt_i$ are proved by induction on $i$. On the contrary, here we \emph{do not} assume anticyclotomic main conjectures: rather, we extend the approach in \cite{BD-IMC}, \cite{BLV} and \cite{howard2012bipartite} to prove these conjectures and show that our extension can be fruitfully applied to study higher Fitting ideals as well. Our general impression is that, when compared to the approach of Kurihara and Ohshita, the Euler systems from \cite{BD-IMC} offer more flexibility and strength. Finally, for recent results on the structure of Selmer groups and the Iwasawa main conjecture for elliptic curves along a different line of investigation, see \cite{kim-AJM}.

\subsection*{Structure of the paper}

In Section \ref{sec::2} we review the main properties of Fitting ideals, with a special focus on Fitting ideals of finitely generated modules over a discrete valuation ring or over $\Lambda$. Section \ref{sec::3} is devoted to the study of bipartite Euler systems and to their application to structure theorems for Selmer and Shafarevich--Tate groups of Galois representations over principal artinian local rings and discrete valuation rings. Here we also introduce the main definitions and properties for bipartite Euler systems over $\Lambda$. In Section \ref{sec::4} we give our main result, namely, we describe (in terms of bipartite Euler systems) all the higher Fitting ideals of (Pontryagin duals of) Shafarevich--Tate groups attached to anticyclotomic Galois representations. Finally, in Section \ref{sec::5} we apply our general result to the case of elliptic curves.

\subsection*{Acknowledgements}
We thank Beatrice Ostorero Vinci for several helpful discussions on some of the topics of this paper.

\section{Background on Fitting ideals} \label{sec::2}

In this section, we review some facts about Fitting ideals; although the theory can be developed in the context of finitely generated modules over a commutative ring, for the sake of simplicity we shall stick to finitely presented modules. For details, we refer the reader to, \emph{e.g.}, \cite[\S 20.2]{Eis}, \cite[Ch. XIX, \S 2]{lang} and \cite[Ch. 3]{northcott}.

\subsection{Definition and basic properties}

From here on, $R$ is a commutative ring and $M$ is a finitely presented $R$-module; if $R$ is noetherian (which will always be the case in our applications), then our condition on $M$ is equivalent to $M$ being finitely generated over $R$. Let $i\geq0$ be an integer and let $n\geq i$ be an integer such that $M$ can be generated over $R$ by $n$ elements. Choose a presentation
\[
\bigoplus_{j=1}^k R \overset\varphi\longrightarrow \bigoplus_{i=1}^n R \longrightarrow M \longrightarrow 0
\]
of $M$. Denote by $A\in\M_{n\times k}(R)$ the matrix associated with $\varphi$ with respect to the canonical basis; finally, given an integer $r\geq0$, write $I_r(A)$ for the ideal of $R$ generated by the minors of order $r$ of $A$, with the convention that $I_r(A)\defeq(0)$ if $r>\min\{n,k\}$ (recall that a minor of order $0$ is defined to be $1$, so that $I_0(A)=R$).

In the definition that follows, the notation above is in force.

\begin{definition} \label{fitting-def}
Let $i\geq0$ be an integer. The \emph{$i$-th Fitting ideal} of $M$ over $R$ is 
\[
\Fitt_i(M)\defeq\begin{cases} I_{n-i}(A) & \text{for $i\leq n$},\\[2mm]R& \text{for $i>n$}.\end{cases}
\]
\end{definition}

The ideal $\Fitt_0(M)$ is sometimes called \emph{the (initial) Fitting ideal} of $M$, in which case the ideals $\Fitt_i(M)$ for $i\geq1$ are referred to as the \emph{higher Fitting ideals} of $M$. A Fitting ideal $\Fitt_i(M)$ with $i$ even (respectively, odd) is called \emph{even} (respectively, \emph{odd}). When we need to specify the ring $R$, we write $\Fitt_{R,i}(M)$ in place of $\Fitt_i(M)$.

The next result asserts that Definition \ref{fitting-def} does indeed make sense.

\begin{proposition}
The Fitting ideals of $M$ are well defined, i.e., their definition is independent of the choice of a presentation of $M$ as an $R$-module.
\end{proposition}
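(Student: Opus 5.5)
The approach is the classical one: show that any two finite presentations of $M$ are related by a short list of elementary modifications, and check that each modification preserves the ideals $I_{n-i}(A)$, with the convention $\Fitt_i(M)=R$ when $i\ge n$ respected at each stage.

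\emph{Step 1: the ideal depends only on the submodule of relations.} Fix the surjection $\pi\colon R^n\to M$. The ideal $I_r(A)$ depends only on the submodule $\operatorname{im}(\varphi)=\ker\pi\subset R^n$, not on the chosen generators of it. Indeed, suppose a new column is an $R$-linear combination of the old columns. Then, by multilinearity of determinants, every $r$-minor involving that column is an $R$-combination of $r$-minors of $A$. So adjoining such columns, or deleting redundant ones, leaves $I_r$ unchanged. Since $\ker\pi$ is finitely generated (because $M$ is finitely presented), any two finite generating sets of $\ker\pi$ give the same $I_r$: compare both with their union. This also handles infinitely many relations, since $I_r$ is generated by the minors taken from finite subsets of columns.

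\emph{Step 2: change of generators of $M$.} Let $\pi\colon R^n\to M$ and $\pi'\colon R^{n'}\to M$ be two surjections. Compare both with the combined surjection $R^{n}\oplus R^{n'}\to M$. Hence it suffices to treat the case where one extra generator $e$ is added, with $e\mapsto m=\pi(\sum a_je_j)$. Using Step 1, we may take as relations for $R^{n+1}$ the old columns of $A$ together with the vector $e-\sum a_je_j$. A change of basis of $R^{n+1}$ by an elementary row operation (an invertible matrix, which preserves each $I_r$ by Cauchy--Binet) reduces this to the vector $e$. The new matrix is then the block matrix $\begin{pmatrix}A&0\\0&1\end{pmatrix}$.

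We then need the identity $I_{r+1}\begin{pmatrix}A&0\\0&1\end{pmatrix}=I_r(A)$. A minor of size $r+1$ either uses both the last row and the last column, in which case it equals an $r$-minor of $A$. Or it uses the last row but not the last column, or the last column but not the last row, in which case it is $0$. Or it avoids both, in which case it is an $(r+1)$-minor of $A$, which lies in $I_r(A)$ by Laplace expansion. With $n-i$ replaced by $(n+1)-i$, this gives the invariance of $\Fitt_i$.

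For the boundary cases, one checks that $i\ge n$ gives $R$ in both conventions: $I_0=R$, and if $i=n$ before and $i<n+1$ after, then $I_1$ of the augmented matrix contains $1$.

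\emph{Step 3: invertible base changes.} The statement used in Step 2, namely $I_r(PAQ)=I_r(A)$ for invertible $P$ and $Q$, follows from Cauchy--Binet, which gives $I_r(PA)\subset I_r(A)$, applied together with $P^{-1}$.

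The main obstacle is Step 2. It requires care with the minor-expansion identity and with the boundary conventions, where $I_r=0$ for $r$ larger than the matrix size and $\Fitt_i=R$ for $i>n$, so that these cases are handled consistently.
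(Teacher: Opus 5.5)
Your proposal is correct. It is the classical argument: first show that $I_r$ depends only on the relation module, then adjoin one redundant generator at a time using $I_{r+1}\left(\begin{smallmatrix}A&0\\0&1\end{smallmatrix}\right)=I_r(A)$, with Cauchy--Binet handling base changes. The paper gives no argument of its own; it simply cites Eisenbud (Corollary--Definition 20.4) and Northcott (Chapter 3, Theorem 1), and your reasoning is essentially the standard proof found there.
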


\begin{proof} See, \emph{e.g.}, \cite[Corollary--Definition 20.4]{Eis} or \cite[Ch. 3, Theorem 1]{northcott}. \end{proof}

The next result collects some properties of Fitting ideals.

\begin{proposition} \label{pfi}
The following properties hold for a finitely presented module $M$ over $R$:
\begin{enumerate}
\item $\Fitt_i(M)$ is finitely generated for all $i\geq0$;
\item $\Fitt_i(M)\subset\Fitt_{i+1}(M)$ for all $i\geq0$;
\item if $M$ can be generated by $i$ elements, then $\Fitt_i(M)=R$;
\item for any ring homomorphism $f:R\to S$, the equality
\[
\Fitt_i(M\otimes_RS)=f\bigl(\Fitt_i(M)\bigr)\cdot S
\]
holds for all $i\geq0$, the tensor product being taken with respect to $f$;
\item if $R$ is local and $\Fitt_i(M)=R$, then $M$ can be generated by $i$ elements;
\item if $I$ is an ideal of $R$, then $\Fitt_0(R/I)=I$;
\item if $M\twoheadrightarrow N$ is a surjection of finitely presented $R$-modules, then the inclusion
\[
\Fitt_i(M)\subset\Fitt_i(N)
\]
holds for all $i\geq0$;
\item given an exact sequence $M \overset{f}\to N \rightarrow B \rightarrow 0$ of finitely presented $R$-modules, there is an inclusion
\[
\Fitt_i(M) \Fitt_j(B) \subset \Fitt_{i+j}(N)
\]
for all $i,j\geq0$;
\item if $\mathfrak{p}$ is a prime ideal of $R$, then $\Fitt_i(M) $ is not contained in $\mathfrak{p}$ if and only if $M_\mathfrak{p}$ can be generated by $i$ elements over $R_\mathfrak{p}$;
\item the equality
\[
\Fitt_i\Biggl(\bigoplus_{j=1}^n M_j\Biggr)=\sum_{\substack{(s_1,\dots,s_n)\in\N^n\\s_1 + \dots + s_n=i}} \prod_{j=1}^n \Fitt_{s_j}(M_j)
\]
holds for all $i\geq0$.
\end{enumerate}
\end{proposition}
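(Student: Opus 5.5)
All ten properties follow from working with a single presentation and the elementary calculus of determinantal ideals. Throughout, fix a presentation $R^k\xrightarrow{\varphi}R^n\to M\to 0$ with matrix $A$, so that $\Fitt_i(M)=I_{n-i}(A)$. We use that this is independent of the presentation. In particular we may enlarge $n$ by adding redundant generators whenever convenient.

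\textbf{Properties (1)--(4) and (6).} These are immediate from the definitions.
\begin{itemize}
\item[(1)] $I_r(A)$ is generated by finitely many minors.
\item[(2)] Laplace expansion writes every $r$-minor as an $R$-combination of $(r-1)$-minors, so $I_r(A)\subset I_{r-1}(A)$.
\item[(3)] Choose a presentation with $n=i$; then $\Fitt_i(M)=I_0(A)=R$.
\item[(4)] Tensoring is right exact, so $f(A)$ presents $M\otimes_RS$, and the minors of $f(A)$ are the images of the minors of $A$.
\item[(6)] $R^{(I)}\to R\to R/I\to 0$ is a presentation whose $1$-minors generate $I$. If $I$ is not finitely generated, use the generalized definition with arbitrary index sets; for noetherian $R$ there is no issue.
\end{itemize}

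\textbf{Property (5).} Let $\mathfrak m$ be the maximal ideal and $\kappa=R/\mathfrak m$. By (4), $\Fitt_i(M\otimes\kappa)=\kappa$. Over the field $\kappa$, the vector space $M\otimes\kappa$ has dimension $d$, and its Fitting ideals are $\Fitt_j=0$ for $j<d$ and $\kappa$ for $j\ge d$ (diagonalize the presentation matrix). Hence $d\le i$, and Nakayama gives that $M$ is generated by $i$ elements.

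\textbf{Property (9).} Since localization is flat, (4) gives $\Fitt_i(M)R_\mathfrak p=\Fitt_i(M_\mathfrak p)$. This ideal equals $R_\mathfrak p$ exactly when $\Fitt_i(M)\not\subset\mathfrak p$. Now combine (3) and (5) over the local ring $R_\mathfrak p$.

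\textbf{Properties (7), (8) and (10).} These are where the actual work lies.
\begin{itemize}
\item[(7)] Lift $n$ generators of $M$ to generators of $N$. The relations of $M$ map to relations of $N$, so a presentation matrix of $N$ can be taken to contain the columns of $A$ plus extra columns. Adding columns can only enlarge $I_{n-i}$.
\item[(8)] Choose generators $x_1,\dots,x_a$ of $f(M)$ (images of generators of $M$) and lifts $y_1,\dots,y_b$ to $N$ of generators of $B$. Then the $x$'s and $y$'s together generate $N$. The relations of $B$ lift to relations of $N$ of the form $\sum c_j y_j+\sum d_\ell x_\ell=0$, and the relations of $M$ map to relations among the $x$'s. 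This yields a block upper-triangular relation matrix $\begin{pmatrix}A' & *\\ 0 & C\end{pmatrix}$ for $N$, possibly incomplete, which by (7)'s logic only shrinks the ideals. Here $A'$ is the image of $A$ and $C$ presents $B$. A product of an $(a-i)$-minor of $A'$ and a $(b-j)$-minor of $C$ is an $(a+b-i-j)$-minor of the block matrix, since the lower-left block vanishes. This gives the inclusion.
\item[(10)] By induction it suffices to treat two summands. The direct sum is presented by the block-diagonal matrix $\operatorname{diag}(A_1,A_2)$, and an $r$-minor of a block-diagonal matrix is either zero or a product of an $r_1$-minor of $A_1$ and an $r_2$-minor of $A_2$ with $r_1+r_2=r$ (generalized Laplace expansion). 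Translating $r_t=n_t-s_t$ gives the stated sum of products, where terms with $s_t>n_t$ contribute $R$ and are handled by the convention.
\end{itemize}

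The main obstacle is (8), where one must check carefully that the block-triangular relation matrix is legitimate even though it may not present all relations of $N$. This is fine because missing relations only enlarge Fitting ideals, as in (7), and the inclusion we need goes in the right direction.
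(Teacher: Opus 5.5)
Your proposal is correct and is essentially the paper's argument, with the textbook proofs the paper cites (Lang, Eisenbud) written out. In particular, your possibly incomplete block-triangular relation matrix in (8) plays exactly the role of the paper's reduction to $0\to M/\ker(f)\to N\to B\to 0$ combined with (7), and your proof of (9) from (3), (4) and (5) is identical to the paper's.

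One point deserves to be stated explicitly rather than attributed to ``the convention''. In (10), the terms with $s_t>n_t$ need an argument, and so does the case $i>a$ or $j>b$ in (8). They are absorbed in either of two ways: pad the presentations so that every $n_t\ge i$, as your opening remark allows, or use the monotonicity (2), e.g.\ $\Fitt_i(M)\Fitt_j(B)=\Fitt_a(M)\Fitt_j(B)\subset\Fitt_{a+j}(N)\subset\Fitt_{i+j}(N)$.
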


\begin{proof} Parts (1), (2) and (3) are \cite[Ch. XIX, Proposition 2.4]{lang}. Part (4) is \cite[Corollary 20.5]{Eis}, while part (5) is \cite[Proposition 20.6]{Eis} and part (6) is \cite[Ch. XIX, Corollary 2.6]{lang}. On the other hand, part (7) is immediate from the definitions. As for part (8), the corresponding result when $f$ is injective (\emph{i.e.}, for short exact sequences) is \cite[Ch. XIX, Proposition 2.7]{lang}; our statement can then be obtained by considering the induced short exact sequence
\[
0\longrightarrow M/\ker(f)\longrightarrow N\longrightarrow B\longrightarrow0
\]
and using part (7). To prove part (9), notice that, by parts (3) and (5), $M_\mathfrak p$ can be generated by $i$ elements over $R_\mathfrak p$ if and only if $\Fitt_i(M_\mathfrak p)=R_\mathfrak p$; since $M_\mathfrak p\simeq M\otimes_RR_\mathfrak p$, part (4) gives $\Fitt_i(M_\mathfrak p)=\Fitt_i(M)\cdot R_\mathfrak p$, and the claim follows. Finally, part (10) can be deduced from \cite[Ch. XIX, Proposition 2.8]{lang} by induction on $n$. \end{proof}

For a survey describing applications of Fitting ideals (especially initial Fitting ideals) to algebraic number theory and arithmetic geometry, the reader may wish to consult \cite{greither}.

\subsection{Fitting ideals of modules over a DVR}

Let $R$ be a discrete valuation ring with uniformizer $\pi$. We recall the following well-known structure theorem.

\begin{theorem} \label{DVR-isom-thm}
Let $M$ be a finitely generated torsion $R$-module. If $M$ is non-trivial, then there is an isomorphism of $R$-modules
\[
M \simeq \bigoplus_{j=1}^n R\big/\bigl(\pi^{k_j}\bigr),
\]
where $k_j \geq 0$ are integers uniquely determined up to order.
\end{theorem}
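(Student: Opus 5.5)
This is the structure theorem for finitely generated modules over a principal ideal domain, specialised to a discrete valuation ring $R$. The plan has two parts: existence, via the Smith normal form of a presentation matrix, and uniqueness, by counting lengths of the graded pieces $\pi^{j}M/\pi^{j+1}M$.

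\emph{Existence.} Since $R$ is noetherian and $M$ is finitely generated, choose a presentation
\[
R^k\overset{\varphi}\longrightarrow R^n\longrightarrow M\longrightarrow0
\]
as in the definition of Fitting ideals, with matrix $A\in\M_{n\times k}(R)$. Every nonzero element of $R$ has the form $u\pi^{e}$ with $u$ a unit, and among the nonzero entries of $A$ we can pick one of minimal valuation. That entry divides every other entry of $A$, so we move it to position $(1,1)$ by row and column swaps. Then row and column operations clear the rest of the first row and column, and we induct on the remaining submatrix. Row operations correspond to changing the basis of $R^n$ and column operations to changing the basis of $R^k$, so neither changes the isomorphism class of $\coker\varphi\simeq M$. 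The result is a diagonal matrix with entries $\pi^{k_1},\dots,\pi^{k_r}$, possibly followed by zero rows, which gives
\[
M\simeq \bigoplus_j R/(\pi^{k_j})\oplus R^{\,n-r}.
\]
Because $M$ is torsion, the free part $R^{n-r}$ must vanish, so $r=n$ and we get the asserted form. Summands with $k_j=0$ are zero and may be kept or dropped as convenient.

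\emph{Uniqueness.} Let $\mathbb F=R/(\pi)$ be the residue field. Multiplication by $\pi^{j}$ identifies $\pi^{j}\bigl(R/(\pi^{k})\bigr)/\pi^{j+1}\bigl(R/(\pi^{k})\bigr)$ with $\mathbb F$ when $j<k$, and this quotient is $0$ when $j\geq k$. Hence, for $M$ written as above,
\[
\dim_{\mathbb F}\bigl(\pi^{j}M/\pi^{j+1}M\bigr)=\#\{\,s : k_s>j\,\}.
\]
The left-hand side depends only on $M$. So the numbers $\#\{s: k_s>j\}$ for $j\geq0$ are invariants of $M$, and they determine the multiset of the positive $k_s$. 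The multiplicity of the value $0$ is not determined by $M$, so uniqueness should be read up to order and up to adding or removing trivial summands (or after requiring all $k_j\geq1$). Alternatively, one could recover the $k_j$ from the Fitting ideals: by Proposition \ref{pfi}(10), $\Fitt_i(M)=\bigl(\pi^{\,\text{sum of the } n-i \text{ smallest } k_j}\bigr)$. The dimension count is cleaner, though.

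The only real obstacle is the diagonalisation step, and the reason it is easy over a DVR is that divisibility among nonzero elements is totally ordered by valuation. This is what guarantees that an entry of minimal valuation divides everything else, so the inductive clearing always works; over a general PID one would need Bezout-type operations instead.
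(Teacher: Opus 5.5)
Your argument is correct, and it takes a different route from the paper. The paper gives no proof: it notes that the statement is the DVR case of the structure theorem for finitely generated modules over a principal ideal domain and cites \cite[Ch.~III, Theorem~7.5]{lang}.

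You instead give a self-contained proof tailored to the DVR case. For existence, you diagonalise a presentation matrix; because divisibility among nonzero elements is totally ordered by valuation, you avoid the B\'ezout-type steps needed over a general PID. For uniqueness, you compute $\dim_{R/(\pi)}\bigl(\pi^jM/\pi^{j+1}M\bigr)=\#\{s : k_s>j\}$.

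The citation is shorter and covers every PID at once. Your route is elementary and fits the presentation-matrix definition of Fitting ideals used throughout Section~2. Your alternative uniqueness argument via $\Fitt_i(M)$ is exactly the computation the paper carries out next, in Theorem~\ref{FittingDVR}.

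Your caveat about zero exponents is also correct. As literally stated, with $k_j\geq0$ allowed, the number of summands with $k_j=0$ is not determined by $M$. Uniqueness therefore holds only after discarding trivial summands, or equivalently after requiring $k_j\geq1$. The paper imposes exactly that normalisation, $1\leq k_1\leq\dots\leq k_n$, in Theorem~\ref{FittingDVR}.
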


\begin{proof} See, \emph{e.g.}, \cite[Ch. III, Theorem 7.5]{lang}, of which this result is a special case. \end{proof}

As a consequence of Theorem \ref{DVR-isom-thm}, the structure of $M$ is determined (up to isomorphism) by the sequence of ideals $\bigl(\Fitt_i(M)\bigr)_{i\geq0}$  , as now we show.

\begin{theorem}\label{FittingDVR}
Let $M$ be a non-trivial finitely generated torsion $R$-module and suppose that
\begin{equation} \label{M-isom}
M \simeq \bigoplus_{j=1}^n R\big/\bigl(\pi^{k_j}\bigr)
\end{equation}
for integers $n\geq1$ and $1\leq k_1\leq\dots\leq k_n$. Then for each $i\in\{1,\dots,n\}$ there is an equality 
\[
\Fitt_i(M)=(\pi^{c_i})
\]
of ideals of $R$, where $c_i\defeq\sum_{j=1}^{n-i}k_j$.
\end{theorem}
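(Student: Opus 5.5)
Since Fitting ideals do not depend on the chosen presentation, and by Proposition \ref{pfi}(4) and (7) they are unchanged under isomorphism, I may replace $M$ by the direct sum in \eqref{M-isom}. I use the obvious presentation
\[
\bigoplus_{j=1}^n R\overset{\varphi}\longrightarrow\bigoplus_{j=1}^n R\longrightarrow M\longrightarrow0,
\]
where $\varphi$ is given by the diagonal matrix $A=\mathrm{diag}(\pi^{k_1},\dots,\pi^{k_n})$. This presentation has exactly $n$ generators. So by Definition \ref{fitting-def}, $\Fitt_i(M)=I_{n-i}(A)$ for $i\le n$, and the task is to compute the ideal of $(n-i)$-minors of a diagonal matrix.

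For a diagonal matrix, a minor of order $r$ obtained from row indices $S$ and column indices $S'$ with $|S|=|S'|=r$ vanishes unless $S=S'$. If $S\neq S'$, some row in $S$ has its unique nonzero entry in a column outside $S'$, so that row of the submatrix is zero. When $S=S'$, the minor equals $\prod_{j\in S}\pi^{k_j}$. Hence $I_{n-i}(A)$ is generated by the elements $\pi^{\sum_{j\in S}k_j}$ with $|S|=n-i$.

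Since $R$ is a DVR, the ideal generated by a family of powers of $\pi$ is generated by the power with the smallest exponent. Because $k_1\le\dots\le k_n$, the minimum of $\sum_{j\in S}k_j$ over $|S|=n-i$ is attained at $S=\{1,\dots,n-i\}$, and it equals $c_i$. This gives $\Fitt_i(M)=(\pi^{c_i})$.

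The case $i=n$ gives $c_n=0$, the empty minor $1$ and the unit ideal $R$, consistent with Proposition \ref{pfi}(3). Alternatively, one could argue via Proposition \ref{pfi}(10) and (6): $\Fitt_0(R/(\pi^{k}))=(\pi^{k})$ and $\Fitt_s(R/(\pi^k))=R$ for $s\ge1$. Summing products over compositions of $i$ gives the same minimum, namely killing the $i$ largest $k_j$. There is no real obstacle here; the only care needed is the vanishing of the off-diagonal minors and the choice of the minimizing index set.
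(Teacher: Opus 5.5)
Your proof is correct, but your main argument takes a different route from the paper's.

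You compute $I_{n-i}(A)$ directly for the diagonal presentation matrix $A=\mathrm{diag}(\pi^{k_1},\dots,\pi^{k_n})$. You check that every non-principal minor vanishes, that the principal minors are $\pi^{\sum_{j\in S}k_j}$, and that the smallest exponent is attained at $S=\{1,\dots,n-i\}$.

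The paper never writes down a presentation. It combines the direct-sum formula of Proposition \ref{pfi}(10) with $\Fitt_0\bigl(R/(\pi^k)\bigr)=(\pi^k)$ and $\Fitt_s\bigl(R/(\pi^k)\bigr)=R$ for $s\geq1$, and sums over compositions $s_1+\dots+s_n=i$. That is exactly the alternative you sketch in your last paragraph.

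Your version is more self-contained. It uses only the definition and independence of the chosen presentation, whereas part (10) is cited from Lang and is essentially the block-diagonal version of your minor computation. It also makes the minimization step fully explicit, which the paper leaves somewhat terse.

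The paper's version is more modular. The same sum-over-compositions bookkeeping is what it invokes again in Step 2 of the proof of Theorem \ref{Thm-pseudo}, where direct sums involving several distinct height-one primes of $\Lambda$ must be combined. Your diagonal-minor argument would handle the single-prime pieces there equally well, since each $\mathfrak{p}_j$ is principal.
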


\begin{proof} To begin with, observe that if $i\geq0$ and $k\geq1$ are integers, then there is an equality
\begin{equation} \label{Fitt-pi-eq}
\Fitt_i\Bigl(R\big/\bigl(\pi^k\bigr)\!\Bigr)=\begin{cases}(\pi^k)&\text{for $i=0$},\\[2mm]R&\text{for $i\geq1$}.\end{cases}
\end{equation}
Now fix $i\in\{1,\dots,n\}$. Combining isomorphism \eqref{M-isom} with equality \eqref{Fitt-pi-eq} and with parts (6) and (10) of Proposition \ref{pfi}, we obtain
\begin{equation} \label{Fitt-i-eq}
\begin{aligned}
\Fitt_i(M) &=\Fitt_i\Biggl(\bigoplus_{j=1}^n R\big/\bigl(\pi^{k_j}\bigr)\!\Biggr)=\sum_{\substack{(s_1,\dots,s_n)\in\N^n\\s_1 + \dots + s_n=i}}\prod_{j=1}^n \Fitt_{s_j}\Bigl(R\big/\bigl(\pi^{k_j}\bigr)\!\Bigr)\\
&=\sum_{\substack{(s_1,\dots,s_n)\in\N^n\\s_1 + \dots + s_n=i}}\prod_{\substack{j\in\{1,\dots,n\}\\s_j=0}} (\pi^{k_j})=(\pi^{c_i}),
\end{aligned}
\end{equation}
as desired, where the last equality in \eqref{Fitt-i-eq} follows from Definition \ref{fitting-def} using the fact that the integers $k_j$ are ordered so that $1\leq k_1\leq \dots\leq k_n$. \end{proof}

\subsection{Fitting ideals of $\Lambda$-modules}\label{sec:FittingLambda}

Let $\mathcal{O}$ be a complete discrete valuation ring with finite residue field. The power series ring $\Lambda\defeq \mathcal{O} \llbracket T \rrbracket$ in one variable over $\mathcal{O}$ is a $2$-dimensional complete noetherian regular local ring. A finitely generated $\Lambda$-module $M$ is \emph{pseudo-null} if $M_\mathfrak{p}=0$ for all prime ideals $\mathfrak{p}$ of $\Lambda$ of height at most $1$; equivalently, $M$ is pseudo-null if and only if $M$ is finite (see, \emph{e.g.}, \cite[Remark 4, p. 269]{NSW}). Two finitely generated $\Lambda$-modules $M$ and $N$ are \emph{pseudo-isomorphic} if there exists a \emph{pseudo-isomorphism} between them, \emph{i.e.}, a homomorphism of $\Lambda$-modules $f:M\to N$ such that $\ker(f)$ and $\coker(f)$ are pseudo-null.

The following well-known result describes (up to pseudo-isomorphism) the structure of finitely generated torsion $\Lambda$-modules. 

\begin{theorem} \label{pseudo-thm}
Let $M$ be a non-trivial finitely generated torsion $\Lambda$-module. Then $M$ is pseudo-isomorphic to a $\Lambda$-module of the form
\[
\bigoplus_{j=1}^n \bigoplus_{t=1}^{\ell} \Lambda\big/ \mathfrak{p}_j^{k_{j,t}},
\]
where $\mathfrak{p}_j$ is a height $1$ prime ideal of $\Lambda$ for all $j$ and $k_{j,t} \geq 0$ are integers. All these ideals and integers are uniquely determined up to order.
\end{theorem}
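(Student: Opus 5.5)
This is the classical structure theorem for finitely generated torsion modules over $\Lambda=\mathcal O[\![T]\!]$. I would prove it by localizing at height $1$ primes, where everything reduces to the DVR case of Theorem \ref{DVR-isom-thm}.

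\textbf{Step 1: finitely many relevant primes.} Since $\Lambda$ is a noetherian UFD and $M$ is torsion, $\mathrm{Ann}(M)\neq0$. So only finitely many height $1$ primes $\mathfrak p_1,\dots,\mathfrak p_n$ contain it, namely the prime divisors of any nonzero annihilator. For every height $1$ prime $\mathfrak p\neq\mathfrak p_j$ we have $M_{\mathfrak p}=0$.

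\textbf{Step 2: the local structure.} Since $\Lambda$ is regular of dimension $2$, hence normal, each $\Lambda_{\mathfrak p_j}$ is a DVR. Theorem \ref{DVR-isom-thm} then gives
\[
M_{\mathfrak p_j}\simeq\bigoplus_t\Lambda_{\mathfrak p_j}/\mathfrak p_j^{k_{j,t}}\Lambda_{\mathfrak p_j},
\]
with the exponents unique up to order. Put $E\defeq\bigoplus_{j,t}\Lambda/\mathfrak p_j^{k_{j,t}}$, and note $E_{\mathfrak p_j}\simeq M_{\mathfrak p_j}$.

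\textbf{Step 3: globalizing (the main obstacle).} The goal is a genuine map $M\to E$ that induces isomorphisms at every height $1$ prime. I would split this into three parts.
\begin{itemize}
\item[(a)] \emph{Primary decomposition.} Let $M_j$ be the $\mathfrak p_j$-primary part, i.e. the elements killed by a power of a generator $\pi_j$ of $\mathfrak p_j$. The natural map $M\to\bigoplus_j M_j$ is then a pseudo-isomorphism. This follows by localizing at each height $1$ prime and using the coprimality of the $\pi_j$ away from height $2$.
\item[(b)] \emph{Lifting the local isomorphism.} For a fixed $j$, take an isomorphism $M_{j,\mathfrak p_j}\to E_{j,\mathfrak p_j}$. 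Since $M_j$ is finitely generated, its image of $M_j$ lies in $s^{-1}E_j$ for some $s\notin\mathfrak p_j$. Because $E_j$ has no nonzero finite submodule, multiplication by $s$ on $E_j$ is injective. Clearing denominators therefore gives a map $M_j\to E_j$ that is an isomorphism at $\mathfrak p_j$.
\item[(c)] \emph{Pseudo-nullity.} At every other height $1$ prime, both $M_j$ and $E_j$ localize to $0$. Hence the kernel and cokernel are supported only at the maximal ideal, so they are finite, i.e. pseudo-null.
\end{itemize}

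\textbf{Step 4: uniqueness.} Suppose $f\colon M\to E'$ is a pseudo-isomorphism onto another module of the stated form. Localizing at any height $1$ prime $\mathfrak p$ kills pseudo-null modules, so $f$ induces $M_{\mathfrak p}\simeq E'_{\mathfrak p}$. The primes appearing in $E'$ are exactly those with $E'_{\mathfrak p}\neq0$, so they coincide with the $\mathfrak p_j$. For each such prime, the exponents are then determined by the uniqueness in Theorem \ref{DVR-isom-thm}, or equivalently by the Fitting ideals of Theorem \ref{FittingDVR}.

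I expect Step 3 to be the delicate point, specifically the clearing of denominators in (b), which relies on $E_j$ having no nonzero finite submodule. Alternatively, one can simply cite the standard reference, e.g. \cite[Ch. V]{NSW}.
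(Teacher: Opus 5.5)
The paper does not prove this statement itself; it simply quotes \cite[(5.1.10)]{NSW}. You instead give a self-contained proof, and it is correct in substance. It is the classical localization argument, carried out one height $1$ prime at a time. The usual textbook version globalizes in one step: with $S\defeq\Lambda\smallsetminus\bigcup_j\mathfrak p_j$, the ring $\Lambda_S$ is a semilocal Dedekind domain, hence a PID, so $M_S\simeq E_S$, and one clears a single denominator using that $E\to E_S$ is injective. Your prime-by-prime version needs only the DVR case (Theorem \ref{DVR-isom-thm}), which the paper already records, at the cost of some bookkeeping. The paper's citation is simply shorter.

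There is one slip to fix, in Step 3(a). If $M_j\subset M$ is the $\mathfrak p_j$-primary submodule, the natural map goes $\bigoplus_jM_j\to M$, not $M\to\bigoplus_jM_j$. With that direction, (a) and (b) only give a roof $M\leftarrow\bigoplus_jM_j\to E$, not a map $M\to E$. There are two easy repairs.
\begin{itemize}
\item Use a non-natural map instead. Take $0\neq a\in\mathrm{Ann}(M)$ and write $a=\pi_j^{r_j}b_j$ with $\pi_j\nmid b_j$. Then $m\mapsto(b_jm)_j$ lands in $\bigoplus_jM_j$, and it is an isomorphism at each $\mathfrak p_j$ because $b_j$ is a unit in $\Lambda_{\mathfrak p_j}$.
\item More simply, drop (a) entirely. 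Apply the clearing of denominators from (b) directly to $M\to M_{\mathfrak p_j}\simeq E_{j,\mathfrak p_j}$ to get $\psi_j\colon M\to E_j$. Then $(\psi_j)_j\colon M\to E$ is an isomorphism at every height $1$ prime, hence a pseudo-isomorphism.
\end{itemize}

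A minor point in (b): injectivity of multiplication by $s\notin\mathfrak p_j$ on $\Lambda/\mathfrak p_j^{k}$ follows directly from unique factorization, since $\pi_j\nmid s$. If you keep the ``no nonzero finite submodule'' argument, you must add why $E_j[s]$ is finite. It is killed by $(s,\pi_j^{k})$, which lies in no height $1$ prime.
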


\begin{proof} See, \emph{e.g.}, \cite[(5.1.10)]{NSW}. \end{proof}

From here on, given ideals $I$ and $J$ of $\Lambda$, we write $I \prec J$ if there is an ideal $\mathfrak{a}$ of height $2$ of $\Lambda$ such that $\mathfrak{a}I \subset J$; we write $I \sim J$ if $I \prec J$ and $J \prec I$. It is immediate to see that $\sim$ is an equivalence relation on the set of ideals of $\Lambda$. Furthermore, $\sim$ respects sums and products of ideals: if $I_r\sim J_r$ for $r\in\{1,2\}$, then $I_1+I_2\sim J_1+J_2$ and $I_1I_2\sim J_1J_2$. In what follows, we denote by $[I]$ the equivalence class of the ideal $I$ of $\Lambda$ with respect to $\sim$.

\begin{proposition}\label{prop-prec}
Let $I$ and $J$ be ideals of $\Lambda$. Then $I \prec J$ if and only if
\[ 
I \Lambda_\mathfrak{P} \subset J \Lambda_\mathfrak{P}
\]
for every height $1$ prime ideal $\mathfrak{P}$ of $\Lambda$.
\end{proposition}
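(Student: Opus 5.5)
The plan is to prove both implications by localizing at height $1$ primes. We use the fact that $\Lambda$ is a $2$-dimensional noetherian local ring. Hence an ideal $\mathfrak a$ has height $2$ exactly when it is either $\Lambda$ or $\mathfrak m$-primary, where $\mathfrak m$ is the maximal ideal of $\Lambda$. Equivalently, $\mathfrak a$ has height $2$ exactly when $\mathfrak a\not\subset\mathfrak P$ for every height $1$ prime $\mathfrak P$. Throughout, when we say "height $2$" we implicitly allow $\mathfrak a=\Lambda$, as the definition of $\prec$ requires (for $I\subset J$ one takes $\mathfrak a=\Lambda$).

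\emph{($\Rightarrow$)} Suppose $\mathfrak aI\subset J$ with $\mathfrak a$ of height $2$, and let $\mathfrak P$ be a height $1$ prime. Since $\mathfrak a\not\subset\mathfrak P$, we have $\mathfrak a\Lambda_\mathfrak P=\Lambda_\mathfrak P$. Localizing the inclusion then gives
\[
I\Lambda_\mathfrak P=\mathfrak aI\Lambda_\mathfrak P\subset J\Lambda_\mathfrak P.
\]

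\emph{($\Leftarrow$)} Take the colon ideal $\mathfrak a\defeq(J:I)=\{x\in\Lambda\mid xI\subset J\}$. By construction $\mathfrak aI\subset J$, so it suffices to show $\mathfrak a$ has height $2$, i.e., that $\mathfrak a\not\subset\mathfrak P$ for all height $1$ primes $\mathfrak P$.

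Since $\Lambda$ is noetherian, $I$ is finitely generated, say $I=(x_1,\dots,x_r)$. Colon ideals then localize: $(J:I)_\mathfrak P=(J\Lambda_\mathfrak P:I\Lambda_\mathfrak P)$. Concretely, if $I\Lambda_\mathfrak P\subset J\Lambda_\mathfrak P$, then for each $k$ there is $s_k\notin\mathfrak P$ with $s_kx_k\in J$. The product $s\defeq s_1\cdots s_r$ lies in $\mathfrak a$ and not in $\mathfrak P$. So the hypothesis gives $\mathfrak a\not\subset\mathfrak P$ for every height $1$ prime $\mathfrak P$.

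It remains to see that this forces $\mathfrak a$ to have height $2$. If $\mathfrak a=\Lambda$ there is nothing to prove. Otherwise, every minimal prime over $\mathfrak a$ has height $1$ or $2$, and it cannot have height $1$ by the previous step. The height $0$ case is also excluded, since $(0)$ is contained in every height $1$ prime. Hence the only prime containing $\mathfrak a$ is $\mathfrak m$, so $\mathfrak a$ is $\mathfrak m$-primary of height $2$.

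The only mildly delicate step is the finite-generation argument producing a single $s\notin\mathfrak P$ with $sI\subset J$; everything else is dimension bookkeeping in the $2$-dimensional local ring $\Lambda$.
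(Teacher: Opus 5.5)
Your proof is correct and follows the paper's argument essentially verbatim. The forward direction localizes $\mathfrak a I\subset J$ using $\mathfrak a\Lambda_\mathfrak P=\Lambda_\mathfrak P$, and the converse takes the colon ideal $(J:I)$ and multiplies together elements $y_j\notin\mathfrak P$ with $y_j i_j\in J$, for a finite generating set of $I$, to get $(J:I)\not\subset\mathfrak P$ for every height $1$ prime. The only differences are cosmetic: where you declare that $\mathfrak a=\Lambda$ counts as height $2$, the paper instead uses the maximal ideal $\mathfrak m$ (since $I\subset J$ gives $\mathfrak m I\subset J$), and your minimal-prime bookkeeping spells out the paper's brief remark that $(J:I)$ is nonzero.
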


\begin{proof} Suppose that $I \prec J$. Then $\mathfrak{a}I \subset J$ with $\mathfrak{a}$ an ideal of height $2$ of $\Lambda$. 
Since $\mathfrak{a}$ has height $2$, for any prime ideal $\mathfrak{P}$ 
of of height $1$ we have $(\Lambda/\mathfrak{a})_\mathfrak{P}=0$, and therefore $\mathfrak{a}_\mathfrak{P}=\Lambda_\mathfrak{P}$. 
Hence, given a height $1$ prime ideal $\mathfrak{P}$ of $\Lambda$, one has
\[
I \Lambda_\mathfrak{P} =\mathfrak{a}I \Lambda_\mathfrak{P} \subset J \Lambda_\mathfrak{P}.
\]
Conversely, assume that $I \Lambda_\mathfrak{P} \subset J \Lambda_\mathfrak{P}$ for every height $1$ prime ideal $\mathfrak{P}$ of $\Lambda$ and define 
\[
\mathfrak{a}\defeq(J:I)=\bigl\{\lambda\in\Lambda\mid\lambda I\subset J\bigr\},
\]
which is an ideal of $\Lambda$. Let $\mathfrak m$ be the maximal ideal of $\Lambda$, which has height $2$. If $\mathfrak{a}=\Lambda$, then $\mathfrak{m} I \subset J$, whence $I\prec J$. Otherwise, thanks to the inclusion $\mathfrak{a}I \subset J$, we just need to check that the height of $\mathfrak{a}$ is $2$. Let $\mathfrak{P}$ be a prime ideal of height $1$ of $\Lambda$. Choose a system of generators $\{i_1,\dots,i_n\}$ of $I$. Since $I \Lambda_\mathfrak{P} \subset J \Lambda_\mathfrak{P}$, there exist $y_j \in \Lambda \smallsetminus \mathfrak{P}$ for $j\in\{1,\dots,n\}$ such that $y_j i_j \in J $ for each $j$. Therefore, setting $y_\mathfrak{P}\defeq y_1 \cdot \ldots \cdot y_n$, we have $iy_\mathfrak{P} \in J$ for all $i\in I$, so $y_\mathfrak{P}\in\mathfrak a$. But $y_\mathfrak{P}\notin\mathfrak{P}$, which shows that $\mathfrak a\not\subset\mathfrak P$. It follows that $\mathfrak{a}$ is a non-trivial (because $y_\mathfrak{P}\neq 0$ for some $\mathfrak{P}$ as before) ideal of $\Lambda$ that is not contained in any height $1$ prime ideal $\mathfrak{P}$ of $\Lambda$. We conclude that $\mathfrak a$ has height $2$. \end{proof}

In the proof of the following result, where we generalize the arguments in the proof of \cite[Lemma 9.2]{KU2012}, we implicitly exploit Theorem \ref{pseudo-thm}.

\begin{theorem} \label{Thm-pseudo}
Let $M$ be a non-trivial finitely generated torsion $\Lambda$-module. The pseudo-isomorphism class of $M$ is uniquely determined by the classes modulo $\sim$ of the ideals $\Fitt_i(M)$ for $i\geq0$.
\end{theorem}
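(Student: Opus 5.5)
The plan is to reduce to a local computation at each height $1$ prime and then invoke Theorem \ref{FittingDVR}. By Theorem \ref{pseudo-thm}, $M$ is pseudo-isomorphic to an elementary module
\[
E=\bigoplus_{j=1}^n\bigoplus_{t=1}^{\ell}\Lambda\big/\mathfrak p_j^{k_{j,t}},
\]
and the isomorphism class of $E$ is determined by the primes $\mathfrak p_j$ and the multisets $\{k_{j,t}\}_t$. It therefore suffices to show two things: the classes $[\Fitt_i(M)]$ depend only on the pseudo-isomorphism class of $M$, and these classes determine the data $(\mathfrak p_j,\{k_{j,t}\})$.

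\textbf{Step 1: the classes are pseudo-isomorphism invariants.} Let $f:M\to N$ be a pseudo-isomorphism and let $\mathfrak P$ be a height $1$ prime. Since $\ker f$ and $\coker f$ are pseudo-null, localizing gives $(\ker f)_\mathfrak P=(\coker f)_\mathfrak P=0$, so $M_\mathfrak P\simeq N_\mathfrak P$. By Proposition \ref{pfi}(4),
\[
\Fitt_i(M)\Lambda_\mathfrak P=\Fitt_i(M_\mathfrak P)=\Fitt_i(N_\mathfrak P)=\Fitt_i(N)\Lambda_\mathfrak P .
\]
Proposition \ref{prop-prec}, applied in both directions, then gives $\Fitt_i(M)\sim\Fitt_i(N)$. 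Pseudo-isomorphism is not symmetric in general, but for torsion modules it is an equivalence relation (as the uniqueness in Theorem \ref{pseudo-thm} implicitly uses). So we may replace $M$ by $E$.

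\textbf{Step 2: local computation.} Fix a height $1$ prime $\mathfrak P$. The ring $\Lambda_\mathfrak P$ is a DVR, because $\Lambda$ is regular, hence normal, of dimension $2$. Localization kills every summand with $\mathfrak p_j\ne\mathfrak P$, since such a summand contains elements of $\mathfrak p_j\smallsetminus\mathfrak P$ acting invertibly. Hence
\[
E_\mathfrak P\simeq\bigoplus_{t:\,\mathfrak p_j=\mathfrak P}\Lambda_\mathfrak P\big/\pi^{k_{j,t}} .
\]
Theorem \ref{FittingDVR} then gives $\Fitt_i(E)\Lambda_\mathfrak P=(\pi^{c_i(\mathfrak P)})$, where $c_i(\mathfrak P)$ is the sum of the smallest $n_\mathfrak P-i$ exponents at $\mathfrak P$. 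Here $n_\mathfrak P$ is the number of nonzero exponents at $\mathfrak P$, and $c_i(\mathfrak P)=0$ for $i\ge n_\mathfrak P$.

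\textbf{Step 3: recovering the invariants.} By Proposition \ref{prop-prec}, the class $[\Fitt_i(M)]$ determines each localization $\Fitt_i(M)\Lambda_\mathfrak P$, and hence each integer $c_i(\mathfrak P)$, for every height $1$ prime $\mathfrak P$ and every $i$. The primes occurring among the $\mathfrak p_j$ are exactly those with $c_0(\mathfrak P)>0$. For such a prime, $n_\mathfrak P=\min\{i: c_i(\mathfrak P)=0\}$. Taking successive differences $c_{i-1}(\mathfrak P)-c_i(\mathfrak P)=k_{(n_\mathfrak P-i+1)}$ recovers the ordered exponents. Thus the classes determine $E$ up to isomorphism, hence $M$ up to pseudo-isomorphism.

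The main obstacle I expect is Step 1, specifically the use of pseudo-isomorphism as a symmetric relation, together with the care needed when $\Fitt_i$ is the zero ideal. For torsion $M$ and $i=0$ this does not occur, and zero exponents $k_{j,t}=0$ contribute trivial summands that can be discarded. Step 1 actually only needs that $M_\mathfrak P\simeq E_\mathfrak P$ for all height $1$ primes $\mathfrak P$, and this holds whichever direction the pseudo-isomorphism goes. That observation sidesteps the symmetry issue, and the rest of the argument is routine bookkeeping.
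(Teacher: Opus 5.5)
Your proof is correct. It has the same overall skeleton as the paper's: invariance of the classes under pseudo-isomorphism, a computation for the elementary module, and recovery of the exponents. The difference is that you carry out every step locally at height-one primes, whereas the paper works with global ideals.

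\textbf{Invariance.} The paper uses the exact sequences $M\to N\to B\to 0$ and $N\to M\to B'\to 0$, together with Proposition~\ref{pfi}(8) and the fact that $\Fitt_0$ of a pseudo-null module has height at least $2$. You use exactness of localization, Proposition~\ref{pfi}(4) and Proposition~\ref{prop-prec}. This has the small bonus of needing a pseudo-isomorphism in only one direction.

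\textbf{Elementary module and uniqueness.} The paper computes $\Fitt_i(E)$ globally through Proposition~\ref{pfi}(10) and obtains $\Fitt_i(E)\sim\prod_j\prod_{t\le\ell-i}\mathfrak p_j^{k_{j,t}}$. This requires checking that the cofactor ideal lies in no height-one prime. The paper then needs a separate divisibility argument (its Step 3) to show that $\sim$ distinguishes different products of prime powers. You bypass both steps with one observation: by Proposition~\ref{prop-prec}, a $\sim$-class is determined by its localizations at height-one primes. So everything reduces to Theorem~\ref{FittingDVR} over the DVRs $\Lambda_{\mathfrak P}$. Your successive differences $c_{i-1}(\mathfrak P)-c_i(\mathfrak P)$ also make explicit the recovery of individual exponents, which the paper's Step 4 leaves to the reader.

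\textbf{What each route buys.} Yours is shorter and more transparent. The paper's produces an explicit principal representative of $[\Fitt_i(E)]$ directly. In your setting this is immediate anyway from Lemma~\ref{princ-lemma}, as $\prod_{\mathfrak P}\mathfrak P^{c_i(\mathfrak P)}$.

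\textbf{Two cosmetic points.} In Step 2, it is the annihilator $\mathfrak p_j^{k_{j,t}}$, not the summand, that contains an element outside $\mathfrak P$. Also, your $\pi$ should denote a generator of $\mathfrak P\Lambda_{\mathfrak P}$, not the uniformizer of $\mathcal O$.
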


\begin{proof} We divide the proof into four steps.

\texttt{Step 1.} If $M$ and $N$ are two pseudo-isomorphic $\Lambda$-modules, then $\Fitt_i(M)\sim\Fitt_i(N)$ for all $i\geq0$. To see this, observe that, since $M$ and $N$ are pseudo-isomorphic, there are exact sequences
\[ M \longrightarrow N \longrightarrow B \longrightarrow 0,\quad N \longrightarrow M \longrightarrow B' \longrightarrow 0,
\]
where $B$ and $B'$ are pseudo-null $\Lambda$-modules. By part (8) of Proposition \ref{pfi}, we deduce that for all $i\geq0$ there are inclusions
\[
\Fitt_i(M)\Fitt_0(B)\subset\Fitt_i(N),\quad\Fitt_i(N)\Fitt_0(B')\subset\Fitt_i(M).
\]
On the other hand, by part (9) of Proposition \ref{pfi}, the ideals $\Fitt_0(B)$ and $\Fitt_0(B')$ have height at least $2$, and the claim follows from Proposition \ref{prop-prec}.

\texttt{Step 2.}
Let  ${\{\mathfrak{p}_j\}}_{j=1,\dots,n}$ be a set of height $1$ prime ideals of $\Lambda$ and let $k_{j,t} \geq 0$ be integers for $j\in\{1, \dots,n\}$ and $t\in\{1, \dots,\ell\}$. Assume that, for each $j$, the sequence ${(k_{j,t})}_{t=1,\dots,\ell}$ is non-decreasing. Then
\begin{equation} \label{fitt-prod-eq}
\Fitt_i\Biggl(\bigoplus_{j=1}^n \bigoplus_{t=1}^{\ell} \Lambda\big/\mathfrak{p}_j^{k_{j,t}} \Biggr)\sim\prod_{j=1}^{n} \prod_{t=1}^{\ell-i}\mathfrak{p}_j^{k_{j,t}}
\end{equation}
for all $i\in\{0,\dots,\ell\}$. To prove \eqref{fitt-prod-eq}, set $M_j\defeq\bigoplus_{t=1}^{\ell} \Lambda\big/\mathfrak{p}_j^{k_{j,t}}$. By the argument in the proof of Theorem \ref{FittingDVR}, 
we have
\[
\Fitt_i(M_j) =\prod_{t=1}^{\ell-i} \mathfrak{p}_j^{k_{j,t}}
\]
for all $j$. Now, using part (10) of Proposition \ref{pfi}, one has
\[
\begin{aligned}
\Fitt_i\Biggl(\bigoplus_{j=1}^n M_j\Biggr) &= \sum_{\substack{(s_1,\dots,s_n)\in\N^n\\s_1 + \dots + s_n=i}} \prod_{j=1}^n \Fitt_{s_j}(M_j)=\sum_{\substack{(s_1,\dots,s_n)\in\N^n\\s_1 + \dots + s_n=i}} \prod_{j=1}^n \prod_{t=1}^{\ell-s_j}\mathfrak{p}_j^{k_{j,t}}\\
& = \left(\sum_{\substack{(s_1,\dots,s_n)\in\N^n\\s_1 + \dots + s_n=i}} \prod_{j=1}^n \prod_{t=\ell-i+1}^{\ell-s_j}\mathfrak{p}_j^{k_{j,t}}\right)\cdot\prod_{j=1}^n \prod_{t=1}^{\ell-i} \mathfrak{p}_j^{k_{j,t}},
\end{aligned}
\]
where by convention we understand that the ideal $\prod_{t=\ell-i+1}^{\ell-s_j}\mathfrak{p}_j^{k_{j,t}}$ is equal to $\Lambda$ when $s_j=i$. We observe that the ideal in brackets contains an ideal of height at least $2$, which (in light of the definition of $\prec$) concludes this step. Indeed, if $n=1$, then we have $s_1=i$ and so the ideal in brackets is equal to $\Lambda$, whose maximal ideal has height $2$. If $n>1$, then the summand corresponding to the choice $s_j=i$ is not divisible by $\mathfrak{p}_j$, so the sum of these varying $j$ is not contained in any prime ideal of height $1$ of $\Lambda$.  

\texttt{Step 3.} Let ${\{\mathfrak{p}_j\}}_{j=1,\dots,n}$ be a family of height $1$ prime ideals of $\Lambda$ and let $k_j,r_j \geq 0$ be integers for any $j$. We want to prove the implication
\[
\prod_{j=1}^n \mathfrak{p}_j^{k_j}\sim\prod_{j=1}^n \mathfrak{p}_j^{r_j}\;\Longrightarrow\;\text{$k_j=r_j$ for all $j$}.
\]
Arguing by contradiction, without loss of generality we can assume $k_1>r_1$. By definition of $\sim$, there is a height $2$ ideal $\mathfrak{a}$ of $\Lambda$ such that
\begin{equation} \label{inclusions-eq}
\mathfrak{a} \prod_{j=1}^n\mathfrak{p}_j^{r_j}\subset\prod_{j=1}^n \mathfrak{p}_j^{k_j} \subset\mathfrak{p}_1^{k_1}. 
\end{equation}
For all $j$, let $p_j$ be a generator of $\mathfrak{p}_j$; pick $x \in \mathfrak{a} \smallsetminus \mathfrak{p}_1$ (such an $x$ exists since $\mathfrak{a}$ has height $2$). Thanks to inclusions \eqref{inclusions-eq}, there is $\lambda\in\Lambda$ such that $x \prod_{j=1}^n p_j^{r_j}=\lambda p_1^{k_1}$, which yields an equality 
\begin{equation} \label{x-eq}
x\prod_{j=2}^n p_j^{r,j}=\lambda p_1^{k_1-r_1}.
\end{equation}
In particular, the left-hand side in equality \eqref{x-eq} lies in $\mathfrak{p}_1$, which is a contradiction. 

\texttt{Step 4.} Eventually, the theorem follows immediately from the three steps above. 
If $M$ is pseudo-isomorphic to the 
$\Lambda$-module $N=\bigoplus_{j=1}^n \bigoplus_{t=1}^{\ell} \Lambda\big/ \mathfrak{p}_j^{k_{j,t}}$ 
(for suitable height $1$ prime ideals $\mathfrak{p}_j$ and integers $k_{j,t}\geq 0$), 
then the classes modulo $\sim$ of the Fitting ideals $\Fitt_i(M)$ and $\Fitt_i(N)$ of 
$M$ and $N$ respectively are the same, by \texttt{Step 1}. To conclude the proof, suppose that the two $\Lambda$-modules 
$M=\bigoplus_{j=1}^n \bigoplus_{t=1}^{\ell} \Lambda\big/ \mathfrak{p}_j^{k_{j,t}}$
and $N=\bigoplus_{j=1}^{n} \bigoplus_{t=1}^{\ell} \Lambda\big/ \mathfrak{p}_{j}^{r_{j,t}}$ 
(for suitable height $1$ prime ideals $\mathfrak{p}_j$ 
and integers $k_{j,t}\geq 0$ and $r_{j,t}\geq 0$) have Fitting ideals $\Fitt_i(M)$ 
and $\Fitt_i(N)$ in the same $\sim$ class. By  
\texttt{Step 2}, we have $\Fitt_i(M)\sim \prod_{j=1}^{n} \prod_{t=1}^{\ell-i}\mathfrak{p}_j^{k_{j,t}}$ 
and $\Fitt_i(N)\sim \prod_{j=1}^{n} \prod_{t=1}^{\ell-i}\mathfrak{p}_{j}^{r_{j,t}}$; finally, since $\Fitt_i(M)\sim \Fitt_i(N)$ by assumption, we see that $\prod_{j=1}^{n}\prod_{t=1}^{\ell-i}\mathfrak{p}_j^{k_{j,t}}\sim \prod_{j=1}^{n} \prod_{t=1}^{\ell-i}\mathfrak{p}_{j}^{r_{j,t}}$, and the conclusion follows from \texttt{Step 3}. \end{proof}

Now we need two auxiliary results in commutative algebra.

\begin{lemma} \label{princ-lemma}
Let $I$ be a non-trivial ideal of $\Lambda$. Then there exists a principal ideal $(f)$ of $\Lambda$ such that $I \sim (f)$. Furthermore, this principal representative of $[I]$ is unique: if $(g) \subset \Lambda$ satisfies $I \sim (g)$, then $(f) = (g)$.
\end{lemma}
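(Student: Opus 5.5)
Since $\Lambda$ is a two-dimensional regular local ring, it is a UFD. The plan is to take the gcd of the generators of $I$ and show that it differs from $I$ only by an ideal of height $2$. Write $I=(i_1,\dots,i_n)$ with not all $i_j$ zero, since $I$ is non-trivial. Let $f$ be a greatest common divisor of the nonzero $i_j$, and write $i_j=f g_j$, so that the $g_j$ have no common prime factor. Set $\mathfrak a\defeq(g_1,\dots,g_n)$; then $I=f\mathfrak a$.

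The key claim is that $\mathfrak a$ is either $\Lambda$ or an ideal of height $2$. Suppose $\mathfrak a$ is contained in a height $1$ prime $\mathfrak p$. In a UFD every height $1$ prime is principal, say $\mathfrak p=(\pi)$ with $\pi$ prime. Then $\pi$ divides every $g_j$, which contradicts the choice of $f$ as a gcd. Also $\mathfrak a\neq0$. Hence $\mathfrak a$ lies in no height $1$ prime, so either $\mathfrak a=\Lambda$ or every minimal prime over it has height $2$, i.e.\ $\mathfrak a$ is $\mathfrak m$-primary.

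With this claim the equivalence is immediate. First, $I=f\mathfrak a\subset(f)$, so $I\prec(f)$, using $\mathfrak m$ as the height $2$ ideal if needed. Second, $\mathfrak a\cdot(f)=I$; if $\mathfrak a=\Lambda$ we may replace it by $\mathfrak m$, and this gives $(f)\prec I$. Alternatively, Proposition \ref{prop-prec} can be used: after localizing at a height $1$ prime $\mathfrak P$ we have $\mathfrak a\Lambda_\mathfrak P=\Lambda_\mathfrak P$, hence $I\Lambda_\mathfrak P=f\Lambda_\mathfrak P$.

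For uniqueness, suppose $(f)\sim(g)$ with $f,g\neq0$; note that $g\neq 0$ since $(0)\sim I$ would force $I\Lambda_\mathfrak P=0$, which is false. By Proposition \ref{prop-prec}, $f\Lambda_\mathfrak P=g\Lambda_\mathfrak P$ for every height $1$ prime $\mathfrak P=(\pi)$. Each $\Lambda_\mathfrak P$ is a DVR with uniformizer $\pi$, so $\ord_\pi(f)=\ord_\pi(g)$ for every prime element $\pi$. By unique factorization, $f$ and $g$ differ by a unit, so $(f)=(g)$; equivalently, this is the $n=1$-type case of Step 3 in Theorem \ref{Thm-pseudo}.

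The main point needing care is the height claim for $\mathfrak a$. It relies on $\Lambda$ being a UFD (regular local implies UFD, or directly via Weierstrass preparation) and on height $1$ primes being principal. Once that is in place, everything else is formal.
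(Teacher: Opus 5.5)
Your proof is correct, but the existence part takes a different route from the paper's.

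\textbf{The paper's route (local).} It uses that $I$ lies in only finitely many height-one primes $\mathfrak p_1,\dots,\mathfrak p_n$. It reads off exponents $k_i$ from $I\Lambda_{\mathfrak p_i}=\mathfrak p_i^{k_i}\Lambda_{\mathfrak p_i}$ and sets $f=\prod p_i^{k_i}$. It then checks $I\Lambda_{\mathfrak q}=(f)\Lambda_{\mathfrak q}$ for every height-one prime $\mathfrak q$ and concludes with Proposition \ref{prop-prec}.

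\textbf{Your route (global).} You take $f$ to be a gcd of the generators. This is the same $f$ up to a unit, since the $\mathfrak p$-adic valuation of the gcd is the minimum valuation of the generators, which is the exponent of $I\Lambda_{\mathfrak p}$. You then get the exact factorization $I=f\mathfrak a$, with $\mathfrak a$ either $\Lambda$ or $\mathfrak m$-primary. This gives $I\prec(f)$ and $(f)\prec I$ directly from the definition of $\sim$, with explicit witnesses $\mathfrak m$ and $\mathfrak a$. It needs neither the finiteness of the set of height-one primes containing $I$ nor the local criterion.

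\textbf{What each buys.} Yours yields a sharper structural fact: every nonzero ideal of $\Lambda$ is a principal ideal times an $\mathfrak m$-primary or unit ideal. The cost is using existence of gcds in the UFD $\Lambda$ and $\dim\Lambda=2$. The paper's local formulation matches how $\sim$ is used later, for example in Proposition \ref{even-odd-prop}, where everything is checked after localizing at height-one primes.

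\textbf{Uniqueness.} Here the two arguments are essentially the same. The paper invokes the Krull-domain identity $(f)=\bigcap_{\mathrm{ht}(\mathfrak p)=1}(f)\Lambda_{\mathfrak p}$. You compare valuations at each prime element and appeal to unique factorization. Your explicit check that $g\neq 0$ is a detail the paper leaves tacit.
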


\begin{proof} Since $\Lambda$ is a unique factorization noetherian domain, every height $1$ prime ideal of $\Lambda$ is principal (see, \emph{e.g.}, \cite[Theorem 20.1]{matsumura}). Moreover, $I$ is contained in at most finitely many height $1$ prime ideals of $\Lambda$ (see, \emph{e.g.}, \cite[Theorem 6.5, (i)]{matsumura} applied to $M=\Lambda/I$), which we call $\p_1,\dots,\p_n$. For each $i\in\{1,\dots,n\}$, fix $p_i\in\Lambda$ such that $\mathfrak{p}_i=(p_i)$.

For each $i$, the localization $\Lambda_{\p_i}$ is a discrete valuation ring, so the extended ideal $I\Lambda_{\p_i}$ is a power of the maximal ideal of $\Lambda_{\p_i}$. Thus, for each $i$ there is an integer $k_i \ge 1$ such that $I\Lambda_{\p_i} = p_i^{k_i}\Lambda_{\p_i}$. Now set 
\[
f\defeq\prod_{i=1}^n p_i^{k_i}\in\Lambda
\]
and consider the principal ideal $(f)$: we claim that $I \sim (f)$. 

To verify this equivalence, let $\mathfrak{q}$ be any height $1$ prime ideal of $\Lambda$. If $\mathfrak{q}=\p_i$ for some $i\in\{1,\dots,n\}$, then $p_j\notin\p_i$ for $j\neq i$, so $p_j$ is invertible in $\Lambda_{\mathfrak{p}_i}$ for $j\neq i$. Therefore, $(f)\Lambda_{\mathfrak q}=(f)\Lambda_{\p_i}=p_i^{k_i}\Lambda_{\p_i}=I\Lambda_{\p_i}=I\Lambda_{\mathfrak q}$. If $\mathfrak{q}\notin\{\p_1,\dots,\p_n\}$, then $I \not\subset\mathfrak{q}$, which implies $I\Lambda_{\mathfrak{q}} = \Lambda_{\mathfrak{q}}$; similarly, no prime factor $p_i$ of $f$ belongs to $\mathfrak{q}$, so $(f)\Lambda_{\mathfrak{q}}=\Lambda_{\mathfrak{q}}$. Since $I\Lambda_{\mathfrak{q}} = (f)\Lambda_{\mathfrak{q}}$ for every height $1$ prime ideal $\mathfrak{q}$ of $\Lambda$, Proposition \ref{prop-prec} allows us to conclude that $I \sim (f)$.

To establish uniqueness, suppose $(g)$ is a principal ideal of $\Lambda$ such that $I \sim (g)$. By transitivity, $(f) \sim (g)$, \emph{i.e.}, $(f)\Lambda_{\mathfrak{p}} = (g)\Lambda_{\mathfrak{p}}$ for all height $1$ prime ideals $\mathfrak{p}$ of $\Lambda$. Since $\Lambda$ is a Krull domain, every principal ideal of $\Lambda$ is uniquely determined by its localizations at height $1$ prime ideals (see, \emph{e.g.}, \cite[Theorem 12.3]{matsumura}). Therefore, there are equalities
\[
(f)=\bigcap_{\mathrm{ht}(\mathfrak{p})=1} (f)\Lambda_{\mathfrak{p}} = \bigcap_{\mathrm{ht}(\mathfrak{p})=1} (g)\Lambda_{\mathfrak{p}}=(g),
\]
as desired. \end{proof}

\begin{remark} \label{princ-rem}
Lemma \ref{princ-lemma} allows us to identify the monoid of equivalence classes of non-zero ideals of $\Lambda$ under $\sim$ with the monoid of principal ideals of $\Lambda$. In particular, if an ideal $J$ of $\Lambda$ satisfies $J\sim I^2$ for some ideal $I$, then the unique principal representative of $[J]$ is a square.
\end{remark}


\begin{proposition} \label{square-prop}
Let $J$ be a non-zero ideal of $\Lambda$ and suppose that there exists a non-zero ideal $I$ of $\Lambda$ such that $J\sim I^2$. Then $[I]$ is uniquely determined by $J$.
\end{proposition}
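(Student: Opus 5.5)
The plan is to reduce to principal ideals via Lemma \ref{princ-lemma} and then use unique factorization in $\Lambda$. Suppose $I$ and $I'$ are non-zero ideals of $\Lambda$ with $J\sim I^2$ and $J\sim I'^2$; we need $I\sim I'$. By transitivity of $\sim$ we get $I^2\sim I'^2$, so $J$ no longer plays a role. By Lemma \ref{princ-lemma}, choose $f,g\in\Lambda$ with $I\sim(f)$ and $I'\sim(g)$. Since $\sim$ respects products, $I^2\sim(f)^2=(f^2)$ and $I'^2\sim(g^2)$, hence $(f^2)\sim(g^2)$. The uniqueness part of Lemma \ref{princ-lemma} (applied to the non-zero ideal $(f^2)$) then gives $(f^2)=(g^2)$ as ideals of $\Lambda$.

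Next I would deduce $(f)=(g)$ from $(f^2)=(g^2)$. Since $\Lambda$ is a unique factorization domain, factor $f=u\prod_j p_j^{a_j}$ and $g=v\prod_j p_j^{b_j}$ with units $u,v$ and pairwise non-associate primes $p_j$. The equality $(f^2)=(g^2)$ means $f^2$ and $g^2$ are associates, so $2a_j=2b_j$, i.e.\ $a_j=b_j$ for all $j$. Hence $f$ and $g$ are associates and $(f)=(g)$. Alternatively, one can localize at each height $1$ prime $\mathfrak P$: $\Lambda_{\mathfrak P}$ is a DVR, and $f^2\Lambda_{\mathfrak P}=g^2\Lambda_{\mathfrak P}$ forces equal valuations of $f$ and $g$, so $(f)\Lambda_{\mathfrak P}=(g)\Lambda_{\mathfrak P}$ for all such $\mathfrak P$; then Proposition \ref{prop-prec} gives $(f)\sim(g)$.

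Finally $I\sim(f)=(g)\sim I'$, so $[I]=[I']$, as claimed. None of the steps is a genuine obstacle. The only point needing care is that all ideals involved are non-zero, so that Lemma \ref{princ-lemma} applies and $f,g\neq0$. This holds because $I,I'\neq0$ and $\Lambda$ is a domain, so $I^2\neq0$, and because an ideal equivalent to a non-zero ideal is non-zero: $\mathfrak a\cdot 0=0$ cannot contain a non-zero product $\mathfrak aI$ with $\mathfrak a$ of height $2$. I would also note explicitly the compatibility $(f)^2=(f^2)$ used in passing from $I^2$ to the principal representative.
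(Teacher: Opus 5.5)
Your proposal is correct and follows the paper's proof essentially step by step: you pass to the principal representatives given by Lemma \ref{princ-lemma}, use its uniqueness part to get $(f^2)=(g^2)$, and then use unique factorization in $\Lambda$ to conclude $(f)=(g)$. Your extra remark on non-vanishing and your alternative localization argument via Proposition \ref{prop-prec} are both valid, but neither is needed beyond what the paper's argument already does.
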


\begin{proof} Let $I_1$ and $I_2$ be non-zero ideals of $\Lambda$ such that $J\sim I_1^2$ and $J\sim I_2^2$: we wish to show that $[I_1]=[I_2]$, \emph{i.e.}, $I_1\sim I_2$. By Lemma \ref{princ-lemma}, the classes $[I_1]$ and $[I_2]$ admit (unique) principal representatives, which we denote by $(f_1)$ and $(f_2)$, respectively. It follows that $I_1^2\sim(f_1^2)$ and $I_2^2\sim(f_2^2)$. By transitivity, we get $(f_1^2)\sim(f_2^2)$, and then the uniqueness part of Lemma \ref{princ-lemma} yields the equality
\begin{equation} \label{square-princ-eq}
(f_1^2)=(f_2^2).    
\end{equation} 
Since $\Lambda$ is a domain, equality \eqref{square-princ-eq} implies that there is $u\in\Lambda^\times$ such that $f_1^2=u\cdot f_2^2$. Furthermore, unique factorization in $\Lambda$ shows that there is $v\in\Lambda^\times$ such that $f_1=v\cdot f_2$, whence $(f_1)=(f_2)$. It follows immediately that $I_1\sim(f_1)=(f_2)\sim I_2$, \emph{i.e.}, $[I_1]=[I_2]$. \end{proof}

In light of Proposition \ref{square-prop}, we can introduce the following notion.

\begin{definition} \label{pseudo-def}
A non-zero ideal $J$ of $\Lambda$ \emph{admits a pseudo-square root} if there is an ideal $I$ of $\Lambda$ such that $J\sim I^2$. In this case, the \emph{pseudo-square root} of $J$ is the (uniquely determined) equivalence class $\sqrt{[J]}\defeq[I]$.
\end{definition}

Equivalently, if $(f^2)$ is the unique principal representative of $[J]$, then $\sqrt{[J]}=[(f)]$ (\emph{cf.} Remark \ref{princ-rem}).

\begin{remark} \label{pseudo-rem} (1) If $I$ is a non-zero ideal of $\Lambda$, then $\sqrt{[I^2]}=[I]$.

(2) Let $I$ and $J$ be non-zero ideals of $\Lambda$ admitting pseudo-square roots. If $I\sim J$, then $\sqrt{[I]}=\sqrt{[J]}$.
\end{remark}

The next result, which describes odd Fitting ideals of certain torsion $\Lambda$-modules in terms of pseudo-square roots (in the sense of Definition \ref{pseudo-def}) of products of even Fitting ideals, will play a crucial (albeit often tacit) role in the rest of the paper.

\begin{proposition} \label{even-odd-prop}
Let $M$ be a non-trivial finitely generated torsion $\Lambda$-module and set $Y\defeq M\oplus M$. The equality
\[ 
\bigl[\Fitt_{2i+1}(Y)\bigr]=\sqrt{\bigl[\Fitt_{2i}(Y)\cdot\Fitt_{2i+2}(Y)\bigr]} 
\]
holds for all integers $i\geq0$. In particular, the classes modulo $\sim$ of the odd Fitting ideals of $Y$ are uniquely determined by the even Fitting ideals of $Y$.
\end{proposition}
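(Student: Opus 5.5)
Because the $\sim$-class of a Fitting ideal depends only on the pseudo-isomorphism class of the module (Step 1 of the proof of Theorem \ref{Thm-pseudo}), we may replace $M$ by its elementary form from Theorem \ref{pseudo-thm}. Note that $M\oplus M$ is then pseudo-isomorphic to the elementary form of $M$ doubled. So we take
\[
M=\bigoplus_{j=1}^n\bigoplus_{t=1}^{\ell}\Lambda\big/\mathfrak p_j^{k_{j,t}},\qquad Y\simeq\bigoplus_{j=1}^n\bigoplus_{t=1}^{2\ell}\Lambda\big/\mathfrak p_j^{k'_{j,t}},
\]
where, for each $j$, we pad with zero exponents so that all $j$ share the same $\ell$, and order $k_{j,1}\le\dots\le k_{j,\ell}$. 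The doubled sequence is then $k'_{j,2s-1}=k'_{j,2s}=k_{j,s}$, which is again non-decreasing in $t$.

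Next we apply Step 2 of the proof of Theorem \ref{Thm-pseudo} to $Y$, whose exponent list has length $2\ell$. For $0\le r\le 2\ell$ it gives
\[
\Fitt_r(Y)\sim\prod_{j=1}^n\prod_{t=1}^{2\ell-r}\mathfrak p_j^{k'_{j,t}}.
\]
Write $e_j(r)\defeq\sum_{t=1}^{2\ell-r}k'_{j,t}$. Because each exponent is repeated twice, for $r=2i$ we get $e_j(2i)=2\sum_{s=1}^{\ell-i}k_{j,s}$. For $r=2i+1$ we get $e_j(2i+1)=2\sum_{s=1}^{\ell-i-1}k_{j,s}+k_{j,\ell-i}$.

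The key identity is then
\[
e_j(2i)+e_j(2i+2)=2\,e_j(2i+1).
\]
Indeed, the left side equals $4\sum_{s\le \ell-i-1}k_{j,s}+2k_{j,\ell-i}$, which is twice $e_j(2i+1)$. Since $\sim$ respects products, this yields
\[
\Fitt_{2i}(Y)\Fitt_{2i+2}(Y)\sim\Bigl(\prod_j\mathfrak p_j^{e_j(2i+1)}\Bigr)^2\sim\Fitt_{2i+1}(Y)^2 .
\]
Hence the product of the two even Fitting ideals admits a pseudo-square root, and by Definition \ref{pseudo-def} together with Proposition \ref{square-prop} that root is $[\Fitt_{2i+1}(Y)]$.

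Two side issues remain. First, the indices $r>2\ell$, or $2i+2>2\ell$. For $r\ge 2\ell$ the module $Y$ is pseudo-isomorphic to a direct sum of $2\ell$ cyclic modules, so $\Fitt_r(Y)\sim\Lambda$ (here $e_j=0$). The identity then still holds trivially: when $2i+2=2\ell+2$ or larger, all three classes are $[\Lambda]$ once $2i\ge 2\ell$. At $2i=2\ell-2$ the formulas above already cover the case with $e_j(2\ell)=0$.

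Second, the ideals involved must be non-zero, as Definition \ref{pseudo-def} requires. This holds because $M$ is torsion, so $\Fitt_0$ is non-zero.

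The main care point is the padding and ordering: Step 2 needs a common $\ell$ and monotone sequences for each $j$, and the doubled sequence must remain monotone. Both are immediate once zeros are inserted at the start. The final sentence of the statement then follows because the odd classes are expressed through the even ones.
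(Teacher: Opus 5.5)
Your proof is correct. With the non-decreasing doubled list $k'_{j,2s-1}=k'_{j,2s}=k_{j,s}$, Step~2 of Theorem~\ref{Thm-pseudo} gives $\Fitt_r(Y)\sim\prod_j\mathfrak p_j^{e_j(r)}$ for $0\le r\le 2\ell$, and the identity $e_j(2i)+e_j(2i+2)=2e_j(2i+1)$ holds. For $i\ge\ell$, all three classes are $[\Lambda]$; your sentence about this range is garbled, but the content is right. The odd ideal and the product are non-zero because $\Fitt_0(Y)\neq0$ and Fitting ideals increase with the index (Proposition~\ref{pfi}(2)).

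The paper gets the same identity by a shorter, local route. For each height-one prime $\mathfrak p$ it works over the discrete valuation ring $\Lambda_\mathfrak p$, where Theorem~\ref{FittingDVR} gives $\Fitt_{2i}(Y_\mathfrak p)=\Fitt_i(M_\mathfrak p)^2$ and $\Fitt_{2i+1}(Y_\mathfrak p)=\Fitt_i(M_\mathfrak p)\Fitt_{i+1}(M_\mathfrak p)$. Since Fitting ideals commute with localization, this yields $\Fitt_{2i+1}(Y)^2\Lambda_\mathfrak p=\Fitt_{2i}(Y)\Fitt_{2i+2}(Y)\Lambda_\mathfrak p$, and Proposition~\ref{prop-prec} turns these local equalities into $\sim$.

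That route avoids several things your global argument needs: the structure theorem for $\Lambda$-modules, the pseudo-isomorphism invariance of Step~1, and the distinct primes and common length $\ell$ required by Step~2. It also needs no separate treatment of indices beyond $2\ell$, because the DVR identities hold for every $i$. In exchange, your version exhibits the principal representatives $\prod_j\mathfrak p_j^{e_j(r)}$ of the classes $[\Fitt_r(Y)]$ explicitly. The localization argument only gives these implicitly, through Lemma~\ref{princ-lemma}.
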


\begin{proof} Let $\p$ be a height $1$ prime ideal of $\Lambda$; the localization $\Lambda_\p$ is a discrete valuation ring, and then a direct computation using Theorem \ref{FittingDVR} shows that
\begin{enumerate}
\item $\Fitt_{2i}(Y_\p)=\Fitt_i(M_\p)^2$ for all $i\geq0$;
\item $\Fitt_{2i+1}(Y_\p)=\Fitt_i(M_\p)\cdot\Fitt_{i+1}(M_\p)$ for all $i\geq0$.
\end{enumerate}
Squaring both sides of (2) and applying (1), for all $i\geq0$ we get an equality
\begin{equation} \label{fitt-loc-eq}
\Fitt_{2i+1}(Y_\p)^2=\Fitt_{2i}(Y_\p)\cdot\Fitt_{2i+2}(Y_\p)
\end{equation}
of ideals of $\Lambda_\p$. On the other hand, by part (4) of Proposition \ref{pfi}, Fitting ideals commute with localization, so equality \eqref{fitt-loc-eq} can be rewritten as
\begin{equation} \label{fitt-loc-eq2}
\Fitt_{2i+1}(Y)^2\Lambda_\p=\bigl(\Fitt_{2i}(Y)\cdot{\Fitt_{2i+2}(Y)}\bigr)\Lambda_\p.
\end{equation}
Since equality \eqref{fitt-loc-eq2} holds for all height $1$ prime ideals $\p$ of $\Lambda$, Proposition \ref{prop-prec} shows that 
\[ 
\Fitt_{2i+1}(Y)^2\sim\Fitt_{2i}(Y)\cdot\Fitt_{2i+2}(Y) 
\]
for all integers $i\geq0$, as was to be shown. \end{proof}

We immediately obtain

\begin{corollary} \label{pseudo-M-coro}
Let $M$ be a non-trivial finitely generated torsion $\Lambda$-module. The pseudo-isomorphism class of $M\oplus M$ is uniquely determined by the classes modulo $\sim$ of the ideals $\Fitt_i(M)$ for even $i\geq0$.
\end{corollary}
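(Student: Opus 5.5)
The plan is to combine Theorem \ref{Thm-pseudo} with Proposition \ref{even-odd-prop}, applied to $Y\defeq M\oplus M$. If $M$ is trivial there is nothing to prove, so assume $M$ is non-trivial; then $Y$ is a non-trivial finitely generated torsion $\Lambda$-module.

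By Theorem \ref{Thm-pseudo}, the pseudo-isomorphism class of $Y$ is determined by the classes $[\Fitt_i(Y)]$ for all $i\geq0$. It therefore suffices to show that the odd-index classes can be recovered from the even-index ones. The reading I adopt is that the $\Fitt_i$ in the statement are those of $Y=M\oplus M$, matching Proposition \ref{even-odd-prop}.

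Proposition \ref{even-odd-prop} gives, for each $i\geq0$,
\[
\bigl[\Fitt_{2i+1}(Y)\bigr]=\sqrt{\bigl[\Fitt_{2i}(Y)\cdot\Fitt_{2i+2}(Y)\bigr]}.
\]
The right-hand side depends only on the classes $[\Fitt_{2i}(Y)]$ and $[\Fitt_{2i+2}(Y)]$, for two reasons:
\begin{itemize}
\item $\sim$ respects products, so the class of the product is determined by the classes of the factors;
\item by Remark \ref{pseudo-rem}(2), the pseudo-square root depends only on the $\sim$-class of the ideal.
\end{itemize}

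One point needs checking: Definition \ref{pseudo-def} requires a non-zero ideal. Since $Y$ is torsion, $\Fitt_0(Y)$ contains a non-zero element. Hence every $\Fitt_j(Y)$ is non-zero by Proposition \ref{pfi}(2), and so is their product, because $\Lambda$ is a domain. The product also admits a pseudo-square root, witnessed by $\Fitt_{2i+1}(Y)$, which is exactly what Proposition \ref{even-odd-prop} shows.

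Now suppose $M$ and $M'$ are two such modules whose doubles have equivalent even Fitting ideals. Then their odd Fitting ideals are also equivalent, so all their Fitting ideals are equivalent. Theorem \ref{Thm-pseudo} then gives that $M\oplus M$ and $M'\oplus M'$ are pseudo-isomorphic.

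I expect no real obstacle here. The only subtle point is the well-definedness of the pseudo-square root on classes, which is already settled by Proposition \ref{square-prop} and Remark \ref{pseudo-rem}.
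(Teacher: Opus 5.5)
Your proof is correct and is the paper's argument: Theorem \ref{Thm-pseudo} combined with Proposition \ref{even-odd-prop} for $Y=M\oplus M$, with the non-vanishing and well-definedness checks for the pseudo-square root made explicit. Your reading of $\Fitt_i$ as $\Fitt_i(M\oplus M)$ is the right one, since the statement fails literally for $\Fitt_i(M)$: for a height $1$ prime $\mathfrak{p}$, the modules $\Lambda/\mathfrak{p}^2$ and $(\Lambda/\mathfrak{p})^2$ have the same even Fitting ideals, but the first Fitting ideals of their doubles are $\mathfrak{p}^2$ and $\mathfrak{p}^3$, so the doubles are not pseudo-isomorphic.
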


\begin{proof} Combine Theorem \ref{Thm-pseudo} and Proposition \ref{even-odd-prop}. \end{proof}
 
\section{Bipartite Euler systems} \label{sec::3}

Starting from the definitions given in \cite{howard2012bipartite}, in this section we develop a general theory of bipartite Euler systems and use it to study the structure of certain Selmer groups; in doing so, we follow arguments from \cite{kim2024higher}. Let us fix throughout an imaginary quadratic field $K$ of discriminant $D_K$, choose an algebraic closure $\overline K$ of $K$ and set $G_K\defeq\Gal(\overline{K}/K)$ for the absolute Galois group of $K$.

\subsection{Bipartite Euler systems} \label{bipartite}

Let $R$ be a complete noetherian local ring with maximal ideal $\mathfrak{m}$ and finite residue field of characteristic $p\nmid D_K$; moreover, fix a free module $T$ of rank $2$ over $R$ endowed with a continuous, $R$-linear action of $G_K$. For every prime $\mathfrak{l}$ of $K$, set
\[
H^1_{\unr}(K_\mathfrak{l},T)\defeq\ker\Bigl(H^1(K_\mathfrak{l},T) \longrightarrow H^1(I_\mathfrak{l},T)\Bigr).
\]

\begin{definition}\label{SelStructureT}
(1) A \emph{Selmer structure} for $T$ over $R$ is an ordered pair $(\mathcal{F}, \Sigma_\mathcal{F})$ where
\begin{itemize}
\item $\Sigma_\mathcal{F}$ is a finite set of places of $K$ containing the primes over $p$, the ramification locus of $T$ and the archimedean primes;
\item $\mathcal{F}$ is the choice, for each prime $\mathfrak{l}$ of $K$, of a subgroup
\[ 
H^1_\mathcal{F}(K_\mathfrak{l},T) \subset H^1(K_\mathfrak{l},T) 
\]
such that $H^1_\mathcal{F}(K_\mathfrak{l},T)=H^1_{\unr}(K_\mathfrak{l},T)$ for all $\mathfrak{l} \notin \Sigma_\mathcal{F}$.
\end{itemize}

(2) The \emph{Selmer group} attached to a Selmer structure $(\mathcal{F},\Sigma_\mathcal{F})$ is
\[
\Sel_\mathcal{F}(K,T)\defeq\ker\Biggl(H^1(K,T) \longrightarrow \prod_{\mathfrak{l}} \frac{H^1(K_\mathfrak{l},T)}{H^1_\mathcal{F}(K_\mathfrak{l},T)}\Biggr),
\]
the map in brackets being the obvious one.

(3) Given an $R$-bilinear pairing $(\cdot,\cdot):T \times T \rightarrow R(1)$, a Selmer structure $(\mathcal{F},\Sigma_\mathcal{F})$ for $T$ is $\emph{self-dual}$ with respect to $(\cdot,\cdot)$ if $H^1_\mathcal{F}(K_v,T)$ is maximal isotropic under the induced local Tate pairing for each finite $v\in\Sigma_\mathcal{F}$.
\end{definition}

From now on, assume that $T$ is equipped with a self-dual Selmer structure $(\mathcal{F},\Sigma_\mathcal{F})$.

\begin{definition}\label{admissibleprimes}
A prime number $\ell$ is \emph{admissible} for $T$ with respect to $\mathcal{F}$ if
\begin{itemize}
\item $\ell\not=p$;
\item $\ell$ is inert in $K$;
\item $\ell^2-1 \in R^\times$;
\item $T$ is unramified at $\ell$;
\item $\ell\mathcal O_K \notin \Sigma_\mathcal{F}$.
\end{itemize}
\end{definition}

Let $\mathcal{P}^\mathrm{adm}$ denote the set of admissible primes. Fix a subset $\mathcal{P}\subset\mathcal{P}^\mathrm{adm}$ and denote by $\mathcal{N}=\mathcal N(\mathcal P)$ the set of square-free products of elements of $\mathcal{P}$. For an $n \in \mathcal{N}$, we write $\nu(n)$ for the number of prime divisors of $n$. We include $1$ in $\mathcal{N}$ and set $\nu(1)\defeq0$. For each $\ell \in \mathcal{P}$, the characteristic polynomial of $\Frob_\ell$ is $P_\ell(X)\defeq\det(X-\Frob_\ell|T)\in R[X]$; write $P_\ell(X)=X^2- t_\ell X+d_\ell$. For $\ell\in\mathcal P$ and $n \in \mathcal{N}$, define the ideals
\begin{equation} \label{I-eq}
I_\ell\defeq\bigl(t_\ell-(\ell^2+1), d_\ell-\ell^2\bigr)\subset R,\quad I_n\defeq\sum_{\ell|n}I_\ell.
\end{equation}
Finally, for $\epsilon\in\Z/2\Z$ set
\[
\mathcal{N}^\epsilon\defeq\bigl\{n \in \mathcal{N}\mid\nu(n)\equiv\epsilon\pmod2\bigr\}.
\]
Let $n \in \mathcal{N}$. For each prime $\ell\,|\,n$, since $T$ is unramified at $\ell$ and $P_\ell(X)$ splits modulo $I_n$ as $(X-\ell^2)(X-1)$, there is an isomorphism of $G_{K_\ell}$-modules  
\[
T/I_nT \simeq R/I_n \oplus R/I_n(1).
\]
From now on, for any integer $r\geq1$ denote by $\Bmu_r$ the group of $r$-th roots of unity (in a suitable field that will be clear from context).

For lack of a convenient reference, we explicitly recall the following standard result. 

\begin{lemma}\label{lemmaKummer} 
Let $S$ be a complete noetherian local ring with finite residue field. The $S$-module $H^1\bigl(K_\ell,S(1)\bigr)$ is free of rank $1$. 
\end{lemma}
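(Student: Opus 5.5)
The plan is to compute $H^1(K_\ell,S(1))$ by Kummer theory, using that $\ell$ is an admissible prime. Since $\ell$ is inert in $K$, $K_\ell$ is the unramified quadratic extension of $\Q_\ell$, with residue field $\F_{\ell^2}$. We have $\ell\neq p$, where $p$ is the residue characteristic of $S$. The key input is that $\ell^2-1$ is a unit, i.e. $p\nmid \ell^2-1$, which controls the $p$-power roots of unity in $K_\ell$.

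First we reduce to the case of $\Z_p$-coefficients. Write $S(1)=S\otimes_{\Z_p}\Z_p(1)$, with $G_{K_\ell}$ acting only on the second factor. I would show that
\[
H^1\bigl(K_\ell,S(1)\bigr)\simeq S\otimes_{\Z_p}H^1\bigl(K_\ell,\Z_p(1)\bigr).
\]
Concretely, compute continuous cohomology of the profinite group $G_{K_\ell}$ via the standard complex. Since $S$ is finitely generated over $\Z_p$ (compact with finite residue field), write $S=\varprojlim S/\mathfrak m_S^j$ with each quotient finite, and use that $H^\bullet(K_\ell,-)$ commutes with inverse limits of finite modules. The Mittag-Leffler condition holds because the groups involved are finite.

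Next, for a finite module $A$ of $p$-power order, the local Euler characteristic formula together with local duality gives $H^1(K_\ell,A(1))$ in terms of $H^0(K_\ell,A(1))$ and $H^0(K_\ell,A^\vee)$. It is cleaner, though, to compute directly. By Kummer theory,
\[
H^1(K_\ell,\Bmu_{p^k})=K_\ell^\times/(K_\ell^\times)^{p^k}.
\]
Since $K_\ell^\times\simeq \ell^{\Z}\times \F_{\ell^2}^\times\times(1+\ell\mathcal O)$ and $p\nmid \ell(\ell^2-1)$, this quotient is $\Z/p^k\Z$, generated by the uniformizer $\ell$. Passing to the limit gives $H^1(K_\ell,\Z_p(1))\simeq \Z_p$, free of rank one.

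Alternatively, the whole computation can be done over $S$ at once via inflation–restriction along the tame quotient. The order of the inertia part that could contribute is prime to $p$, and $\Bmu_{p^\infty}(K_\ell)=0$ because $p\nmid \ell^2-1$. So only the unramified part contributes: $H^1(\mathrm{Gal}(K_\ell^{\mathrm{tame}}/K_\ell),S(1))$, where tame inertia $\hat{\Z}'(1)$ acts trivially and Frobenius acts on $S(1)$ by $\ell^2$. We get
\[
H^1_{\unr}\bigl(K_\ell,S(1)\bigr)=S/(\ell^2-1)S=0,
\]
and
\[
H^1(I,S(1))^{\Frob}=\Hom(\Z_p(1),S(1))^{\Frob}=S,
\]
with the connecting $H^2$-term vanishing since $\ell^2-1\in S^\times$. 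This gives $H^1(K_\ell,S(1))\simeq S$.

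Combining these steps gives the claim. The main obstacle is the limit/tensor compatibility for general $S$ that is not finite over $\Z_p$. To avoid it, I would use the inflation–restriction computation above directly with $S$-coefficients, where only the continuity of tame inertia cohomology needs care.
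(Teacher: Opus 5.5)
Your argument is correct, and the route you commit to in the end is genuinely different from the paper's. That route is inflation--restriction along inertia with $S$-coefficients.

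The paper uses only Kummer theory. For $S\simeq\Z/p^s\Z$ it computes $H^1(K_\ell,\Bmu_{p^s})\simeq K_\ell^\times/(K_\ell^\times)^{p^s}\simeq\Z/p^s\Z$, using $p\nmid\ell^2-1$. It extends this to finite $S$ by writing $S$ as a direct sum of cyclic groups. It then passes to general $S=\varprojlim_n S/\mathfrak m^n$ by Tate's theorem that $H^1(K_\ell,-)$ commutes with inverse limits of finite modules \cite[Proposition B.2.3]{RubinES}.

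You instead show that $H^1_{\unr}(K_\ell,S(1))=S/(\ell^2-1)S=0$ and that the obstruction in $H^2(\hat{\Z},S(1))$ vanishes. This identifies $H^1(K_\ell,S(1))$ with $H^1(I_\ell,S(1))^{\Frob}=\Hom(\Z_p(1),S(1))^{\Frob}\simeq S$.

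\begin{itemize}
\item \emph{What your route gives.} The isomorphism with $S$ is canonical and visibly $S$-linear. In the paper, the cyclic decomposition of a finite $S$ is not $S$-stable, so a small naturality argument is needed to upgrade the group isomorphism to one of $S$-modules. Your computation also recovers the vanishing of $H^1_{\unr}(K_\ell,S(1))$ used in Proposition \ref{freeness-prop}.
\item \emph{What the paper's route gives.} It needs nothing beyond the structure of $K_\ell^\times$. Yours needs the structure of tame inertia and the low-degree Hochschild--Serre sequence in continuous cohomology with compact coefficients. You can justify that last point directly: $\Frob-1$ acts invertibly on $S(1)$, so all $\hat{\Z}$-cohomology of $S(1)$ vanishes. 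Alternatively, work at finite level and take the same limit as the paper.
\end{itemize}

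One correction. Your parenthetical claim that $S$ is finitely generated over $\Z_p$ because it is compact with finite residue field is false. Counterexamples are $\Z_p[\![T]\!]$ and $\Lambda/p^k\Lambda$, and the latter is exactly the ring $R/I_n$ to which the lemma is applied in \S\ref{BESLambda} and \S\ref{sec::5}. So the reduction $H^1(K_\ell,S(1))\simeq S\otimes_{\Z_p}H^1(K_\ell,\Z_p(1))$ cannot be justified that way. It has to go through the finite quotients $S/\mathfrak m_S^j$ and the limit, as in the paper. Your inflation--restriction computation needs no finiteness of $S$ over $\Z_p$ at all.
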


\begin{proof} Write $\mathfrak m$ for the maximal ideal of $S$ and $p$ for its residual characteristic; we divide the proof into two steps.

\texttt{Step 1.} We first assume that $S$ is finite. If $S\simeq \Z/p^s\Z$, then $S(1)\simeq\Bmu_{p^s}$, where $\Bmu_{p^s}$ is the $G_{K_\ell}$-module 
of $p^s$-roots of unity in $\overline{K}_\ell$. By Kummer theory, there is an isomorphism  
\[
H^1(K_\ell,\Bmu_{p^s})\simeq K_\ell^\times\big/{(K_\ell^\times)}^{p^s};
\]
moreover, since $K_\ell^\times \simeq \ell^\Z \times\Bmu_{(\ell^2-1)} \times \bigl(1+\mathfrak{m}_{\mathcal{O}_{K_\ell}}\bigr)$, we have ${(K_\ell^\times)}^{p^s}\simeq \ell^{p^s\Z} \times\Bmu_{(\ell^2-1)} \times\bigl(1+\mathfrak{m}_{\mathcal{O}_{K_\ell}}\bigr)$ because $p\nmid \ell^2-1$, so $K_\ell^\times\big/\bigl(K_\ell^\times\bigr)^{p^s}\simeq\Z/p^s\Z$. If $S$ is finite, then it is isomorphic (as a group) to a direct sum of cyclic groups, and the result follows in this case. 

\texttt{Step 2.} Now we prove the result for a general $S$. For every integer $n\geq1$, the $S$-module $S_n=S/\mathfrak{m}^n$ is finite, so by the previous 
step there is an isomorphism $H^1(K_\ell,S_n(1))\simeq S_n$. Since $S$ is complete, there is an isomorphism $S\simeq\varprojlim_nS_n$, and then a theorem of Tate (\cite[Proposition B.2.3]{RubinES}) ensures that there is an isomorphism
\[
S\simeq\varprojlim_nH^1\bigl(K_\ell,S_n(1)\bigr)\simeq H^1\bigl(K_\ell,\textstyle{\varprojlim_n}S_n(1)\bigr)=H^1\bigl(K_\ell,S(1)\bigr),
\]
where the isomorphism of $G_{K_\ell}$-modules $\varprojlim_nS_n(1)\simeq S(1)$ is clear from the isomorphism 
of $S$-modules $\varprojlim_nS_n\simeq S$. 
\end{proof}

Define 
\[
H^1_{\ord}(K_\ell,T/I_nT)\defeq\im\Bigl(H^1\bigl(K_\ell,R/I_n(1)\bigr)\longrightarrow H^1(K_\ell,T/I_nT)\Bigr). 
\]


\begin{proposition} \label{freeness-prop}
Let $n \in \mathcal{N}$ and let $\ell$ be a prime divisor of $n$. There is a splitting
\[
H^1(K_\ell,T/I_nT)\simeq H^1_{\unr}(K_\ell,T/I_n T)\oplus  H^1_{\ord}(K_\ell,T/I_nT)
\]
in which each summand is free of rank $1$ over $R/I_n$.
\end{proposition}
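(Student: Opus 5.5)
We write $S\defeq R/I_n$ and use the $G_{K_\ell}$-splitting $T/I_nT\simeq S\oplus S(1)$ recalled just before Lemma \ref{lemmaKummer}. Since $H^1$ commutes with finite direct sums,
\[
H^1(K_\ell,T/I_nT)\simeq H^1(K_\ell,S)\oplus H^1\bigl(K_\ell,S(1)\bigr).
\]
The plan has three steps. First, identify the second summand with $H^1_{\ord}$. Second, identify the first summand with $H^1_{\unr}$. Third, prove that each summand is free of rank $1$ over $S$. Note that $S$ is a quotient of the complete noetherian local ring $R$, so it is again complete noetherian local with finite residue field of characteristic $p$; this is what lets us invoke Lemma \ref{lemmaKummer}.

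For the ordinary summand, the map $H^1(K_\ell,S(1))\to H^1(K_\ell,T/I_nT)$ induced by the inclusion of a direct summand is split injective. Hence its image, which is $H^1_{\ord}(K_\ell,T/I_nT)$ by definition, is isomorphic to $H^1(K_\ell,S(1))$. This module is free of rank $1$ over $S$ by Lemma \ref{lemmaKummer}.

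For the unramified summand, the first step is to show $H^1_{\unr}(K_\ell,S(1))=0$. Since $T$ is unramified at $\ell$, the inertia group acts trivially on $S(1)$, and inflation–restriction identifies $H^1_{\unr}(K_\ell,S(1))$ with
\[
H^1\bigl(\Gal(K_\ell^{\unr}/K_\ell),S(1)\bigr)\simeq S(1)\big/(\Frob-1)S(1)=S/(\ell^2-1)S.
\]
Here the residue field of $K_\ell$ has order $\ell^2$, so $\Frob$ acts on $S(1)$ by multiplication by $\ell^2$. This quotient vanishes because $\ell^2-1\in R^\times$ by admissibility. (For infinite $S$ we use continuous cohomology of $\hat\Z$ with coefficients in a profinite module, or pass to the limit over $S/\mathfrak m^k$.) The same computation gives
\[
H^1_{\unr}(K_\ell,S)\simeq S/(\Frob-1)S=S,
\]
which is free of rank $1$.

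It remains to show that all of $H^1(K_\ell,S)$ is unramified, i.e.\ that $H^1(K_\ell,S)=H^1_{\unr}(K_\ell,S)$. We expect this to be the main obstacle. By inflation–restriction, the quotient $H^1(K_\ell,S)/H^1_{\unr}(K_\ell,S)$ injects into
\[
H^1(I_\ell,S)^{\Frob}=\Hom_{\mathrm{cont}}(I_\ell,S)^{\Frob}.
\]
Since $S$ is pro-$p$ and $\ell\neq p$, any continuous homomorphism $I_\ell\to S$ factors through the tame pro-$p$ quotient $\Z_p(1)$, on which $\Frob$ acts by $\ell^2$. So a $\Frob$-invariant homomorphism $\varphi$ satisfies $\varphi=\ell^{2}\varphi$ (up to the sign convention for the action), i.e.\ $(\ell^2-1)\varphi=0$. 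Since $\ell^2-1$ is a unit, $\varphi=0$. An equivalent route is local Tate duality: $H^1(K_\ell,S)$ is dual to $H^1(K_\ell,S^\vee(1))$, whose size can be compared with $S$ by the Euler characteristic formula, since $H^0$ and $H^2$ vanish appropriately.

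Putting the three steps together, the splitting is
\[
H^1(K_\ell,T/I_nT)=H^1_{\unr}(K_\ell,T/I_nT)\oplus H^1_{\ord}(K_\ell,T/I_nT),
\]
with both summands free of rank $1$ over $R/I_n$.
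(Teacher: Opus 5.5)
Your proposal is correct and follows the same route as the paper. You split $T/I_nT\simeq S\oplus S(1)$, identify $H^1_{\ord}(K_\ell,T/I_nT)$ with $H^1\bigl(K_\ell,S(1)\bigr)$ (free of rank $1$ by Lemma \ref{lemmaKummer}), note $H^1_{\unr}\bigl(K_\ell,S(1)\bigr)\simeq S/(\ell^2-1)S=0$, and show $H^1(K_\ell,S)=H^1_{\unr}(K_\ell,S)\simeq S$.

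For that last equality your argument is more accurate than the paper's. The paper asserts $\Hom(I_\ell,S)=0$ because inertia is pro-$\ell$, which is false: inertia surjects onto the tame quotient $\Z_p(1)$. You correctly observe that only the $\Frob$-invariant homomorphisms matter, and these are killed by the unit $\ell^2-1$.
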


\begin{proof} For notational convenience, set $S\defeq R/I_n$ and $M\defeq T/I_nT$. There is a splitting
\[
H^1(K_\ell,M) \simeq H^1(K_\ell,S) \oplus H^1\bigl(K_\ell,S(1)\bigr).
\]
Since $I_\ell$ is pro-$\ell$ and $S$ is pro-$p$, the group $H^1(I_\ell,S)=\Hom(I_\ell,S)$ is trivial and so
\[
H^1_\unr(K_\ell,S)=H^1(K_\ell,S).
\]
By \cite[Lemma 1.2.1]{MR}, we also have
\[
H^1_\unr\bigl(K_\ell,S(1)\bigr) \simeq S(1)/(\Frob_\ell -1) S(1)=S(1)/(\ell^2-1)S(1)=0,
\]
where the last equality follows from the fact that $\ell^2-1$ is invertible in $R$. Then
\[
H^1_{\unr}(K_\ell,M)\simeq H^1_{\unr}(K_\ell,S)\oplus H^1_\unr\bigl(K_\ell,S(1)\bigr)=H^1(K_\ell,S),
\]
where $H^1(K_\ell,S) \simeq S/(\Frob_\ell-1)S \simeq S$. On the other hand, since $M\simeq S\oplus S(1)$, the map $H^1\bigl(K_\ell,S(1)\bigr)\rightarrow H^1(K_\ell,M)$ is injective, so there is an isomorphism $H^1_{\ord}(K_\ell,M)\simeq H^1\bigl(K_\ell,S(1)\bigr)$. Finally, by Lemma \ref{lemmaKummer} the $S$-module $H^1\bigl(K_\ell,S(1)\bigr)$ is free of rank $1$, as was to be shown. \end{proof}

In light of Proposition \ref{freeness-prop}, from here on we fix isomorphisms of $R$-modules
\[
\phi_{\unr}^{(n)}:H^1_\unr(K_\ell,T/I_nT) \overset\simeq\longrightarrow R/I_n,\quad\phi_{\ord}^{(n)}: H^1_{\ord}(K_\ell,T/I_nT) \overset\simeq\longrightarrow R/I_n.
\]
Define a modified Selmer structure $\mathcal{F}(n)$ on $T/I_nT$ by setting
\begin{itemize}
\item $H^1_{\mathcal{F}(n)}(K_\ell,T/I_nT)\defeq H^1_\mathcal{F}(K_\ell,T/I_nT)$ if $\ell \nmid n$;
\item $H^1_{\mathcal{F}(n)}(K_\ell,T/I_nT)\defeq H^1_{\ord}(K_\ell,T/I_nT)$ if $\ell\,|\,n$.
\end{itemize}
Finally, for any $n\in\mathcal{N}$ and $\ell\in\mathcal{P}$ consider the projection
\[ 
\pi_{n,\ell}:R/I_n \longepi R/I_{n \ell}. 
\]
The following notion was first introduced in \cite{howard2012bipartite}.

\begin{definition}\label{defBES}
A \emph{bipartite Euler system} for $(T,\mathcal{F},\mathcal{P})$ of parity $\epsilon \in \Z/2\Z$ is a pair $(\kappa,\lambda)$ of families
\[ 
\kappa=\Bigl\{\kappa_n \in\Sel_{\mathcal{F}(n)}(K,T/I_n T)\;\Big|\;n \in \mathcal{N}^\epsilon\Bigr\},\quad\lambda=\Bigl\{\lambda_n \in  R/I_n\;\Big|\;n \in \mathcal{N}^{\epsilon+1}\Bigr\}
\]
satisfying the following two properties.
\begin{enumerate}
\item \texttt{First explicit reciprocity law}: for $n\in\mathcal{N}^{\epsilon+1}$ and $\ell \in \mathcal{P}$ such that $\ell\nmid n$, there is an equality  
\[
\bigl(\pi_{n,\ell}(\lambda_n)\bigr)=\Bigl(\phi_{\ord}^{(n\ell)}\bigl(\loc_\ell (\kappa_{n\ell})\bigr)\!\Bigr)
\]
of ideals of $R/I_{n\ell}$.
\item \texttt{Second explicit reciprocity law}: for $n \in \mathcal{N}^\epsilon$ and $\ell \in \mathcal{P}$ such that $\ell\nmid n$, there is an equality 
\[
(\lambda_{n\ell})=\biggl(\pi_{n,\ell}\Bigl(\phi_{\unr}^{(n)}\bigl(\loc_\ell(\kappa_n)\bigr)\!\Bigr)\!\!\biggr)
\]
of ideals of $R/I_{n\ell}$.
\end{enumerate}
\end{definition}

\begin{remark}
The abstract formulation of the notion of a bipartite Euler system given in this section is proposed here for the first time; it is similar to that of \cite{kim2024higher} and can be seen as an analogue of the formalism for Kolyvagin systems in \cite{MR} (see also \cite{mastella2025anticyclotomic} for further details). 
\end{remark}

\subsection{Bipartite Euler systems over a principal artinian local ring} \label{BES-Artinian}

We study bipartite Euler systems over principal artinian local rings, mainly following 
\cite{howard2012bipartite} and \cite{kim2024higher}. The main result of this subsection is Theorem \ref{artSel}, which also appears in \cite[Theorem 4.19]{kim2024higher} in a similar form; for the reader's convenience, in our arguments we decided to include a certain number of details, following the approach of Mazur--Rubin (\cite[Proposition 4.5.8]{MR}), 
because some of the steps in the proof of \cite[Theorem 4.19]{kim2024higher} are only implicit or just sketched.

Let $R$ be a principal artinian local ring of length $k$ with maximal ideal $\mathfrak{m}$; let $\pi\in\mathfrak m$ be a uniformizer. For an $R$-module $M$, denote by $\length(M)$ the length of $M$ over $R$. 

We say that $\mathcal{F}$ is \emph{cartesian} if the isomorphism
\[
T/\mathfrak{m}^i T \overset\simeq\longrightarrow T[\mathfrak{m}^i],\quad \bar{x}\longmapsto \pi^{k-i}x
\]
induces an isomorphism
\[
H^1_\mathcal{F}(K_v,T/\mathfrak{m}^i T)\overset\simeq\longrightarrow H^1_\mathcal{F}\bigl(K_v,T[\mathfrak{m}^i]\bigr)
\]
for all $i\in\{1,\dots,k\}$ and all $v\in\Sigma_\mathcal{F}$.

In the list of assumptions that follows, $\tau\in G_\Q$ is a fixed complex conjugation.

\begin{assumption} \label{assArt}
\begin{enumerate}
\item $T/\mathfrak{m}T$ is absolutely irreducible;
\item $T$ admits a perfect and symmetric $R$-bilinear pairing
\[
(\cdot, \cdot):T \times T \longrightarrow R(1)
\]
such that $(x^\sigma,y^{\tau \sigma \tau})=(x,y)^\sigma$ for all $\sigma \in G_K$;
\item $\mathcal{F}$ is cartesian (\cite[\S1.2]{HoHeeg}) and self-dual;
\item for all $c \in H^1(K,T/\mathfrak{m}T) \smallsetminus \{0 \}$, there are infinitely many $\ell \in \mathcal{P}$ such that $\loc_\ell(c) \neq 0$; 
\item $I_\ell=0$ for all $\ell \in \mathcal{P}$.
\end{enumerate}
\end{assumption}

By \cite[Lemma 2.2.6]{howard2012bipartite}, $\mathcal{F}(n)$ is cartesian for all $n \in \mathcal{N}$.

\begin{proposition}[Howard] \label{eSel}
Let $n \in \mathcal{N}$. There exists $e(n)\in\{0,1\}$ such that
\[ \Sel_{\mathcal{F}(n)}(K,T) \simeq R^{e(n)} \oplus M_n \oplus M_n. \]
\end{proposition}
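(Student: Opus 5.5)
The plan is to follow Howard's argument (\cite[Proposition 1.4.3]{HoHeeg} and \cite[Theorem 1.4.2 / Lemma 2.2.7]{howard2012bipartite}), which rests on an alternating pairing on Selmer groups. Write $S\defeq\Sel_{\mathcal F(n)}(K,T)$, a finitely generated module over the principal artinian ring $R$. By the structure theory of modules over a principal ideal ring, $S\simeq\bigoplus_j R/\mathfrak m^{a_j}$. The claim is equivalent to saying that, for every $1\le i<k$, the number of summands $R/\mathfrak m^i$ is even, while the number of free summands $R/\mathfrak m^k$ has either parity.

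\textbf{Step 1: alternating pairing on the torsion part.} Starting from the pairing $(\cdot,\cdot)$ of Assumption \ref{assArt}(2), I would build the generalized Cassels--Tate (Flach) pairing on
\[
S/S_{\mathrm{div}},\qquad S_{\mathrm{div}}\defeq\bigcap_i \pi^iS,
\]
or more concretely on the quotients $S[\mathfrak m^i]/(S[\mathfrak m^{i}]\cap \pi^{k-i}S)$.

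The cartesian property of $\mathcal F(n)$ (\cite[Lemma 2.2.6]{howard2012bipartite}) identifies $\Sel_{\mathcal F(n)}(K,T[\mathfrak m^i])$ with $S[\mathfrak m^i]$ and with $\Sel_{\mathcal F(n)}(K,T/\mathfrak m^iT)$. Self-duality of $\mathcal F(n)$ then follows: at $v\in\Sigma_{\mathcal F}$ by assumption, and at $\ell\mid n$ because $H^1_{\ord}$ is maximal isotropic, being the image of the $R(1)$ summand. This self-duality makes the local conditions orthogonal, so Poitou--Tate duality gives a pairing that is perfect modulo the maximal "divisible" part.

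\textbf{Step 2: the pairing is alternating.} Symmetry of $(\cdot,\cdot)$ together with the twisted equivariance $(x^\sigma,y^{\tau\sigma\tau})=(x,y)^\sigma$ implies, as in Flach's argument, that the pairing is alternating, and not merely skew-symmetric. This is where $p$ odd (or a careful cocycle computation) enters.

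\textbf{Step 3: conclude the decomposition.} A finite $R$-module carrying a perfect alternating $R/\mathfrak m^k$-valued pairing is of the form $M\oplus M$, by the symplectic normal form over a principal artinian ring. Hence the non-free part of $S$ is $M_n\oplus M_n$, and $S\simeq R^{r}\oplus M_n\oplus M_n$ for some $r\ge0$.

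\textbf{Step 4: show $r\le1$.} Reduce modulo $\mathfrak m$. Assumption \ref{assArt}(1) and cartesianness give $S[\mathfrak m]\simeq \Sel_{\mathcal F(n)}(K,T/\mathfrak mT)$. Applying the previous steps to $\mathcal F(n)$ with $R$ replaced by $R/\mathfrak m^j$, I would compare $\dim S[\mathfrak m]=r+2\dim M_n[\mathfrak m]$ with the parity and rank statements. The cleanest route is to run the whole argument by descending $r$ by $2$: if $r\ge2$, the free part $R^r$ is isotropic for the Poitou--Tate pairing at level $k$. On the other hand, the global duality/Euler characteristic argument of \cite[Theorem 1.4.2]{HoHeeg} (via Mazur--Rubin's ``core rank'' for self-dual structures, equal to $0$ or $1$) bounds the free rank by $1$. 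Setting $e(n)\defeq r\in\{0,1\}$ finishes the proof.

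The main obstacle is Step 2, namely constructing the pairing and proving it is alternating rather than only skew-symmetric, together with the identification of the free rank with a core rank in $\{0,1\}$ in Step 4. I would take both essentially from \cite[\S1.4]{HoHeeg} and \cite[\S2.2]{howard2012bipartite}, checking only that the modified local conditions at $\ell\mid n$ preserve cartesianness and self-duality.
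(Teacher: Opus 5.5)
You are correct that the paper gives no argument of its own here: it simply quotes \cite[Proposition 2.2.7]{howard2012bipartite}, which rests on an alternating Flach/Cassels--Tate-type pairing, so your Steps 1--3 are in the right circle of ideas. Step 4, however, is wrong, and not merely incomplete. The free rank $r$ of $S=\Sel_{\mathcal F(n)}(K,T)$ is \emph{not} bounded by $1$ in general. No isotropy or core-rank argument can prove $r\le 1$, and setting $e(n)\defeq r$ is incorrect. Already for $k=1$, where $R$ is a field and every module is free, $r=\dim_R S$ can be arbitrarily large; think of the $p$-Selmer group of an elliptic curve of large Mordell--Weil rank over $K$. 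A core rank does not bound the free rank of $\Sel_{\mathcal F(n)}(K,T)$ for an arbitrary $n$. As you observed in your opening paragraph, nothing in the statement prevents $M_n$ from having free summands. The correct conclusion is therefore: write $r=2s+e(n)$ with $e(n)\in\{0,1\}$ and replace $M_n$ by $M_n\oplus R^s$. So $e(n)$ is only the parity of $r$, equivalently of $\dim_{R/\mathfrak m}S[\mathfrak m]$, which is how it is used later (cf.\ Lemma \ref{lemma-indep}). Step 4 should be deleted.

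There is also a gap in how Steps 1--3 extract the decomposition. Over an artinian ring $\bigcap_i\pi^iS=\pi^kS=0$, so your ``$S/S_{\mathrm{div}}$'' is all of $S$. A perfect alternating pairing on all of $S$ would force $S\simeq M\oplus M$, which is false whenever $e(n)=1$.

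Your alternative does not work either. Note that $S[\mathfrak m^i]\cap\pi^{k-i}S=\pi^{k-i}S$, so your quotients are $S[\mathfrak m^i]/\pi^{k-i}S$, and perfect alternating pairings on these do not force the non-free part to be a double. For instance, take $k=3$ and $S=R/\mathfrak m\oplus R/\mathfrak m^2$. Both non-trivial quotients ($i=1,2$) are $2$-dimensional over $R/\mathfrak m$, so they carry perfect alternating pairings. Yet $S$ is not of the form $R^e\oplus M\oplus M$.

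Hence the implication ``pairing $\Rightarrow$ non-free part $\simeq M_n\oplus M_n$'' in Step 3 is not justified by what you wrote. The genuine artinian substitute for the divisible part is what Howard's result supplies; the cartesian property of $\mathcal F(n)$, recorded just before the statement, is the relevant input. Like the paper, you should invoke that result rather than the constructions sketched in Step 1.
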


\begin{proof} This is \cite[Proposition 2.2.7]{howard2012bipartite}. \end{proof}

Define
\[ 
\mathcal{N}^{\ind}\defeq\bigl\{n\in\mathcal{N}\mid e(n)=1\bigr\},\quad\mathcal{N}^{\deff}\defeq\bigl\{n\in\mathcal{N}\mid e(n)=0\bigr\}.
\]
By \cite[Corollary 2.2.10]{howard2012bipartite}, there exists $\epsilon\in\Z/2\Z$ such that $\mathcal{N}^{\ind}=\mathcal{N}^\epsilon$ and $\mathcal{N}^{\deff}=\mathcal{N}^{\epsilon +1}$.

\begin{definition} \label{C-def}
A bipartite Euler system $(\kappa, \lambda)$ is \emph{free} if for each $n \in \mathcal{N}^{\ind}$ there is a free $R$-module $C_n \subset \Sel_{\mathcal{F}(n)}(K,T)$ of rank $1$ containing $\kappa_n$.
\end{definition}

\begin{remark}
With these definitions, $n\in \mathcal{N}^{\deff}$, \emph{i.e.}, $e(n)=0$, if and only if 
the parity of the number of primes dividing $n$ is \emph{odd} if $\epsilon=0$ and \emph{even} if $\epsilon=1$; on the other hand, $n\in \mathcal{N}^{\ind}$, \emph{i.e.}, $e(n)=1$, if and only if the parity of the number of primes dividing $n$ is \emph{even} if $\epsilon=0$ and \emph{odd} if $\epsilon=1$.  
\end{remark}

\begin{lemma} \label{lemmaind}
Let $n\in \mathcal{N}^{\ind}$ and suppose that $\ell\in\mathcal{P}$ satisfies the following conditions:
\begin{itemize}
\item $\ell\nmid n$;
\item the restriction of $\loc_\ell:\Sel_{\mathcal{F}(n)}(K,T) \rightarrow H^1_{\unr}(K_{\ell},T)$ to the $R$-component is bijective. 
\end{itemize}
Then there is an isomorphism $M_{n}\simeq M_{n\ell}$.  
\end{lemma}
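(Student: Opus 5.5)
We compare $\Sel_{\mathcal F(n)}(K,T)$ and $\Sel_{\mathcal F(n\ell)}(K,T)$ through the common Selmer groups with relaxed and with strict local conditions at $\ell$, in the style of \cite[Lemma 4.1.6]{MR} and \cite[Lemma 2.2.8]{howard2012bipartite}. Let $\Sel_{\mathcal F(n)_\ell}$ denote the Selmer group with condition $0$ at $\ell$ and $\Sel_{\mathcal F(n)^\ell}$ the one with the full $H^1(K_\ell,T)$ at $\ell$, the conditions away from $\ell$ being those of $\mathcal F(n)$. Then
\[
\Sel_{\mathcal F(n)_\ell}(K,T)\subset\Sel_{\mathcal F(n)}(K,T),\ \Sel_{\mathcal F(n\ell)}(K,T)\subset\Sel_{\mathcal F(n)^\ell}(K,T).
\]
By Proposition \ref{freeness-prop} (with $I_n=0$ by Assumption \ref{assArt}(5)) we have $H^1(K_\ell,T)=H^1_\unr\oplus H^1_{\ord}$ with both summands free of rank $1$ and maximal isotropic for the local Tate pairing. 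Global Poitou--Tate duality, using the self-duality of $\mathcal F(n)$ from Assumption \ref{assArt}(2)--(3), then gives the exact sequence
\[
0\to\Sel_{\mathcal F(n)_\ell}\to\Sel_{\mathcal F(n)^\ell}\xrightarrow{\loc_\ell}H^1(K_\ell,T)
\]
whose image is maximal isotropic, hence has length equal to $\length(R)=k$.

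First, since $\loc_\ell$ maps the free rank-$1$ component $R\subset\Sel_{\mathcal F(n)}(K,T)\simeq R\oplus M_n\oplus M_n$ (Proposition \ref{eSel}) bijectively onto $H^1_\unr(K_\ell,T)\simeq R$, the map $\loc_\ell:\Sel_{\mathcal F(n)}\to H^1_\unr(K_\ell,T)$ is surjective. Its kernel $\Sel_{\mathcal F(n)_\ell}$ is therefore a complement to that $R$-summand. I would make this precise by writing $\Sel_{\mathcal F(n)}=R c\oplus\ker(\loc_\ell)$, where $c$ generates the component. This gives $\Sel_{\mathcal F(n)_\ell}(K,T)\simeq M_n\oplus M_n$, because the complement of a free summand is determined up to isomorphism by Krull--Schmidt over the artinian principal ring.

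Second, since $\loc_\ell(\Sel_{\mathcal F(n)^\ell})$ has length $k$ and already contains $H^1_\unr(K_\ell,T)$, which has length $k$, the two are equal. Hence every relaxed class has unramified localization, so $\Sel_{\mathcal F(n)^\ell}=\Sel_{\mathcal F(n)}$. Consequently
\[
\Sel_{\mathcal F(n\ell)}=\{s\in\Sel_{\mathcal F(n)^\ell}:\loc_\ell s\in H^1_{\ord}\}=\{s\in\Sel_{\mathcal F(n)}:\loc_\ell s\in H^1_\unr\cap H^1_{\ord}=0\}=\Sel_{\mathcal F(n)_\ell}.
\]
Therefore $\Sel_{\mathcal F(n\ell)}(K,T)\simeq M_n\oplus M_n$. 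On the other hand, $n\ell\in\mathcal N^{\deff}$ by the parity statement, so Proposition \ref{eSel} gives $\Sel_{\mathcal F(n\ell)}\simeq M_{n\ell}\oplus M_{n\ell}$. From $M_n\oplus M_n\simeq M_{n\ell}\oplus M_{n\ell}$ and the uniqueness of elementary divisors (Theorem \ref{DVR-isom-thm} applied to $R$ as a quotient of a DVR), we conclude $M_n\simeq M_{n\ell}$.

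The main obstacle is the duality step: showing that the image of the relaxed Selmer group in $H^1(K_\ell,T)$ is maximal isotropic, so of length exactly $k$. This needs the global duality / parity argument of \cite[Theorem 1.4.2]{HoHeeg} or \cite[Lemma 2.2.8]{howard2012bipartite} over the artinian ring, and uses cartesianness to control lengths. I would cite Howard's result directly, which says that $\loc_\ell(\Sel_{\mathcal F(n)^\ell})$ is maximal isotropic.
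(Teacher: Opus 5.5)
Your proof is correct and follows essentially the same route as the paper. Both arguments identify $\Sel_{\mathcal F(n\ell)}(K,T)$ with the strict Selmer group $\ker\bigl(\loc_\ell\colon\Sel_{\mathcal F(n)}(K,T)\to H^1_{\unr}(K_\ell,T)\bigr)$ using Howard's duality input, and then compare $R\oplus M_n\oplus M_n$ with $M_{n\ell}\oplus M_{n\ell}$. The paper quotes the length identity of \cite[Proposition 2.2.9]{howard2012bipartite} directly and leaves the splitting of the exact sequence implicit, whereas you derive the equality from the maximal isotropy of $\loc_\ell\bigl(\Sel_{\mathcal F(n)^\ell}(K,T)\bigr)$ and make the decomposition $\Sel_{\mathcal F(n)}(K,T)=Rc\oplus\ker(\loc_\ell)$ explicit.
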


\begin{proof} There is a short exact sequence
\begin{equation} \label{sel-eq}
0\longrightarrow \Sel_{\mathcal{F}_\ell(n)}(K,T)\longrightarrow \Sel_{\mathcal{F}(n)}(K,T)\longrightarrow H^1_\mathrm{unr}(K_\ell,T)\longrightarrow 0,
\end{equation}
where $\Sel_{\mathcal{F}_\ell(n)}(K,T)$ is the Selmer group $\Sel_{\mathcal{F}(n)}(K,T)$ restricted at $\ell$, \emph{i.e.}, the subgroup of those classes in $\Sel_{\mathcal{F}(n)}(K,T)$ having trivial localization at $\ell$. Then 
\[ 
\length\biggl(\frac{\Sel_{\mathcal{F}(n)}(K,T)}{\Sel_{\mathcal{F}_\ell(n)}(K,T)}\biggr)=
\length\bigl(H^1_{\unr}(K,T)\bigr)=\length(R)=k,
\]
so by \cite[Proposition 2.2.9]{howard2012bipartite} we get
\[
\length\biggl(\frac{\Sel_{\mathcal{F}(n\ell)}(K,T)}{\Sel_{\mathcal{F}_\ell(n)}(K,T)}\biggr)=0.
\] 
It follows that there is an isomorphism
\[ {\Sel_{\mathcal{F}(n\ell)}(K,T)}\simeq {\Sel_{\mathcal{F}_\ell(n)}(K,T)}, \]
so exact sequence \eqref{sel-eq} can be rewritten as  
\[ 0\longrightarrow \Sel_{\mathcal{F}(n\ell)}(K,T)\longrightarrow \Sel_{\mathcal{F}(n)}(K,T)\longrightarrow H^1_\mathrm{unr}(K_\ell,T)\longrightarrow 0.\]
Since $\Sel_{\mathcal{F}(n\ell)}(K,T)\simeq M_{n\ell}\oplus M_{n\ell}$
and $\Sel_{\mathcal{F}(n)}(K,T)\simeq R\oplus M_n\oplus M_n$, we conclude that $M_{n\ell}\simeq M_n$. \end{proof}

\begin{lemma} \label{lemmadef}
Let $n\in \mathcal{N}^{\deff}$ and assume that there is an isomorphism
\[
\Sel_{\mathcal{F}(n)}(K,T)\simeq\bigoplus_{i\geq0}\bigl(R/\mathfrak{m}^{d_i}\bigr)^2
\]
with ${(d_i)}_{i\geq0}$ a non-increasing sequence of non-negative integers such that $d_i=0$ for $i\gg0$. Let $\ell\in\mathcal{P}$ satisfy the following conditions:
\begin{itemize}
\item $\ell\nmid n$;
\item the restriction of
\[
\loc_\ell:\Sel_{\mathcal{F}(n)}(K,T) \longrightarrow H^1_{\unr}(K_{\ell},T)
\]
to a summand of maximal length is injective.
\end{itemize}
Then $\length(M_{n\ell})=\sum_{i\geq 2}d_i$ and there is an isomorphism
\[
\Sel_{\mathcal{F}(n\ell)}(K,T)\simeq R\oplus\bigoplus_{i\geq1}\bigl(R/\mathfrak{m}^{d_i}\bigr)^2.
\]
\end{lemma}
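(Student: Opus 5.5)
We work with the exact sequence
\[
0\longrightarrow \Sel_{\mathcal{F}_\ell(n)}(K,T)\longrightarrow \Sel_{\mathcal{F}(n)}(K,T)\xrightarrow{\ \loc_\ell\ } H^1_{\unr}(K_\ell,T),
\]
where $\Sel_{\mathcal{F}_\ell(n)}(K,T)$ is the subgroup of classes trivial at $\ell$, as in the proof of Lemma \ref{lemmaind}, and argue as in that proof.

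\textbf{Step 1: the image of $\loc_\ell$.} Write $\Sel_{\mathcal{F}(n)}(K,T)\simeq\bigoplus_{i\geq0}(R/\mathfrak m^{d_i})^2$ and let $C\simeq R/\mathfrak m^{d_0}$ be a summand of maximal length on which $\loc_\ell$ is injective. The target $H^1_{\unr}(K_\ell,T)\simeq R$ is cyclic, so the image of $\loc_\ell$ is cyclic, hence isomorphic to $R/\mathfrak m^{e}$ for some $e$. Injectivity on $C$ gives $e\geq d_0$. On the other hand, the image is a quotient of a module killed by $\mathfrak m^{d_0}$, so $e\leq d_0$. Thus the image has length exactly $d_0$.

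\textbf{Step 2: the kernel.} The map $\loc_\ell$ maps $C$ isomorphically onto its image, so $C$ is a direct summand of the Selmer group complementary to the kernel. Hence
\[
\Sel_{\mathcal{F}_\ell(n)}(K,T)\simeq R/\mathfrak m^{d_0}\oplus\bigoplus_{i\geq1}\bigl(R/\mathfrak m^{d_i}\bigr)^2,
\]
and its length is $\length\bigl(\Sel_{\mathcal F(n)}(K,T)\bigr)-d_0$.

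\textbf{Step 3: passing to $n\ell$.} Since $n\in\mathcal N^{\deff}$, we have $n\ell\in\mathcal N^{\ind}$. Proposition \ref{eSel} then gives $\Sel_{\mathcal F(n\ell)}(K,T)\simeq R\oplus M_{n\ell}\oplus M_{n\ell}$. By \cite[Proposition 2.2.9]{howard2012bipartite}, the lengths of
\[
\Sel_{\mathcal F(n)}/\Sel_{\mathcal F_\ell(n)}\qquad\text{and}\qquad \Sel_{\mathcal F(n\ell)}/\Sel_{\mathcal F_\ell(n)}
\]
sum to $k$. The first has length $d_0$, so $\Sel_{\mathcal F_\ell(n)}(K,T)\subset\Sel_{\mathcal F(n\ell)}(K,T)$ has cokernel of length $k-d_0$. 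Counting lengths,
\[
k+2\length(M_{n\ell})=\Bigl(d_0+2\sum_{i\geq1}d_i\Bigr)-d_0+(k-d_0),
\]
which yields $2\length(M_{n\ell})=2\sum_{i\geq1}d_i-d_0$.

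This does not match the claimed value $\sum_{i\geq2}d_i$ in general, which signals that my bookkeeping is off. The likely reason is that with pairs indexed from $i=0$, the maximal summand $C$ appears twice, so $\sum_{i\ge0}$ should count $2d_0$. Redoing the count carefully: the kernel is $R/\mathfrak m^{d_0}\oplus\bigoplus_{i\ge1}(R/\mathfrak m^{d_i})^2$, and the extension from this kernel to $\Sel_{\mathcal F(n\ell)}(K,T)$ has cokernel of length $k-d_0$. Matching this with $R\oplus M\oplus M$ forces the lone $R/\mathfrak m^{d_0}$ to sit inside the free summand $R$. Then $M_{n\ell}\oplus M_{n\ell}\simeq\bigoplus_{i\ge1}(R/\mathfrak m^{d_i})^2$, so $\length(M_{n\ell})=\sum_{i\ge1}d_i$.

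The claimed length $\sum_{i\geq2}d_i$ presumably reflects an indexing convention for the $d_i$ that I cannot fully reconcile; I will follow the isomorphism statement as the target.

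\textbf{Step 4: the main obstacle.} The hard step is the structural claim that the kernel embeds into $R\oplus M_{n\ell}^2$ with $R/\mathfrak m^{d_0}$ going into the free part and the rest mapping isomorphically onto $M_{n\ell}^2$. Length counting alone does not determine this. I would use self-duality: the generalized Cassels pairing, which induces the square structure $M\oplus M$ in Howard's proof of Proposition \ref{eSel}, applied to the cartesian structures $\mathcal F(n\ell)$ and $\mathcal F_\ell(n)$. Concretely, one shows that the $\mathfrak m^{j}$-torsion counts $\length\bigl(\Sel[\mathfrak m^j]\bigr)$ of both groups agree up to the free summand for every $j$, using the cartesian property and the same exact-sequence argument over $T/\mathfrak m^jT$. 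This determines $M_{n\ell}$ up to isomorphism.
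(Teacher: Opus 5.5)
Your Steps 1--3 follow the paper's argument. They use the same exact sequence and show that the image of $\loc_\ell$ has length $d_0$. They get the kernel $\Sel_{\mathcal{F}_\ell(n)}(K,T)\simeq R/\mathfrak{m}^{d_0}\oplus\bigoplus_{i\geq1}(R/\mathfrak{m}^{d_i})^2$; your splitting $\Sel_{\mathcal{F}(n)}(K,T)=C\oplus\ker(\loc_\ell)$ is cleaner than the paper's justification of this. Then they apply \cite[Proposition 2.2.9]{howard2012bipartite}. Your corrected count $\length(M_{n\ell})=\sum_{i\geq1}d_i$ is right, and there is no indexing convention to reconcile. The value $\sum_{i\geq2}d_i$ contradicts the isomorphism asserted in the same statement. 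It comes from a slip in the paper's length count, where $k+d-2d_0=k+2\sum_{i\geq1}d_i$ is written as $k+\sum_{i\geq1}d_i$; the paper's last line does conclude $M_{n\ell}\simeq\bigoplus_{i\geq1}R/\mathfrak{m}^{d_i}$. Discard your first displayed count, which used $d_0+2\sum_{i\geq1}d_i$ for the length of $\Sel_{\mathcal{F}(n)}(K,T)$.

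The genuine gap is the identification of $M_{n\ell}$. In Step 3 you assert that matching with $R\oplus M_{n\ell}\oplus M_{n\ell}$ ``forces'' the answer. In Step 4 you concede this is not proved and propose a Cassels-pairing argument that is never carried out. That pairing is not available on $\Sel_{\mathcal{F}_\ell(n)}(K,T)$ anyway, because the strict condition at $\ell$ is not maximal isotropic; indeed this group, with its lone summand $R/\mathfrak{m}^{d_0}$, is in general not of the shape in Proposition \ref{eSel}.

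The missing ingredient is elementary and uses the inclusion you already have, $\Sel_{\mathcal{F}_\ell(n)}(K,T)\subset\Sel_{\mathcal{F}(n\ell)}(K,T)$. If $N\subset L$ are finite $R$-modules, then the $t$-th largest elementary divisor of $N$ is at most the $t$-th largest elementary divisor of $L$. To see this, compare $\dim_{R/\mathfrak{m}}(\mathfrak{m}^sN)[\mathfrak{m}]\leq\dim_{R/\mathfrak{m}}(\mathfrak{m}^sL)[\mathfrak{m}]$ for all $s\geq0$; this is the same fact as (2) in the proof of Lemma \ref{parityNExp}. Write $M_{n\ell}\simeq\bigoplus_{t\geq1}R/\mathfrak{m}^{m_t}$ with $m_1\geq m_2\geq\cdots$. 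The sorted divisors are $(d_0,d_1,d_1,d_2,d_2,\dots)$ for the submodule and $(k,m_1,m_1,m_2,m_2,\dots)$ for the ambient module. Comparing the entries in position $2t$ gives $d_t\leq m_t$ for every $t\geq1$. Since $\sum_t m_t=\length(M_{n\ell})=\sum_{t\geq1}d_t$, all these inequalities are equalities. This is what the paper's final sentence tacitly relies on.

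Your torsion-count idea can also be made to work without any pairing. By the cartesian property of $\mathcal{F}(n)$ and $\mathcal{F}(n\ell)$, rerunning Steps 1--3 over $R/\mathfrak{m}^j$ gives $\length\bigl(\Sel_{\mathcal{F}(n\ell)}(K,T)[\mathfrak{m}^j]\bigr)=j+2\sum_{i\geq1}\min\{j,d_i\}$ for every $j$, which determines the isomorphism class. As written, however, it is only a sketch, and it is more work than the domination argument.
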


\begin{proof} In our current notation, the length of the summand of maximal length is $d_0$ and 
\[ 
\length\bigl(\Sel_{\mathcal{F}(n)}(K,T)\bigr)=d\defeq2\cdot\sum_{i\geq 0}d_i.
\]
There is a short exact sequence  
\[
0\longrightarrow \Sel_{\mathcal{F}_\ell(n)}(K,T)\longrightarrow \Sel_{\mathcal{F}(n)}(K,T)\longrightarrow H^1_{\unr}(K_\ell,T),
\] 
where, as before, $\Sel_{\mathcal{F}_\ell(n)}(K,T)$ denotes the restriction of $\Sel_{\mathcal{F}(n)}(K,T)$ at $\ell$ and the length of the image of the rightmost map is $d_0$ because $H^1_{\unr}(K_\ell,T)\simeq R$ and all summands of $\Sel_{\mathcal{F}(n)}(K,T)$ have lengths smaller than $d_0$. Then 
\[
\length\biggl(\frac{\Sel_{\mathcal{F}(n)}(K,T)}{\Sel_{\mathcal{F}_\ell(n)}(K,T)}\biggr)= d_0,
\]
so $\length\bigl(\Sel_{\mathcal{F}_\ell(n)}(K,T)\bigr)=d-d_0$. Since $\loc_\ell:\Sel_{\mathcal{F}(n)}(K,T)\rightarrow H^1_{\unr}(K,T)$ is an isomorphism 
on one of the two copies of $R/\mathfrak{m}^{d_0}$, and any proper submodule 
$\bigoplus_{i\geq 1}(R/\mathfrak{m}^{d_i})^2\oplus (R/\mathfrak{m}^{d_0})$ has length strictly smaller than $d-d_0$, it follows that there is an isomorphism
\[
\Sel_{\mathcal{F}_\ell(n)}(K,T)\simeq\bigoplus_{i\geq1}\bigl(R/\mathfrak{m}^{d_i}\bigr)^2\oplus (R/\mathfrak{m}^{d_0}).
\]
Moreover, by \cite[Proposition 2.2.9]{howard2012bipartite} there is an equality 
\[
\length\biggl(\frac{\Sel_{\mathcal{F}(n\ell)}(K,T)}{\Sel_{\mathcal{F}_\ell(n)}(K,T)}\biggr)=k-d_0.
\]  
It follows that 
\[
\length\bigl(\Sel_{\mathcal{F}(n\ell)}(K,T)\bigr)=k+d-2d_0=k+\sum_{i\geq 1}d_i.
\] 
On the other hand, the isomorphism $\Sel_{\mathcal{F}(n\ell)}(K,T)\simeq R\oplus M_{n\ell}\oplus M_{n\ell}$ yields the equality 
\[
\length\bigl(\Sel_{\mathcal{F}(n\ell)}(K,T)\bigr)=k+2\length(M_{n\ell}),
\]
so $\length(M_{n\ell})=\sum_{i\geq 2}d_i$. Finally, $\Sel_{\mathcal{F}(n\ell)}(K,T)$ contains the submodule $\bigoplus_{i\geq 1}(R/\mathfrak{m}^{d_i})^2$ of $\Sel_{\mathcal{F}_\ell(n)}(K,T)$ and we conclude that $M_{n\ell}=\bigoplus_{i\geq 1}R/\mathfrak{m}^{d_i}$. \end{proof}
\color{black}

Given a bipartite Euler system $(\kappa,\lambda)$ and an integer $j\geq1$ with $j\equiv\epsilon+1\pmod2$, set
\begin{equation} \label{partiallambda}
\begin{split}
\partial^{(j)}(\lambda)&\defeq\min\Bigl\{\length\Bigl(R\big/\bigl(I_n+(\lambda_n)\bigr)\!\Bigr)\;\Big|\;\nu(n)=j\Bigr\},\\
\partial(\lambda)&\defeq\min\bigl\{\partial^{(j)}(\lambda)\mid j\equiv\epsilon+1\pmod2\bigr\}.
\end{split}
\end{equation}

\begin{theorem} \label{artSel}
Let $(\kappa, \lambda)$ be a non-trivial free bipartite Euler system of parity $\epsilon$. Assume that for some $e\in\{0,1\}$ there is an isomorphism of $R$-modules
\[
\Sel_{\mathcal{F}}(K,T) \simeq R^e \oplus\bigoplus_{i\geq0}\bigl(R/\mathfrak{m}^{d_i}\bigr)^2,
\]
with ${(d_i)}_{i\geq0}$ a non-increasing sequence of non-negative integers such that $d_i=0$ for $i \gg 0$. The equality
\[
\partial^{(j)}(\lambda)=\min\Biggl\{ k, \partial(\lambda) + \sum_{i \geq \frac{j-e}{2}} d_i\Biggr\}
\]
holds for all integers $j\geq1$ such that $j\equiv\epsilon+1\pmod 2$.
\end{theorem}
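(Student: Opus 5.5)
The plan is to follow the Mazur--Rubin strategy of \cite[Proposition 4.5.8]{MR}, adapted to the bipartite setting as in \cite[Theorem 4.19]{kim2024higher}. The argument has three parts: a local identity linking $\lambda_n$ to a Selmer module, an upper bound on $\partial^{(j)}(\lambda)$ via a chain of well-chosen primes, and a matching lower bound via general position. Throughout, $R$ is artinian of length $k$ and $I_n=0$ by Assumption \ref{assArt}(5), so $\partial^{(j)}(\lambda)=\min_{\nu(n)=j}\length\bigl(R/(\lambda_n)\bigr)$.

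\textbf{Step 1: basic identity.} For $n\in\mathcal N^{\deff}$ and any $\ell\nmid n$ in $\mathcal P$, I would show
\[
\length\bigl(R/(\lambda_{n})\bigr)=\min\{k,\ \length(\kappa\text{-index})+2\length(M_{n})\},
\]
in the form used by Howard \cite[Theorem 2.5.1 and \S2.3]{howard2012bipartite}. Concretely, I would define the index $\delta_m$ of $\kappa_m$ in $C_m\simeq R$ for $m\in\mathcal N^{\ind}$ (the exponent with $\kappa_m\in\mathfrak m^{\delta_m}C_m$), and then prove
\[
\length\bigl(R/(\lambda_n)\bigr)=\min\{k,\ \delta_{m}+2\length(M_n)\}\quad\text{for suitable } m=n/\ell \text{ or } n\ell.
\]
The tools are the two reciprocity laws together with Lemmas \ref{lemmaind} and \ref{lemmadef}. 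By Assumption \ref{assArt}(4) and Chebotarev, choose $\ell$ such that $\loc_\ell$ is injective on a summand of maximal length, respectively bijective on the free part. The second reciprocity law then gives $\length(R/(\lambda_{n\ell}))$ from $\loc_\ell(\kappa_n)$. The first reciprocity law gives the reverse passage, $\kappa_{n\ell}$ from $\lambda_n$. Combined with the structural change $M_n\to M_{n\ell}$ described in the two lemmas, this yields a recursion. In particular, the index $\delta$ is constant along such chains, and its value equals $\partial(\lambda)$ at the minimizing stage. I would also verify that $\partial(\lambda)$ is attained when the Selmer group has no torsion $M$, i.e.\ at large $j$.

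\textbf{Step 2: upper bound.} Start from $n=1$ with $\Sel_{\mathcal F}(K,T)\simeq R^e\oplus\bigoplus_i(R/\mathfrak m^{d_i})^2$. Add primes one at a time, alternating between the cases of Lemma \ref{lemmadef} (which kills one pair $d_0$ and creates a free summand) and Lemma \ref{lemmaind} (which keeps $M$). After adding $j$ primes, the surviving torsion is $\bigoplus_{i\ge (j-e)/2}(R/\mathfrak m^{d_i})^2$; the shift depends on whether we start in the definite or indefinite case, which is where $e$ enters. Step 1 then produces an $n$ with $\nu(n)=j$ and
\[
\length\bigl(R/(\lambda_n)\bigr)\le \partial(\lambda)+\sum_{i\ge (j-e)/2} d_i,
\]
giving the inequality ``$\le$'' (trivially also $\le k$).

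\textbf{Step 3: lower bound, the main obstacle.} For every $n$ with $\nu(n)=j$, I need
\[
\length\bigl(R/(\lambda_n)\bigr)\ge\min\Bigl\{k,\ \partial(\lambda)+\sum_{i\ge (j-e)/2}d_i\Bigr\}.
\]
The key input is that adding one prime can remove at most one pair of summands. That is, $M_{n\ell}$ surjects onto, or contains, $M_n$ with one summand dropped, which I would extract from \cite[Proposition 2.2.9]{howard2012bipartite} and the exact sequences in the proofs of the two lemmas. An elementary-divisor interlacing argument then gives $2\length(M_n)\ge 2\sum_{i\ge (j-e)/2}d_i$, since $\nu(n)=j$. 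Combined with the index bound $\delta\ge\partial(\lambda)$, which holds by definition of $\partial$ as a minimum over the chain relation of Step 1, this gives the lower bound. The delicate points are the interlacing statement for arbitrary, non-generic $\ell$, and the truncation at $k$ when $\lambda_n=0$. I would handle them by induction on $\nu(n)$, following \cite[Lemma 4.5.9]{MR}.
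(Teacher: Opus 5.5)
The overall plan is sound: reduce everything to $\min_{\nu(n)=j}\length(M_n)$, get the upper bound from an explicit chain of primes through Lemmas \ref{lemmadef} and \ref{lemmaind}, and get the lower bound from elementary divisors. However, the identity in Step 1, on which both bounds rest, is misstated.

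\textbf{The correct identity.} What is true, and what the paper uses in its own Step 1, is Howard's rigidity theorem \cite[Theorem 2.5.1]{howard2012bipartite}. There is a single integer $\delta$, independent of $n$, such that $\lambda_n$ generates $\mathfrak m^{\delta}\cdot\mathfrak m^{\length(M_n)}$, i.e.
\[
\ind(\lambda_n)=\min\bigl\{k,\ \delta+\length(M_n)\bigr\}\qquad\text{for every } n\in\mathcal N^{\deff}.
\]
The correction term is $\length(M_n)$, not $2\length(M_n)=\length\bigl(\Sel_{\mathcal F(n)}(K,T)\bigr)$. Also, the raw index of $\kappa_m$ in $C_m$ is not the constant quantity: it equals $\min\{k,\delta+\length(M_m)\}$, which changes along your chains because $\length(M_m)$ does.

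\textbf{Why your version fails.} You can see this with your own tools. Take $m\in\mathcal N^{\ind}$ and $\ell$ such that $\loc_\ell$ maps $C_m$ isomorphically onto $H^1_{\unr}(K_\ell,T)$. The second reciprocity law gives exactly $\ind(\lambda_{m\ell})=\ind_{C_m}(\kappa_m)$. Lemma \ref{lemmaind} gives $M_{m\ell}\simeq M_m$. So for $n=m\ell$, your identity with $m=n/\ell$ would force $\length(M_n)=0$ whenever $\ind(\lambda_n)<k$.

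The factor $2$ is not cosmetic. With it, the $n$ built in Step 2 only gives $\ind(\lambda_n)\le\partial(\lambda)+2\sum_{i\ge(j-e)/2}d_i$. Step 3 would then give a lower bound exceeding the claimed value.

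\textbf{Uniformity in $n$.} Step 3 applies the identity at an arbitrary $n$ with $\nu(n)=j$, and needs the same $\delta$ for all of them. A recursion along chains of generically chosen primes only controls the vertices on those chains. That uniformity is the real content of Howard's theorem, proved by a connectivity argument on $\mathcal N$. You should quote it rather than try to rederive it from the two reciprocity laws.

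\textbf{After the correction.} With Step 1 fixed, $\partial(\lambda)=\min\{k,\delta\}$, since some $n$ has $M_n=0$. Your Steps 2 and 3 then show $\min_{\nu(n)=j}\length(M_n)=\sum_{i\ge(j-e)/2}d_i$, which proves the theorem.

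\textbf{Comparison with the paper.} Your route differs mainly in the upper bound.
\begin{itemize}
\item The paper inducts on $j$. It passes to the shifted system $(\kappa',\lambda')$ for $(T,\mathcal F(\ell_1\ell_2),\mathcal P\smallsetminus\{\ell_1,\ell_2\})$ and proves $\partial(\lambda')\le\partial(\lambda)$ by a chain of reciprocity laws through auxiliary primes $q_1,q_2$. Your direct chain avoids the shifted system entirely, because the uniform $\delta$ already computes $\ind(\lambda_n)$ at the end of the chain.
\item For the lower bound, the paper bounds in one stroke the kernel of $\Sel_{\mathcal F}(K,T)\to\prod_{\ell\mid n}H^1_{\unr}(K_\ell,T)$, whose image is generated by $j$ elements. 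This is the same elementary-divisor fact as your iterated interlacing.
\end{itemize}

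The interlacing is not delicate for non-generic $\ell$. The module $\Sel_{\mathcal F_\ell(n)}(K,T)$ sits inside both $\Sel_{\mathcal F(n)}(K,T)$ and $\Sel_{\mathcal F(n\ell)}(K,T)$, with quotients embedding in $H^1_{\unr}(K_\ell,T)\simeq R$ and $H^1_{\ord}(K_\ell,T)\simeq R$ respectively. A submodule with cyclic quotient has interlacing elementary divisors, so no special choice of $\ell$ is needed.
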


\begin{proof} We divide our arguments into several steps. 

\texttt{Step 1.} By \cite[Theorem 2.5.1]{howard2012bipartite} (see also \cite[Theorem 4.18]{kim2024higher}), we have
\[
\partial^{(j)}(\lambda)=\min\Bigl\{k, \partial (\lambda) + \length (M_n)\;\Big|\; j=\nu(n)\equiv\epsilon+1\pmod{2}\Bigr\}.
\]
To check this formula, following the notation in \cite[Section 2]{howard2012bipartite}, let $\ind(\lambda_n)$ be the index of divisibility of $\lambda_n$ in $R$, \emph{i.e.}, the
largest integer $b\leq k$ such that $\lambda_n\in\mathfrak{m}^b$ (\emph{cf.} \cite[Section 2]{howard2012bipartite}), and write $\delta$ for the minimum of $\mathrm{ind}(\lambda_n)$ for all $n\in\mathcal{N}^{\deff}$. Then there are equalities
\[
\begin{split}\delta&=\min\bigl\{\mathrm{ind}(\lambda_n)\mid n\in \mathcal{N}^{\deff}\bigr\}=\min\bigl\{k,\length(R/\lambda_nR)\mid n\in \mathcal{N}^{\deff}\bigr\}\\
&=\min_{j\equiv\epsilon+1\;(\text{mod $2$})}\Bigl\{\min\bigl\{k,\length(R/\lambda_nR)\mid\nu(n)=j, n\in\mathcal{N}^{\deff}\bigr\}\!\Bigr\}\\
&=\min_{j\equiv\epsilon+1\;(\text{mod $2$})}\Bigl\{\min\bigl\{k,\partial^{(j)}(\lambda_n)\mid\nu(n)=j\bigr\}\!\Bigr\}=\min\bigl\{k,\partial(\lambda)\bigr\}.
\end{split}
\]
For $n\in \mathcal{N}^{\deff}$, put 
\[
\mathrm{Stub}(n)\defeq\mathfrak{m}^{\mathrm{length}(M_n)}R.
\]
By \cite[Theorem 2.5.1]{howard2012bipartite}, $\lambda_n$ generates $\mathfrak{m}^\delta\cdot\mathrm{Stub}(n)$ for all $n\in\mathcal{N}^\mathrm{def}$. Therefore, there are equalities 
\[
\ind(\lambda_n)=\delta+\length(M_n)=\min\bigl\{k,\partial(\lambda)+\length(M_n)\bigr\},
\]
and then we get
\[
\begin{split}\partial^{(j)}(\lambda)&=\min\bigl\{\mathrm{ind}(\lambda_n,R)\mid\text{$\nu(n)=j$, $n\in \mathcal{N}^{\deff}$}\bigr\}\\
&=\min\bigl\{k,\partial(\lambda)+\length(M_n)\mid\text{$\nu(n)=j$, $n\in \mathcal{N}^{\deff}$}\bigr\}.
\end{split}
\]

\texttt{Step 2.} We prove the basis of the induction. If $e=0$, \emph{i.e.}, $1\in\mathcal{N}^{\deff}$ and $\epsilon=1$, then we have the equality
\[
\partial^{(0)}(\lambda)=\min\Biggl\{k, \partial(\lambda)+ \sum_{i\geq0} d_i\Biggr\}.
\]
If $e=1$, \emph{i.e.}, $1\in\mathcal{N}^{\ind}$ and $\epsilon=0$, then we choose $\ell \in \mathcal{P}$ such that the restriction of
\[
\loc_\ell:\Sel_{\mathcal{F}}(K,T) \longrightarrow H^1_{\unr}(K_{\ell},T)
\]
to the $R$-component is an isomorphism (such an $\ell$ exists by \cite[Lemma 2.3.3]{howard2012bipartite}; \emph{cf.} also \cite[Lemma 4.14]{kim2024higher}). 
By Lemma \ref{lemmaind}, $M_\ell\simeq M_1$. On the other hand, by \cite[Corollary 2.2.12]{howard2012bipartite} (\emph{cf.} also \cite[Corollary 4.9]{kim2024higher}) there is an inequality $\mathrm{length}(M_1)\leq \mathrm{length}(M_q)$ for all $q \in \mathcal{P}$, so $\length (M_\ell) \leq \length (M_q)$ for all $q \in \mathcal{P}$. In particular, we get
\[
\partial^{(1)}(\lambda)=\min \Biggl\{k,\partial(\lambda)+\sum_{i\geq0}d_i\Biggr\}.
\]

\texttt{Step 3.} We assume that the desired statement holds for $j-2$ and prove it for $j$. Let $n \in \mathcal{N}$ with $\nu(n)=j$ and consider the localization map
\[
\Sel_{\mathcal{F}}(K,T) \longrightarrow \prod_{\ell | n}H^1_{\unr}(K_\ell,T),
\]
where the right-hand side is free of rank $j$. Then the image of this map has length at most $2\cdot\sum_{i\leq\frac{j-e}{2}-1}d_i+ek$, so its kernel has length at least $2\cdot \sum_{i\geq\frac{j-e}{2}}d_i$. Since $M_n$ is contained in $\Sel_{\mathcal{F}(n)}(K,T)$, it follows that $\length(M_n)\geq\sum_{i\geq\frac{j-e}{2}}d_i$, so there is an inequality
\[
\partial^{(j)}(\lambda)\geq\min\Biggl\{k,\partial(\lambda)+\sum_{i\geq\frac{j-e}{2}}d_i \Biggr\}.
\]
Now we prove the opposite inequality. Choose $\ell_1\in\mathcal{P}$ such that the restriction of
\[
\loc_{\ell_1}: \Sel_{\mathcal{F}}(K,T) \longrightarrow H^1_{\unr}(K_{\ell_1},T)
\]
to one of the components of biggest length is injective and pick $\ell_2\in\mathcal{P}\smallsetminus\{\ell_1\}$ such that the restriction of
\[
\loc_{\ell_2}: \Sel_{\mathcal{F}(\ell_1)}(K,T) \longrightarrow H^1_{\unr}(K_{\ell_2},T)
\]
to one of the components of biggest length is injective (as before, the existence of $\ell_1$ and $\ell_2$ is guaranteed by \cite[Lemma 2.3.3]{howard2012bipartite}). 

Suppose that $e=1$ (so $1\in\mathcal{N}^{\ind}$, $\epsilon=0$ and $j$ is odd). By Lemma \ref{lemmaind}, we have $M_1\simeq M_{\ell_1}$, so Lemma \ref{lemmadef} allows us to conclude that there is an isomorphism
\[
\Sel_{\mathcal{F}(\ell_1 \ell_2)}(K,T)\simeq R\oplus\bigoplus_{i\geq1} \bigl(R/\mathfrak{m}^{d_{i}}\bigr)^2.
\]
Similarly, if $e=0$ (so $1\in\mathcal{N}^{\deff}$, $\epsilon=1$, and $j$ is even),
by Lemma \ref{lemmadef} there is an isomorphism
\[
\Sel_{\mathcal{F}(\ell_1)}(K,T) \simeq R\oplus\bigoplus_{i\geq1}\bigl(R/\mathfrak{m}^{d_{i}}\bigr)^2,
\]
while by Lemma \ref{lemmaind} we have $M_{\ell_1\ell_2}\simeq M_{\ell_1}$; in conclusion, there is an isomorphism
\[
\Sel_{\mathcal{F}(\ell_1 \ell_2)}(K,T)\simeq\bigoplus_{i\geq1}\bigl(R/\mathfrak{m}^{d_{i}}\bigr)^2.
\]
Therefore, in any case we have 
\[
\Sel_{\mathcal{F}(\ell_1 \ell_2)}(K,T)\simeq R^e\oplus\bigoplus_{i\geq0} \bigl(R/\mathfrak{m}^{d_{i+1}}\bigr)^2.
\]
Now set $\mathcal{P}'\defeq\mathcal{P}\smallsetminus\{\ell_1,\ell_2\}$. Consider the bipartite Euler system $(\kappa',\lambda')$ for $(T,\mathcal{F}(\ell_1 \ell_2),\mathcal{P}')$ where $\kappa'_n\defeq\kappa_{n \ell_1 \ell_2}$ and $\lambda'_n\defeq\lambda_{n \ell_1 \ell_2}$ for all square-free products $n$ of prime numbers in $\mathcal{P}'$; observe that the ordinary conditions at $\ell_1$ and $\ell_2$ are both self-dual and cartesian. Then the pair $(\kappa',\lambda')$ is a non-trivial free bipartite Euler system. 

With notation as in Definition \ref{C-def}, choose $m\in\mathcal{N}^{\deff}$ such that $\ind(\lambda_m)=\partial(\lambda)$ and (using \cite[Corollary 2.2.12]{howard2012bipartite}) pick $q_1 \in \mathcal{P}$ such that the restriction of
\[
\loc_{q_1}:\Sel_{\mathcal{F}(m\ell_1)}(K,T) \longrightarrow H^1_{\unr}(K_{q_1},T)
\]
to $C_{m \ell_1}$ is an isomorphism. Moreover, choose $q_2 \in \mathcal{P}$ such that the restriction of
\[
\loc_{q_2}:\Sel_{\mathcal{F}(m \ell_1 \ell_2q_1 )}(K,T) \longrightarrow H^1_{\unr}(K_{q_2},T)
\]
to $C_{m \ell_1 \ell_2q_1 }$ is an isomorphism. Let us write $\ind(\kappa_n)$ for the index of divisibility of $\kappa_n$ in $\Sel_{\mathcal{F}(n)}(K,T)$, \emph{i.e.}, the largest integer $b\leq k$ such that $\kappa_n\in \mathfrak{m}^b\Sel_{\mathcal{F}(n)}(K,T)$. Then there are the following three series of equalities and inequalities:
\[
\begin{aligned}
\partial(\lambda) &=\ind(\lambda_m) &  \text{\small{(choice of $m$)}}\\
&=\ind\bigl(\loc_{\ell_1}(\kappa_{m \ell_1})\bigr) & \text{\small{(first explicit reciprocity law)}}\\
&\geq\ind(\kappa_{m \ell_1});& \text{}\\
\ind(\kappa_{m \ell_1})&=\ind\bigl(\loc_{q_1}(\kappa_{m \ell_1 })\bigr) &\text{\small{(choice of $q_1$)}}\\
&=\ind(\lambda_{m \ell_1 q_1})& \text{\small{(second explicit reciprocity law)}}\\
&=\ind\bigl(\loc_{\ell_2}(\kappa_{m \ell_1 q_1 \ell_2})\bigr) &\text{\small{(first explicit reciprocity law)}}\\
&\geq \ind(\kappa_{m \ell_1 q_1 \ell_2}); &\text{}\\
\ind(\kappa_{m \ell_1 q_1 \ell_2})&=\ind\bigl(\loc_{q_2}(\kappa_{m \ell_1  q_1 \ell_2})\bigr) &\text{\small{(choice of $q_2$)}}\\
&=\ind(\lambda_{m \ell_1 \ell_2q_1q_2})  &\text{\small{(second explicit reciprocity law)}}\color{black}\\
&\geq \partial(\lambda')  &\text{\small{(definition of $\lambda'$)}}.
\end{aligned}
\]
Thus, we have shown that $\partial(\lambda')\leq \partial(\lambda)$. Finally, we obtain
\[
\partial^{(j)}(\lambda) \leq \partial^{(j-2)}(\lambda')=\min\Biggl\{k,\partial(\lambda')+ \sum_{i\geq\frac{j-e}{2}-1} d_{i+1}\Biggr\}\leq\min\Biggl\{k,\partial(\lambda)+\sum_{i \geq\frac{j-e}{2}} d_i\Biggr\},
\]
where the equality holds by our inductive hypothesis. This concludes the proof. \end{proof}

It is worth noticing that Theorem \ref{artSel} expresses the structure of Selmer groups 
in terms of the $\lambda$-component of $(\kappa,\lambda)$ both when $e=0$ and when $e=1$. The next corollary shows that, when $e=1$, one can also derive from Theorem \ref{artSel} a similar statement describing the structure of Selmer groups in terms of the $\kappa$-component of $(\kappa,\lambda)$. We first introduce the relevant notation. Given a bipartite Euler system $(\kappa, \lambda)$ of parity $0$ and an even integer $j$, define
\begin{equation} \label{partialk}
\partial^{(j)}(\kappa)\defeq\min\bigl\{\ind(\kappa_n)\mid\nu(n)=j\bigr\},\quad
\partial(\kappa)\defeq\min\bigl\{\partial^{(j)}(\kappa)\mid j\equiv0\pmod 2\bigr\}.
\end{equation}
In the following statement, keep the notation of Theorem \ref{artSel} in force.

\begin{theorem} \label{artKappa}
Let $(\kappa, \lambda)$ be a non-trivial free bipartite Euler system of parity $0$. Assume that there is an isomorphism
\[
\Sel_{\mathcal{F}}(K,T) \simeq R\oplus \bigoplus_{i \geq 0} \bigl( R/\mathfrak{m}^{d_i} \bigr)^2
\]
for a non-increasing sequence ${(d_i)}_{i\geq0}$ with $d_i=0$ for $i\gg0$. The equality 
\[
\partial^{(j)}(\kappa)=\min\Biggl\{ k,\partial(\kappa)+\sum_{i\geq\frac{j}{2}}d_i\Biggr\}
\]
holds for all even integers $j\geq0$.
\end{theorem}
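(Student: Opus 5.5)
The plan is to deduce the statement from Theorem \ref{artSel} with $e=1$, so $\epsilon=0$ and $1\in\mathcal{N}^{\ind}$. The bridge is a comparison of indices,
\[
\ind(\kappa_n)=\min_{\ell\nmid n}\ind(\lambda_{n\ell}) \quad\text{for } n\in\mathcal{N}^{\ind},
\]
which I would establish through the two explicit reciprocity laws. Once this comparison holds, minimizing over $\nu(n)=j$ gives
\[
\partial^{(j)}(\kappa)=\partial^{(j+1)}(\lambda),\qquad \partial(\kappa)=\partial(\lambda),
\]
and the statement follows by substitution.

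\emph{The inequality ``$\leq$''.} For any $\ell\nmid n$, the second reciprocity law gives
\[
\ind(\lambda_{n\ell})=\ind\bigl(\loc_\ell(\kappa_n)\bigr)\geq \ind(\kappa_n).
\]

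\emph{The inequality ``$\geq$''.} Since the system is free, $\kappa_n$ lies in a free rank-one submodule $C_n\subset\Sel_{\mathcal{F}(n)}(K,T)$. By \cite[Lemma 2.3.3]{howard2012bipartite} (as used in Step~3 of the proof of Theorem \ref{artSel}), there is $q\in\mathcal{P}$ with $q\nmid n$ such that $\loc_q$ restricted to $C_n$ is an isomorphism onto $H^1_{\unr}(K_q,T)\simeq R$. For this $q$ we get $\ind(\loc_q\kappa_n)=\ind_{C_n}(\kappa_n)$. We also need $\ind_{C_n}(\kappa_n)=\ind(\kappa_n)$ in the whole Selmer group. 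This holds if $C_n$ can be taken to be the $R$-summand in Proposition \ref{eSel}: a summand is a direct factor, so divisibility inside it equals divisibility in the ambient module. I expect this identification of $C_n$ with a free direct summand to be the main point needing care. I would argue that a free rank-one submodule of $R\oplus M_n\oplus M_n$, with $R$ of length $k$, must contain an element of annihilator $0$. Such an element generates a direct summand, since $R$ is self-injective. The identity $\ind=\ind_{C_n}$ then follows.

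\emph{Substitution.} Using the comparison, Theorem \ref{artSel} with $e=1$ and $j+1$ odd gives
\[
\partial^{(j)}(\kappa)=\partial^{(j+1)}(\lambda)=\min\Bigl\{k,\partial(\lambda)+\sum_{i\geq \frac{j}{2}}d_i\Bigr\}.
\]
Since $\partial(\kappa)=\partial(\lambda)$, this is exactly the claimed formula.

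One caveat is that $\partial(\lambda)$ and $\partial(\kappa)$ are minima taken over different sets. However, the comparison identity shows that every value $\ind(\lambda_m)$ with $\nu(m)$ odd, $m=n\ell$, dominates $\ind(\kappa_n)$ for $n=m/\ell$. So the two minima coincide, with $\min\{k,\cdot\}$ applied on both sides.
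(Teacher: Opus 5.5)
Your proposal is correct and is essentially the paper's argument: you reduce to Theorem~\ref{artSel} by proving $\partial^{(j)}(\kappa)=\partial^{(j+1)}(\lambda)$, with the bound $\ind(\lambda_{n\ell})\geq\ind(\kappa_n)$ for every $\ell$ coming from the second reciprocity law, and the reverse bound coming from a prime $q$ at which $\loc_q$ is an isomorphism on $C_n$ (the paper's sketch leaves this choice of prime implicit when it writes $\ind(\kappa_n)\geq\ind(\loc_\ell(\kappa_n))$ for an arbitrary $\ell\in\mathcal{P}$). The direct-summand step you single out as delicate is not needed, since $\mathfrak{m}^bC_n\subset\mathfrak{m}^b\Sel_{\mathcal{F}(n)}(K,T)$ already gives $\ind_{C_n}(\kappa_n)\leq\ind(\kappa_n)$, and that is the only direction your ``$\geq$'' step uses.
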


\begin{proof} By Theorem \ref{artSel}, it suffices to check that $\partial^{(j)}(\kappa)=\partial^{(j+1)}(\lambda)$ for every even $j$, \emph{i.e.}, that the equality
\[
\min\bigl\{\ind(\lambda_n)\mid\nu(n)=j+1\bigr\}=\min\bigl\{\ind(\kappa_n)\mid\nu(n)=j\bigr\}
\]
holds for every even $j$. Lemma \ref{lemmaind} gives
\[
\min\bigl\{\ind(\kappa_n)\mid\nu(n)=j\bigr\}\leq\min\bigl\{\ind(\lambda_n)\mid\nu(n)=j+1\bigr\},
\]
and the other inequality follows from the second explicit reciprocity law (Definition \ref{defBES}). Indeed, for $\ell\in\mathcal{P}$ we have
\begin{equation} \label{ineq-def/ind}
\ind(\kappa_n)\geq\ind\bigl(\loc_\ell(\kappa_n)\bigr)=\ind(\lambda_{n\ell}),
\end{equation}
where the equality follows from the second explicit reciprocity law. \end{proof}

The following remark will not be used in the paper, but we add it for completeness.

\begin{remark}
Fix $n\in\mathcal{N}$. Let $q_1,q_2\in \mathcal{P}$ with $q_1\neq q_2$, $q_1\nmid n$, $q_2\nmid n$. For simplicity, put $\Sel_{\star}\defeq\Sel_\star(K,T)$ for $\star\in\bigl\{\mathcal{F}(n),\mathcal{F}(nq_1),\mathcal{F}(nq_2)\bigr\}$. By \cite[Corollary 2.2.12]{howard2012bipartite}, there are equalities
\[ 
\length(M_1)=\begin{cases}
\length(M_{q_1})+\length\bigl(\loc_{q_1}(\Sel_{\mathcal{F}})\bigr) & \text{if $e=0$},\\[2mm]
\length(M_{q_1})-\length\bigl(\loc_{q_1}(\Sel_{\mathcal{F}(q_1)})\bigr) & \text{if $e=1$}
\end{cases}
\]
and
\[
\length(M_{q_1})=\begin{cases}
\length(M_{q_1q_2})-\length\bigl(\loc_{q_2}(\Sel_{\mathcal{F}(q_1q_2)})\bigr) & \text{if $e=0$},\\[2mm]
\length(M_{q_1q_2})+\length\bigl(\loc_{q_2}(\Sel_{\mathcal{F}(q_1)})\bigr) & \text{if $e=1$}.
\end{cases}
\]
Thus, we get
\[
\text{{\small$\length(M_{q_1q_2})=
\begin{cases}
\length(M_1)-\length\bigl(\loc_{q_1}(\Sel_{\mathcal{F}})\bigr)+\length\bigl(\loc_{q_2}(\Sel_{\mathcal{F}(q_1q_2)})\bigr) & \text{if $e=0$},\\[2mm]
\length(M_1)+\length\bigl(\loc_{q_1}(\Sel_{\mathcal{F}(q_1)})\bigr)-\length\bigl(\loc_{q_2}(\Sel_{\mathcal{F}(q_1)})\bigr) & \text{if $e=1$}.
\end{cases}$}}
\]
Now choose $\ell_1\in\mathcal{P}$ such that $\ell_1\nmid n$ and the restriction of
\[
\loc_{\ell_1}: \Sel_{\mathcal{F}(n)}(K,T) \longrightarrow H^1_{\unr}(K_{\ell_1},T)
\]
to one of the components of largest length is injective; choose also $\ell_2 \in \mathcal{P}$ such that $\ell_2\nmid n\ell_1$ and the restriction of
\[
\loc_{\ell_2}: \Sel_{\mathcal{F}(n\ell_1)}(K,T) \longrightarrow H^1_{\unr}(K_{\ell_2},T)
\]
has the same property. The existence of $\ell_1$ and $\ell_2$ is ensured, as before, by \cite[Lemma 2.3.3]{howard2012bipartite}. If $e=1$, then $\loc_{\ell_1}\bigl(\Sel_{\mathcal{F}(n\ell_1)}\bigr)=0$ by \cite[Corollary 2.2.12]{howard2012bipartite}, as $\length\bigl(\loc_{\ell_1}(\Sel_{\mathcal{F}(n)})\bigr)=k$ for our choice of $\ell_1$; moreover, our choice of $\ell_2$ gives $\length\bigl(\loc_{\ell_2}(\Sel_{\mathcal{F}(n\ell_1)})\bigr)=d_0$. Similarly, if $e=0$, then $\length\bigl(\loc_{\ell_1}(\Sel_{\mathcal{F}(n)})\bigr)=d_0$ by our choice of $\ell_1$ 
and $\length\bigl(\loc_{\ell_2}(\Sel_{\mathcal{F}(n\ell_1\ell_2)})\bigr)=0$ by 
\cite[Corollary 2.2.12]{howard2012bipartite}, as $\length\bigl(\loc_{\ell_2}(\Sel_{\mathcal{F}(n\ell_1)})\bigr)=k$. Therefore, in both cases there is an equality $\length(M_{n\ell_1\ell_2})=\length(M_n)-d_0$. In particular, for all $q_1,q_2\in\mathcal{P}$ we get $\length(M_{n\ell_1\ell_2})\leq \length(M_{nq_1q_2})$.
\end{remark}
\color{black}

\subsection{Bipartite Euler systems over a DVR}

Let $R$ be a complete discrete valuation ring with maximal ideal $\mathfrak{m}=(\pi)$ and quotient field $F$, which is assumed to be a finite extension of $\Q_p$ inside $\overline{\Q}_p$. Set 
\[
V\defeq T\otimes_RF,\quad A\defeq V/T=T\otimes_R(F/R).
\]
Given a Selmer structure $(\mathcal{F},\Sigma_\mathcal{F})$ on $V$ as in Definition \ref{SelStructureT}, we introduce a Selmer structure $(\mathcal{F},\Sigma_\mathcal{F})$ on $T$ by defining $H^1_{\mathcal{F}}(K_\mathfrak{l},T)$ to be the inverse image of $H^1_{\mathcal{F}}(K_\mathfrak{l},V)$ under the map induced by the inclusion $T \hookrightarrow V$ for all primes $\mathfrak{l}$ of $K$. It is actually a Selmer structure since $H^1_{\mathcal{F}}(K_\mathfrak{l},T)=H^1_{\unr}(K_\mathfrak{l},T)$ for all $\mathfrak{l} \notin \Sigma_\mathcal{F}$ by \cite[Lemma 1.1.9]{MR}.
Analogously, let $H^1_\mathcal{F}(K_\mathfrak{l},A)$ be the Selmer structure on $A$ obtained by propagation from $H^1_\mathcal{F}(K_\mathfrak{l},V)$ via the projection map $V\twoheadrightarrow A$ and define
\[
\Sel_\mathcal{F}(K,A)\defeq\ker\Biggl(H^1(K,A)\longrightarrow \prod_\mathfrak{l} \frac{H^1(K_\mathfrak{l},A)}{H^1_\mathcal{F}(K_\mathfrak{l},A)}\Biggr).
\]
For every integer $k\geq1$, the Selmer structure $(\mathcal{F},\Sigma_\mathcal{F})$ on $T$ induces by propagation via the projection map $T\twoheadrightarrow T/\mathfrak{m}^kT$ (respectively, via the inclusion $A[\mathfrak{m}^k]\subset A$) a Selmer structure on $T/\mathfrak{m}^kT$ (respectively, $A[\mathfrak{m}^k]$); for each of these Selmer structures, we can consider the corresponding Selmer groups. To simplify our notation, set $T_k\defeq T/\mathfrak{m}^kT$ and $A_k\defeq A[\mathfrak{m}^k]$. 

\begin{definition} 
Let $k\geq1$ be an integer. A prime number $\ell$ is \emph{$k$-admissible} for $T$ over $R$ if $I_\ell \subset \mathfrak{m}^k$.
\end{definition} 

We denote by $\mathcal{P}_k$ the set of $k$-admissible primes in $\mathcal{P}$ and by $\mathcal{N}_k$ the set of square-free products of primes in $\mathcal{P}_k$. For all integers $k\geq1$ and all $n\in\mathcal{N}_k$, we still write $\mathcal{F}$ for the induced Selmer structure for $T/\mathfrak{m}^kT$. Set 
\[
S_n^{(k)}\defeq\Sel_{\mathcal{F}(n)}(K,T_k)
\] 
and, as in Theorem \ref{eSel}, denote by $e^{(k)}(n)\in\{0,1\}$ the parity of the rank of $S_n^{(k)}$.

\begin{assumption}\label{ass}
\begin{enumerate}
\item $\overline{T}=T/\mathfrak{m}T$ is absolutely irreducible;
\item $T$ admits a perfect and symmetric $R$-bilinear pairing
\[
(\cdot, \cdot):T \times T \longrightarrow R(1)
\]
such that $(x^\sigma,y^{\tau \sigma \tau})=(x,y)^\sigma$ for any $\sigma \in G_K$, where $\tau \in G_\Q$ is a fixed complex conjugation;
\item $\mathcal{F}$ is cartesian in the category $\mathrm{Quot}(T)$ (\emph{cf.} \cite[\S1.2]{HoHeeg}) and self-dual;
\item for any $k \geq 1$ and any $c \in H^1(K,\overline{T})$ with $c\neq 0$, there are infinitely many $\ell \in \mathcal{P}_k$ such that $\loc_\ell(c) \neq 0$;
\item for each $k\geq 0$, the following \emph{control theorem} holds:
\[
\Sel_{\mathcal{F}(n)}(K,A_k)\simeq \Sel_{\mathcal{F}(n)}(K,A)[\mathfrak{m}^k].
\]
\end{enumerate}    
\end{assumption}
Observe that Assumption \ref{ass} immediately implies that for all integers $k \geq 1$ the triple $(T_k,\mathcal{F},\mathcal{P}_k)$ satisfies Assumption \ref{assArt}.

\begin{lemma} \label{lemma-indep}
For all $n \in \mathcal{N}$, the value of $e^{(k)}(n)$ is independent of $k$.
\end{lemma}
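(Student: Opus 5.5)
The plan is to compare $S_n^{(k)}$ with $S_n^{(1)}$ by reducing modulo $\mathfrak m$, and to show that the parity $e^{(k)}(n)$ can be read off from the $\F$-dimension of the mod $\mathfrak m$ Selmer group $S_n^{(1)}=\Sel_{\mathcal F(n)}(K,\overline T)$. Here $\F=R/\mathfrak m$.

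First, I fix $n\in\mathcal N$ with $n\in\mathcal N_k$. Note that $n\in\mathcal N_k$ implies $n\in\mathcal N_{k'}$ for all $k'\leq k$, since $\mathfrak m^k\subset\mathfrak m^{k'}$. By Assumption \ref{ass}, $(T_k,\mathcal F,\mathcal P_k)$ satisfies Assumption \ref{assArt}. So Proposition \ref{eSel} gives
\[
S_n^{(k)}\simeq (R/\mathfrak m^k)^{e^{(k)}(n)}\oplus M_n^{(k)}\oplus M_n^{(k)}.
\]
Because $\mathcal F(n)$ is cartesian (\cite[Lemma 2.2.6]{howard2012bipartite}), I will use the standard consequence of cartesianness together with absolute irreducibility of $\overline T$ (so that $H^0(K,\overline T)=0$), as in \cite[Lemma 1.3.8]{HoHeeg} or \cite[Lemma 3.5.4]{MR}. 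Namely, the map $T_1\simeq T_k[\mathfrak m]$, $\bar x\mapsto\pi^{k-1}x$, induces an isomorphism
\[
S_n^{(1)}\simeq S_n^{(k)}[\mathfrak m].
\]
The argument for this goes in two parts.
\begin{itemize}
\item Injectivity of $H^1(K,T_1)\to H^1(K,T_k)$ follows from $H^0(K,T_k/\mathfrak m T_k)=0$ in the long exact sequence.
\item The identification of the local conditions is exactly the cartesian property at the primes $v\in\Sigma_{\mathcal F}$ and at $\ell\mid n$. At the remaining primes the conditions are unramified, and the identification there is \cite[Lemma 1.1.9]{MR}.
\end{itemize}

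Next, I take $\mathfrak m$-torsion in the decomposition above. This gives
\[
\dim_{\F} S_n^{(k)}[\mathfrak m]=e^{(k)}(n)+2\dim_{\F}M_n^{(k)}[\mathfrak m],
\]
because $(R/\mathfrak m^k)[\mathfrak m]\simeq\F$ and each cyclic summand of $M_n^{(k)}$ contributes one dimension. Hence
\[
e^{(k)}(n)\equiv \dim_{\F}S_n^{(1)}\pmod 2.
\]
The right-hand side does not depend on $k$. Since $e^{(k)}(n)\in\{0,1\}$, it follows that $e^{(k)}(n)=e^{(1)}(n)$ for every $k$ with $n\in\mathcal N_k$.

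The main obstacle is the isomorphism $S_n^{(1)}\simeq S_n^{(k)}[\mathfrak m]$, specifically the compatibility of the local conditions at the primes $\ell\mid n$ with the ordinary condition. I would handle this via Proposition \ref{freeness-prop}. Since $I_\ell\subset\mathfrak m^k$, $H^1_{\ord}(K_\ell,T_k)$ is free of rank one over $R/\mathfrak m^k$ and is the image of $H^1(K_\ell,(R/\mathfrak m^k)(1))$. Its $\mathfrak m$-torsion is therefore the image of $H^1(K_\ell,\F(1))$, which is $H^1_{\ord}(K_\ell,T_1)$. Alternatively, I would simply invoke \cite[Lemma 2.2.6]{howard2012bipartite}, which already gives cartesianness of $\mathcal F(n)$.

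A second route, if needed, is via the parity result \cite[Corollary 2.2.10]{howard2012bipartite}: $e^{(k)}(n)\equiv e^{(k)}(1)+\nu(n)\pmod 2$. This reduces the claim to $n=1$, where the same mod $\mathfrak m$ argument applies.
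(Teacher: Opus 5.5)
Your proposal is correct and follows the same route as the paper. You write $S_n^{(k)}\simeq (R/\mathfrak{m}^k)^{e^{(k)}(n)}\oplus M_n^{(k)}\oplus M_n^{(k)}$ via Proposition \ref{eSel}, note that $\dim S_n^{(k)}[\mathfrak{m}]\equiv e^{(k)}(n)\pmod 2$, and identify $S_n^{(k)}[\mathfrak{m}]$ with $S_n^{(1)}$ through the cartesian property of $\mathcal{F}(n)$ from \cite[Lemma 2.2.6]{howard2012bipartite}. Your write-up is slightly more careful than the paper's, which writes $[\mathfrak{m}^{k-1}]$ where $[\mathfrak{m}]$ is meant, and you rightly restrict to those $k$ with $n\in\mathcal{N}_k$, the only $k$ for which $S_n^{(k)}$ is defined.
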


\begin{proof} For all integers $k\geq1$, set $R_k\defeq R/\mathfrak{m}^k$ and write 
\[
\Sel_{\mathcal{F}(n)}(K,T_k)=R_k^{e^{(k)}(n)}\oplus\bigoplus_{i\geq1}\Bigl(R_k/\mathfrak{m}^{d_i^{(k)}(n)}\Bigr)^2.
\] 
Taking $\mathfrak{m}$-torsion, we get an equality 
\[
\dim_{R/\mathfrak{m}}\bigl(\Sel_{\mathcal{F}_n}(K,T_k)[\mathfrak{m}]\bigr)=e^{(k)}(n)+2\cdot\sum_{d_i^{(k)}(n)\neq 2} i\equiv e^{(k)}(n)\pmod{2}.
\]
Since the Selmer condition $\mathcal{F}(n)$ is cartesian, \cite[Lemma 2.2.6]{howard2012bipartite} ensures that for all pairs of integers $(k,k')$ with $k\geq k'\geq 1$ there is an isomorphism
\[
\Sel_{\mathcal{F}(n)}(K,T_{k'})\simeq\Sel_{\mathcal{F}(n)}(K,T_{k})\bigl[\mathfrak{m}^{k'}\bigr].
\]
Thus, since $T_k[\mathfrak{m}]\simeq T_1$ (an isomorphism being induced by the multiplication-by-$\pi^{k-1}$ map), we have 
\[
\dim_{R/\mathfrak{m}}\bigl(\Sel_{\mathcal{F}_n}(K,T_k)[\mathfrak{m}^{k-1}]\bigr)=
\dim_{R/\mathfrak{m}}\bigl(\Sel_{\mathcal{F}_n}(K,T_1)\bigr)\equiv e^{(1)}(n)\pmod{2}.
\]
Therefore, $e^{(k)}(n)\equiv e^{(1)}(n)\pmod{2}$ for all $k\geq 1$, which concludes the proof. \end{proof}

Thanks to Lemma \ref{lemma-indep}, we can define $e(n)\defeq e^{(k)}(n)$ for any choice of $k$; set $e\defeq e(1)$. We also define $\mathcal{N}^{\ind}$ and $\mathcal{N}^{\deff}$ as in the case of principal artinian rings. Notice that there exists $\epsilon \in \Z/2 \Z$ such that $\mathcal{N}^{\ind}=\mathcal{N}^\epsilon$ and $\mathcal{N}^{\deff}=\mathcal{N}^{\epsilon +1}$.

Given a bipartite Euler system $(\kappa, \lambda)$ of parity $\epsilon$ for $(T,\mathcal{F},\mathcal{P})$, for any integer $k\geq 1$ we obtain a bipartite Euler system $\bigl(\kappa^{(k)},\lambda^{(k)}\bigr)$ for $(T_k,\mathcal{F},\mathcal{P}_k)$ by projection: namely, for $n\in\mathcal{N}_k^{\ind}$ the class $\kappa_n^{(k)}$ is the image of $\kappa_n\in\Sel_{\mathcal{F}(n)}(K,T/I_nT)$ in $\Sel_{\mathcal{F}(n)}(K,T_k)$ via the canonical projection map $T/I_nT\twoheadrightarrow T_k=T/\mathfrak m^kT$, while for $n\in\mathcal{N}_k^{\deff}$ the class $\lambda_n^{(k)}$ is the image of $\lambda_n\in R/I_n$ in $R_k=R/\mathfrak{m}^k$ via the canonical projection map $R/I_n\twoheadrightarrow R_k$ (recall that the inclusion $I_n\subset\mathfrak{m}^k$ holds because $n$ is a product of $k$-admissible primes). The bipartite Euler system $\bigl(\kappa^{(k)},\lambda^{(k)}\bigr)$ for $(T_k,\mathcal{F},\mathcal{P}_k)$ thus constructed may not be free, and we modify it as follows. For integers $t\geq k >0$, denote by $\bigl(\kappa^{(k),t},\lambda^{(k),t}\bigr)$ the bipartite Euler system for 
$(T_k,\mathcal{F},\mathcal{P}_t)$ whose elements $\lambda^{(k),t}_n$ for $n\in\mathcal{N}_j^{\deff}$ and $\kappa^{(k),t}_n$ for $n\in \mathcal{N}_j^{\ind}$ are defined again by projection: namely, for $n\in \mathcal{N}^{\deff}$ the element 
$\lambda_n^{(k),t}$ is the image of $\lambda_n^{(t)}$ via the map canonical projection 
$R_t\rightarrow R_k$, while for $n\in\mathcal{N}^{\ind}$ the element $\kappa_n^{(k),t}$ is the image of $\kappa_n^{(t)}$ under the map $S^{(t)}_n\rightarrow S^{(k)}_n$ induced by the canonical projection. 

\begin{lemma} \label{freeness}
The bipartite Euler system $\bigl(\kappa^{(k),t},\lambda^{(k),t}\bigr)$ is free for all $t \geq 2k$.
\end{lemma}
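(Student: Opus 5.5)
We have to show that for every $n\in\mathcal{N}_t^{\ind}$ there is a free rank-one $R_k$-submodule $C_n\subset S^{(k)}_n$ containing $\kappa^{(k),t}_n$. The idea is that $\kappa^{(k),t}_n$ is, by construction, the image of $\kappa^{(t)}_n$ under the reduction map $S^{(t)}_n\to S^{(k)}_n$. Working at the deeper level $t\ge 2k$ leaves enough room to push this class into the free summand at level $k$.

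First we describe the structure of the two Selmer groups. By Proposition \ref{eSel}, applied over $R_t$ (which is legitimate because $(T_t,\mathcal{F},\mathcal{P}_t)$ satisfies Assumption \ref{assArt}), and since $e(n)=1$ for $n\in\mathcal{N}^{\ind}$ by Lemma \ref{lemma-indep}, we can write
\[
S^{(t)}_n\simeq R_t\oplus M^{(t)}_n\oplus M^{(t)}_n .
\]
Because $\mathcal{F}(n)$ is cartesian (\cite[Lemma 2.2.6]{howard2012bipartite}), we also have
\[
S^{(k)}_n\simeq S^{(t)}_n[\mathfrak{m}^k],
\]
where the identification $T_k\simeq T_t[\mathfrak{m}^k]$ is induced by multiplication by $\pi^{t-k}$. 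Under these identifications, the map $S^{(t)}_n\to S^{(k)}_n$ induced by $T_t\twoheadrightarrow T_k$ becomes, up to the cartesian identification, the map $x\mapsto \pi^{t-k}x$ from $S^{(t)}_n$ into $S^{(t)}_n[\mathfrak{m}^k]$. I will check this compatibility with the standard diagram $T_t\to T_k\xrightarrow{\pi^{t-k}}T_t$, whose composite is multiplication by $\pi^{t-k}$.

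Next we estimate the image. Write $\kappa^{(t)}_n=(a,b,c)$ in the decomposition above. Then $\pi^{t-k}\kappa^{(t)}_n=(\pi^{t-k}a,\pi^{t-k}b,\pi^{t-k}c)$. Since $M^{(t)}_n\oplus M^{(t)}_n$ is only part of the picture, the decisive input is a bound on the exponent of $M^{(t)}_n$. If every cyclic summand of $M^{(t)}_n$ were killed by $\pi^{t-k}$, the image would be $(\pi^{t-k}a,0,0)$. This element lies in $\pi^{t-k}R_t=R_t[\mathfrak{m}^k]$, which is free of rank one over $R_k$, and we could take $C_n$ to be this copy of $R_k$.

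That exponent bound is the main obstacle, since $M^{(t)}_n$ need not have exponent at most $t-k$ in general. I would first try the following device: the decomposition in Proposition \ref{eSel} is not canonical, and $M^{(t)}_n\oplus M^{(t)}_n$ is a module over $R_t$ with $2$-generated $\mathfrak{m}^k$-torsion in each summand. The claim to aim for is that any element $y\in\pi^{t-k}S^{(t)}_n$ with $t-k\ge k$ lies in some free rank-one $R_k$-submodule of $S^{(t)}_n[\mathfrak{m}^k]$. To prove this, write $y=\pi^{t-k}x$. Choose a generator of a summand $R_t$ or $R_t/\mathfrak{m}^{d}$ of maximal possible order containing $x$ in a suitable adapted decomposition, so that $x=\pi^{s}u$ with $u$ of order $\ge$ that of $x$; this is the usual "element of index $s$ is a multiple of an element generating a cyclic direct summand" argument for modules over a DVR quotient. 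There are then two cases.
\begin{itemize}
\item If $u$ has order at least $t-k+k=t$, then $\pi^{t-k}u$ generates a free $R_k$-module containing $y$.
\item If $u$ has order less than $t$, we use $t\ge 2k$. When $\pi^{t-k}u\ne 0$, the order of $\pi^{t-k}u$ is at most $k$, and the cyclic module it generates is free over $R_k$ exactly when this order equals $k$. When the order is smaller, we replace $u$ by $u+v$, where $v$ generates the $R_t$-summand, to force order $t$.
\end{itemize}
The hypothesis $t-k\ge k$ is what ensures that the added term $\pi^{t-k}v$ still lies in the $\mathfrak{m}^k$-torsion. The details of this adjustment, namely that $y$ still lies in the resulting cyclic submodule after modifying by multiples of $\pi^{t-k}v$, are the step I expect to require the most care.
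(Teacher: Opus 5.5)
Your reduction is correct and matches the paper: under the cartesian identification $S^{(k)}_n\simeq S^{(t)}_n[\mathfrak{m}^k]$, the class $\kappa^{(k),t}_n$ becomes $\pi^{t-k}\kappa^{(t)}_n$. The gap is in the module-theoretic claim that is supposed to replace the exponent bound.

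First, the ``usual argument'' you invoke is false. An element of a finite $R_t$-module need not be $\pi^s$ times a generator of a cyclic direct summand. For example, in $R/\mathfrak{m}^3\oplus(R/\mathfrak{m})^2$ the element $(\pi,1,0)$ has annihilator $\mathfrak{m}^2$ and is not divisible by $\pi$, while $R/\mathfrak{m}^2$ is not a summand.

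More seriously, your second case cannot be repaired by $u\mapsto u+v$. Suppose $u$ generates a summand $R/\mathfrak{m}^d$ with $t-k<d<t$. Then $\pi^{t-k}u$ has order $d-t+k<k$, so this case always needs your adjustment. But if $r\pi^{t-k}(u+v)=\pi^{t-k+s}u$, comparing $v$-components gives $r\in\mathfrak{m}^k$. Then $r\pi^{t-k}u\in\pi^tS^{(t)}_n=0$, so $y\in R\,\pi^{t-k}(u+v)$ only when $y=0$. Your stated use of $t\geq 2k$ is also empty, since $\pi^k\cdot\pi^{t-k}v=\pi^tv=0$ for every $t$.

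The missing observation is where $t\geq 2k$ really enters. Fix a decomposition $S^{(t)}_n\simeq\bigoplus_i R/\mathfrak{m}^{d_i}$. A summand not killed by $\pi^{t-k}$ has $d_i>t-k\geq k$. Since also $d_i\leq t$, one gets $\pi^{t-k}(R/\mathfrak{m}^{d_i})\subset\pi^{d_i-k}(R/\mathfrak{m}^{d_i})\simeq R_k$.

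Hence $\pi^{t-k}S^{(t)}_n$ lies in the free $R_k$-module $F=\bigoplus_{d_i>t-k}\pi^{d_i-k}(R/\mathfrak{m}^{d_i})\subset S^{(t)}_n[\mathfrak{m}^k]$. This module is non-zero because the $R_t$-summand contributes. Every element of $F$ is $\pi^s$ times a vector with a unit coordinate, so it lies in a free rank-one $R_k$-submodule.

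With this patch your route becomes a complete proof that uses no Euler-system input, only that $\kappa^{(k),t}_n$ lifts to level $t$; the price is that $C_n$ depends on the class. The paper instead obtains exactly the exponent bound you considered unavailable, via a case split. Write $S^{(t)}_n\simeq R_t\oplus N\oplus N$ and $S^{(k)}_n\simeq R_k\oplus M\oplus M$.
\begin{itemize}
\item If $\mathfrak{m}^{k-1}M\neq0$, then \cite[Proposition 2.3.5]{howard2012bipartite} gives $\kappa^{(k),t}_n=0$.
\item Otherwise, cyclicity of $\mathfrak{m}^{k-1}S^{(t)}_n[\mathfrak{m}^k]$ forces $\mathfrak{m}^{k-1}N=0$. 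So $\pi^{t-k}$ kills $N$, and the whole image of $S^{(t)}_n\to S^{(k)}_n$ is the free module $\pi^{t-k}R_t\simeq R_k$.
\end{itemize}
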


\begin{proof} This is similar to that of \cite[Lemma 3.3.6]{howard2012bipartite} (\emph{cf.} also \cite[Lemma 4.5]{BD-IMC}): we reproduce it because of the slight notational difference with respect to \cite{howard2012bipartite} in treating the ramification index. Take $n$ such that $e(n)=1$; we must show that there is a free $R_k$-submodule of $\Sel_{\mathcal{F}(n)}(K,T_k)$ containing $\kappa^{(k),t}_n$. By Theorem \ref{artSel}, there are isomorphisms $\Sel_{\mathcal{F}(n)}(K,T_j)\simeq R_j\oplus N\oplus N$ and $\Sel_{\mathcal{F}(n)}(K,T_k)\simeq R_k\oplus M\oplus M$ for finite $R$-modules $N$ and $M$. Since $R_k$ has length $k$, if $\mathfrak{m}^{k-1}M\neq0$, then $\kappa^{(k),t}_n=0$ by \cite[Proposition 2.3.5]{howard2012bipartite} and there is nothing to prove. Therefore, assume that $\mathfrak{m}^{k-1}M=0$. By \cite[Lemma 2.2.6]{howard2012bipartite}, there is a commutative triangle
\[ 
\xymatrix@C=40pt@R=40pt{\Sel_{\mathcal{F}(n)}(K,T_j)\ar[r]^-{\pi^{j-k}\cdot} \ar[d]& \Sel_{\mathcal{F}(n)}(K,T_j)[\mathfrak{m}^{k}]\\ \Sel_{\mathcal{F}(n)}(K,T_k)\ar[ru]^-\simeq} 
\] 
in which the horizontal arrow is multiplication by $\pi^{j-k}$, the vertical arrow is induced, as before, by the canonical projection and the diagonal map is an isomorphism by the control theorem in Assumption \ref{ass}. Since $\mathfrak{m}^{k-1}M=0$, the module $\mathfrak{m}^{k-1}\Sel_{\mathcal{F}(n)}(K,T_k)$ is cyclic, isomorphic to $\mathfrak{m}^{k-1}R_k/R_k\simeq R/\mathfrak{m}$; moreover, the diagonal isomorphism shows that $\mathfrak{m}^{k-1}\Sel_{\mathcal{F}(n)}(K,T_j)[\mathfrak{m}^{k}]$ is also cyclic. In particular, it follows that $\mathfrak{m}^{j-k}N=0$. Since $j\geq 2k$, the image of $N$ under the vertical arrow is trivial, so the image of the vertical map is free of rank $1$. Since $\kappa_n^{(k),t}$ lies in this image, this concludes the proof. \end{proof}

From now on, we consider the bipartite Euler system $\bigl(\boldsymbol{\kappa}^{(k)},\boldsymbol{\lambda}^{(k)}\bigr)=\bigl(\kappa^{(k),2k},\lambda^{(k),2k}\bigr)$ for $(T_k, \mathcal{F}, \mathcal{P}_{2k})$, which is free by Lemma \ref{freeness}. We still write $\lambda^{(k)}_n=\boldsymbol{\lambda}^{(k)}_n$ and $\kappa^{(k)}_n=\boldsymbol{\kappa}^{(k)}_n$ for the classes corresponding to $n\in\mathcal{N}_j^{\deff}$ and $n\in \mathcal{N}_j^{\ind}$, respectively.    
Observe that, given an integer $j\geq1$ such that $j \equiv \epsilon +1 \pmod 2$, the sequences $\bigl(\partial(\boldsymbol{\lambda}^{(k)})\bigr)_{k\geq1}$ and $\bigl(\partial^{(j)}(\boldsymbol{\lambda}^{(k)})\bigr)_{k\geq1}$ are non-decreasing; define
\begin{equation} \label{deltas}
\delta(\lambda)\defeq\lim_{k \rightarrow + \infty} \partial(\boldsymbol{\lambda}^{(k)}),\quad\delta^{(j)}(\lambda)\defeq\lim_{k \rightarrow + \infty} \partial^{(j)}(\boldsymbol{\lambda}^{(k)}).
\end{equation}

\begin{theorem} \label{thmDVR}
Let $(\kappa,\lambda)$ be a bipartite Euler system of parity $\epsilon$. Assume $\lambda_1 \neq 0$ if $e=0$ and $\kappa_1 \neq 0$ if $e=1$. There is an isomorphism of $R$-modules
\[
\Sel_{\mathcal{F}}(K,A) \simeq \left( F/R \right) ^e \oplus \bigoplus_{i\geq0} \bigl(R/\mathfrak{m}^{d_i}\bigr)^2,
\]
where $d_i\defeq\delta^{(2i+e)}(\lambda)-\delta^{(2i+2+e)}(\lambda)$. Furthermore, the sequence ${(d_i)}_{i\geq0}$ is non-increasing.
\end{theorem}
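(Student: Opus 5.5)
The plan is to deduce the DVR statement from the artinian Theorem \ref{artSel}, applied to the free bipartite Euler systems $\bigl(\boldsymbol{\kappa}^{(k)},\boldsymbol{\lambda}^{(k)}\bigr)$ for $(T_k,\mathcal{F},\mathcal{P}_{2k})$ provided by Lemma \ref{freeness}, and then to pass to the limit $k\to\infty$ by means of the control theorem in Assumption \ref{ass}.

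\emph{Structure of $\Sel_\mathcal{F}(K,A)$.} Since $\Sel_\mathcal{F}(K,A)$ is a cofinitely generated $R$-module, it has the form $(F/R)^r\oplus B$ with $B$ finite. By the control theorem, $\Sel_\mathcal{F}(K,A_k)\simeq(R/\mathfrak m^k)^r\oplus B[\mathfrak m^k]$. Because $\mathcal{F}$ is cartesian, $\Sel_\mathcal{F}(K,T_k)\simeq\Sel_\mathcal{F}(K,A_k)$ via $T_k\simeq A_k$. On the other hand, Proposition \ref{eSel} gives $\Sel_\mathcal{F}(K,T_k)\simeq R_k^{e}\oplus M\oplus M$, with $e$ independent of $k$ by Lemma \ref{lemma-indep}. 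Taking $k$ larger than the exponent of $B$, we compare the number of summands of length exactly $k$ and obtain $r=e$. The finite parts then force $B\simeq N\oplus N$, so
\[
\Sel_\mathcal{F}(K,A)\simeq(F/R)^e\oplus\bigoplus_{i\ge0}\bigl(R/\mathfrak m^{d'_i}\bigr)^2
\]
with $(d'_i)$ non-increasing and eventually zero. For $k$ larger than $d'_0$, we also have $\Sel_\mathcal{F}(K,T_k)\simeq R_k^e\oplus\bigoplus_i(R_k/\mathfrak m^{d'_i})^2$.

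\emph{Identifying $d'_i$ with $d_i$.} For each large $k$, Theorem \ref{artSel} applied to $\boldsymbol\lambda^{(k)}$ gives
\[
\partial^{(j)}\bigl(\boldsymbol\lambda^{(k)}\bigr)=\min\Bigl\{k,\;\partial\bigl(\boldsymbol\lambda^{(k)}\bigr)+\sum_{i\ge (j-e)/2}d'_i\Bigr\}.
\]
This requires the Euler system to be non-trivial. When $e=0$, the hypothesis $\lambda_1\neq0$ shows that $\lambda_1\notin\mathfrak m^k$ for $k\gg0$, so $\partial(\boldsymbol\lambda^{(k)})\le\ind(\lambda_1)$ is bounded independently of $k$. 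When $e=1$, we use $\kappa_1\neq0$ together with the second reciprocity law, exactly as in \eqref{ineq-def/ind}: choose $\ell$ with $\loc_\ell$ an isomorphism on the free part, so that $\ind(\lambda_\ell)=\ind(\kappa_1)$ stays bounded. I expect this uniform bound to be the main obstacle. The difficulty is that $\kappa_1^{(k)}$ lives in $T_k$ while the primes vary with $k$, so one must check that the index of $\kappa_1^{(k)}$ in $\Sel_{\mathcal F}(K,T_k)$ stabilizes. This should follow from the control theorem and the freeness argument in Lemma \ref{freeness}, since $\kappa_1$ has a well-defined finite divisibility index in $\varprojlim_k\Sel_\mathcal{F}(K,T_k)$.

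Granting boundedness, the non-decreasing sequence $\partial(\boldsymbol\lambda^{(k)})$ stabilizes at the finite value $\delta(\lambda)$. Letting $k\to\infty$ in the displayed formula, the truncation by $k$ disappears, and we obtain
\[
\delta^{(j)}(\lambda)=\delta(\lambda)+\sum_{i\ge(j-e)/2}d'_i .
\]
Taking the difference for $j=2i+e$ and $j=2i+2+e$ yields $\delta^{(2i+e)}(\lambda)-\delta^{(2i+2+e)}(\lambda)=d'_i$, that is, $d'_i=d_i$. This proves the isomorphism, and the fact that $(d_i)$ is non-increasing follows from the ordering chosen for $(d'_i)$.
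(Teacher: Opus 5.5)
\textbf{Gap: the corank $r=e$ does not follow from comparing summands.} Your first step compares $(R/\mathfrak m^k)^r\oplus B$ with $R_k^e\oplus M\oplus M$ for $k$ beyond the exponent of $B$. This only gives $r\equiv e\pmod 2$, not $r=e$. The module $M$ depends on $k$ and may itself contain copies of $R_k$: if $r=e+2s$, then simply $M\simeq R_k^{s}\oplus N$. Lemma~\ref{lemma-indep} does not help, since it controls only the parity.

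This step cannot be purely structural. Your computation of the corank never uses the hypothesis $\lambda_1\neq0$ (resp.\ $\kappa_1\neq0$), and without that hypothesis the statement is false. For example, in the definite setting of Section~\ref{sec::5} ($e=0$), an elliptic curve with $\mathrm{rank}\,E(K)\ge 2$ has $\Sel_{\mathcal F}(K,A)$ of corank at least $2$. Everything downstream rests on $r=e$, in particular the decomposition $\Sel_{\mathcal F}(K,T_k)\simeq R_k^e\oplus\bigoplus_i(R_k/\mathfrak m^{d'_i})^2$ with $k$-independent $d'_i$ that you feed into Theorem~\ref{artSel}.

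\textbf{Repair, as in the paper's proof.} The uniform bound on $\partial^{(e)}(\boldsymbol\lambda^{(k)})$ that you already establish is what forces $r=e$, not merely what removes the truncation at the end. For $e=0$ the bound is $\ind(\lambda_1)$. For $e=1$ it is $\ind(\lambda_\ell)=\ind(\boldsymbol\kappa^{(k)}_1)$, where $\ell$ maps the free line containing $\boldsymbol\kappa^{(k)}_1$ isomorphically onto $H^1_{\unr}(K_\ell,T_k)$.

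Do not assume $r=e$; write $r=e+2s$. Then Theorem~\ref{artSel} applies to the decomposition $\Sel_{\mathcal F}(K,T_k)\simeq R_k^e\oplus (R_k^2)^{s}\oplus\bigoplus_i(R_k/\mathfrak m^{d'_i})^2$, whose first $s$ exponents equal $k$. The formula at $j=e$ gives $\partial^{(e)}(\boldsymbol\lambda^{(k)})=\min\{k,\partial(\boldsymbol\lambda^{(k)})+\sum_{i\ge0}d_i\}=k$ whenever $s>0$. This contradicts boundedness for $k\gg0$, so $s=0$. The rest of your argument then goes through unchanged: the splitting $B\simeq N\oplus N$, the limit $k\to\infty$, and $d'_i=\delta^{(2i+e)}(\lambda)-\delta^{(2i+2+e)}(\lambda)$.

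\textbf{The step you flagged is not an obstacle.} Suppose $\boldsymbol\kappa^{(k)}_1\in\mathfrak m^b\Sel_{\mathcal F}(K,T_k)$ for all $k$. The nonempty finite sets $\{y:\pi^b y=\boldsymbol\kappa^{(k)}_1\}$ form an inverse system. Its limit yields $y\in\Sel_{\mathcal F}(K,T)=\varprojlim_k\Sel_{\mathcal F}(K,T_k)$ with $\pi^b y=\kappa_1$. Since $\kappa_1\neq0$ and $\Sel_{\mathcal F}(K,T)$ is finitely generated, this fails for $b\gg0$, so these indices are bounded.
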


\begin{proof} By \cite[Proposition B.2.7]{RubinES}, the $R$-module $\Sel_{\mathcal{F}}(K,A)$ is cofinitely generated, so there is an isomorphism
\[
\Sel_{\mathcal{F}}(K,A) \simeq \left( F/R \right) ^{r} \oplus \bigoplus_{i \geq 0 }  R/ \mathfrak{m}^{c_i} 
\]
with ${(c_i)}_{i\geq0}$ a non-increasing sequence of non-negative integers with $c_i=0$ for $i\gg0$. By Assumption \ref{ass}, for all integers $k\geq1$ there are isomorphisms
\begin{equation} \label{eqCT}
S_1^{(k)} \simeq \Sel_{\mathcal{F}}(K,A)[\mathfrak{m}^k] \simeq\bigl(R/\mathfrak{m}^k \bigr)^r \oplus \bigoplus_{i \geq 0} R/\mathfrak{m}^{\min \{k,c_i \}}.
\end{equation} 
If we choose an integer $k\geq1$ such that $k > c_i$ for all $i$, then we can rewrite \eqref{eqCT} as 
\begin{equation} \label{eqCT1}
S_1^{(k)}\simeq\bigl(R/\mathfrak{m}^k\bigr)^r \oplus  
\bigoplus_{i \geq 0} R/\mathfrak{m}^{c_i}.
\end{equation}
We also pick an integer $k$ such that $k>\ind(\lambda_1)$; then $\bigl(\boldsymbol{\kappa}^{(k)},\boldsymbol{\lambda}^{(k)}\bigr)$ is non-trivial and free. Choose integers $d_i\geq 0$ as in Theorem \ref{artSel}, so that 
\begin{equation} \label{eqAR}
S_1^{(k)} \simeq (R/\mathfrak{m}^k)^e\oplus\Biggl(\bigoplus_{i\geq 0}R/\mathfrak{m}^{d_i}\Biggr)^2
\end{equation} 
with 
\begin{equation}  \label{equality2}
\partial^{(j)}(\boldsymbol{\lambda}^{(k)})=\min\Biggl\{k,\partial (\boldsymbol{\lambda}^{(k)})+\sum_{i\geq\frac{j-e}{2}} d_i\Biggr\}
\end{equation}
for all integers $j\geq1$ such that $j\equiv\epsilon+1\pmod 2$. Then there is an equality 
\[
d_i=\begin{cases}k&\text{for $0\leq i\leq\displaystyle{\frac{r-e}{2}}-1$},\\[3mm]c_{2i-r+e}=c_{2i-r+e+1}&\text{for $i\geq\displaystyle{\frac{r-e}{2}}$}. \end{cases}
\]
Therefore, there is an isomorphism
\begin{equation} \label{eqsel1}
S_1^{(k)}\simeq\bigl(R/\mathfrak{m}^k\bigr)^{e}\oplus\Bigl(\bigl(R/\mathfrak{m}^k\bigr)^2 \Bigr)^{\frac{r-e}{2}}\oplus\bigoplus_{i \geq \frac{r-e}{2}}\bigl(R/\mathfrak{m}^{d_i} \bigr)^2.
\end{equation}
We claim that, under our assumptions, the sequences $\bigl(\partial(\boldsymbol{\lambda}^{(k)})\bigr)_{k\geq1}$ and $\bigl(\partial^{(e)}(\boldsymbol{\lambda}^{(k)})\bigr)_{k\geq1}$ are bounded from above. This is obvious if $e=0$, as in this case $\lambda_1\neq0$ in $R$. Thus, assume $e=1$ and $\kappa_1 \neq 0$. By \cite[Lemma 2.3.3]{howard2012bipartite}, which we can apply thanks to Assumption \ref{ass}, for any integer $k\geq1$ and any free $R_k$-submodule $C\subset \Sel_{\mathcal{F}}(K,T_k)$ of rank $1$ there are infinitely many $\ell \in \mathcal{P}_k$ such that $\loc_\ell$ takes $C$ isomorphically onto $H^1_{\unr}(K_\ell,T_k)$. For any $k$, fix a prime number $\ell_k$ satisfying the previous property and notice that $\ind(\lambda_{\ell_k},R_k)\leq\ind(\kappa_1)$ for $k \gg 0$ (see \eqref{ineq-def/ind} for this inequality). This implies that $\bigl(\partial^{(1)}(\boldsymbol{\lambda}^{(k)})\bigr)_{k\geq1}$ is bounded from above, and then the same is true of $\bigl(\partial(\boldsymbol{\lambda}^{(k)})\bigr)_{k\geq1}$, as desired. We also notice that, since $\bigl(\partial^{(j)}(\boldsymbol{\lambda}^{(k)})\bigr)_{j\geq1}$ is non-increasing, the sequence $\bigl(\partial^{(j)}(\boldsymbol{\lambda}^{(k)})\bigr)_{k\geq1}$ is bounded for all $j$ with $j\equiv\epsilon+1\pmod 2$.

Since $\partial^{(e)}(\boldsymbol{\lambda}^{(k)})$ is upper bounded, from \eqref{equality2} with $j=e$  
we find that $d_i < k$ for any $i \geq 0$ (and for $k\gg0$) and thus it implies $r=e$.
Moreover, since the sequence $\bigl(\partial^{(j)}(\boldsymbol{\lambda}^{(k)})\bigr)_{k\geq1}$ is bounded, equality \eqref{equality2} shows that for $k\gg0$ the equality
\begin{equation}\label{eqpart}
\partial^{(j)}(\boldsymbol{\lambda}^{(k)})=\partial (\boldsymbol{\lambda}^{(k)}) + \sum_{i \geq \frac{j-e}{2}} d_i
\end{equation}
holds for any integer $j\geq1$ with $j\equiv\epsilon+1\pmod 2$. Letting $k \rightarrow + \infty$ in \eqref{eqpart} yields the equality
\begin{equation} \label{eqpart1}
\delta^{(j)}(\lambda)=\delta(\lambda) + \sum_{i \geq \frac{j-e}{2}}d_i,
\end{equation}
so for $i=\frac{j-e}{2}$ we get $d_i=\delta^{(j)}(\lambda)-\delta^{(j+2)}(\lambda)$,
which (using the equality $j= 2i+e$) can be rewritten as 
\[
d_i=\delta^{(2i+e)}(\lambda)-\delta^{(2i+2+e)}(\lambda).
\]
The proof of the theorem is complete. \end{proof}

\color{black}

Denote by
\[
\Sha_\mathcal{F}(K,A)\defeq\Sel_\mathcal{F}(K,A)\big/\divv\bigl(\Sel_\mathcal{F}(K,A)\bigr)
\]
the Shafarevich--Tate group of $A$ over $K$ with respect to $\mathcal{F}$, where
$\divv\bigl(\Sel_\mathcal{F}(K,A)\bigr)$ is the maximal divisible subgroup of $\Sel_\mathcal{F}(K,A)$. Moreover, write $X(T)$ for the Pontryagin dual of $\Sha_\mathcal{F}(K,A)$, \emph{i.e.}, set
\[
X(T)\defeq\Hom\Bigl(\Sha_\mathcal{F}(K,A),\Q_p/\Z_p\Bigr).
\]
Theorem \ref{thmDVR} implies that there is an isomorphism
\begin{equation}\label{ShaX(T)}
X(T) \simeq \bigoplus_{i\geq0}\bigl(R/\mathfrak{m}^{d_i}\bigr)^2
\end{equation}
with $d_i\defeq\delta^{(2i+e)}(\lambda)-\delta^{(2i+2+e)}(\lambda)$. 
In particular, by Theorem \ref{FittingDVR}, for all integers $i\geq0$ there is an equality
\begin{equation} \label{fittingDVR}
\Fitt_i\bigl(X(T)\bigr)=\mathfrak{m}^{2(\delta^{(i+e)}(\lambda)-\delta(\lambda))}.
\end{equation}
It follows from the equality $\delta(\lambda)= \min_j \delta^{(j)}(\lambda)$.

In the indefinite case, we can express the result above in terms of classes in $\kappa$. For all even integers $j\geq1$, define
\[
\delta(\kappa)\defeq\lim_{k\rightarrow+\infty}\partial\bigl(\boldsymbol{\kappa}^{(k)}\bigr),\quad\delta^{(j)}(\kappa)\defeq\lim_{k\rightarrow+\infty}\partial^{(j)}\bigl(\boldsymbol{\kappa}^{(k)}\bigr).
\]

\begin{theorem} \label{dvrKappa}
Let $(\kappa,\lambda)$ be a bipartite Euler system of parity $0$ and assume $\kappa_1 \neq 0$. There is an isomorphism of $R$-modules
\[
\Sel_{\mathcal{F}}(K,A)\simeq(F/R)\oplus\bigoplus_{i\geq0}\bigl(R/\mathfrak{m}^{d_i}\bigr)^2,
\]
where $d_i\defeq\delta^{(2i)}(\kappa)-\delta^{(2i+2)}(\kappa)$. Furthermore, the sequence ${(d_i)}_{i\geq0}$ is non-increasing.
\end{theorem}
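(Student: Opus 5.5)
The plan is to deduce the statement from Theorem \ref{thmDVR} by identifying, for each even $j\geq0$, the invariant $\delta^{(j)}(\kappa)$ with $\delta^{(j+1)}(\lambda)$, in the same spirit as the deduction of Theorem \ref{artKappa} from Theorem \ref{artSel}. Parity $0$ means $\epsilon=0$, so $1\in\mathcal{N}^{\ind}$, $e=1$, and the hypothesis $\kappa_1\neq0$ is exactly what Theorem \ref{thmDVR} requires when $e=1$. That theorem then gives
\[
\Sel_{\mathcal{F}}(K,A)\simeq(F/R)\oplus\bigoplus_{i\geq0}\bigl(R/\mathfrak{m}^{d_i}\bigr)^2,\qquad d_i=\delta^{(2i+1)}(\lambda)-\delta^{(2i+3)}(\lambda),
\]
and it also gives that the sequence $(d_i)_{i\geq0}$ is non-increasing. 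It therefore suffices to prove that $\delta^{(j)}(\kappa)=\delta^{(j+1)}(\lambda)$ for all even $j\geq0$. Then $d_i=\delta^{(2i)}(\kappa)-\delta^{(2i+2)}(\kappa)$, and monotonicity is inherited directly.

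First step: establish the identity at each finite level $k$. For $k$ large (in particular $k>\ind(\kappa_1)$, so that the system is non-trivial), the system $\bigl(\boldsymbol{\kappa}^{(k)},\boldsymbol{\lambda}^{(k)}\bigr)$ for $(T_k,\mathcal{F},\mathcal{P}_{2k})$ is free by Lemma \ref{freeness}, and $T_k$ satisfies Assumption \ref{assArt}. The argument in the proof of Theorem \ref{artKappa} then gives
\[
\partial^{(j)}\bigl(\boldsymbol{\kappa}^{(k)}\bigr)=\partial^{(j+1)}\bigl(\boldsymbol{\lambda}^{(k)}\bigr)\quad\text{for every even }j.
\]
One inequality comes from \eqref{ineq-def/ind}: by the second explicit reciprocity law, $\ind(\kappa_n)\geq\ind(\lambda_{n\ell})$ for every $\ell$. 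The reverse inequality comes from choosing $\ell$ so that $\loc_\ell$ maps the free line $C_n$ containing $\kappa_n$ isomorphically onto $H^1_{\unr}(K_\ell,T_k)$, which is possible by \cite[Lemma 2.3.3]{howard2012bipartite}; with this choice $\ind(\kappa_n)=\ind(\lambda_{n\ell})$. Minimising over $\nu(n)=j$ gives the equality.

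Second step: pass to the limit $k\to\infty$. Both sides are non-decreasing in $k$ and bounded. For $\lambda$ boundedness was shown in the proof of Theorem \ref{thmDVR}, and it transfers to $\kappa$ through the equality just obtained. So the limits exist and coincide, giving $\delta^{(j)}(\kappa)=\delta^{(j+1)}(\lambda)$. Taking the minimum over even $j$ also yields $\delta(\kappa)=\delta(\lambda)$, though this is not needed for the displayed formula.

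The main obstacle is the reverse inequality in the first step. For $\ind(\loc_\ell\kappa_n)=\ind(\kappa_n)$ one needs $\kappa_n$ to lie in a free rank-one submodule whose divisibility index measures its index in the whole Selmer group; this is precisely why the freeness supplied by Lemma \ref{freeness} is essential, and the main care is in checking that $C_n$ can be chosen so that indices are computed inside $C_n$. A second point to verify is that the primes $\ell$ used must lie in $\mathcal{P}_{2k}$, which requires Assumption \ref{ass}(4) at level $2k$; the rest is formal.
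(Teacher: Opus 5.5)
Your overall reduction is sound, and it packages the paper's argument slightly more economically. The paper reruns the proof of Theorem \ref{thmDVR} with Theorem \ref{artKappa} in place of Theorem \ref{artSel}. You instead read the result off Theorem \ref{thmDVR} via $\delta^{(j)}(\kappa)=\delta^{(j+1)}(\lambda)$. In both cases the real input is the level-$k$ identity $\partial^{(j)}(\boldsymbol{\kappa}^{(k)})=\partial^{(j+1)}(\boldsymbol{\lambda}^{(k)})$.

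However, your proof of that identity does not establish it. The inequality you invoke, $\ind(\kappa_n)\geq\ind(\lambda_{n\ell})$ for every $\ell$, points the wrong way. Any homomorphism carries $\mathfrak{m}^b\Sel_{\mathcal{F}(n)}(K,T_k)$ into $\mathfrak{m}^bH^1_{\unr}(K_\ell,T_k)$. So the second reciprocity law gives $\ind(\lambda_{n\ell})=\ind\bigl(\loc_\ell(\kappa_n)\bigr)\geq\ind(\kappa_n)$ for every $\ell\nmid n$. This can be strict: if $\loc_\ell(\kappa_n)=0\neq\kappa_n$, then $\ind(\lambda_{n\ell})=k>\ind(\kappa_n)$. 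The display \eqref{ineq-def/ind}, read for arbitrary $\ell$, has the same orientation problem; it is valid, as an equality, only for $\ell$ chosen as in your free-line argument.

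Worse, your two arguments prove the same direction. From ``for each $n$ there is $\ell$ with $\ind(\lambda_{n\ell})=\ind(\kappa_n)$'' you get $\partial^{(j+1)}(\boldsymbol{\lambda}^{(k)})\leq\ind(\kappa_n)$ for all $n$ with $\nu(n)=j$. That is, $\partial^{(j+1)}(\boldsymbol{\lambda}^{(k)})\leq\partial^{(j)}(\boldsymbol{\kappa}^{(k)})$, which is exactly what your first (false) inequality would also give. The inequality $\partial^{(j)}(\boldsymbol{\kappa}^{(k)})\leq\partial^{(j+1)}(\boldsymbol{\lambda}^{(k)})$ is never proved.

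The repair is short. Take $m$ with $\nu(m)=j+1$ realising $\partial^{(j+1)}(\boldsymbol{\lambda}^{(k)})$, write $m=n\ell$ with $\nu(n)=j$, and use the correct inequality:
$\partial^{(j+1)}(\boldsymbol{\lambda}^{(k)})=\ind(\lambda_m)=\ind\bigl(\loc_\ell(\kappa_n)\bigr)\geq\ind(\kappa_n)\geq\partial^{(j)}(\boldsymbol{\kappa}^{(k)})$.
Your free-line argument supplies the other direction. The obstacle you flag there is harmless: you only need $\ind\bigl(\loc_\ell(\kappa_n)\bigr)=\ind_{C_n}(\kappa_n)\leq\ind(\kappa_n)$, which is automatic from $C_n\subset\Sel_{\mathcal{F}(n)}(K,T_k)$. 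Equality holds anyway, since $R_k$ is self-injective and so $C_n\simeq R_k$ is a direct summand. With the identity fixed at each large $k$, your limit step goes through as written.
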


\begin{proof} One can proceed as in the proof of Theorem \ref{thmDVR}, using Corollary \ref{artKappa} in place of Theorem \ref{artSel}. \end{proof}

\subsection{Bipartite Euler systems over $\Lambda$} \label{BESLambda}

Denote by $h_K$ the class number of $K$ and assume $p\nmid D_Kh_K$. For any integer $n\geq1$, write $K[n]$ for the ring class field of $K$ of conductor $n$. Since $p \nmid \Gal(K[p]/K)$ and $\Gal(K[p^{m+1}]/K[p]) \simeq \Z/p^m \Z$, there is an isomorphism
\[
\Gal(K[p^{m+1}]/K) \simeq \Gamma \times \Delta,
\]
with $\Gamma \simeq \Z/p^m \Z$ and $\Delta\defeq\Gal(K[p]/K)$. Set
\[ 
K_m\defeq K[p^{m+1}]^{\Delta},\quad\Gamma_m\defeq\Gal(K_m/K) \simeq \Z/p^m \Z
\]
for any integer $m\geq1$ and
\[
K_\infty\defeq\bigcup_{m\geq1} K_m,\quad\Gamma_\infty\defeq\varprojlim_{m} \Gamma_m=\Gal(K_\infty/K)\simeq\Z_p.
\]
The field $K_\infty$ is the anticyclotomic $\Z_p$-extension of $K$.

In what follows, let $\mathcal{O}$ be a complete discrete valuation ring with maximal ideal $\mathfrak{m}$ and residual characteristic $p$; set
\[
\Lambda_m\defeq\mathcal{O}[\Gamma_m],\quad\Lambda\defeq\varprojlim_m \Lambda_m = \mathcal{O} \llbracket \Gamma_\infty \rrbracket.
\]
Then there is a (non-canonical) isomorphism of $\mathcal{O}$-algebras 
$\Lambda \simeq \mathcal{O}\llbracket T \rrbracket$. Let $\mathfrak{m}_\Lambda\defeq(T,\pi)$ be the maximal ideal of $\Lambda$. A finitely generated $\Lambda$-module $M$ is \emph{$\mathfrak{m}_\Lambda$-divisible} if $\mathfrak{m}_\Lambda M=M$; we denote by $\divv_{\mathfrak{m}_\Lambda}(M)$ the maximal $\mathfrak{m}_\Lambda$-divisible submodule of $M$ (see, \emph{e.g.}, \cite[(4.1.20)]{NP}). 

Let $\mathbf{T}$ be a free $\Lambda$-module of rank $2$ endowed with a continuous $G_K$-action and define $\mathbf{A}\defeq \Hom_{\mathrm{cont}}(\mathbf{T}^\iota, \Bmu_{p^\infty})$  where $\iota:\Lambda \rightarrow \Lambda$ is the involution sending $\sigma$ to $\sigma^{-1}$ for all $\sigma\in\Gamma_\infty$.
Denote by $\mathcal{F}$ and $\mathcal{P}$ a Selmer structure and a set of admissible primes for $\mathbf{T}$, respectively; write $\mathcal{F}$ also for the Selmer structure for $\mathbf{A}$ such that $H^1_\mathcal{F}(K_\mathfrak{l},\mathbf{A})$ is the orthogonal complement of $H^1_\mathcal{F}(K_{\bar{\mathfrak{l}}},\mathbf{T})$ under the local Tate pairing for all primes $\mathfrak{l}$ of $K$. Set
\[
\Sel_\mathcal{F}(K,\mathbf{A})\defeq \ker \Biggl(H^1(K,\mathbf{A})\longrightarrow \prod_\mathfrak{l} \frac{H^1(K_\mathfrak{l},\mathbf{A})}{H^1_\mathcal{F}(K_\mathfrak{l},\mathbf{A})} \Biggr).
\]
For an integer $k\geq1$, a prime number $\ell \in \mathcal{P}$ is \emph{$k$-admissible} if $I_\ell \subset \mathfrak{m}^k$; we denote by $\mathcal{P}_k$ the set of $k$-admissible primes and write $\mathcal{N}_k$ for the set of $k$-admissible integers, \emph{i.e.}, square-free products of $k$-admissible primes. By arguments analogous to those in \cite[Proposition 2.1.5 and Lemma 2.3.3]{MR}, one can show that $\Sel_\mathcal{F}(K,\mathbf{T})$ 
and $\Sel_\mathcal{F}(K,\mathbf{A})$ are finitely and cofinitely generated $\Lambda$-modules. We also consider the Shafarevich--Tate group
\[
\Sha_\mathcal{F}(K,\mathbf{A})\defeq\Sel_\mathcal{F}(K,\mathbf{A})\big/\divv_{\mathfrak{m}_\Lambda}\bigl(\Sel_\mathcal{F}(K,\mathbf{A})\bigr)
\]
of $\mathbf{A}$ over $K$ with respect to $\mathcal{F}$. Finally, let
\begin{equation} \label{X-sha-eq}
X\defeq\Hom\Bigl(\Sha_\mathcal{F}(K,\mathbf{A}),\Q_p/\Z_p\Bigr).
\end{equation}
be the Pontryagin dual of $\Sha_\mathcal{F}(K,\mathbf{A})$.

Let $\mathfrak{P}$ be a height $1$ prime ideal of $\Lambda$. Let $\mathcal{O}_{\mathfrak{P}}$ be the integral closure of $\Lambda/\mathfrak{P}$, which is a discrete valuation ring such that $[\mathcal{O}_{\mathfrak{P}}:\Lambda/\mathfrak{P}]<\infty$; let $\pi_\mathfrak{P}$ be a uniformizer of $\mathcal O_{\mathfrak P}$. The composition of the canonical projection $\Lambda\twoheadrightarrow\Lambda/\mathfrak{P}$ and inclusion $\Lambda/\mathfrak{P}\hookrightarrow\mathcal O_{\mathfrak P}$ induces a map $s_\mathfrak{P}:\Lambda\rightarrow\mathcal{O}_\mathfrak{P}$. Let $F_{\mathfrak P}$ be the fraction field of $\mathcal O_{\mathfrak P}$ and set 
\begin{equation} \label{T-V-A-eq}
T_{\mathfrak{P}}\defeq\mathbf{T} \otimes_\Lambda \mathcal{O}_{\mathfrak{P}},\quad V_\mathfrak{P}\defeq T_\mathfrak{P}\otimes_{\mathcal{O}_\mathfrak{P}}F_\mathfrak{P},\quad A_\mathfrak{P}\defeq V_\mathfrak{P}/T_\mathfrak{P},
\end{equation}
where the tensor product in the definition of $T_{\mathfrak{P}}$ is taken with respect to $s_{\mathfrak P}$. It follows that $T_{\mathfrak{P}}$ is a free $\mathcal{O}_{\mathfrak{P}}$-module of rank $2$ equipped with a continuous action of $G_K$. By an abuse of notation, the obvious map $\mathbf{T}\rightarrow T_\mathfrak{P}$ and the map $H^1(K,\mathbf{T})\rightarrow H^1(K,T_\mathfrak{P})$ induced in cohomology will be still denoted by $s_\mathfrak{P}$.

We assume that, for each height $1$ prime ideal $\mathfrak{P}$ of $\Lambda$, we are given a Selmer structure $\mathcal{F}_\mathfrak{P}$ on $V_\mathfrak{P}$; then, for all integers $n\geq1$, we define by propagation Selmer structures on $A_\mathfrak{P}$ and $T_\mathfrak{P}$, still denoted by $\mathcal{F}_\mathfrak{P}$. Moreover, we assume that $T_\mathfrak{P}$ is endowed with a perfect, symmetric, $\mathcal{O}_\mathfrak{P}$-bilinear pairing
\[
{(\cdot, \cdot)}_\mathfrak{P}:T_\mathfrak{P} \times T_\mathfrak{P} \longrightarrow \mathcal{O}_\mathfrak{P}(1)
\]
such that $(x^\sigma,y^{\tau \sigma \tau})=(x,y)^\sigma$ for all $\sigma \in G_K$, where $\tau \in G_\Q$ is a fixed complex conjugation. In particular, it induces a perfect, $G_K$-equivariant and $\mathcal{O}_\mathfrak{P}$-bilinear pairing
\[
{(\cdot, \cdot)}_\mathfrak{P}:T^\iota_{\mathfrak{P}^\iota} \times A_\mathfrak{P} \longrightarrow \Bmu_{p^\infty}.
\]
In addition, we assume that $\mathcal{F}_\mathfrak{P}$ is self-dual. Observe that the dual of the map $s^\iota_{\mathfrak{P}^\iota}:\mathbf{T}^\iota\rightarrow T^\iota_{\mathfrak{P}^\iota}$ induces a canonical map $A_\mathfrak{P}\rightarrow \mathbf{A}$.

\begin{assumption} \label{assLambda}
For every height $1$ prime ideal $\mathfrak{P}$ of $\Lambda$, the properties below hold true:
\begin{enumerate}
\item the triple $(T_\mathfrak{P},\mathcal{F}_\mathfrak{P},\mathcal{P})$ satisfies Assumption \ref{ass};
\item the Selmer structure $\mathcal{F}_\mathfrak{P}$ is compatible 
with the Selmer structure $\mathcal{F}$ on $\mathbf{T}$ in the sense that 
the specialization map $s_\mathfrak{P}:H^1(K,\mathbf{T})\rightarrow H^1(K,T_\mathfrak{P})$ induces a map 
\begin{equation} \label{sp}
s_\mathfrak{P}:\Sel_{\mathcal{F}}(K,\mathbf{T})/\mathfrak{P}\longrightarrow \Sel_{\mathcal{F}_\mathfrak{P}}(K,T_\mathfrak{P})
\end{equation} 
and the specialization map $i_\mathfrak{P}:H^1(K,A_\mathfrak{P}) \rightarrow H^1(K,\mathbf{A})$ induces a map
\begin{equation} \label{cosp}
i_\mathfrak{P}:\Sel_{\mathcal{F}_\mathfrak{P}}(K,A_\mathfrak{P})\longrightarrow \Sel_\mathcal{F}(K,\mathbf{A})[\mathfrak{P}];\end{equation}
\item the following \emph{control theorem} holds: the map $s_\mathfrak{P}$ in \eqref{sp}
is injective and there is a finite set $\Sigma_\mathrm{CT}$ of height $1$ prime ideals of $\Lambda$ such that for all $\mathfrak{P}\notin \Sigma_\mathrm{CT}$ the kernel and the cokernel of the maps $s_\mathfrak{P}$ and $i_\mathfrak{P}$ in \eqref{sp} and \eqref{cosp} are finite, of order bounded by a constant that depends on $[\mathcal{O}_\mathfrak{P}:\Lambda/\mathfrak{P}]$ but not on $\mathfrak{P}$ itself. 
\end{enumerate}
\end{assumption}

Let us fix $\epsilon \in \Z/2\Z$ and assume that there exists a non-trivial bipartite Euler system $(\kappa, \lambda)$ of parity $\epsilon$ for $(\mathbf{T},\mathcal{F},\mathcal{P})$. 
Then we have an element $\lambda_1\in \Lambda$ if $1\in\mathcal{N}^{\epsilon+1}$ and a class 
$\kappa_1\in \Sel_\mathcal{F}(K,\mathbf{T})$
if $1\in\mathcal{N}^{\epsilon}$.

\begin{assumption}[Bipartite Euler systems] \label{assBESLambda}
    $\lambda_1\neq 0$ if $1\in\mathcal{N}^{\epsilon+1}$ and $\kappa_1\neq 0$ if $1\in\mathcal{N}^{\epsilon}$. 
\end{assumption}

Let $\mathfrak{P}$ be a height $1$ prime ideal of $\Lambda$. For all $n\in\mathcal{N}$, denote by $I_{\mathfrak{P},n}$ the usual ideal of $\mathcal{O}_\mathfrak{P}$ attached to $T_\mathfrak{P}$ and define a set of classes $(\kappa_{\mathfrak{P}},\lambda_{\mathfrak{P}})$ as follows:  
\begin{itemize}
\item if $n\in\mathcal{N}^{\epsilon}$, then $\kappa_n\in\Sel_{\mathcal{F}(n)}(K,\mathbf{T}/I_n)$; set $\kappa_{\mathfrak{P},n}\defeq s_\mathfrak{P}(\kappa_n)\in\Sel_{\mathcal{F}_\mathfrak{P}(n)}(K,T_{\mathfrak{P}}/I_{\mathfrak{P},n})$; 
\item if $n\in \mathcal{N}^{\epsilon+1}$, then $\lambda_n\in\Lambda/I_n$; set $\lambda_{\mathfrak{P},n}\defeq s_\mathfrak{P}(\lambda_n)\in \mathcal{O}_\mathfrak{P}/I_{\mathfrak{P},n}$. 
\end{itemize}
Here $I_n$ is the ideal introduced in \eqref{I-eq}.
\begin{proposition} \label{non-triv}
There exists a finite set $\Sigma_\mathrm{EX}$ of height $1$ prime ideals of $\Lambda$ such that for all $\mathfrak P\notin\Sigma_\mathrm{EX}$ the following conditions hold true: 
\begin{enumerate}\label{parprop}
\item $\lambda_{\mathfrak{P},1}\neq 0$ if $1\in\mathcal{N}^{\epsilon+1}$ and $\kappa_{\mathfrak{P},1}\neq0$ if $1\in\mathcal{N}^{\epsilon}$;
\item $(\kappa_{\mathfrak{P}},\lambda_{\mathfrak{P}})$ is a non-trivial bipartite Euler system of parity $\epsilon$ for $(T_{\mathfrak{P}},\mathcal{F}_\mathfrak{P},\mathcal{P})$;
\item the $\mathcal{O}_\mathfrak{P}$-corank of $\Sel_{\mathcal{F}_\mathfrak{P}}(K,\mathbf{A})$ is $0$ if $1\in\mathcal{N}^{\epsilon+1}$ and is $1$ if $1\in\mathcal{N}^{\epsilon}$.
\end{enumerate}
\end{proposition}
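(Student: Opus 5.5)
Each of the three conditions fails only on a finite set of height $1$ primes, so I will produce finite sets $\Sigma_1,\Sigma_3$ and take $\Sigma_\mathrm{EX}\defeq\Sigma_1\cup\Sigma_3\cup\Sigma_\mathrm{CT}$; part (2) will then follow formally from part (1) together with functoriality of specialization.

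\emph{Part (1).} If $1\in\mathcal{N}^{\epsilon+1}$, then $\lambda_1\in\Lambda$ is non-zero by Assumption \ref{assBESLambda}. Hence $(\lambda_1)$ is a non-zero principal ideal, and it lies in only finitely many height $1$ primes (its prime divisors in the UFD $\Lambda$). For $\mathfrak P$ outside these primes, $s_\mathfrak P(\lambda_1)\neq 0$ in $\Lambda/\mathfrak P\subset\mathcal O_\mathfrak P$.

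If instead $1\in\mathcal{N}^{\epsilon}$, then $\kappa_1\in\Sel_\mathcal F(K,\mathbf T)$ is non-zero. The module $\Sel_\mathcal F(K,\mathbf T)\subset H^1(K,\mathbf T)$ is finitely generated over $\Lambda$, and $H^1(K,\mathbf T)$ has finite (pseudo-null) $\Lambda$-torsion because $\overline T$ is absolutely irreducible, so $H^0(K,\overline T)=0$. It follows that the cyclic submodule $\Lambda\kappa_1$ is torsion-free, \emph{i.e.}\ isomorphic to $\Lambda$, except possibly modulo a finite submodule. Moreover $\kappa_1\in\mathfrak P\Sel_\mathcal F(K,\mathbf T)$ can happen for only finitely many $\mathfrak P$: the quotient $\Sel_\mathcal F(K,\mathbf T)/\Lambda\kappa_1$ is finitely generated, so its torsion has finite support in height one. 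By the injectivity of $s_\mathfrak P$ in Assumption \ref{assLambda}(3), $\kappa_{\mathfrak P,1}=s_\mathfrak P(\kappa_1)\neq0$ for all other $\mathfrak P$. Collect the excluded primes in $\Sigma_1$.

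\emph{Part (2).} The classes $\kappa_{\mathfrak P,n}$ lie in $\Sel_{\mathcal F_\mathfrak P(n)}(K,T_\mathfrak P/I_{\mathfrak P,n})$ by the compatibility in Assumption \ref{assLambda}(2), applied modulo $I_n$. Here I will check that $s_\mathfrak P(I_n)\subset I_{\mathfrak P,n}$, since $t_\ell,d_\ell$ specialize to the traces and determinants for $T_\mathfrak P$. The local ordinary and unramified conditions at $\ell\mid n$ are preserved as well, since the splitting $T/I_n\simeq R/I_n\oplus R/I_n(1)$ is functorial.

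The two explicit reciprocity laws are equalities of ideals generated by $\lambda_n$ and by $\phi^{(n)}_{\unr/\ord}(\loc_\ell\kappa_\bullet)$. They are preserved under the ring map $\Lambda/I_n\to\mathcal O_\mathfrak P/I_{\mathfrak P,n}$, provided the fixed trivializations $\phi$ for $T_\mathfrak P$ are chosen as base changes of those for $\mathbf T$. This is allowed because the local cohomology groups are free of rank one (Proposition \ref{freeness-prop}) and formation of $H^1_{\unr}$ and $H^1_{\ord}$ commutes with this base change. Non-triviality of the system then follows from part (1).

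\emph{Part (3).} This is the main obstacle. I would first compute the $\Lambda$-rank of $\Sel_\mathcal F(K,\mathbf A)^\vee$. In the case $1\in\mathcal N^{\epsilon+1}$, Theorem \ref{thmDVR} applied to $(\kappa_\mathfrak P,\lambda_\mathfrak P)$ already gives corank $e(1)$ for $\Sel_{\mathcal F_\mathfrak P}(K,A_\mathfrak P)$ whenever $\mathfrak P\notin\Sigma_1$; the remaining task is to identify $e(1)$ with the parity dictated by $\epsilon$. For this I use that the bipartite parity is intrinsic: by \cite[Corollary 2.2.10]{howard2012bipartite} and Lemma \ref{lemma-indep}, $\mathcal N^{\ind}=\mathcal N^{\epsilon'}$ for some $\epsilon'$. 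A nonzero $\lambda_{\mathfrak P,1}$ forces $1\in\mathcal N^{\deff}$, because $\lambda_n$ vanishes on indefinite $n$ by \cite[Proposition 2.3.5]{howard2012bipartite} after reduction mod $\mathfrak m^k$ for large $k$. Symmetrically, a nonzero $\kappa_{\mathfrak P,1}$ forces $1\in\mathcal N^{\ind}$, so $\epsilon'=\epsilon$. In the statement $\mathbf A$ should be read as $A_\mathfrak P$, or transported to $\Sel_\mathcal F(K,\mathbf A)[\mathfrak P]$ via $i_\mathfrak P$. Outside $\Sigma_\mathrm{CT}$ the kernel and cokernel of $i_\mathfrak P$ are finite, so the coranks agree, and $\Sigma_3$ is just $\Sigma_\mathrm{CT}$. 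The delicate point I expect is justifying the vanishing of $\lambda$ at the wrong parity uniformly in $k$; I would try to derive it from the reciprocity laws combined with Lemma \ref{lemmaind}.
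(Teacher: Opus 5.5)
Your proposal is correct and follows the paper's proof: finitely many height-one primes divide $\lambda_1$, resp.\ contain $\kappa_1$ in $\mathfrak P\Sel_{\mathcal F}(K,\mathbf T)$; part (2) follows by functoriality; and part (3) comes from Theorem \ref{thmDVR} applied to $A_{\mathfrak P}$. Two of your additions fill in points the paper leaves tacit, and the second resolves exactly as you guess at the end. First, your use of $H^0(K,\overline T)=0$ makes $\Lambda\kappa_1$ torsion-free, which finite generation alone does not give. Second, the parity check does not go through \cite[Proposition 2.3.5]{howard2012bipartite}, which presupposes the correct parity, and needs no uniformity in $k$: if $1$ had the wrong intrinsic parity, a prime $\ell\in\mathcal P_k$ with $\loc_\ell$ bijective on the free summand gives $\loc_\ell\bigl(\Sel_{\mathcal F(\ell)}(K,T_{\mathfrak P}/\pi_{\mathfrak P}^kT_{\mathfrak P})\bigr)=0$ as in the proof of Lemma \ref{lemmaind}, so the first reciprocity law gives $\lambda_{\mathfrak P,1}\in\pi_{\mathfrak P}^k\mathcal O_{\mathfrak P}$ for each $k$ separately, and the same argument one level up, combined with the second law and Assumption \ref{ass}(4), kills $\kappa_{\mathfrak P,1}$.
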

\begin{proof} The class $\lambda_1$ if $1\in\mathcal{N}^{\epsilon+1}$ or the class $\kappa_1$ if $1\in\mathcal{N}^{\epsilon}$ can be divisible only by a finite number of height $1$ prime ideals $\mathfrak{P}$ of $\Lambda$ (in the second case, use the fact that $\Sel_\mathcal{F}(K,\mathbf{T})$ is a finitely generated $\Lambda$-module), so condition (1) holds for all but finitely many $\mathfrak P$ as above and shows that, for such a $\mathfrak P$, the pair $(\kappa_{\mathfrak{P}},\lambda_{\mathfrak{P}})$ is a non-trivial bipartite Euler system for $(T_{\mathfrak{P}},\mathcal{F}_\mathfrak{P},\mathcal{P})$. For any $\mathfrak{P}$ as before, by Theorem \ref{thmDVR} the $\mathcal{O}_\mathfrak{P}$-corank of $\Sel_{\mathcal{F}_\mathfrak{P}}(K,A_\mathfrak{P})$ is $1$ if $\kappa_{\mathfrak{P},1}\neq 0$ and is $0$ if $\lambda_{\mathfrak{P},1}\neq 0$; by construction, the first (respectively, second) case occurs when $1\in\mathcal{N}^{\epsilon}$ (respectively, $1\in\mathcal{N}^{\epsilon+1}$), which proves condition (3). \end{proof}

\begin{corollary} \label{corolambdarank}
The $\Lambda$-corank of $\Sel_\mathcal{F}(K,\mathbf{A})$ is $0$ if $1\in\mathcal{N}^{\epsilon+1}$ and is $1$ if $1\in\mathcal{N}^{\epsilon}$. 
\end{corollary}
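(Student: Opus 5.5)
The plan is to pass from the $\mathcal{O}_\mathfrak{P}$-coranks of the specialized Selmer groups (Proposition \ref{non-triv}(3)) to the $\Lambda$-corank of $\Sel_\mathcal{F}(K,\mathbf{A})$, using the control theorem of Assumption \ref{assLambda}(3). Let $\mathcal{S}\defeq\Hom\bigl(\Sel_\mathcal{F}(K,\mathbf{A}),\Q_p/\Z_p\bigr)$ be the Pontryagin dual. It is a finitely generated $\Lambda$-module, since $\Sel_\mathcal{F}(K,\mathbf{A})$ is cofinitely generated. By the structure theory of finitely generated $\Lambda$-modules (Theorem \ref{pseudo-thm} applied to the torsion part), there is a pseudo-isomorphism
\[
\mathcal{S}\longrightarrow\Lambda^r\oplus\bigoplus_{j}\Lambda/\mathfrak{p}_j^{k_j},
\]
where $r$ is the $\Lambda$-corank to be determined and the $\mathfrak{p}_j$ form a finite set of height $1$ primes.

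First I would take a height $1$ prime $\mathfrak{P}$ that avoids $\Sigma_\mathrm{CT}$, $\Sigma_\mathrm{EX}$, the finitely many $\mathfrak{p}_j$, and the support of the (finite) kernel and cokernel of the pseudo-isomorphism. Infinitely many such $\mathfrak{P}$ exist; for instance, one can take $\mathfrak{P}=(\gamma-1-\pi^N)$ with $N$ large, for which $\mathcal{O}_\mathfrak{P}=\Lambda/\mathfrak{P}\simeq\mathcal{O}$. For such $\mathfrak{P}$, the Pontryagin dual of $\Sel_\mathcal{F}(K,\mathbf{A})[\mathfrak{P}]$ is $\mathcal{S}/\mathfrak{P}\mathcal{S}$. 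Up to finite modules this is $(\Lambda/\mathfrak{P})^r\oplus\bigoplus_j\Lambda/(\mathfrak{P}+\mathfrak{p}_j^{k_j})$. Each summand $\Lambda/(\mathfrak{P}+\mathfrak{p}_j^{k_j})$ is finite because $\mathfrak{P}\neq\mathfrak{p}_j$, so $\mathfrak{P}+\mathfrak{p}_j^{k_j}$ has height $2$. A finite kernel or cokernel in the pseudo-isomorphism changes $\mathcal{S}/\mathfrak{P}\mathcal{S}$ only by finite modules, via the snake lemma and the fact that $\Tor$ with $\Lambda/\mathfrak{P}$ of a finite module is finite. Hence the $\Lambda/\mathfrak{P}$-rank of $\mathcal{S}/\mathfrak{P}\mathcal{S}$ is $r$, and so is its $\mathcal{O}_\mathfrak{P}$-rank after tensoring with the finite extension $\mathcal{O}_\mathfrak{P}$.

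Next, Assumption \ref{assLambda}(3) says that $i_\mathfrak{P}:\Sel_{\mathcal{F}_\mathfrak{P}}(K,A_\mathfrak{P})\to\Sel_\mathcal{F}(K,\mathbf{A})[\mathfrak{P}]$ has finite kernel and cokernel. Therefore the $\mathcal{O}_\mathfrak{P}$-corank of $\Sel_{\mathcal{F}_\mathfrak{P}}(K,A_\mathfrak{P})$ equals $r$. Since $\mathfrak{P}\notin\Sigma_\mathrm{EX}$, Proposition \ref{non-triv}(3) gives that this corank is $0$ when $1\in\mathcal{N}^{\epsilon+1}$ and $1$ when $1\in\mathcal{N}^{\epsilon}$. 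This yields $r=0$, respectively $r=1$, as claimed.

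The step needing the most care is the comparison of ranks after reduction modulo $\mathfrak{P}$, specifically checking that the pseudo-null error terms and the torsion summands contribute only finite modules. The key requirement is choosing $\mathfrak{P}$ coprime to every $\mathfrak{p}_j$ and outside the finite exceptional sets. I would also make sure the $\mathcal{O}_\mathfrak{P}$-corank is read off through the finite map $\Lambda/\mathfrak{P}\hookrightarrow\mathcal{O}_\mathfrak{P}$, so that the ranks over $\Lambda/\mathfrak{P}$ and over $\mathcal{O}_\mathfrak{P}$ agree.
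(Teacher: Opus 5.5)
Your proposal is correct and is the paper's argument: the paper's proof just combines the structure theorem for $\Lambda$-modules, the control theorem of Assumption \ref{assLambda}(3) and part (3) of Proposition \ref{non-triv}, which holds for all but finitely many $\mathfrak{P}$, and you make the specialization at one well-chosen $\mathfrak{P}$ explicit. One cosmetic point: the pseudo-isomorphism $\mathcal{S}\to\Lambda^r\oplus\bigoplus_j\Lambda/\mathfrak{p}_j^{k_j}$ comes from the structure theorem for arbitrary finitely generated $\Lambda$-modules, not from Theorem \ref{pseudo-thm} as stated, which covers only torsion modules.
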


\begin{proof} Since part (3) of Proposition \ref{parprop} holds for all but finitely many height $1$ prime ideals of $\Lambda$, the structure theorem for $\Lambda$-modules combined with the control theorem in part (3) of Assumption \ref{assLambda} shows that the $\Lambda$-corank of $\Sel_\mathcal{F}(K,\mathbf{A})$ is $0$ if $\lambda_1\neq 0$ and is $1$ if $\kappa_1\neq 0$. The corollary follows from Assumption \ref{assBESLambda}. \end{proof}

Motivated by Proposition \ref{parprop}, set 
\begin{equation} \label{N-sets-eq}
\mathcal{N}^{\deff}\defeq\mathcal{N}^{\epsilon+1},\quad\mathcal{N}^{\ind}\defeq\mathcal{N}^{\epsilon}. 
\end{equation}
For $\mathfrak{P}\notin\Sigma_\mathrm{EX}$, the sets in \eqref{N-sets-eq} are the definite and indefinite sets for the bipartite Euler system $(\kappa_\mathfrak{P},\lambda_\mathfrak{P})$, as introduced in \S\ref{BES-Artinian}; however, for $\mathfrak{P}\in\Sigma_\mathrm{EX}$ this may no longer be true. 

The map $i_\mathfrak{P}$ in \eqref{cosp} takes the maximal divisible subgroup of $\Sel_{\mathcal{F}_\mathfrak{P}}(K,A_\mathfrak{P})$ to the maximal $\mathfrak{m}_\Lambda$-divisible subgroup $\mathrm{div}_{\mathfrak{m}_\Lambda}\bigl(\Sel_\mathcal{F}(K,\mathbf{A})\bigr)$ of $\Sel_\mathcal{F}(K,\mathbf{A})$; thus, $i_{\mathfrak P}$ induces a map
\[
i_\mathfrak{P}:\Sha_{\mathcal{F}_\mathfrak{P}}(K,A_\mathfrak{P})\longrightarrow \Sha_\mathcal{F}(K,\mathbf{A})[\mathfrak{P}],
\]
still denoted by the same symbol. Therefore, with notation as in \eqref{X-sha-eq}, Pontryagin duality gives a map 
\begin{equation} \label{cospShadual}
i^\vee_\mathfrak{P}:X /\mathfrak{P} X\longrightarrow X({T}_{\mathfrak{P}}).
\end{equation}
Taking $\mathfrak{P}$-torsion in the short exact sequence of $\Lambda$-modules 
\[
0\longrightarrow\mathrm{div}_{\mathfrak{m}_\Lambda}\bigl(\Sel_\mathcal{F}(K,\mathbf{A})\bigr)\longrightarrow\Sel_\mathcal{F}(K,\mathbf{A})\longrightarrow\Sha_\mathcal{F}(K,\mathbf{A})\longrightarrow0
\]
defining $\Sha_\mathcal{F}(K,\mathbf{A})$, we obtain an exact sequence of $\Lambda$-modules
\begin{equation} \label{shaP}
0\longrightarrow\mathrm{div}_{\mathfrak{m}_\Lambda}\bigl(\Sel_\mathcal{F}(K,\mathbf{A})\bigr)[\mathfrak{P}]\longrightarrow\Sel_\mathcal{F}(K,\mathbf{A})[\mathfrak{P}]\longrightarrow\Sha_\mathcal{F}(K,\mathbf{A})[\mathfrak{P}].
\end{equation}

\begin{lemma} \label{right-onto-lemma}
The rightmost map in \eqref{shaP} is surjective. 
\end{lemma}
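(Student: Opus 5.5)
To prove that $\Sel_\mathcal{F}(K,\mathbf{A})[\mathfrak{P}]\to\Sha_\mathcal{F}(K,\mathbf{A})[\mathfrak{P}]$ is surjective, I will take the long exact sequence of $\mathfrak{P}$-torsion and use that the obstruction lives in a cokernel of multiplication by a generator of $\mathfrak{P}$ on the divisible part. Since $\Lambda$ is a unique factorization domain, $\mathfrak{P}=(f)$ is principal. Set $D\defeq\divv_{\mathfrak{m}_\Lambda}\bigl(\Sel_\mathcal{F}(K,\mathbf{A})\bigr)$. Multiplication by $f$ on the short exact sequence $0\to D\to\Sel_\mathcal{F}(K,\mathbf{A})\to\Sha_\mathcal{F}(K,\mathbf{A})\to0$ gives, by the snake lemma, the exact sequence
\[
0\longrightarrow D[f]\longrightarrow\Sel_\mathcal{F}(K,\mathbf{A})[f]\longrightarrow\Sha_\mathcal{F}(K,\mathbf{A})[f]\longrightarrow D/fD.
\]
This extends \eqref{shaP}. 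It therefore suffices to show that $D/fD=0$, i.e.\ that $fD=D$.

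To show $fD=D$, I will pass to Pontryagin duals. The dual $D^\vee$ is a finitely generated $\Lambda$-module (as a quotient of the dual of the cofinitely generated $\Sel_\mathcal{F}(K,\mathbf{A})$). The condition $\mathfrak{m}_\Lambda D=D$ dualizes to the statement that $D^\vee$ has no nonzero elements killed by $\mathfrak{m}_\Lambda$, i.e.\ $D^\vee[\mathfrak{m}_\Lambda]=0$. Finitely generated $\Lambda$-modules with this property are exactly those with no nonzero finite submodule. Indeed, any nonzero finite submodule is $\mathfrak{m}_\Lambda$-power torsion and hence contains a nonzero element killed by $\mathfrak{m}_\Lambda$; conversely, $D^\vee[\mathfrak{m}_\Lambda]$ is itself finite. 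This is the standard characterization of $\mathfrak{m}_\Lambda$-divisible modules recalled via \cite[(4.1.20)]{NP}, and it is the step I would make precise first.

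Dually, $D/fD=0$ is equivalent to $D^\vee[f]=0$. So I need that multiplication by $f$ is injective on a finitely generated $\Lambda$-module $Y\defeq D^\vee$ with no nonzero finite submodules. For this, I would use the description of $D$ as the maximal $\mathfrak{m}_\Lambda$-divisible submodule. By the structure theory recalled in \S\ref{sec:FittingLambda} (and \cite[(4.1.20)]{NP}), the dual of the maximal $\mathfrak{m}_\Lambda$-divisible submodule of a cofinitely generated module is the maximal torsion-free (indeed free, up to the finite-submodule issue) quotient of the dual. Hence $Y$ is $\Lambda$-torsion-free, and so $Y[f]=0$ for every nonzero $f$, in particular for the generator of $\mathfrak{P}$.

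The main obstacle I expect is exactly this identification: that $\mathfrak{m}_\Lambda$-divisibility of $D$ forces $D^\vee$ to be torsion-free, rather than merely to have no finite submodules. A module such as $\Lambda/(g)$ with $g$ a distinguished polynomial has no finite submodule but is torsion. My first attempt will be to invoke the precise statement in \cite[(4.1.20)]{NP} that $\divv_{\mathfrak{m}_\Lambda}$ of a cofinitely generated module is dual to the torsion-free quotient $Y/Y_{\tors}$. If that reference only gives ``no finite submodule'', I would instead argue directly that $fD\subset D$ is again $\mathfrak{m}_\Lambda$-divisible. By maximality the interesting direction is $D\subset fD$, which follows if $D$ is $f$-divisible. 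Dually, this would require $Y[f]=0$, and I would try to get it from the definition of $D$ as a sum of all $\mathfrak{m}_\Lambda$-divisible submodules together with the fact that such submodules have duals embedding in free modules.
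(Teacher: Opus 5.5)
Your reduction is the paper's proof in different form. The paper lifts $x\in\Sha_\mathcal{F}(K,\mathbf{A})[\mathfrak{P}]$ to $y$, observes $fy\in D$, writes $fy=fz$ with $z\in D$ and takes $y-z$; this is exactly the vanishing of your connecting map $\Sha_\mathcal{F}(K,\mathbf{A})[f]\to D/fD$. Like you, the paper needs $fD=D$, and it simply asserts it. Your main way of obtaining it, namely $D^\vee\simeq Y/Y_{\tors}$ being torsion-free, is the right one. It is also the reading of $\divv_{\mathfrak{m}_\Lambda}$ that the paper relies on elsewhere: the ``maximal $\Lambda$-divisible submodule'' of the introduction, and the identification of $\mathcal X$, the $\Lambda$-torsion submodule of the dual Selmer group, with $\Sha_{\Gr}(K_\infty,E[p^\infty])^\vee$. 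With that reading your argument is complete.

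Your fallback, however, cannot work, and you should state plainly that $Y[f]=0$ comes from the definition of $\divv_{\mathfrak{m}_\Lambda}$ and not from an argument. Suppose $D$ were only the maximal submodule with $\mathfrak{m}_\Lambda D=D$. Then, as you essentially computed, $D^\vee=Y/Y_{\mathrm{fin}}$ with $Y_{\mathrm{fin}}$ the maximal finite submodule, and the lemma is false. Take $Y=\Lambda/(T^2,\pi T)$. Then $Y_{\mathrm{fin}}=TY\simeq\mathcal{O}/\mathfrak{m}$ and $TY_{\mathrm{fin}}=0$, so $Y_{\mathrm{fin}}/TY_{\mathrm{fin}}\to Y/TY$ is the zero map. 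Dually, for $S=Y^\vee$ one gets $S[T]=D$, and $S[T]\to(S/D)[T]\simeq\mathcal{O}/\mathfrak{m}$ is zero, hence not surjective. Relatedly, your claim that $\mathfrak{m}_\Lambda$-divisible modules in the literal sense have duals embedding into free modules is false: $(\Lambda/(T))^\vee$ satisfies $\mathfrak{m}_\Lambda D=D$ but is killed by $T$.
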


\begin{proof} Fix $x\in \Sha_\mathcal{F}(K,\mathbf{A})[\mathfrak{P}]$ and pick $y\in \Sel_\mathcal{F}(K,\mathbf{A})$ mapping to $x$. Choose $F\in\Lambda$ such that $\mathfrak{P}=(F)$. Since $Fy$ maps to $Fx=0$ in $\Sha_\mathcal{F}(K,\mathbf{A})$, we see that $Fy$ belongs to $\mathrm{div}_{\mathfrak{m}_\Lambda}\bigl(\Sel_\mathcal{F}(K,\mathbf{A})\bigr)$. In particular, the maximal $\mathfrak{m}_\Lambda$-divisible subgroup of $\Sel_\mathcal{F}(K,\mathbf{A})$ is $F$-divisible; in other words, the multiplication-by-$F$ map is surjective, so there exists $z\in\mathrm{div}_{\mathfrak{m}_\Lambda}\bigl(\Sel_\mathcal{F}(K,\mathbf{A})\bigr)$ with $Fy=Fz$. This shows that $y=z+w$ for some $w\in \Sel_\mathcal{F}(K,\mathbf{A})$ with $Fw=0$. It follows that $w$ is an element of $\Sel_\mathcal{F}(K,\mathbf{A})[\mathfrak{P}]$ mapping to $x$ under the rightmost map in \eqref{shaP}. \end{proof}

\begin{proposition} \label{CTSha1}
There is a finite set $\Sigma$ of height $1$ prime ideals of $\Lambda$ such that for all height $1$ prime ideals $\mathfrak{P}$ of $\Lambda$ not in $\Sigma$ the kernel and the cokernel of the map $i^\vee_\mathfrak{P}$ in $\eqref{cospShadual}$ are finite of order bounded by a constant depending on $[\mathcal{O}_{\mathfrak{P}}:\Lambda/\mathfrak{P}]$. 
\end{proposition}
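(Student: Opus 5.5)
By Pontryagin duality it is equivalent to prove the statement for the dual map
\[
i_\mathfrak{P}:\Sha_{\mathcal{F}_\mathfrak{P}}(K,A_\mathfrak{P})\longrightarrow \Sha_\mathcal{F}(K,\mathbf{A})[\mathfrak{P}],
\]
since finite kernels and cokernels are exchanged by duality and their orders are preserved. I would take $\Sigma\defeq\Sigma_\mathrm{CT}\cup\Sigma_\mathrm{EX}\cup\Sigma_\mathrm{div}$. Here $\Sigma_\mathrm{CT}$ and $\Sigma_\mathrm{EX}$ are the finite sets of Assumption \ref{assLambda}(3) and Proposition \ref{non-triv}. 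The set $\Sigma_\mathrm{div}$ consists of the finitely many height $1$ primes dividing the characteristic ideal of the $\Lambda$-torsion part of the Pontryagin dual of $\Sel_\mathcal{F}(K,\mathbf{A})$. Together with the support of the torsion of the dual of $\mathrm{div}_{\mathfrak m_\Lambda}(\Sel_\mathcal{F}(K,\mathbf{A}))$, this set is finite by Theorem \ref{pseudo-thm}.

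The plan is a diagram chase. The first row is the exact sequence
\[
0\to \mathrm{div}\bigl(\Sel_{\mathcal{F}_\mathfrak{P}}(K,A_\mathfrak{P})\bigr)\to\Sel_{\mathcal{F}_\mathfrak{P}}(K,A_\mathfrak{P})\to\Sha_{\mathcal{F}_\mathfrak{P}}(K,A_\mathfrak{P})\to 0.
\]
The second row is \eqref{shaP}, completed on the right by $0$ using Lemma \ref{right-onto-lemma}. The vertical maps are the restriction of $i_\mathfrak{P}$ to divisible parts, the map \eqref{cosp}, and the induced map on $\Sha$. By Assumption \ref{assLambda}(3), the middle vertical map has finite kernel and cokernel, of order bounded in terms of $[\mathcal{O}_\mathfrak{P}:\Lambda/\mathfrak{P}]$, for $\mathfrak P\notin\Sigma_\mathrm{CT}$. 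The snake lemma then gives
\[
|\ker(\text{right})|\le|\ker(\text{mid})|\cdot|\coker(\text{left})| \quad\text{and}\quad |\coker(\text{right})|\le |\coker(\text{mid})|.
\]
So everything reduces to bounding the cokernel of the left vertical map.

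For the left map I would use the structure of the coranks. By Proposition \ref{non-triv}(3) and Corollary \ref{corolambdarank}, for $\mathfrak P\notin\Sigma_\mathrm{EX}$ the $\mathcal O_\mathfrak P$-corank of $\Sel_{\mathcal{F}_\mathfrak{P}}(K,A_\mathfrak{P})$ equals the $\Lambda$-corank $r\in\{0,1\}$ of $\Sel_\mathcal{F}(K,\mathbf{A})$. If $r=0$ both divisible parts vanish: for $\Lambda$, an $\mathfrak m_\Lambda$-divisible cofinitely generated module of corank $0$ has finite dual $Y$ with $\mathfrak m_\Lambda Y=Y$, so $Y=0$ by Nakayama. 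Then the claim is immediate.

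If $r=1$, the dual $D$ of $\mathrm{div}_{\mathfrak m_\Lambda}(\Sel_\mathcal{F}(K,\mathbf{A}))$ is a quotient of the dual of $\Sel_\mathcal{F}(K,\mathbf{A})$ with no nonzero finite quotient (by Nakayama again). I expect $D$ to be pseudo-isomorphic to $\Lambda$, after checking that its torsion part is supported on primes in $\Sigma_\mathrm{div}$ and is dealt with by excluding them. Then $\mathrm{div}_{\mathfrak m_\Lambda}(\cdots)[\mathfrak P]$ is dual to $D/\mathfrak P D$. For $\mathfrak P\notin\Sigma$, this is $\Lambda/\mathfrak P$ up to a finite error bounded by the (fixed) pseudo-null defect, hence of $\mathcal O_\mathfrak P$-corank $1$ up to bounded finite groups. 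The left map sends the corank-$1$ divisible module $\mathrm{div}(\Sel_{\mathcal F_\mathfrak P}(K,A_\mathfrak P))\simeq F_\mathfrak P/\mathcal O_\mathfrak P$ into this. Its kernel is finite (contained in $\ker(\text{mid})$), so its image is divisible of corank $1$, and its cokernel is a quotient of a corank-$1$ cofinitely generated module by a corank-$1$ divisible submodule. That cokernel is therefore finite, of order controlled by $|(D/\mathfrak PD)_{\tors}|$ and by $[\mathcal O_\mathfrak P:\Lambda/\mathfrak P]$.

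The main obstacle is this last uniformity: bounding $|(D/\mathfrak P D)_{\tors}|$ independently of $\mathfrak P$ outside a finite set. I would handle it by fixing one pseudo-isomorphism $D\to\Lambda\oplus D_{\tors}$ with finite kernel and cokernel $Z_1,Z_2$. Reducing modulo $\mathfrak P$ for $\mathfrak P\nmid\charr(D_{\tors})$, the module $D_{\tors}/\mathfrak P$ is finite of order bounded via $[\mathcal O_\mathfrak P:\Lambda/\mathfrak P]$; the $\mathfrak P$-torsion of $Z_2$ and $Z_2/\mathfrak P Z_2$ are bounded by $|Z_2|$, and likewise for $Z_1$. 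Should this uniformity fail, the fallback is to enlarge $\Sigma$ to include all $\mathfrak P$ with $\mathfrak P\ni$ a fixed annihilator of these finite modules, which only removes finitely many primes since such annihilators have height $2$ support.
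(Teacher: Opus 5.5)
Your skeleton is the paper's: the same two-row diagram (with Lemma~\ref{right-onto-lemma} giving exactness of the bottom row), the same snake-lemma reduction to the left vertical map $\varphi$, and the same comparison of the duals of the two divisible parts. Aiming for a bound on $\coker(\varphi)$ via $|(D/\mathfrak P D)_{\tors}|$, instead of proving surjectivity, is fine.

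The gap is in how you control $D$, the dual of $\divv_{\mathfrak m_\Lambda}\bigl(\Sel_\mathcal{F}(K,\mathbf A)\bigr)$. You allow $D$ to have a non-trivial torsion part. You then claim that, for $\mathfrak P\nmid\charr(D_{\tors})$, the order of $D_{\tors}/\mathfrak P D_{\tors}$ is bounded in terms of $[\mathcal O_\mathfrak P:\Lambda/\mathfrak P]$. This is false. Take $D_{\tors}=\Lambda/(f)$ and $\mathfrak P_j=(f+\pi^j)$, which are exactly the primes the proposition is applied to in Lemma~\ref{extScalars}. Then you get $\Lambda/(f,\pi^j)$, whose order grows with $j$ while $[\mathcal O_{\mathfrak P_j}:\Lambda/\mathfrak P_j]$ stays constant.

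Excluding $\Supp(D_{\tors})$ does nothing for nearby primes. Your fallback removes no primes at all: the annihilator of a nonzero finite $\Lambda$-module is $\mathfrak m_\Lambda$-primary, so it is contained in no height-one prime. Worse, if $D_{\tors}$ were not pseudo-null, $\coker(\varphi)$ would be unbounded along such $\mathfrak P_j$. The snake sequence $\ker(\psi)\to\coker(\varphi)\to\coker(i_\mathfrak P)$ would then make $\ker(\psi)$ unbounded, so the statement would fail for every choice of $\Sigma$. What you need is not a bound on $D_{\tors}$ but its vanishing.

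The missing input is that $D$ is $\Lambda$-torsion-free. The divisible part is divisible by every non-zero $F\in\Lambda$; this is the property already used in the proof of Lemma~\ref{right-onto-lemma}. Surjectivity of $F$ on it dualizes to injectivity of $F$ on $D$. Then $D=0$ if $r=0$. If $r=1$, the natural map $D\hookrightarrow D^{**}\simeq\Lambda$ has finite cokernel $Z$, which gives $|(D/\mathfrak P D)_{\tors}|\le|Z[\mathfrak P]|\le|Z|$ uniformly and completes your argument. (The paper takes $D\simeq\Lambda^r$ from Corollary~\ref{corolambdarank} and deduces that $\varphi$ is surjective; torsion-freeness together with your cokernel bound is all that is actually needed.)

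Your dualizations also need correcting:
\begin{itemize}
\item For a discrete module $D'$, the condition $\mathfrak m_\Lambda D'=D'$ dualizes to $Y[\mathfrak m_\Lambda]=0$, not to $\mathfrak m_\Lambda Y=Y$. The latter forces $Y=0$ by Nakayama for every finitely generated $Y$, so it would kill $D$ also when $r=1$.
\item $\Lambda$-corank $0$ means the dual is $\Lambda$-torsion, not finite.
\item The claim that $D$ has ``no nonzero finite quotient'' fails already for $D=\Lambda$.
\end{itemize}
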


Notice that the constant whose existence is stated in the proposition does not depend on the ideal $\mathfrak{P}$ itself, but only on $[\mathcal{O}_{\mathfrak{P}}:\Lambda/\mathfrak{P}]$.

\begin{proof} With notation as in Assumption \ref{assLambda} and Proposition \ref{parprop}, we prove the analogous result for $i_\mathfrak{P}$ with $\mathfrak{P}$ outside the finite set $\Sigma\defeq\Sigma_\mathrm{CT}\cup\Sigma_\mathrm{EX}$ of height $1$ prime ideals of $\Lambda$: the desired result for $i^\vee_\mathfrak{P}$ will follow by Pontryagin duality. 

Let us fix a height $1$ prime ideal $\mathfrak P$ of $\Lambda$ such that $\mathfrak P\notin\Sigma$. Writing $\varphi$ and $\psi$ for the maps induced by $i_\mathfrak{P}$, in light of Lemma \ref{right-onto-lemma} there is a commutative diagram  with exact rows
\[
\xymatrix{
0\ar[r] & \mathrm{div}\bigl(\Sel_{\mathcal{F}_\mathfrak{P}}(K,A_\mathfrak{P})\bigr)\ar[r]\ar[d]^-{\varphi} & \Sel_{\mathcal{F}_\mathfrak{P}}(K,A_\mathfrak{P})\ar[r]\ar[d]^-{i_\mathfrak{P}} & \Sha_{\mathcal{F}_\mathfrak{P}}(K,A_\mathfrak{P})\ar[d]^-{\psi}\ar[r]&0\\
0\ar[r]&\mathrm{div}_{\mathfrak{m}_\Lambda}\bigl(\Sel_\mathcal{F}(K,\mathbf{A})\bigr)[\mathfrak{P}]\ar[r]&\Sel_\mathcal{F}(K,\mathbf{A})[\mathfrak{P}]\ar[r]&\Sha_\mathcal{F}(K,\mathbf{A})[\mathfrak{P}]\ar[r]&0
}
\]
that gives rise to an exact sequence 
\[
\begin{split}
0\longrightarrow \ker(\varphi)&\longrightarrow\ker(i_\mathfrak{P})\longrightarrow \ker(\psi)\\&\hskip3mm\longrightarrow 
\mathrm{coker}(\varphi)\longrightarrow \mathrm{coker}(i_\mathfrak{P})\longrightarrow\mathrm{coker}(\psi)\longrightarrow0.
\end{split}
\]
Since $\mathfrak{P}\notin\Sigma_\mathrm{CT}$, Assumption \ref{assLambda} forces $\ker(i_\mathfrak{P})$ and $\mathrm{coker}(i_\mathfrak{P})$, hence $\mathrm{coker}(\psi)$ as well, to possess the required properties. It remains to show that the same is true of 
$\mathrm{coker}(\varphi)$. We actually show that the fact that $\mathfrak{P}\notin\Sigma_\mathrm{EX}$ implies the surjectivity of $\varphi$. First of all, by Corollary \ref{corolambdarank}, the Pontryagin dual of $\mathrm{div}_{\mathfrak{m}_\Lambda}\bigl(\Sel_\mathcal{F}(K,\mathbf{A})\bigr)$ is isomorphic to $\Lambda^r$ with $r=1$ if $1\in\mathcal{N}^{\epsilon}$ and $r=0$ if $1\in\mathcal{N}^{\epsilon+1}$, so the dual of $\mathrm{div}_{\mathfrak{m}_\Lambda}\bigl(\Sel_\mathcal{F}(K,\mathbf{A})\bigr)[\mathfrak{P}]$ is isomorphic to $(\Lambda/\mathfrak{P})^r$. Similarly, Proposition \ref{parprop} ensures that if $\mathfrak{P}\notin \Sigma$, then the Pontryagin dual
of $\mathrm{div}\bigl(\Sel_{\mathcal{F}_\mathfrak{P}}(K,A_\mathfrak{P})\bigr)$ is isomorphic to $\mathcal{O}_\mathfrak{P}^r$ for the same $r$ as before and the dual map $\varphi^\vee$ corresponds to the injection $(\Lambda/\mathfrak{P})^r\hookrightarrow\mathcal{O}_\mathfrak{P}^r$, so it is 
injective. By Pontryagin duality, it follows that $\varphi$ is surjective, as desired. \end{proof}

The map $i_\mathfrak{P}^\vee$ in \eqref{cospShadual} induces a map
\begin{equation} \label{ipvee}
i_\mathfrak{P}^\vee: X/\mathfrak{P}X \otimes_{\Lambda/\mathfrak{P}} \mathcal{O}_\mathfrak{P} \longrightarrow X(T_\mathfrak{P}) \otimes_{\Lambda/\mathfrak{P}} \mathcal{O}_\mathfrak{P},
\end{equation}
denoted by the same symbol. Composing with the map $X(T_\mathfrak{P}) \otimes_{\Lambda/\mathfrak{P}} \mathcal{O}_\mathfrak{P}\rightarrow 
X(T_\mathfrak{P})$ defined on pure tensors by $x\otimes a\mapsto ax$, we obtain a further map
\begin{equation} \label{ipvee2}
i_\mathfrak{P}^\vee: X/\mathfrak{P}X \otimes_{\Lambda/\mathfrak{P}} \mathcal{O}_\mathfrak{P} \longrightarrow X(T_\mathfrak{P}).
\end{equation}
For a height $1$ prime ideal $\mathfrak{P}$ of $\Lambda$ and an integer $j \geq 0$, we introduce the following notation:  
\begin{itemize}
\item if $\mathfrak{P} = (f)$ with $f \neq \pi$, set $\mathfrak{P}_j\defeq(f+\pi^j)$ and write $e_\mathfrak{P}$ for the ramification index of $\Frac(\mathcal{O}_{\mathfrak{P}})$ over $\Frac(\mathcal{O})$;
\item if $\mathfrak{P} = (\pi)$, set $\mathfrak{P}_j\defeq(\pi + T^j)$ and $e_\mathfrak{P}\defeq1$.
\end{itemize}
There exists an integer $N(\mathfrak{P})\geq1$ such that for all integers $j \geq N(\mathfrak{P})$ the following properties hold:
\begin{itemize}
\item $\mathfrak{P}_j$ is a height $1$ prime ideal of $\Lambda$;
\item $\mathfrak{P}_j\notin\Sigma$;
\item if $\mathfrak{P}\neq(\pi)$, then there is an isomorphism $\Lambda/\mathfrak{P}_j\simeq\Lambda/\mathfrak{P}$.
\end{itemize}
For all integers $j \geq N(\mathfrak{P})$, set
\[
\mathcal{O}_j\defeq\mathcal{O}_{\mathfrak{P}_j},\quad T_j\defeq {T}_{\mathfrak{P}_j},\quad \lambda_j\defeq\lambda_{\mathfrak{P}_j},\quad \lambda_{j,n}\defeq(\lambda_{\mathfrak{P}_j})_n,\quad s_j=s_{\mathfrak{P}_j},\quad i_j=s_{\mathfrak{P}_j},\quad i_j^\vee=i_{\mathfrak{P}_j}^\vee.
\]
Moreover, denote by $\pi_j$ a uniformizer of $\mathcal{O}_j$ and by $e_j$ the ramification index of $\Frac(\mathcal{O}_j)$ over $\Frac(\mathcal{O})$. Finally, let
\begin{equation} \label{s-j-eq}
s_j:\Lambda\longrightarrow\Lambda/\mathfrak{P}_j\longmono\mathcal{O}_j
\end{equation}
be the specialization map.

Now we give a first result on the structure of $X$. We need some preliminary lemmas.

\begin{lemma} \label{kerCokerComposition}
Let $R$ be a commutative ring and for all $j\in\N$ let $A_j,B_j,C_j$ be $R$-modules. If for all $j\in\N$ the maps $f_j:A_j \rightarrow B_j$ and $g_j:B_j \rightarrow C_j$ are $R$-modules homomorphisms with finite kernel and cokernel bounded independently of $j$, then the same holds for $g_j\circ f_j$ for all $j\in\N$.
\end{lemma}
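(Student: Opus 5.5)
The plan is to bound the kernel and the cokernel of $h_j\defeq g_j\circ f_j$ separately, using the standard six-term exact sequence attached to a composition of homomorphisms. The intended reading of the hypothesis is that there is a constant $c$, independent of $j$, such that $\#\ker(f_j)$, $\#\coker(f_j)$, $\#\ker(g_j)$ and $\#\coker(g_j)$ are all at most $c$ for every $j\in\N$. The goal is to show that $\ker(h_j)$ and $\coker(h_j)$ are finite of order at most $c^2$, again independently of $j$.

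The key tool is the kernel--cokernel exact sequence (a direct consequence of the snake lemma) for the composition $h_j=g_j\circ f_j$:
\[
0\longrightarrow\ker(f_j)\longrightarrow\ker(h_j)\longrightarrow\ker(g_j)\longrightarrow\coker(f_j)\longrightarrow\coker(h_j)\longrightarrow\coker(g_j)\longrightarrow0.
\]
If one prefers not to quote it, the needed pieces can be checked directly by hand:
\begin{itemize}
\item $f_j$ restricts to a map $\ker(h_j)\to\ker(g_j)$ whose kernel is $\ker(f_j)$;
\item $\coker(f_j)=B_j/\im(f_j)\to C_j/\im(h_j)$, induced by $g_j$, is well defined because $g_j(\im f_j)=\im h_j$;
\item the cokernel of this map is $C_j/\im(g_j)=\coker(g_j)$.
\end{itemize}

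From exactness at $\ker(h_j)$ we get that $\ker(h_j)$ is an extension of a submodule of $\ker(g_j)$ by $\ker(f_j)$. Hence $\#\ker(h_j)\le\#\ker(f_j)\cdot\#\ker(g_j)\le c^2$.

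Symmetrically, $\coker(h_j)$ is an extension of $\coker(g_j)$ by a quotient of $\coker(f_j)$. Hence $\#\coker(h_j)\le\#\coker(f_j)\cdot\#\coker(g_j)\le c^2$.

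Since $c^2$ does not depend on $j$, this concludes the argument.

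There is no real obstacle here. The only point requiring care is the exactness of the middle portion of the sequence. The argument above only uses the two ends of that sequence, namely injectivity of $\ker(f_j)\to\ker(h_j)$ and surjectivity of $\coker(h_j)\to\coker(g_j)$, together with the identifications of the adjacent terms. So the elementary verifications listed above suffice, and the connecting homomorphism is never needed.
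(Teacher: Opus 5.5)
Your proof is correct and is essentially the paper's argument. The paper bounds $\#\ker(g_j\circ f_j)$ by writing it as the union of the fibres $f_j^{-1}(y)$ for $y\in\ker(g_j)$, i.e.\ $\ker(g_j\circ f_j)/\ker(f_j)\hookrightarrow\ker(g_j)$, and bounds the cokernel through the filtration $g_j(f_j(A_j))\subset g_j(B_j)\subset C_j$; these are exactly the two end segments of the kernel--cokernel sequence you use, giving the same bound $c^2$.
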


\begin{proof} Fix $j\in\N$. There are equalities
\[
\ker(g_j \circ f_j)=f_j^{-1}(\ker g_j)=\bigcup_{y \in \ker(g_j)} f_j^{-1}(y)
\]
and, since $\# f_j^{-1}(y) \leq \# \ker (f_j)$ for all $y \in B_j$, there is an inequality
\[
\#\ker(g_j \circ f_j) \leq \sum_{y \in \ker (g_j)} \#\ker(f_j)=\# \ker(g_j)\cdot\# \ker(f_j),
\]
which proves the claim of the proposition for $\ker(g_j \circ f_j)$. On the other hand, we also have
\[
\begin{aligned}
\# \coker(g_j \circ f_j)&=\#\Bigl(C_j\big/g_j\bigl(f_j(A_j)\bigr)\!\Bigr)=\#\bigl(C_j/g_j(B_j)\bigr)\cdot \#\Bigl(g_j(B_j)\big/g_j\bigl(f_j(A_j)\bigr)\!\Bigr)\\
& \leq\#\coker(g_j)\cdot\#\coker(f_j),
\end{aligned}
\]
which concludes the proof. \end{proof}

\begin{lemma} \label{extScalars}
Let $\mathfrak{P}$ be a height $1$ prime of $\Lambda$ not contained in $\Sigma$. The kernel and the cokernel of $i_j^\vee$ are finite and bounded by a constant depending on $\mathfrak{P}$ but not on $j$ for $j\geq N(\mathfrak{P})$.
\end{lemma}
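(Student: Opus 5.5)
We will write $i_j^\vee$ (in the form \eqref{ipvee2}) as a composition of two maps and apply Lemma \ref{kerCokerComposition}. The first map is $i_{\mathfrak P_j}^\vee:X/\mathfrak{P}_jX\to X(T_j)$ from \eqref{cospShadual}. Since $\mathfrak{P}_j\notin\Sigma$ for $j\geq N(\mathfrak{P})$, Proposition \ref{CTSha1} says its kernel and cokernel are finite. Their orders are bounded by a constant depending only on $[\mathcal{O}_j:\Lambda/\mathfrak{P}_j]$. The second map is the extension of scalars $X/\mathfrak{P}_jX\otimes_{\Lambda/\mathfrak{P}_j}\mathcal{O}_j\to X(T_j)\otimes_{\Lambda/\mathfrak{P}_j}\mathcal{O}_j$ followed by the multiplication map $X(T_j)\otimes_{\Lambda/\mathfrak{P}_j}\mathcal{O}_j\to X(T_j)$, $x\otimes a\mapsto ax$.

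The first point is to show that the index $[\mathcal{O}_j:\Lambda/\mathfrak{P}_j]$ is bounded independently of $j$. If $\mathfrak{P}\neq(\pi)$, we use the isomorphism $\Lambda/\mathfrak{P}_j\simeq\Lambda/\mathfrak{P}$ valid for $j\geq N(\mathfrak{P})$. Passing to integral closures, this gives $\mathcal{O}_j\simeq\mathcal{O}_{\mathfrak P}$ compatibly, so the index equals $[\mathcal{O}_\mathfrak{P}:\Lambda/\mathfrak{P}]$. If $\mathfrak{P}=(\pi)$, then $\mathfrak{P}_j=(\pi+T^j)$ is generated by an Eisenstein polynomial. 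Hence $\Lambda/\mathfrak{P}_j\simeq\mathcal{O}[T]/(T^j+\pi)$ is already a DVR and the index is $1$. In both cases the constant from Proposition \ref{CTSha1} is uniform in $j$.

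Next we control the kernel and cokernel of the scalar-extension and multiplication maps. Let $c$ be the index above and write $M\defeq X(T_j)$, a finitely generated $\mathcal{O}_j$-module.
\begin{itemize}
\item Tensoring over $\Lambda/\mathfrak{P}_j$ with $\mathcal{O}_j$ is right exact. So the cokernel of the extended map is a quotient of $\coker(i^\vee_{\mathfrak{P}_j})\otimes\mathcal{O}_j$. Its kernel is controlled by $\ker\otimes\mathcal{O}_j$ together with $\Tor_1^{\Lambda/\mathfrak{P}_j}(\coker,\mathcal{O}_j)$ (first splitting into two short exact sequences through the image).
\item $\mathcal{O}_j$ is generated over $\Lambda/\mathfrak{P}_j$ by a bounded number of elements, since it is finite of bounded index. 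So these tensor and Tor groups have orders bounded in terms of $c$ and the orders of the original kernel and cokernel.
\item The multiplication map $M\otimes_{\Lambda/\mathfrak{P}_j}\mathcal{O}_j\to M$ is surjective. Its kernel is the kernel of $M\otimes_{\Lambda/\mathfrak P_j}\mathcal{O}_j\to M\otimes_{\mathcal{O}_j}\mathcal{O}_j$. This kernel is killed by the conductor of $\Lambda/\mathfrak{P}_j$ in $\mathcal{O}_j$.
\end{itemize}

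The hardest step is bounding the kernel of the multiplication map uniformly. The difficulty is that $M$ itself is not bounded: $X(T_j)$ grows with $j$. My plan is to use the exact sequence $0\to\Lambda/\mathfrak{P}_j\to\mathcal{O}_j\to Q\to0$, where $Q$ is finite of order $c$. It gives $M\otimes\mathcal{O}_j\simeq M\oplus(M\otimes Q)$-type bounds only after noting that $M\otimes_{\Lambda/\mathfrak{P}_j}\mathcal{O}_j=M\otimes_{\mathcal{O}_j}(\mathcal{O}_j\otimes_{\Lambda/\mathfrak{P}_j}\mathcal{O}_j)$. The kernel is then $M\otimes_{\mathcal{O}_j}J$, where $J$ is the kernel of the multiplication $\mathcal{O}_j\otimes_{\Lambda/\mathfrak P_j}\mathcal{O}_j\to\mathcal{O}_j$, a finite module of order bounded by $c$. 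For its order to be uniform, we also need the number of generators of $M$ to be bounded. This follows from \eqref{ShaX(T)}: $X(T_j)$ is a sum of squares of cyclic modules. Its $\mathfrak{m}$-rank is at most $\dim X/\mathfrak{m}_\Lambda X$ plus the uniformly bounded kernel and cokernel from the first step. With this bound in place, Lemma \ref{kerCokerComposition} concludes the proof. In the case $\mathfrak{P}=(\pi)$ the second map is the identity and the statement is immediate.
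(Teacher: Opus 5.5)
Your proposal is correct and is essentially the paper's proof. You factor \eqref{ipvee2} as $g_j\circ f_j$, where $f_j$ is the scalar extension of $h_j=i^\vee_{\mathfrak P_j}$ and $g_j$ is the multiplication map; $h_j$ itself is not one of the two factors, as your first paragraph suggests. You then bound $\ker f_j$ through $\ker(h_j)\otimes\mathcal O_j$ and $\Tor_1^{\Lambda/\mathfrak P_j}(\coker h_j,\mathcal O_j)$. For the $\Tor$ term a bounded number of generators of $\mathcal O_j$ is not quite enough; you need a fixed presentation, which the isomorphisms $\Lambda/\mathfrak P_j\simeq\Lambda/\mathfrak P$ you cite provide, exactly as in the paper. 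Finally you bound $\ker g_j$ by the finite module $J_j=\ker(\mathcal O_j\otimes_{\Lambda/\mathfrak P_j}\mathcal O_j\to\mathcal O_j)$ together with a uniform bound on the number of generators of $X(T_j)$. Your identification $\ker g_j\simeq X(T_j)\otimes_{\mathcal O_j}J_j$, valid because $0\to J_j\to\mathcal O_j\otimes\mathcal O_j\to\mathcal O_j\to 0$ splits, is a slightly slicker substitute for the paper's reduction to cyclic summands via the snake lemma.
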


\begin{proof} The lemma is trivial for $\mathfrak{P}=(\pi)$, as, in this case, $\Lambda/\mathfrak{P}_j=\mathcal{O}_j$ for all integers $j \geq 1$. Thus, we can assume $\mathfrak{P} \neq (\pi)$. The claim for the cokernel follows from Proposition \ref{CTSha1}, as the cokernel of \eqref{ipvee2} is a quotient of the cokernel of \eqref{cospShadual}; therefore, we need to prove the statement for the kernel only. To lighten the notation, for the rest of the proof we set
\begin{equation} \label{j-notation}
R_j\defeq\Lambda/\mathfrak{P}_j,\quad S_j\defeq\mathcal{O}_{\mathfrak{P}_j},\quad M_j\defeq X/\mathfrak{P}_jX,\quad N_j\defeq X(T_{\mathfrak{P}_j}),\quad h_j:M_j \rightarrow N_j.
\end{equation}
By Lemma \ref{kerCokerComposition}, we just need to prove the statement for the two maps
\[
f_j:M_j \otimes_{R_j} S_j\longrightarrow N_j \otimes_{R_j} S_j,\quad g_j:N_j \otimes_{R_j} S_j \longrightarrow N_j,
\]
where $f_j$ is the $S_j$-linear extension of $i_j^\vee$ and $g_j$ is defined on pure tensors by $n\otimes s\mapsto ns$. 
    
We first prove the statement for $f_j$. From the tautological short exact sequences
\[
0 \longrightarrow \ker(h_j) \longrightarrow M_j \longrightarrow \im (h_j) \longrightarrow 0
\]
and
\[
0 \longrightarrow \im(h_j) \longrightarrow N_j \longrightarrow \coker(h_j) \longrightarrow 0,
\]
we get exact sequences
\begin{equation} \label{a-eq}
\ker(h_j) \otimes_{R_j} S_j \longrightarrow M_j \otimes_{R_j} S_j \longrightarrow \im(h_j) \otimes_{R_j} S_j \longrightarrow 0
\end{equation}
and
\begin{equation} \label{b-eq}
\begin{split}
\Tor_1^{R_j}\bigl(\coker(h_j),S_j\bigr)& \longrightarrow \im(h_j) \otimes_{R_j} S_j\\ &\hskip 3mm\longrightarrow N_j \otimes_{R_j} S_j \longrightarrow \coker(h_j) \otimes_{R_j} S_j \longrightarrow 0.
\end{split}
\end{equation}
Notice that $f_j$ is the composition of the second maps in \eqref{a-eq} and \eqref{b-eq}, so, again by Lemma \ref{kerCokerComposition}, we just need to prove that the orders of $\ker(h_j)\otimes_{R_j} S_j$ and $\Tor_1^{R_j}\bigl(\coker(h_j),S_j\bigr)$ are bounded independently of $j$. The claim for $\ker(h_j) \otimes_{R_j} S_j$ follows from Proposition \ref{CTSha1} because $[S_j:R_j]$ is constant over all $j\geq N(\mathfrak{P})$. As for the second claim, the rings $R_j$ (and, consequently, the rings $S_j$) are isomorphic as $\Z_p$-algebras for $j \geq N(\mathfrak{P})$, so we can fix finite presentations
\[
R_j^m \xlongrightarrow{\varphi} R_j^k \longrightarrow S_j \longrightarrow 0
\]
in which $m$ and $k$ do not depend on $j$. Then for all $j$ there is a short exact sequence
\[
0 \longrightarrow R_j^m/\ker(\varphi) \longrightarrow R_j^k \longrightarrow S_j \longrightarrow 0;
\]
tensoring with $\coker(h_j)$ over $R_j$, we get an exact sequence
\[
\begin{split}
0 \longrightarrow\Tor_1^{R_j}\bigl(\coker(h_j),S_j\bigr) &\longrightarrow R_j^m/\ker(\varphi) \otimes \coker(h_j)\\ &\hskip 3mm\longrightarrow \coker(h_j)^k \longrightarrow S_j \otimes\coker(h_j) \longrightarrow 0
\end{split}
\]
(here the unadorned $\otimes$ stands for $\otimes_{R_j}$). Therefore, there are inequalities
\[
\#\Tor_1^{R_j}\bigl(\coker(h_j),S_j\bigr) \leq \#\bigl(R_j^m/\ker(\varphi) \otimes_{R_j} \coker(h_j)\bigr) \leq \#\coker(h_j)^m
\]
and our claim follows from Proposition \ref{CTSha1}. This establishes the desired result for $f_j$.

Now we prove the statement for $g_j$. The map $g_j$ is clearly surjective, so we only need to check that its kernel has order bounded independently of $j$. First of all, consider the map of $S_j$-modules $S_j \otimes_{R_j} S_j\rightarrow S_j$ given on pure tensors by $s\otimes t\mapsto st$; denote by $J_j$ the kernel of this map. If we write $K_j$ for the fraction field of $S_j$, then there is a short exact sequence
\[
0 \longrightarrow  J_j \otimes_{R_j} K_j \longrightarrow (S_j \otimes_{R_j} S_j) \otimes_{R_j} K_j \longrightarrow S_j \otimes_{R_j} K_j \longrightarrow 0.
\]
Since $S_j/R_j$ is finite, $(S_j \otimes_{R_j} S_j) \otimes_{R_j} K_j\simeq S_j\otimes_{R_j}K_j\simeq K_j$ and the second map, being surjective, is an isomorphism. Therefore, $J_j \otimes_{R_j} K_j=0$, which implies that $J_j$ is a finitely generated torsion $R_j$-module and, hence, a finitely generated torsion $S_j$-module. Thus, $J_j$ is finite and, since the $R_j$'s (and, consequently, the $S_j$'s) are isomorphic as $\Z_p$-algebras to each other for $j\geq N(\mathfrak{P})$, its order is bounded by a constant that does not depend on $j$. Now, we observe that $M_j$ is finitely generated over $R_j$ by a number of elements independent of $j$, as it is smaller than or equal to the number of generators of $X$ over $\Lambda$. On the other hand, $\coker(h_j)$ is finite and bounded by a constant independent of $j$, so $N_j$ too is finitely generated over $R_j$ by a number of elements independent of $j$, and the same holds true over $S_j$. Then there is an isomorphism
\[
N_j\simeq\bigoplus_{i=1}^t S_j/\pi_j^{k_i} S_j
\]
of $S_j$-modules, where $t$ does not depend on $j$. Therefore, it suffices to prove that, for an integer $k\geq1$, the kernel of the natural map $g_j:\bigl(S_j/\pi_j^k S_j\bigr)\otimes_{R_j}S_j\rightarrow S_j/\pi_j^k S_j$ is bounded independently of $j$. To this end, consider the commutative diagram with exact rows 
\begin{equation} \label{S-comm-eq}
\xymatrix@C=42pt{&S_j\otimes_{R_j}S_j \ar[r]^-{(\pi_j^k \otimes 1)\cdot} \ar@{->>}[d] &S_j\otimes_{R_j}S_j \ar[r] \ar@{->>}[d] &S_j/\pi_j^k S_j\otimes_{R_j}S_j \ar[r] \ar@{->>}[d]^-{g_j} &0 \\
    0 \ar[r] &S_j \ar[r]^-{\pi_j^k\cdot} & S_j \ar[r] & S_j/\pi_j^kS_j \ar[r] &0}
\end{equation}
in which the vertical arrows are given by $x\otimes s\mapsto sx$ on pure tensors.
By the snake lemma and the surjectivity of the left vertical map in \eqref{S-comm-eq}, $\ker(g_j)$ is a quotient of $J_j$. The claim follows because we already proved that the order of $J_j$ is bounded independently of $j$. \end{proof}

\begin{lemma} \label{pseudoKerCoker}
Fix a height $1$ prime ideal $\mathfrak{P}$ of $\Lambda$. Let $M$ and $N$ be finitely generated torsion $\Lambda$-modules and let $f:M \rightarrow N$ be a pseudo-isomorphism. For all integers $j\geq0$ such that $\mathfrak{P}_j\notin\Supp(M)$, the map
\[ 
f_j:M \otimes_\Lambda \mathcal{O}_j \longrightarrow N \otimes_\Lambda \mathcal{O}_j
\]
has finite kernel and cokernel of order bounded independently of $j$.
\end{lemma}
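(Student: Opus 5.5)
Since $f$ is a pseudo-isomorphism, we have exact sequences of $\Lambda$-modules
\[
0\longrightarrow K\longrightarrow M\overset{f}\longrightarrow N\longrightarrow C\longrightarrow0
\]
with $K=\ker(f)$ and $C=\coker(f)$ finite (pseudo-null is the same as finite over $\Lambda$). Split this into $0\to K\to M\to I\to0$ and $0\to I\to N\to C\to0$, with $I\defeq\im(f)$, and tensor both with $\mathcal{O}_j$ over $\Lambda$ (via $s_j$). Then $f_j$ is the composition $M\otimes\mathcal{O}_j\to I\otimes\mathcal{O}_j\to N\otimes\mathcal{O}_j$. By Lemma \ref{kerCokerComposition}, it is enough to bound the kernel and cokernel of each factor independently of $j$.

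For the first factor, right exactness gives a surjection with kernel a quotient of $K\otimes_\Lambda\mathcal{O}_j$. For the second factor, the cokernel is $C\otimes_\Lambda\mathcal{O}_j$ and the kernel is a quotient of $\Tor_1^\Lambda(C,\mathcal{O}_j)$. It therefore suffices to show that, for a fixed finite $\Lambda$-module $Q$ (namely $Q=K$ or $Q=C$), the orders of $Q\otimes_\Lambda\mathcal{O}_j$ and $\Tor_1^\Lambda(Q,\mathcal{O}_j)$ are bounded independently of $j$.

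For the tensor product, $Q\otimes_\Lambda\mathcal{O}_j$ is a quotient of $Q^k$, where $k$ is the number of generators of $\mathcal{O}_j$ over $\Lambda$. For $j\geq N(\mathfrak{P})$ the rings $\Lambda/\mathfrak{P}_j$ and $\mathcal{O}_j$ are isomorphic to fixed rings, as in the proof of Lemma \ref{extScalars}, so $k$ can be chosen independently of $j$. For the Tor term, factor $\Lambda\to\Lambda/\mathfrak{P}_j\to\mathcal{O}_j$. First, $\Tor_1^\Lambda(Q,\Lambda/\mathfrak{P}_j)=Q[f_j]$, where $\mathfrak{P}_j=(f_j)$, which has order at most $\#Q$. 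Then use the change-of-rings (base change) spectral sequence, or more concretely a fixed presentation $R_j^m\to R_j^k\to\mathcal{O}_j\to0$ over $R_j=\Lambda/\mathfrak{P}_j$ with $m,k$ independent of $j$, exactly as in Lemma \ref{extScalars}. This bounds $\Tor_1^{\Lambda}(Q,\mathcal{O}_j)$ by a product of the orders of $\Tor_1^\Lambda(Q,R_j)^{k}$, $(Q\otimes R_j)^m$ and similar terms, all at most a fixed power of $\#Q$.

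I expect the main obstacle to be making the Tor bound along $\Lambda\to R_j\to\mathcal{O}_j$ clean. I would use the low-degree exact sequence
\[
\Tor_1^\Lambda(Q,R_j)\otimes_{R_j}\mathcal{O}_j\to\Tor_1^\Lambda(Q,\mathcal{O}_j)\to\Tor_1^{R_j}(Q\otimes_\Lambda R_j,\mathcal{O}_j)\to0
\]
from the base-change spectral sequence. The first term is bounded by $(\#Q)^k$, and the last term is bounded by $(\#Q)^m$ via the presentation argument of Lemma \ref{extScalars}. The hypothesis $\mathfrak{P}_j\notin\Supp(M)$ is not needed for these finite-module estimates; it only guarantees that the modules involved are finite. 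In the case $\mathfrak{P}=(\pi)$ one has $\mathcal{O}_j=\Lambda/\mathfrak{P}_j$, so the second step is vacuous.
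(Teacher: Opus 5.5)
Your argument is correct, but it controls $\ker(f_j)$ by a different mechanism from the paper.

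\textbf{The paper's route.} The paper never uses $\Tor$. It bounds $\coker(f_j)=B\otimes_\Lambda\mathcal O_j$ the same way you do, and then gets the kernel by counting. Because $\mathfrak P_j\notin\Supp(M)$, the modules $M\otimes_\Lambda\mathcal O_j$ and $N\otimes_\Lambda\mathcal O_j$ are finite, so
$\#\ker(f_j)=\#(M\otimes_\Lambda\mathcal O_j)\cdot\#(B\otimes_\Lambda\mathcal O_j)/\#(N\otimes_\Lambda\mathcal O_j)$.
A pseudo-isomorphism $N\to M$ in the opposite direction, with finite cokernel $B'$, gives $\#(M\otimes_\Lambda\mathcal O_j)\leq\#(N\otimes_\Lambda\mathcal O_j)\cdot\#(B'\otimes_\Lambda\mathcal O_j)$. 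Hence $\#\ker(f_j)\leq\#(B\otimes_\Lambda\mathcal O_j)\cdot\#(B'\otimes_\Lambda\mathcal O_j)$. This is shorter and entirely elementary. However, it relies on the symmetry of pseudo-isomorphism for torsion modules and on the finiteness of the specializations, and the latter is the only role of the support hypothesis.

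\textbf{Your route.} You factor through $\im(f)$ and bound the kernel by $K\otimes_\Lambda\mathcal O_j$ and $\Tor_1^\Lambda(C,\mathcal O_j)$. This needs neither the reverse map nor the support hypothesis, so it works for any map of finitely generated $\Lambda$-modules with finite kernel and cokernel, as you note. The price is a uniform bound on $\Tor_1^\Lambda(Q,\mathcal O_j)$.

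You can get that bound without the base-change spectral sequence. Lift a fixed presentation of $\mathcal O_j$ over $R_j$ to a presentation $\Lambda^{m+k}\to\Lambda^k\to\mathcal O_j\to0$, adjoining as relations a generator of $\mathfrak P_j$ times the basis vectors. Then run the argument of Lemma \ref{extScalars} over $\Lambda$; it embeds $\Tor_1^\Lambda(Q,\mathcal O_j)$ into a quotient of $Q^{m+k}$.

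\textbf{One small correction.} $\Lambda/\mathfrak P_j$ and $\mathcal O_j$ are isomorphic to fixed rings only when $\mathfrak P\neq(\pi)$. For $\mathfrak P=(\pi)$, the uniformity of $k$ comes instead from $\mathcal O_j=\Lambda/\mathfrak P_j$ being cyclic over $\Lambda$, which your final remark already supplies.
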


\begin{proof} Since $M$ and $N$ are pseudo-isomorphic, there are exact sequences of $\Lambda$-modules
\[
M \overset{f}\longrightarrow N \longrightarrow B \longrightarrow 0,\quad N \longrightarrow M \longrightarrow B' \longrightarrow 0,
\]
where $B$ and $B'$ are finite. Fix an integer $j \geq 0$ such that $\mathfrak{P}_j \notin \Supp(M)$. There are exact sequences of finitely generated torsion $\mathcal{O}_j$-modules
\[
M \otimes_\Lambda \mathcal{O}_j \overset{f_j}\longrightarrow N \otimes_\Lambda \mathcal{O}_j \longrightarrow B \otimes_\Lambda \mathcal{O}_j \longrightarrow 0
\]
and
\[ N \otimes_\Lambda \mathcal{O}_j \longrightarrow M \otimes_\Lambda \mathcal{O}_j \longrightarrow B' \otimes_\Lambda \mathcal{O}_j \longrightarrow 0,
\]
where $B \otimes_\Lambda \mathcal{O}_j$ and $B' \otimes_\Lambda \mathcal{O}_j$ are finite of order bounded independently of $j$. Indeed:
\begin{itemize}
\item if $\mathfrak{P} \neq (\pi)$, then the desired boundedness is a consequence of the canonical surjection $B \otimes_{\Z_p}\mathcal{O}_j\twoheadrightarrow B \otimes_\Lambda \mathcal{O}_j$ and the isomorphism $B \otimes_{\Z_p}\mathcal{O}_j\simeq B\otimes_{\Z_p}\mathcal{O}_{\mathfrak{P}}$ induced by the isomorphism of rings $\mathcal{O}_\mathfrak{P} \simeq \mathcal{O}_j$;      
\item if $\mathfrak{P}=(\pi)$, then $B \otimes_\Lambda \mathcal{O}_j=B \otimes_\Lambda \Lambda/\mathfrak{P}_j$ is a quotient of $B$.
\end{itemize}
This proves the claim of the lemma on the cokernel of $f_j$; it remains to deal with its kernel. Set $a_j\defeq\#\ker(f_j)$ and for $(\xi,\Xi)\in\bigl\{(m,M),(n,N),(b,B),(b',B')\bigr\}$ put $\xi_j\defeq\#(\Xi\otimes_\Lambda \mathcal{O}_j)$. Choose an integer $k\geq0$ such that $b_j,b'_j \leq k$ for all $j$. Then 
\[
a_j=\frac{m_j b_j}{n_j} \leq \frac{m_j}{n_j} \cdot k \leq k^2,
\]
where the last inequality follows from the inequality $n_jb'_j\geq m_j$. Since $k^2$ is independent of $j$, this proves the statement of the lemma for the kernel of $f_j$. \end{proof}

\begin{lemma} \label{parityNExp}
Let $R$ be a discrete valuation ring with uniformizer $\pi$. Assume that for any integer $m\gg0$ we are assigned finite $R$-modules $A_m$, $B_m$, $M_m$, integers $e_i,g_i\geq1$ for $i\in\{1,\dots,n\}$ and an exact sequence
\[
0\longrightarrow A_m \longrightarrow \bigoplus_{i=1}^n  (R/\pi^{me_i}R)^{g_i} \xlongrightarrow{f_m} M_m \oplus M_m \longrightarrow B_m \longrightarrow 0
\]
of finite $R$-modules, where $\max\{\#A_m,\#B_m\}\leq C$ for some integer $C\geq0$ independent of $m$. Then $g_i$ is even for all $i\in\{1,\dots,n\}$.
\end{lemma}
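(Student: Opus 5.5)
Since $\bigoplus_i(R/\pi^{me_i}R)^{g_i}$ and $M_m\oplus M_m$ differ only by modules of bounded order, I plan to show that the number of cyclic summands of $M_m\oplus M_m$ with exponent close to $me_i$ is $g_i$ up to a bounded error. Since $M_m\oplus M_m$ has all multiplicities even, this number is even, and letting $m\to\infty$ should force $g_i$ to be even. The tool is the invariant
\[
r_t(N)\defeq\dim_{R/\pi}\bigl(\pi^{t}N/\pi^{t+1}N\bigr),
\]
which equals the number of cyclic summands $R/\pi^a$ of $N$ with $a>t$. For $N=\bigoplus_i(R/\pi^{me_i})^{g_i}$ it is $\sum_{e_i m>t}g_i$. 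For $N=M_m\oplus M_m$ it equals $2\,r_t(M_m)$, hence is even for every $t$.

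Step 1 will compare $r_t$ across the exact sequence. Write $P_m\defeq\bigoplus_i(R/\pi^{me_i}R)^{g_i}$ and $Q_m\defeq M_m\oplus M_m$, and let $c$ be the largest integer with $\pi^c\cdot$(module)$\neq 0$ for some module of order at most $C$; so $\pi^{c}$ kills $A_m$ and $B_m$ and $c\le\log_{\#(R/\pi)}C$, independently of $m$. Set $I_m\defeq\im(f_m)$. I will prove a robust inequality: if $0\to A\to P\to I\to 0$ with $\pi^cA=0$, then $r_{t+c}(P)\le r_t(I)\le r_{t}(P)$ for all $t$. This is best phrased via the count of elementary divisors through $\#(\pi^tN)$ or lengths: $\length(\pi^tN)=\sum_a\max(a-t,0)$, and injections and surjections with kernel or cokernel killed by $\pi^c$ change $\length(\pi^t\,\cdot)$ in controlled ways. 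Indeed $\pi^tI$ is a quotient of $\pi^tP$, and $\pi^{t+c}P$ maps onto $\pi^{t+c}I$ with kernel inside $A\cap\pi^{t+c}P$, which is bounded. Similarly, for $0\to I\to Q\to B\to0$ we get $\pi^cQ\subset I$, and hence $\pi^{t+c}Q\subset \pi^tI\subset\pi^tQ$. From these sandwiches I extract the two-sided bound
\[
\bigl|\length(\pi^tP)-\length(\pi^tQ)\bigr|\le K
\]
for all $t$, uniformly in $m$, up to shifts of $t$ by at most $2c$. This bookkeeping is the main technical obstacle; if the length version is messy, I will instead work directly with $r_t$ and the sandwich inclusions.

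Step 2 will choose $t$ to separate the blocks. Order the distinct values $e^{(1)}<\dots<e^{(s)}$ among the $e_i$, and let $G_k$ be the sum of the $g_i$ with $e_i=e^{(k)}$. For $m$ large, the gaps $m(e^{(k+1)}-e^{(k)})$ exceed $10c$. Take $t$ and $t'=t\pm 2c$ both in the window $(me^{(k-1)},me^{(k)})$. Then $r_t(P)=r_{t'}(P)=\sum_{l\ge k}G_l$, and the sandwich from Step 1 gives
\[
r_{t+2c}(P)\le r_t(Q)\le r_{t-2c}(P).
\]
Since both bounds equal $\sum_{l\ge k}G_l$, we obtain $r_t(Q)=\sum_{l\ge k}G_l$ exactly. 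The sandwich in terms of $r$ follows from $\pi^{t+c}P\twoheadrightarrow\pi^{t+c}I$ modulo bounded torsion, combined with the inclusions above; working with the ranks of the $R/\pi$-vector spaces $\pi^tN/\pi^{t+1}N$ is cleanest if one first shows $r_t(N')\le r_{t-c}(N)$ whenever $\pi^cN\subset N'\subset N$.

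Step 3 concludes. Since $r_t(Q)$ is even, each $\sum_{l\ge k}G_l$ is even, so each $G_k$ is even by taking differences. Passing from the block sums $G_k$ to the individual $g_i$ requires grouping equal $e_i$. Here I must flag a problem: the statement as written asserts that each $g_i$ is even, which is false when two indices share the same $e_i$ (for example $e_1=e_2$ and $g_1=g_2=1$). The argument therefore proves the intended statement under the implicit assumption that the $e_i$ are distinct, in which case $G_k=g_i$, and I would make that assumption explicit.
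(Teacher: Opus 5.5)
Your proof is correct, and so is your caveat: with $e_1=e_2=1$, $g_1=g_2=1$, $M_m=R/\pi^mR$ and $A_m=B_m=0$, the lemma as literally stated fails. The paper's proof implicitly needs the same hypothesis: its final contradiction $\length(B_m)\geq m(e_1-e_2)-k$ requires $e_1>e_2$. Moreover, in the proof of Proposition~\ref{propX=M+M} the lemma is applied exactly in your grouped form, concluding that the number of $i$ with $e_i=e$ is even.

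The two routes organize the comparison of elementary divisors differently. The paper sandwiches $\im(f_m)$ between $\bigoplus_i(R/\pi^{me_i-k}R)^{g_i}$ and $\bigoplus_i(R/\pi^{me_i}R)^{g_i}$, which places its elementary divisors in $[me_i-k,me_i]$. It then uses termwise domination under $\im(f_m)\subset M_m\oplus M_m$ to bound $\length(B_m)$ from below by differences of consecutive divisors. These differences grow linearly in $m$ when some multiplicity is odd, contradicting $\#B_m\leq C$. You instead compute the threshold counts $r_t(M_m\oplus M_m)$ exactly inside the gaps between the clusters $me^{(k)}$. This gives evenness directly rather than by contradiction, and it avoids the index pairing in the paper's displayed inequality. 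It also uses only that a fixed power of $\pi$ kills $A_m$ and $B_m$, i.e.\ bounded exponent rather than bounded order.

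For Step 1, drop the length version and use $r_t(N)=\dim_{R/\pi}(\pi^tN)[\pi]$, which is visibly monotone under inclusion. Then:
\begin{itemize}
\item $I_m\subset Q_m$ and $\pi^cQ_m\subset I_m$ give $r_{t+c}(Q_m)\leq r_t(I_m)\leq r_t(Q_m)$;
\item the surjection $P_m\twoheadrightarrow I_m$ gives $r_t(I_m)\leq r_t(P_m)$;
\item $A_m\subset P_m[\pi^c]$ gives a surjection $I_m\twoheadrightarrow P_m/P_m[\pi^c]\simeq\pi^cP_m$, whence $r_{t+c}(P_m)\leq r_t(I_m)$.
\end{itemize}
Together these yield $r_{t+c}(P_m)\leq r_t(Q_m)\leq r_{t-c}(P_m)$, which is all Step 2 needs. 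Two small fixes: set $e^{(0)}\defeq0$ for the first window, and choose $c$ so that $\pi^c$ kills every module of order at most $C$. As written, your ``largest $c$ with $\pi^c\cdot(\text{module})\neq0$'' is off by one.
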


\begin{proof} We shall repeatedly use the following two facts, both of which are easy consequences of the structure theorem for finitely generated modules over principal ideal domains:
\begin{enumerate}
\item if $M$ is a finitely generated torsion $R$-module and $N$ is an $R$-submodule of $M$, then there exists an injective map of $R$-modules $M/N\hookrightarrow M$;
\item if $M \simeq \bigoplus_{i=1}^n R/\pi^{a_i}R$ and $N\simeq \bigoplus_{i=1}^n R/\pi^{b_i}R$ are two finitely generated torsion $R$-modules with $a_1\geq a_2\geq\dots\geq a_n$ and $b_1\geq b_2\geq\dots\geq b_n$, then there is an injection of $R$-modules $N \hookrightarrow M$ if and only if $b_i\leq a_i$ for all $i\in\{1,\dots,n\}$.
\end{enumerate} 
Without loss of generality, we can assume that $g_i$ is odd for $i\in\{1,\dots,s\}$ and even for $i\in\{s+1,\dots,n\}$. We also assume $e_1\geq e_2\geq\dots\geq e_s$. Since $\#A_m \leq C$, there exists an integer $k \geq 0$ such that $\pi^k A_m=0$, and then there are inclusions
\[
A_m\subset\bigoplus_{i=1}^n\bigl(\pi^{me_i-k}R/\pi^{me_i}R\bigr)^{g_i}\subset\bigoplus_{i=1}^n\bigl(R/\pi^{me_i}R\bigr)^{g_i}.
\]
By (1), there are two injections $\varphi$ and $\psi$ sitting in the chain of maps of $R$-modules 
\[
\bigoplus_{i=1}^n\bigl(R/\pi^{me_i-k}R\bigr)^{g_i}\overset\varphi\longmono \bigoplus_{i=1}^n\bigl(R/\pi^{me_i}R\bigr)^{g_i}/A_m\simeq\im(f_m)\overset\psi\longmono \bigoplus_{i=1}^n\bigl(R/\pi^{me_i}R\bigr)^{g_i}.
\]
It follows from (2) that there is an isomorphism
\[
\im (f_m) \simeq \bigoplus_{i=1}^n \bigoplus_{j=1}^{g_i} R/\pi^{c_{i,j}(m)}R,
\]
where $me_i-k \leq c_{i,j}(m) \leq me_i$ for all $j\in\{1,\dots,g_i\}$. Without loss of generality, we can assume that $c_{i,1}(m)\geq c_{i,2}(m)\geq\dots\geq c_{i,g_i}(m)$ for all $i\in\{1,\dots,n\}$. In light of the inclusion $\im(f_m) \subset M_m \oplus M_m$, again by (2) we obtain the inequality
\[
\length(M_m)\geq\sum_{i=1}^s\sum_{j=1}^{(g_i-1)/2} c_{i,2j-1}(m)+\sum_{i=1}^{s'}c_{2i-1,g_i}(m)+\sum_{i=s+1}^n\sum_{j=1}^{g_i/2}c_{i,2j-i}(m),
\]
where
\[
s'\defeq \begin{cases} s/2 &\text{if $s$ is even},\\[2mm](s+1)/2 & \text{if $s$ is odd}. \end{cases}
\]
Then we get
\begin{equation} \label{diseq}
\begin{split}
\length(B_m) &=2\cdot\length(M_m) - \length(\im(f_m))\\
&\geq 2\cdot\sum_{i=1}^s \sum_{j=1}^{(g_i-1)/2} c_{i,2j-1}(m) + 2\cdot\sum_{i=1}^{s'} c_{2i-1,g_i}(m) + 2\cdot\sum_{i=s+1}^n \sum_{j=1}^{g_i/2} c_{i,2j-i}(m) 
\\&\quad-\sum_{i=1}^n \sum_{j=1}^{g_i} c_{i,j}(m).
\end{split}
\end{equation}
If $s>1$, then \eqref{diseq} implies
\[
\length(B_m) \geq c_{1,g_1}(m)-c_{2,g_2}(m) \geq me_1-k -me_2=m(e_1-e_2)-k,
\]
which produces a contradiction. On the other hand, if $s=1$, then \eqref{diseq} gives
\[
\length(B_m) \geq c_{1,g_1}(m) \geq me_1 -k,
\]
which also yields a contradiction. It follows that $s=0$, and the lemma is proved. \end{proof}

In the lines below, recall the specialization map $s_j:\Lambda\rightarrow\mathcal O_j$ from \eqref{s-j-eq}.

\begin{proposition} \label{propX=M+M}
There exists a torsion $\Lambda$-module $M$ such that $X\sim M\oplus M$.
\end{proposition}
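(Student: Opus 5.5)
We have to show that in the structure decomposition of $X$ every elementary summand occurs an even number of times. First, $X$ is a finitely generated torsion $\Lambda$-module. This follows from Corollary \ref{corolambdarank}: the maximal $\mathfrak m_\Lambda$-divisible part of $\Sel_\mathcal{F}(K,\mathbf A)$ accounts for the whole corank, so $\Sha_\mathcal{F}(K,\mathbf A)$ has $\Lambda$-corank $0$.

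By Theorem \ref{pseudo-thm} we fix a pseudo-isomorphism
\[
f\colon X\longrightarrow Y\defeq\bigoplus_{\mathfrak p}\bigoplus_{i} \bigl(\Lambda/\mathfrak p^{e_{\mathfrak p,i}}\bigr)^{g_{\mathfrak p,i}},
\]
where, for each height $1$ prime $\mathfrak p$ in the support, the exponents $e_{\mathfrak p,i}\geq1$ are pairwise distinct and $g_{\mathfrak p,i}$ is the multiplicity. It suffices to prove that every $g_{\mathfrak p,i}$ is even. Then $M\defeq\bigoplus_{\mathfrak p,i}(\Lambda/\mathfrak p^{e_{\mathfrak p,i}})^{g_{\mathfrak p,i}/2}$ satisfies $X\sim Y= M\oplus M$.

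Fix $\mathfrak p$ and take $\mathfrak P\defeq\mathfrak p$ in the notation preceding Lemma \ref{extScalars}. We specialize at the auxiliary primes $\mathfrak P_j$ with $j\geq N(\mathfrak P)$ large, so that $\mathfrak P_j\notin\Sigma$ and $\mathfrak P_j\notin\Supp(X)$. Three comparisons are needed.

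\begin{enumerate}
\item By Lemma \ref{pseudoKerCoker}, $f_j\colon X\otimes_\Lambda\mathcal O_j\to Y\otimes_\Lambda\mathcal O_j$ has kernel and cokernel bounded independently of $j$.
\item By Lemma \ref{extScalars}, $i_j^\vee\colon X\otimes_\Lambda\mathcal O_j\to X(T_j)$ has kernel and cokernel bounded independently of $j$.
\item By \eqref{ShaX(T)}, which applies because $\mathfrak P_j\notin\Sigma_{\rm EX}$ so that Theorem \ref{thmDVR} holds for $(\kappa_{\mathfrak P_j},\lambda_{\mathfrak P_j})$, we have $X(T_j)\simeq M_j\oplus M_j$ for a finite $\mathcal O_j$-module $M_j$.
\end{enumerate}

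Next we compute $Y\otimes_\Lambda\mathcal O_j$. The summands with $\mathfrak q\neq\mathfrak p$ give $\Lambda/(\mathfrak q^{e},\mathfrak P_j)$. Since $\mathfrak P_j\to\mathfrak p$ adically, these stay bounded as $j$ grows; for instance, for $\mathfrak p=(f)$ one has $s_j(q)\to s_{\mathfrak p}(q)\neq0$ in the identified ring $\mathcal O_j\simeq\mathcal O_{\mathfrak p}$. Since $f\equiv-\pi^j\pmod{\mathfrak P_j}$, the summand $\Lambda/\mathfrak p^{e}$ specializes to $\mathcal O_j/\pi^{je}$. In terms of the uniformizer $\pi_j$ this is $\mathcal O_j/\pi_j^{j e e_{\mathfrak P}}$, where $e_{\mathfrak P}$ is constant in $j$. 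The case $\mathfrak p=(\pi)$ is analogous, with $T^j$ in place of $\pi^j$. Here I take $\mathcal O_j=\Lambda/\mathfrak P_j$ up to bounded index, which the proof of Lemma \ref{extScalars} indicates.

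Composing the maps in (1) and (2) and discarding the bounded summands, Lemma \ref{kerCokerComposition} yields, for all large $m=j$, an exact sequence
\[
0\to A_m\to\bigoplus_i(\mathcal O_j/\pi_j^{m e_i})^{g_{\mathfrak p,i}}\to M_m\oplus M_m\to B_m\to0
\]
with $\#A_m$ and $\#B_m$ bounded. The quasi-isomorphism $X\otimes\mathcal O_j\to Y\otimes \mathcal O_j$ goes in the wrong direction for direct composition. To fix this, we compose with the reverse pseudo-isomorphism $Y\to X$ given by Theorem \ref{pseudo-thm}, or we pull back along $f_j$; both options preserve bounded kernels and cokernels.

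The remaining issue is that the rings $\mathcal O_j$ vary with $j$, whereas Lemma \ref{parityNExp} is stated over a fixed DVR $R$. For $\mathfrak P\neq(\pi)$ the rings $\mathcal O_j\simeq\mathcal O_{\mathfrak P}$ are all isomorphic, so we transport everything to $R=\mathcal O_{\mathfrak P}$. For $\mathfrak P=(\pi)$ the rings are $\mathcal O[T]/(\pi+T^j)$. These are ramified of degree $j$, so I would redo the length argument of Lemma \ref{parityNExp} directly; its proof only uses lengths and the growth $me_1-k\to\infty$, which survives. Lemma \ref{parityNExp} then forces each $g_{\mathfrak p,i}$ to be even.

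I expect the main obstacle to be exactly this bookkeeping: keeping the bounds uniform in $j$ while identifying the specializations of $\Lambda/\mathfrak q^e$ for $\mathfrak q\neq\mathfrak p$ as bounded, and treating the $\mathfrak P=(\pi)$ case.
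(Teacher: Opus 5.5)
Your proposal is correct and follows the paper's proof. Like the paper, you specialize at the primes $\mathfrak P_j$ and chain $N_{\mathfrak P}\otimes_\Lambda\mathcal O_j\hookrightarrow Y\otimes_\Lambda\mathcal O_j\to X\otimes_\Lambda\mathcal O_j\to X(T_j)\simeq M_j\oplus M_j$ via the reverse pseudo-isomorphism, control all kernels and cokernels uniformly in $j$ with Lemmas \ref{pseudoKerCoker}, \ref{extScalars} and \ref{kerCokerComposition}, and conclude with Lemma \ref{parityNExp}. Two of your choices are slightly more careful than the paper's version: grouping equal exponents into multiplicities, which the proof of Lemma \ref{parityNExp} tacitly needs since it requires $e_1>e_2$, and handling $\mathfrak P=(\pi)$ by rerunning the length argument over the varying DVRs $\mathcal O_j$ (whose residue field is fixed, so bounded order means bounded length) rather than passing to $\mathcal O$-modules.
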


\begin{proof} Fix a prime ideal $\mathfrak P$ of $\Lambda$ such that $\mathfrak{P}\,|\,\charr(X)$. We assume $X \sim N=N' \oplus N_\mathfrak{P}$ where $\charr (N')$ is coprime with $\mathfrak{P}$ and $N_\mathfrak{P} \simeq \bigoplus_{i=1}^n \Lambda/\mathfrak{P}^{e_i}$. We need to show that all the $e_i$ are even. 
    
\texttt{Step 1.} Keep notation as in \eqref{j-notation}. We claim that for all $j\geq N(\mathfrak{P})$ the composition
\begin{equation} \label{Str1}
N_\mathfrak{P} \otimes_\Lambda \mathcal{O}_j \longrightarrow N\otimes_\Lambda \mathcal{O}_j \longrightarrow X \otimes_\Lambda \mathcal{O}_j \longrightarrow X(T_j)=M_j \oplus M_j
\end{equation}
has finite kernel and cokernel bounded independently of $j$ (note that the last equality follows from \eqref{ShaX(T)}). By Lemma \ref{kerCokerComposition}, we need to prove the previous claim for each of the involved maps. The short exact sequence
\[
0 \longrightarrow N_\mathfrak{P} \longrightarrow N \longrightarrow N' \longrightarrow 0
\]
splits, so it induces for all $j$ a short exact sequence
\[
0 \longrightarrow N_\mathfrak{P} \otimes_\Lambda \mathcal{O}_j \longrightarrow N \otimes_\Lambda \mathcal{O}_j\longrightarrow N' \otimes_\Lambda \mathcal{O}_j \longrightarrow 0.
\]
To prove that the first map in \eqref{Str1} satisfies the claim, we need to check that $N' \otimes_\Lambda\mathcal{O}_j$ is finite of order independent of $j$. We notice that $N' \otimes_\Lambda\mathcal{O}_j$ is a finite direct sum of modules of the form $\mathcal{O}_j\big/s_j(\mathfrak{p}^N)\mathcal{O}_j$, where $\mathfrak{p} \neq \mathfrak{P}$. All these modules are finite and their orders are bounded independently of $j$, as we explain below.

$\bullet$\quad Suppose $\mathfrak{P}\neq(\pi)$. Then, writing $\mathfrak{p}^N=(g)$ and $\mathfrak{P}=(f)$, there exists an integer $S \geq 0$ such that $\pi^S \in \mathfrak{p} + \mathfrak{P}$, \emph{i.e.}, there exist $\lambda_1,\lambda_2 \in \Lambda$ such that $\pi^S=\lambda_1g+\lambda_2f$. Then
\[
\lambda_1g+\lambda_2 (f+\pi^j)=\pi^S + \lambda_2\pi^j \in \mathfrak{p}^N+\mathfrak{P}_j
\]
and $\bigl(\pi_j^{S e_\mathfrak{P}}\bigr)\subset s_j(\mathfrak{p}^N)$. The order of $\mathcal{O}_j\big/\bigl(\pi_j^{S e_\mathfrak{P}}\bigr)$ is independent of $j$ because $\mathcal{O}_j\simeq\mathcal{O}$ as rings. 

$\bullet$\quad Suppose $\mathfrak{P} =(\pi)$. Then $s_j(T)=\pi_j$ and, writing $\mathfrak{p}^{N}=(g)$ with $g=T^S + \pi g'$ a distinguished polynomial, we have $s_j(\mathfrak{p}^N) \mathcal{O}_j=(\pi_j^S)$.

Then the first map in \eqref{Str1} enjoys the claimed property. On the other hand, by Lemma \ref{pseudoKerCoker}, the second map in \eqref{Str1} also satisfies the claim. Finally, the last map in \eqref{Str1} has the desired property by Lemma \ref{extScalars}.
    
\texttt{Step 2.} The proof now follows from Lemma \ref{parityNExp}. As before, we distinguish two cases. 

$\bullet$\quad Suppose $\mathfrak{P}\neq(\pi)$. Then $\mathfrak{P}^{e_i}+\mathfrak{P}_j=(\pi^{je_i})+\mathfrak{P}_j$, so $s_j(\mathfrak{P}^{e_i}) \mathcal{O}_j=(\pi_j^{e_\mathfrak{P} j e_i})$. It follows that there is an isomorphism $N_\mathfrak{P} \otimes_\Lambda \mathcal{O}_j \simeq \bigoplus_{i=1}^n \mathcal{O}_\mathfrak{P}\big/\bigl(\pi_\mathfrak{P}^{e_\mathfrak{P}je_i}\bigr)$ of $\mathcal{O}_\mathfrak{P}$-modules, and we deduce from Lemma \ref{parityNExp} that the number of $i$ such that $e_i=e$ is even for all integers $e\geq1$. 

$\bullet$\quad Suppose $\mathfrak{P} = (\pi)$. Then $\mathcal{O}_j$ is totally ramified of degree $j$ over $\mathcal{O}$, so $s_j(\mathfrak{P}^{e_i}) \mathcal{O}_j=(\pi_j^{je_i})$. Therefore, there is an isomorphism $N_\mathfrak{P} \otimes_\Lambda \mathcal{O}_j \simeq \bigoplus_{i=1}^n \mathcal{O}/(\pi^{je_i})$ of $\mathcal{O}$-modules, and we deduce from Lemma \ref{parityNExp} that the number of $i$ such that $e_i=e$ is even for all integers $e\geq1$. 
    
This concludes the proof of the proposition. \end{proof}

\section{Higher Fitting ideals of $X$} \label{sec::4}

In what follows, let $K$ be an imaginary quadratic field of discriminant $D_K$ and class number $h_K$. Let $\mathcal{O}$ be a discrete valuation ring with maximal ideal $\mathfrak{m}$ and residue field $\mathcal O/\mathfrak m$ of characteristic $p \nmid D_K h_K$, then set $\Lambda\defeq \mathcal{O} \llbracket \Gamma_\infty \rrbracket$. Fix a free $\Lambda$-module $\mathbf{T}$ of rank $2$ endowed with a continuous $G_K$-action. We assume that $(\kappa,\lambda)$ is a non-trivial bipartite Euler system of parity $\epsilon \in \Z/2 \Z$ for $(\mathbf{T},\mathcal{F},\mathcal{P})$, where $\mathcal{F}$ is a Selmer structure and $\mathcal{P}$ is a set of admissible primes for $\mathbf{T}$. Regarding $\epsilon$ as an element of $\{0,1\}$, set $e\defeq1-\epsilon$. We keep the notation of \S \ref{BESLambda} in force and work under Assumptions \ref{assLambda} and \ref{assBESLambda}. 

\begin{assumption} \label{kolyvagin}
For any height $1$ prime ideal $\mathfrak{P}$ of $\Lambda$, there is an integer $k_\mathfrak{P} \geq 1$ such that for all integers $k \geq  k_\mathfrak{P}$ the set
\[
\bigl\{\lambda_n\in\Lambda/I_n\Lambda\mid n\in\mathcal{N}_{k}^{\deff}\bigr\}
\]
contains an element with non-trivial image in $\Lambda\big/\bigl(\mathfrak{P}+(\pi^{k_\mathfrak{P}})\bigr)$ (respectively, $\Lambda\big/\bigl(\mathfrak{P}+(T^{k_\mathfrak{P}})\bigr)$) if $\mathfrak{P}\neq(\pi)$ (respectively, $\mathfrak{P}=(\pi)$).
\end{assumption}


\subsection{Construction of the ideals $\mathfrak{C}_i$} \label{ss:construction}

Fix an even integer $i\geq0$. Let $k \geq 1$ be an integer and recall that $\mathfrak{m}$ is the maximal ideal of $\mathcal{O}$. Set
\[
\mathfrak{C}_i(k)\defeq\Bigl(\lambda_n^{(k)}\Bigr)_{n \in \mathcal{N}_{2k,}\nu(n) \leq i+e}\subset \Lambda/\mathfrak{m}^k\Lambda,
\]
where $\lambda_n^{(k)}$ is the projection of $\lambda_n$ to $\Lambda/\mathfrak{m}^k \Lambda$.
If $k_2\geq k_1$, then the canonical projection map $\Lambda/\mathfrak{m}^{k_2}\Lambda\twoheadrightarrow \Lambda/\mathfrak{m}^{k_1}\Lambda$ takes $\lambda_n^{(k_2)}$ to $\lambda_n^{(k_1)}$, so we can consider
\[
\mathfrak{C}_i\defeq\varprojlim_{k} \mathfrak{C}_i(k),
\]
which is obviously an ideal of $\varprojlim_{k}\Lambda/\mathfrak{m}^k \Lambda=\Lambda$.

\subsection{Main result}

Given two sequences of real numbers ${(x_j)}_{j\geq1}$ and ${(y_j)}_{j\geq1}$, we write $x_j \prec y_j$ if $\liminf (y_j - x_j) \neq -\infty$; moreover, we write $x_j \sim y_j$ if $x_j \prec y_j$ and $y_j \prec x_j$.

To begin with, we need a preliminary result.

\begin{proposition} \label{fittingspec}
Let $M$ be a finitely generated torsion $\Lambda$-module and let $\mathfrak{P}$ be a height $1$ prime ideal of $\Lambda$. If there are equalities
\[
\Fitt_i(M)\Lambda_\mathfrak{P}=\mathfrak{P}^m \Lambda_\mathfrak{P},\quad\Fitt_{\mathcal{O}_j,i}(M \otimes_\Lambda \mathcal{O}_j)=\pi_j^{m_j}\mathcal{O}_j,
\]
then $m_j \sim m e_\mathfrak{P} j$.
\end{proposition}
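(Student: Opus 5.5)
We reduce to the elementary-divisor case via the structure theorem. By Theorem \ref{pseudo-thm} there is a pseudo-isomorphism $f:M\to N=N'\oplus N_\mathfrak{P}$, where $N_\mathfrak{P}\simeq\bigoplus_{t=1}^{\ell}\Lambda/\mathfrak{P}^{k_t}$ with $k_1\le\dots\le k_\ell$, and $\charr(N')$ is coprime to $\mathfrak{P}$.

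\textbf{Step 1: the exponent $m$.} Localizing at $\mathfrak{P}$ kills $N'$ and the pseudo-null kernel and cokernel. Applying Theorem \ref{FittingDVR} over the discrete valuation ring $\Lambda_\mathfrak{P}$ then gives
\[
m=\sum_{t=1}^{\ell-i}k_t,
\]
with the convention that $m=0$ when $i\ge\ell$.

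\textbf{Step 2: pass from $M$ to $N_\mathfrak{P}$ after specialization.} Since $\mathfrak{P}_j\notin\Supp(M)$ for $j\gg0$, Lemma \ref{pseudoKerCoker} shows that $M\otimes_\Lambda\mathcal{O}_j\to N\otimes_\Lambda\mathcal{O}_j$ has kernel and cokernel of order bounded independently of $j$. The argument of Step 1 of Proposition \ref{propX=M+M} shows that $N'\otimes_\Lambda\mathcal{O}_j$ is bounded independently of $j$. Since $N\otimes\mathcal{O}_j\simeq (N'\otimes\mathcal{O}_j)\oplus(N_\mathfrak{P}\otimes\mathcal{O}_j)$, Lemma \ref{kerCokerComposition} yields that $M\otimes\mathcal{O}_j$ and $N_\mathfrak{P}\otimes\mathcal{O}_j$ are related by maps in both directions with kernels and cokernels bounded by a constant $C$ independent of $j$.

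\textbf{Step 3: a bounded-defect lemma over a DVR.} Over a DVR $S$, let $A\to B$ have kernel and cokernel of length at most $c$. Then for each $i$ the exponents of $\Fitt_i(A)$ and $\Fitt_i(B)$ differ by at most a constant depending only on $c$. To see this:
\begin{itemize}
\item a surjection $A\twoheadrightarrow B$ gives $\Fitt_i(A)\subset\Fitt_i(B)$ by Proposition \ref{pfi}(7);
\item an injection with cokernel $Q$ of length $\le c$ gives $\Fitt_i(A)\Fitt_0(Q)\subset\Fitt_i(B)$ by Proposition \ref{pfi}(8);
\item for the reverse inequalities, apply the same reasoning to the exact sequences in the opposite direction. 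Alternatively, use the interlacing of elementary divisors of a submodule of bounded colength, as in fact (2) of Lemma \ref{parityNExp}.
\end{itemize}
The bound is uniform because $\Fitt_i$ corresponds to the sum of all but the $i$ largest elementary divisors. Changing $A$ by bounded length moves each elementary divisor by a bounded amount. The number of generators may change, but only by a bounded number of extra factors, each of bounded length.

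\textbf{Step 4: compute for $N_\mathfrak{P}$ and conclude.} By Step 2 of Proposition \ref{propX=M+M}, $s_j(\mathfrak{P}^{k_t})\mathcal{O}_j=(\pi_j^{e_\mathfrak{P}jk_t})$, hence
\[
N_\mathfrak{P}\otimes\mathcal{O}_j\simeq\bigoplus_t\mathcal{O}_j/(\pi_j^{e_\mathfrak{P}jk_t}).
\]
Theorem \ref{FittingDVR} then gives exponent $e_\mathfrak{P}j\sum_{t\le\ell-i}k_t=me_\mathfrak{P}j$. Combining this with Step 3 gives $|m_j-me_\mathfrak{P}j|\le C'$ for all $j\gg0$, so $m_j\sim me_\mathfrak{P}j$. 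The case $\mathfrak{P}=(\pi)$ is handled the same way with $e_\mathfrak{P}=1$.

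The main obstacle is Step 3: making the "bounded defect implies bounded Fitting exponent" statement uniform in $j$, including the control on the number of generators. I would first prove it for an injection of bounded colength using the interlacing of elementary divisors, then deduce the general case from the surjection case.
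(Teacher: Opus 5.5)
Your proposal is correct and follows essentially the paper's route: you compute $m$ from the pseudo-isomorphism class of $M$ localized at $\mathfrak{P}$, compare $M\otimes_\Lambda\mathcal{O}_j$ with $N\otimes_\Lambda\mathcal{O}_j$ up to a bounded error, and read off the exponent from $s_j(\mathfrak{P}^{k_t})\mathcal{O}_j=(\pi_j^{e_\mathfrak{P}jk_t})$ via Theorem \ref{FittingDVR}. The step you flag as the main obstacle is settled in the paper as in your third bullet: applying part (8) of Proposition \ref{pfi} to the two sequences $M\otimes_\Lambda\mathcal{O}_j\to N\otimes_\Lambda\mathcal{O}_j\to B\otimes_\Lambda\mathcal{O}_j\to0$ and $N\otimes_\Lambda\mathcal{O}_j\to M\otimes_\Lambda\mathcal{O}_j\to B'\otimes_\Lambda\mathcal{O}_j\to0$ needs only bounded cokernels, so no kernel or interlacing argument is required.
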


\begin{proof} We assume that $M$ is pseudo-isomorphic to the $\Lambda$-module $
N\defeq\bigoplus_{s=1}^n \bigoplus_{t=1}^l\Lambda\big/\mathfrak{p}_s^{k_{s,t}}$, where the sequence of positive integers ${(k_{s,t})}_t$ is non-decreasing for all $s$ and the prime ideals $\mathfrak{p}_s$ are all distinct. By Theorem \ref{Thm-pseudo}, for all $i\in\{0,\dots,l\}$ there are equalities
\[
\Fitt_i(M) \Lambda_{\mathfrak{p}_s}=\Fitt_i(N)\Lambda_{\mathfrak{p}_s}=\Biggl(\bigoplus_{t=1}^{l-i
    } \Lambda/\mathfrak{p}_s^{k_{s,t}}\Biggr) \Lambda_{\mathfrak{p}_s}.
\]
In particular, by Theorem \ref{FittingDVR}, the equality $\mathfrak{P}=\mathfrak{p}_{s_\mathfrak{P}}$ holds for some $s_\mathfrak{P}\in\{1,\dots,n\}$ and the equality $m=\sum_{t=1}^{l-i}k_{s_\mathfrak{P},t}$ holds for all integers $0\leq i<l$. On the other hand, the equality $\Fitt_i(M) \Lambda_\mathfrak{P}=\Lambda_\mathfrak{P}$ holds for $i\geq l$, so $m=0$. In conclusion, we have
\begin{equation} \label{m}
m=\begin{cases}\displaystyle{\sum_{t=1}^{l-i}k_{s_\mathfrak{P},t}} & \text{for $0\leq i<l$},\\[7mm]
    0 & \text{for $i\geq l$.}\end{cases} 
\end{equation}
Since $M$ and $N$ are pseudo-isomorphic, there are exact sequences of $\Lambda$-modules
\[
M \longrightarrow N \longrightarrow B \longrightarrow 0,\quad N \longrightarrow M \longrightarrow B' \longrightarrow 0,
\]
with $B$ and $B'$ finite. Recall that $\mathcal O_j\defeq\mathcal O_{\mathfrak P_j}$. For all $j$ such that $\mathfrak{P}_j \notin \Supp(M)$, there are exact sequences
\begin{equation} \label{seq1}
M \otimes_\Lambda \mathcal{O}_j \longrightarrow N \otimes_\Lambda \mathcal{O}_j \longrightarrow B \otimes_\Lambda \mathcal{O}_j \longrightarrow 0
\end{equation}
and
\begin{equation} \label{seq2}
N \otimes_\Lambda \mathcal{O}_j \longrightarrow M \otimes_\Lambda \mathcal{O}_j \longrightarrow B' \otimes_\Lambda \mathcal{O}_j \longrightarrow 0
\end{equation}
of finitely generated torsion $\mathcal{O}_j$-modules; moreover, by Lemma \ref{pseudoKerCoker}, $B \otimes_\Lambda \mathcal{O}_j$ and $B' \otimes_\Lambda \mathcal{O}_j$ are finite with length bounded independently of $j$. Write
\[
\Fitt_i(N \otimes_\Lambda \mathcal{O}_j)=\bigl(\pi_j^{n_j}\bigr),\quad\Fitt_i(B \otimes_\Lambda\mathcal{O}_j)=\bigl(\pi_j^{b_j}\bigr),\quad\Fitt_i(B'\otimes_\Lambda\mathcal{O}_j)=\Bigl(\pi_j^{b'_j}\Bigr);
\]
by Proposition \ref{pfi} applied to our exact sequences \eqref{seq1} and \eqref{seq2}, there is an integer $b\geq0$ such that $b_j,b'_j\leq b$ and
\[
m_j +b_j \geq n_j,\quad n_j + b'_j \geq m_j.
\]
It follows that 
\begin{equation} \label{m_jn_j}
m_j \sim n_j.
\end{equation}
Furthermore, there are also isomorphisms
\[
N \otimes_\Lambda \mathcal{O}_j\simeq \bigoplus_{s=1}^n \bigoplus_{t=1}^l \Lambda\big/\mathfrak{p}_s^{k_{s,t}}\otimes_\Lambda\mathcal{O}_j\simeq \bigoplus_{s=1}^n \bigoplus_{t=1}^l\mathcal{O}_j\big/s_j(\mathfrak{p}_s^{k_{s,t}})\mathcal{O}_j. 
\]
We distinguish two cases, each of which comprises two subcases.

$\bullet$\quad Suppose $\mathfrak{P} \neq (\pi)$. Then, for each $s=1,\dots,n$ there are two subcases: $\mathfrak{p}_s \neq \mathfrak{P}$ (\emph{i.e.}, $s\neq s_\mathfrak{P}$) and $\mathfrak{p}_s =\mathfrak{P}$ (\emph{i.e.}, $s= s_\mathfrak{P}$). When $\mathfrak{p}_s\neq\mathfrak{P}$, the argument in \texttt{Step 1} of the proof of Proposition \ref{propX=M+M} shows that there is an integer $S\geq 0$ independent of $j$ such that $\bigl(\pi_j^{S e_\mathfrak{P}}\bigr)\subset s_j\bigl(\mathfrak{p}_s^{k_{s,t}}\bigr)$. When $\mathfrak{p}_s =\mathfrak{P}$, there is an equality $\mathfrak{p}_s^{k_{s,t}}+\mathfrak{P}_j=(\pi^{jk_{s,t}})+\mathfrak{P}_j$, so $s_j\bigl(\mathfrak{p}_s^{k_{s,t}}\bigr)\mathcal{O}_j=\bigl(\pi_j^{e_\mathfrak{P} j k_{s,t}}\bigr)$.

$\bullet$\quad Suppose $\mathfrak{P} =(\pi)$. Again, for each $s=1,\dots,n$ there are two subcases: $\mathfrak{p}_s \neq (\pi)$ (\emph{i.e.}, $s\neq s_\mathfrak{P}$) and $\mathfrak{p}_s=(\pi)$ (\emph{i.e.}, $s= s_\mathfrak{P}$). When $\mathfrak{p}_s \neq (\pi)$, the arguments in \texttt{Step 1} of the proof of Proposition \ref{propX=M+M} show that $s_j\bigl(\mathfrak{p}_s^{k_{s,t}}\bigr)\mathcal{O}_j=(\pi_j^S)$ for an integer $S\geq 0$ independent of $j$. When $\mathfrak{p}_s = (\pi)$, the ring $\mathcal{O}_j$ is totally ramified of degree $j$ over $\mathcal{O}$ and, since $e_\mathfrak{P}=1$ in this case, there are equalities $s_j(\mathfrak{p}_s^{k_{s,t}})\mathcal{O}_j=(\pi_j^{jk_{s,t}})=(\pi_j^{e_\mathfrak{P} jk_{s,t}})$.

In any case, we can write
\[
N\otimes_\Lambda\mathcal{O}_j\simeq P\oplus \Biggl(\bigoplus_{t=1}^l \mathcal{O}_j\big/\bigl(\pi_j^{e_\mathfrak{P} jk_{s,t}}\bigr)\!\Biggr)
\]
for an $\mathcal{O}_j$-module $P$ of finite length $L$ independent of $j$. Then, by Theorem \ref{FittingDVR}, we have $n_j=L+\sum_{t=1}^{l-i}e_\mathfrak{P}jk_{s,t}$ if $0 \leq i <l$ and $0 \leq n_j \leq L$ if $i \geq l$. It follows that 
\begin{equation}\label{n_j}
n_j \sim \begin{cases} \displaystyle{\sum_{t=1}^{l-i}e_\mathfrak{P}jk_{s,t}}& \text{for $0 \leq i <l$},\\[7mm]
0 &\text{for $i \geq l$.}
\end{cases}\end{equation}
Now the statement of the proposition follows by combining \eqref{m}, \eqref{m_jn_j} and \eqref{n_j}. \end{proof}

Now fix a height $1$ prime ideal $\mathfrak{P}$ of $\Lambda$ and set
\begin{equation} \label{alphabeta}
\Fitt_i(X) \Lambda_\mathfrak{P}\defeq\mathfrak{P}^{\alpha_i} \Lambda_\mathfrak{P},\quad\mathfrak{C}_i^2 \Lambda_\mathfrak{P}\defeq\mathfrak{P}^{\beta_i} \Lambda_\mathfrak{P}.
\end{equation}
Moreover, for all integers $j\geq N(\mathfrak{P})$ put
\begin{equation} \label{ajbj}
(\pi_j^{a_j})\defeq\Fitt_{\mathcal{O}_j,i}(X \otimes_\Lambda \mathcal{O}_j),\quad
b_j\defeq\length_{\mathcal{O}_j}\bigl((\Lambda / \mathfrak{C}_i^2) \otimes_\Lambda \mathcal{O}_j\bigr).
\end{equation}
By Proposition \ref{fittingspec}, we have
\begin{equation} \label{a_jb_jsim}
a_j \sim \alpha_i e_\mathfrak{P} j,\quad b_j \sim \beta_i e_\mathfrak{P} j.
\end{equation}
On the other hand, by Proposition \ref{extScalars}, for each $j$ there is an exact sequences of $\mathcal{O}_j$-modules 
\begin{equation} \label{KXTC} 
0 \longrightarrow K_j \longrightarrow X \otimes_\Lambda \mathcal{O}_j \longrightarrow X(T_j) \longrightarrow C_j \longrightarrow 0,
\end{equation}
where $K_j$ and $C_j$ are finite, of order bounded independently of $j$. If we set
\[
\Fitt_0(K_j)\defeq\bigl(\pi_j^{k_j}\bigr),\quad\Fitt_i\bigl(X(T_j)\bigr)\defeq\bigl(\pi_j^{t_j}\bigr),\quad\Fitt_0(C_j)\defeq\bigl(\pi_j^{c_j}\bigr),
\]
then there is an integer $c\geq0$ independent of $j$ such that $k_j,c_j \leq c$. Combining the short exact sequence 
\[
X\otimes_\Lambda\mathcal{O}_j\longrightarrow X(T_j)\longrightarrow C_j\longrightarrow0
\] 
and Proposition \ref{pfi}, we get $a_j + c_j \geq t_j$, hence $t_j\prec a_j$. Taking Pontryagin duals of exact sequence \eqref{KXTC}, we obtain the exact sequence 
\[
0\longrightarrow C_j^\vee\longrightarrow X(T_j)^\vee\longrightarrow (X\otimes_\Lambda\mathcal{O}_j)^\vee \longrightarrow K_j^\vee\longrightarrow0,
\]
where for an $\mathcal{O}_j$ module $M$ we let $M^\vee\defeq\Hom_{\mathcal{O}_j}(M,F_j/\mathcal{O}_j)$ be its Pontryagin dual, with $F_j$ the fraction field of $\mathcal{O}_j$. Thus, we get another exact sequence 
\[
X(T_j)^\vee\longrightarrow (X\otimes_\Lambda\mathcal{O}_j)^\vee\longrightarrow K_j^\vee\longrightarrow0.
\]
The $\mathcal{O}_j$-modules $K_j$, $X\otimes_\Lambda\mathcal{O}_j$, $X(T_j)$ are
finite, so they are isomorphic to their Pontryagin duals as $\mathcal{O}_j$-modules; then $\Fitt_0(K_j)=\Fitt_0(K_j^\vee)$, $\Fitt_i(X\otimes_\Lambda\mathcal{O}_j)=\Fitt_i\bigl((X\otimes_\Lambda\mathcal{O}_j)^\vee\bigr)$, $\Fitt_i\bigl(X(T_j)\bigr)=\Fitt_i\bigl(X(T_j)^\vee\bigr)$. Therefore, by Proposition \ref{pfi}, we have $t_j + k_j \geq a_j$, hence $a_j\prec t_j$ and we conclude that 
\begin{equation} \label{maineq2}
a_j \sim t_j = 2\delta^{(i+e)}(\lambda_j)-2\delta(\lambda_j),
\end{equation} 
where the equality follows from Theorem \ref{thmDVR}.

For any integers $j\geq N(\mathfrak{P})$ and $k\gg0$, denote by $I(k) \subset \Lambda$ the inverse image of $\mathfrak{C}_i(k)$ via the natural projection. Observe that the sequence $\bigl(I(k)\bigr)_k$ is non-increasing under inclusion and $\mathfrak{C}_i=\bigcap_{k \gg 0}I(k)$.

\begin{lemma} \label{kj0}
For every $j\geq N(\mathfrak P)$, there is an integer $k_{j,0} >0$ such that $s_j\bigl(I(k_{j,0})\bigr)\mathcal{O}_j = s_j(\mathfrak{C}_i) \mathcal{O}_j$.
\end{lemma}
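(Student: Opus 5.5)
The plan is to exploit the fact that the ideals $J(k)\defeq s_j\bigl(I(k)\bigr)\mathcal{O}_j$ form a non-increasing sequence of ideals of the discrete valuation ring $\mathcal{O}_j$, bounded below by $s_j(\mathfrak{C}_i)\mathcal{O}_j$. Each $J(k)$ is therefore of the form $(\pi_j^{r_k})$ with $r_k$ non-decreasing (allowing $r_k=\infty$ for the zero ideal). Since $\mathfrak{C}_i\subset I(k)$ for all $k$, we have $s_j(\mathfrak{C}_i)\mathcal{O}_j\subset J(k)$, so $r_k\leq r$, where $s_j(\mathfrak{C}_i)\mathcal{O}_j=(\pi_j^{r})$.

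First consider the case $r<\infty$. Then the sequence $(r_k)$ is non-decreasing and bounded, so it stabilizes at some value $r_\infty\leq r$ for all $k\geq k_{j,0}$. It remains to prove that $r_\infty=r$, i.e.\ that $\bigcap_k J(k)\subset s_j(\mathfrak{C}_i)\mathcal{O}_j$. This is the heart of the argument: the image of an intersection need not equal the intersection of the images. To handle this I would use compactness.

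Each $I(k)$ is an ideal of the compact noetherian ring $\Lambda$, and is therefore closed in the $\mathfrak{m}_\Lambda$-adic topology. The map $s_j:\Lambda\to\mathcal{O}_j$ is continuous. Fix $x=\pi_j^{r_\infty}$ and write $u\,x\in J(k)$ for a suitable unit $u$. Since $\Lambda/\mathfrak{P}_j\subset\mathcal{O}_j$ has finite index, there is a fixed $c$ such that $\pi_j^{c}\mathcal{O}_j\subset\Lambda/\mathfrak{P}_j$. Then $\pi_j^{c}x\in s_j\bigl(I(k)\bigr)$ for all $k\geq k_{j,0}$ (after first making sure that $J(k)$ and $s_j(I(k))$ differ by a bounded index; alternatively, one can work throughout with $\Lambda/\mathfrak{P}_j$-submodules).

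The sets $S_k\defeq I(k)\cap s_j^{-1}(\pi_j^{c}x)$ are non-empty, closed in the compact space $\Lambda$, and nested. Hence their intersection is non-empty, and it lies in $\bigcap_k I(k)=\mathfrak{C}_i$. This gives $\pi_j^{r_\infty+c}\in s_j(\mathfrak{C}_i)\mathcal{O}_j$, i.e.\ $r\leq r_\infty+c$.

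To remove the constant $c$, I would run the same argument with $\mathcal{O}_j$-multiples. Specifically, apply it to the $\Lambda/\mathfrak{P}_j$-modules $s_j(I(k))$, which are compact and nested. The intersection of compact nested images equals the image of the intersection, so $\bigcap_k s_j\bigl(I(k)\bigr)=s_j(\mathfrak{C}_i)$. These are nested $\Lambda/\mathfrak{P}_j$-submodules of finite index in the noetherian ring $\Lambda/\mathfrak{P}_j$ (when non-zero), so they too stabilize. The stable value equals $s_j(\mathfrak{C}_i)$, and extending scalars to $\mathcal{O}_j$ gives the claim. Stabilization is guaranteed because $s_j(\mathfrak{C}_i)\neq0$ has finite index in $\Lambda/\mathfrak{P}_j$ and the chain is decreasing and contains it.

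Now consider the case $r=\infty$, i.e.\ $s_j(\mathfrak{C}_i)=0$. In this case one needs $s_j\bigl(I(k)\bigr)=0$ for some $k$. The same compactness identity gives $\bigcap_k s_j\bigl(I(k)\bigr)=0$, but a decreasing chain of non-zero ideals of $\Lambda/\mathfrak{P}_j$ can have zero intersection. I expect this to be the main obstacle. I would try to rule it out for $j\geq N(\mathfrak{P})$ using Assumption \ref{kolyvagin} and the non-triviality of the specialized Euler system (Proposition \ref{non-triv}), which keeps the $\lambda_{j,n}$ non-zero. If this fails, the fallback is that the lemma is only needed when $s_j(\mathfrak{C}_i)\neq0$, which holds for all but finitely many $j$.
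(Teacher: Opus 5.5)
Your final argument, for the case $s_j(\mathfrak{C}_i)\neq0$, is correct and uses the same ingredients as the paper. The ideals $I(k)$ are closed in the compact ring $\Lambda$ with $\bigcap_k I(k)=\mathfrak{C}_i$, and a non-zero lower bound forces the chain to stabilize.

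The routes differ slightly. The paper stabilizes $s_j(I(k))\mathcal{O}_j$ at some $(\pi_j^N)$ inside the DVR. It then applies compactness to the nested closed sets $I(k)\smallsetminus s_j^{-1}\bigl((\pi_j^{N+1})\bigr)$ to extract one $\lambda\in\mathfrak{C}_i$ of valuation exactly $N$. You instead prove $\bigcap_k s_j(I(k))=s_j(\mathfrak{C}_i)$ in $\Lambda/\mathfrak{P}_j$, and stabilize there using that $(\Lambda/\mathfrak{P}_j)/s_j(\mathfrak{C}_i)$ is finite. This gives the slightly sharper equality $s_j(I(k))=s_j(\mathfrak{C}_i)$ before extending scalars, and makes your detour through the conductor exponent $c$ unnecessary in this case.

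The real issue is the case $s_j(\mathfrak{C}_i)=0$, which you treat as a hard case still to be handled. In fact the lemma is false there. Since $I(k)\supset\mathfrak{m}^k\Lambda$, we get $s_j(I(k))\mathcal{O}_j\supset\pi^k\mathcal{O}_j\neq0$ for every $k$.

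So non-triviality of $s_j(\mathfrak{C}_i)\mathcal{O}_j$ has to be proved; the paper simply asserts it in its first sentence. Your fallback does not supply it: it presupposes $\mathfrak{C}_i\neq0$ and would only cover $j\gg0$, not every $j\geq N(\mathfrak{P})$. Here is how to close it.

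\begin{itemize}
\item In the definite case, $\lambda_1\in\mathfrak{C}_i$, and $s_j(\lambda_1)=\lambda_{\mathfrak{P}_j,1}\neq0$ because $\mathfrak{P}_j\notin\Sigma_{\mathrm{EX}}$.
\item In the indefinite case, $I(k)$ contains lifts of $\lambda_\ell^{(k)}$ for every $\ell\in\mathcal{P}_{2k}$. The argument in the proof of Theorem~\ref{thmDVR} applies: the second reciprocity law, plus a choice of $\ell$ for which $\loc_\ell$ is an isomorphism on a free rank-one submodule containing the image of $\kappa_{\mathfrak{P}_j,1}\neq0$. This bounds the valuation $r_k$ of $s_j(I(k))\mathcal{O}_j$ by $\mathrm{ind}(\kappa_{\mathfrak{P}_j,1})$, independently of $k$.
\end{itemize}

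Your conductor-plus-compactness argument needs only that $(r_k)$ is bounded, not that $r<\infty$. It then yields $\pi_j^{r_\infty+c}\in s_j(\mathfrak{C}_i)$, so $s_j(\mathfrak{C}_i)\neq0$.

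Assumption~\ref{kolyvagin} is not the relevant input here. It concerns $\lambda_n$ with no bound on $\nu(n)$, whereas $\mathfrak{C}_i$ only sees $\nu(n)\leq i+e$. In the paper it is used for uniformity in $j$, not in $k$.
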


\begin{proof} Since $s_j(\mathfrak{C}_i) \mathcal{O}_j$ is not trivial, there are integers $k_{j,0},N \geq 0$ such that $s_j\bigl(I(k)\bigr)\mathcal{O}_j=(\pi_j^N)$ for all $k\geq k_{j,0}$. Set
\[
J_N\defeq s_j^{-1}\bigl((\pi_j^N)\bigr),\quad J_{N+1}\defeq s_j^{-1}\bigl((\pi_j^{N+1})\bigr)
\]
and notice that for all $k\geq k_{j,0}$ we have:
\begin{itemize}
\item $I(k)\subset J_N$;
\item $I(k)\cap(\Lambda\smallsetminus J_{N+1})\neq\emptyset$.
\end{itemize}
It follows, in particular, that the descending sequence of sets $\bigl(I(k)\cap(\Lambda\smallsetminus J_{N+1})\bigr)_{k\geq k_{j,0}}$ has the finite intersection property. Considering the $\mathfrak{m}_\Lambda$-adic topology on $\Lambda$, we also observe that
\begin{itemize}
\item $J_{N+1}$ is open, as it is the inverse image of an open set via a continuous function;
\item $I(k)$ is closed, as it is an ideal in a complete noetherian local ring.
\end{itemize}
Then the sets in our descending sequence are closed and, since $\Lambda$ is compact, we conclude that their intersection is non-empty. Finally, there is $\lambda \in \mathfrak{C}_i$ such that $s_j(\lambda)\mathcal{O}_j=(\pi_j^N)$; in particular, $s_j\bigl(I(k_{j,0})\bigr)\mathcal{O}_j=s_j(\mathfrak{C}_i)\mathcal{O}_j$, which completes the proof. \end{proof}

\begin{theorem} \label{highFittLambda}
If Assumptions \ref{assLambda} and \ref{assBESLambda} are satisfied, then $\mathfrak{C}_i^2\prec\Fitt_i(X)$ for all even integers $i\geq0$. If, in addition, Assumption \ref{kolyvagin} is satisfied, then $\Fitt_i(X) \sim \mathfrak{C}_i^2$ for all even integers $i\geq0$.
\end{theorem}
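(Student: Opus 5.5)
By Proposition \ref{prop-prec}, both statements can be checked one height $1$ prime $\mathfrak{P}$ at a time. With the notation \eqref{alphabeta}, we must show $\alpha_i\le\beta_i$ for every $\mathfrak{P}$, and $\alpha_i\ge\beta_i$ under Assumption \ref{kolyvagin}. We pass to the specializations $\mathfrak{P}_j$ for $j\ge N(\mathfrak{P})$. By \eqref{a_jb_jsim}, $a_j\sim\alpha_i e_\mathfrak{P}j$ and $b_j\sim\beta_i e_\mathfrak{P}j$. Hence, after dividing by $e_\mathfrak{P}j$ and letting $j\to\infty$, it suffices to prove $a_j\prec b_j$, and $b_j\prec a_j$ in the second case. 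By \eqref{maineq2}, $a_j\sim 2\delta^{(i+e)}(\lambda_j)-2\delta(\lambda_j)$. The whole proof therefore reduces to comparing $b_j=2\,\length_{\mathcal{O}_j}\bigl(\mathcal{O}_j/s_j(\mathfrak{C}_i)\mathcal{O}_j\bigr)$ with $2\delta^{(i+e)}(\lambda_j)$, up to bounded error and up to the correction term $\delta(\lambda_j)$. Since $\Lambda/\mathfrak{C}_i^2\otimes\mathcal{O}_j=\mathcal{O}_j/s_j(\mathfrak{C}_i)^2\mathcal{O}_j$, $b_j$ is twice the valuation of $s_j(\mathfrak{C}_i)\mathcal{O}_j$.

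\textbf{First inclusion ($\mathfrak{C}_i^2\prec\Fitt_i(X)$).} We need $\delta^{(i+e)}(\lambda_j)-\delta(\lambda_j)\le \mathrm{val}_{\pi_j}\bigl(s_j(\mathfrak{C}_i)\bigr)+O(1)$.

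1. By Lemma \ref{kj0}, the valuation of $s_j(\mathfrak{C}_i)\mathcal{O}_j$ is realized by $s_j(I(k))$ for some finite $k\ge k_{j,0}$. Hence it is attained as the minimum valuation of $s_j(\lambda_n)$ over $n\in\mathcal{N}_{2k}$ with $\nu(n)\le i+e$. This uses the fact that $\mathcal{O}_j$ is a DVR, so the ideal generated by a finite set of generators is generated by one of them, together with the $\mathfrak{m}^k$-part.

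2. Since $\nu(n)\le i+e$ and $\partial^{(\cdot)}$ is non-increasing in the number of primes, we get $\delta^{(i+e)}(\lambda_j)\le\delta^{(\nu(n))}(\lambda_j)\le \mathrm{val}\,s_j(\lambda_n)$. We still have to transport $2k$-admissibility for $\Lambda$ into admissibility for $T_j$: since $I_{\mathfrak{P}_j,n}=s_j(I_n)$, a $2k$-admissible prime for $\mathbf{T}$ stays admissible at the corresponding level in $\mathcal{O}_j$, with the level scaled by $e_j$.

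3. The term $\delta(\lambda_j)\ge0$ only helps. This gives $a_j\prec b_j$.

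\textbf{Reverse inequality.} Here we need $\mathrm{val}\,s_j(\mathfrak{C}_i)\le\delta^{(i+e)}(\lambda_j)-\delta(\lambda_j)+O(1)$ uniformly in $j$.

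1. The DVR theory (Theorem \ref{artSel}) gives $n$ with $\nu(n)=i+e$ such that $\mathrm{val}\,\lambda_{j,n}=\delta^{(i+e)}(\lambda_j)$. This $n$ lies in $\mathfrak{C}_i$, so $\mathrm{val}\,s_j(\mathfrak{C}_i)\le\delta^{(i+e)}(\lambda_j)$, up to the truncation issue of choosing $k$ large.

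2. To absorb $\delta(\lambda_j)$, it must be bounded independently of $j$. This is exactly what Assumption \ref{kolyvagin} provides. It gives some $\lambda_n$ whose image in $\Lambda/(\mathfrak{P}+(\pi^{k_\mathfrak{P}}))$ (or $\Lambda/(\mathfrak{P}+(T^{k_\mathfrak{P}}))$) is nonzero. Since $\mathfrak{P}_j\equiv\mathfrak{P}$ modulo $\pi^j$ (respectively $T^j$), for $j\ge k_\mathfrak{P}$ the element $s_j(\lambda_n)$ is nonzero modulo a fixed power of $\pi_j$, roughly $\pi_j^{e_\mathfrak{P}k_\mathfrak{P}}$. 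Hence $\delta(\lambda_j)\le e_\mathfrak{P}k_\mathfrak{P}$ is bounded, and $b_j\prec a_j$ follows.

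\textbf{Main obstacle.} I expect the delicate step to be the uniformity in $j$ when moving between admissibility levels. The ideals $\mathfrak{C}_i(k)$ use $\mathcal{N}_{2k}$ and reduction mod $\mathfrak{m}^k$ on $\Lambda$, whereas $\delta^{(\cdot)}(\lambda_j)$ is a limit over $\boldsymbol{\lambda}^{(k)}$ for $\mathcal{O}_j$, whose uniformizer has ramification growing with $j$ when $\mathfrak{P}=(\pi)$. I would handle this by fixing $j$, taking $k\ge k_{j,0}$ much larger than $a_j+b_j$, and then checking directly that $n\in\mathcal{N}_{2k}(\Lambda)$ implies $n\in\mathcal{N}_{2k'}(\mathcal{O}_j)$ with $k'\to\infty$. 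This makes the minima over $n$ on the two sides agree once truncation is no longer visible. All error terms must be bounded by constants coming from the control theorem (Proposition \ref{CTSha1}, Lemma \ref{extScalars}) and from $k_\mathfrak{P}$, all independent of $j$.
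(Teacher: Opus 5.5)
Your proposal is correct and follows essentially the same route as the paper. Both arguments localize at $\mathfrak{P}$, specialize along $\mathfrak{P}_j$, and use \eqref{a_jb_jsim} together with the control-theorem estimate $a_j\sim 2\delta^{(i+e)}(\lambda_j)-2\delta(\lambda_j)$; they then compare $b_j$ with $2\delta^{(i+e)}(\lambda_j)$ via Lemma \ref{kj0} and the monotonicity of $\partial^{(t)}$, and finally bound $\delta(\lambda_j)$ using Assumption \ref{kolyvagin} in the two cases $\mathfrak{P}\neq(\pi)$ and $\mathfrak{P}=(\pi)$. The paper states the single equality $b_j=2\delta^{(i+e)}(\lambda_j)$ for $k_j\gg0$, whose two inequalities are exactly your two halves, and it makes your ``roughly $\pi_j^{e_\mathfrak{P}k_\mathfrak{P}}$'' precise by killing the finite cokernel of $\Lambda/\mathfrak{P}_j\hookrightarrow\mathcal{O}_j$; note also that in your reverse step $\lambda_n$ lies only in $I(k)$, not in $\mathfrak{C}_i$, so it is Lemma \ref{kj0} that justifies that step.
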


\begin{proof} Suppose that Assumptions \ref{assLambda} and \ref{assBESLambda} are satisfied. Recall that, in our notation, $\boldsymbol{\lambda}_j^{(k)}\defeq\lambda_j^{(k),2k}$. Moreover, recall the integers $\alpha_i$ and $\beta_i$ from \eqref{alphabeta} and the integers $a_j$ and $b_j$ from \eqref{ajbj}. For all $j \geq N(\mathfrak{P})$, fix an integer $k_j\gg0$ such that $\delta^{(i+e)}(\lambda_j)=\partial^{(i+e)}\bigl(\boldsymbol{\lambda}_j^{(e_jk_j)}\bigr)$ and $k_j \geq k_{j,0}$, where $k_{j,0}$ is as in Lemma \ref{kj0}. Then
\begin{equation} \label{maineq1}
\begin{split}
\beta_i e_\mathfrak{P} j \sim b_j &=\length_{\mathcal{O}_j}\bigl((\Lambda/\mathfrak{C}_i^2) \otimes_\Lambda \mathcal{O}_j \bigr)=2\cdot\length_{\mathcal{O}_j}\bigl((\Lambda/\mathfrak{C}_i)\otimes_\Lambda \mathcal{O}_j\bigr)\\
&=2\cdot\length_{\mathcal{O}_j}\bigl(\mathcal{O}_j/s_j(\mathfrak{C}_i)\mathcal{O}_j\bigr)= 2\cdot\length_{\mathcal{O}_j}\bigl(\mathcal{O}_j/s_j(I(k_j))\mathcal{O}_j\bigr), 
\end{split}
\end{equation} 
where the equivalence $\sim$ follows from \eqref{a_jb_jsim} and the last equality is a consequence of Lemma \ref{kj0} and of our choice of $k_j$. Since $\pi^{k_j} \in I(k_j)$, there is an isomorphism
\begin{equation} \label{maineq2}
\mathcal{O}_j\big/s_j\bigl(I(k_j)\bigr)\mathcal{O}_j\simeq\bigl(\mathcal{O}_j\big/\pi_j^{e_j k_j}\mathcal{O}_j\bigr)\big/\pi_{k_j}\bigl(s_j(I(k_j))\bigr),
\end{equation}
where $\pi_{k_j}:\mathcal{O}_j\twoheadrightarrow\mathcal{O}_j\big/\pi_j^{e_j k_j} \mathcal{O}_j$ is the natural projection. By definition of $\mathfrak{C}_i(k_j)$, there exists $n \in \mathcal{N}_{2k_j}$ with $\nu(n)\leq i+e$ such that $\pi_{k_j}\Bigl(s_j\bigl(I(k_j)\bigr)\!\Bigr)=\bigl(\lambda_{j,n}^{(k_j)}\bigr)$. The sequence $\bigl(\partial^{(t)}(\boldsymbol{\lambda}_j^{(e_jk_j)})\bigr)_{t\geq1}$ is non-increasing, so we can assume $\nu(n)=i+e$; we obtain the equality
\begin{equation} \label{eqIndPart}
\ind\bigl(\lambda_{j,n}^{(k_j)}\bigr)=\partial^{(i+e)}\bigl(\boldsymbol{\lambda}_j^{(e_jk_j)}\bigr).
\end{equation}
Combining \eqref{eqIndPart} with \eqref{maineq1} and \eqref{maineq2}, we get
the first equality in the chain
\begin{equation} \label{maineq3}
\begin{aligned}
\beta_i e_\mathfrak{P} j \sim b_j &= 2\cdot\length_{\mathcal{O}_j}\Bigl(\!\bigl(\mathcal{O}_j / \pi_j^{e_j k_j}\mathcal{O}_j\bigr)\big/\bigl(\lambda_{j,n}^{(k_j)}\bigr)\!\Bigr)= 2\cdot\partial^{(i+e)}\bigl(\boldsymbol{\lambda}_j^{(e_jk_j)}\bigr)\\
&=2\cdot\delta^{(i+e)}(\lambda_j)\geq 2\cdot\delta^{(i+e)}(\lambda_j)-2\cdot\delta(\lambda_j) \sim a_j \sim \alpha_i e_\mathfrak{P} j,
\end{aligned}\end{equation}
while the second equality follows from \eqref{eqIndPart}, the third follows from our choice of $k_j$, the first $\sim$ is \eqref{maineq2} 
and the second $\sim$ is 
\eqref{a_jb_jsim}. Thus, we conclude that $\alpha_i\prec\beta_i$, completing the proof of the first statement. 

To prove the second claim of the theorem, we need to check that, under Assumption \ref{kolyvagin}, the sequence $\bigl(\delta (\lambda_j)\bigr)_{j\geq1}$ is bounded. Again, we distinguish the cases $\mathfrak{P}\neq(\pi)$ and $\mathfrak{P}=(\pi)$. 

$\bullet$\quad Suppose $\mathfrak{P} \neq (\pi)$. Using Assumption \ref{kolyvagin}, we choose $\lambda_{n(k)}\in\Lambda$, with $n(k) \in \mathcal{N}_k^{\deff}$, having non-trivial image in $\Lambda\big/\bigl(\mathfrak{P}+(\pi^{k_\mathfrak{P}})\bigr)$ for all $k \geq k_\mathfrak{P}$. Then $\lambda_{n(k)}$ has non-trivial image in $\Lambda\big/\bigl(\mathfrak{P}_j+(\pi^{k_\mathfrak{P}})\bigr)$ for all $j\geq k_\mathfrak{P}$, where the integer $k_\mathfrak{P}\geq0$ is as in \emph{loc. cit.} Let $C_j$ be the cokernel of $\Lambda/\mathfrak{P}_j \hookrightarrow \mathcal{O}_j$ and notice that it is finite and independent of $j$. Then fix an integer $k_1\geq k_\mathfrak{P}$ such that $\pi^{k_1-k_\mathfrak{P}}$ kills $C_j$; by the snake lemma, there is a commutative diagram with exact rows
\[
\xymatrix@C=30pt{
C_j[\pi^{k_1}] \ar[r] \ar[d] &\Lambda\big/\bigl(\mathfrak{P}_j+(\pi^{k_1})\bigr) \ar[r] \ar[d] & \mathcal{O}_j\big/\bigl(\pi_j^{e_jk_1}\bigr) \ar[d]\\
C_j[\pi^{k_\mathfrak{P}}] \ar[r] & \Lambda\big/\bigl(\mathfrak{P}_j+(\pi^{k_\mathfrak{P}})\bigr) \ar[r] & \mathcal{O}_j\big/\bigl(\pi_j^{e_jk_\mathfrak{P}}\bigr).
}
\]
Since $\pi^{k_1-k_\mathfrak{P}}C_j=0$, the left vertical arrow is trivial, so 
$\lambda_{n(k)}$ has non-trivial image in $\mathcal{O}_j/(\pi_j^{e_jk_1})$ for all $k \geq k_1$. It follows (\emph{cf.} the definition of $\partial(\boldsymbol{\lambda}_j^{(k)})$ in \eqref{partiallambda}) that
\[
\partial\bigl(\boldsymbol{\lambda}_j^{(k)}\bigr)\leq\ind\Bigl(\lambda_{j,n(2k)},\mathcal{O}_j\big/\bigl(\pi_j^{2e_jk}\bigr)\!\Bigr)\leq e_jk_1=e_\mathfrak{P}k_1
\]
for $k\gg0$.

$\bullet$\quad Suppose $\mathfrak{P}=(\pi)$. Then, again using Assumption \ref{kolyvagin}, we choose $\lambda_{n(k)}\in\Lambda$, with $n(k) \in \mathcal{N}_k^{\deff}$, having non-trivial image in $\Lambda\big/\bigl(\mathfrak{P}+(T^{k_\mathfrak{P}})\bigr)$. It follows that $\lambda_{n(k)}$ has non-trivial image in $\Lambda\big/\bigl(\mathfrak{P}_j+(T^{k_\mathfrak{P}})\bigr)=\mathcal{O}_j\big/\pi_j^{k_\mathfrak{P}} \mathcal{O}_j$ for all $j \geq k_\mathfrak{P}$, so there are inequalities
\[
\partial\bigl(\boldsymbol{\lambda}_j^{(k)}\bigr)\leq \ind\Bigl(\lambda_{j,n(2k)},\mathcal{O}_j\big/\bigl(\pi_j^{2k}\bigr)\!\Bigr)\leq k_\mathfrak{P}
\]
for $k\gg0$ (again, \emph{cf.} \eqref{partiallambda}).

Comparing with the definition of $\delta(\lambda_j)$ in \eqref{deltas}, we conclude that $\bigl(\delta (\lambda_j)\bigr)_{j\geq1}$ is bounded, which concludes the proof. \end{proof}

In the indefinite case, we can give this result in terms of classes in $\kappa$. Namely, for all even integers $i\geq0$ set
\[
\begin{split}
\mathfrak{D}_i(k)&\defeq\Bigl(\Bigl\{f\bigl(\kappa_n^{(k)}\bigr)\;\Big|\;\text{$f\in \Hom_{\Lambda/(\pi^k)}\bigl(H^1(K,\mathbf{T}/\pi^k \mathbf{T}),\Lambda/(\pi^k)\bigr)$, $n \in\mathcal{N}_{2k}$, $\nu(n)\leq i$}\Bigr\}\Bigr)\\&\;\subset\Lambda\big/(\pi^k),
\end{split}
\]
where $\kappa_n^{(k)}$ is the image of $\kappa_n$ in $H^1(K,\mathbf{T}/\pi^k \mathbf{T})$. Moreover, put $\mathfrak{D}_i\defeq\varprojlim_k \mathfrak{D}_i(k)$, which is an ideal of $\Lambda$.

\begin{theorem} \label{lambdaKappa}
Suppose that Assumptions \ref{assLambda}, \ref{assBESLambda} and \ref{kolyvagin} are satisfied and $e=1$. Then $\Fitt_i(X) \sim \mathfrak{D}_i^2$ for all even integers $i\geq0$.
\end{theorem}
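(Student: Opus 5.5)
The plan is to reduce the statement to Theorem \ref{highFittLambda}: since $e=1$, that result already gives $\Fitt_i(X)\sim\mathfrak{C}_i^2$ for every even $i\geq0$, where $\mathfrak{C}_i$ is built from the $\lambda_n$ with $\nu(n)\leq i+1$. Because $\sim$ is compatible with products, it is enough to prove $\mathfrak{D}_i\sim\mathfrak{C}_i$. By Proposition \ref{prop-prec}, this amounts to checking, for each height $1$ prime $\mathfrak{P}$, that $\mathfrak{D}_i\Lambda_\mathfrak{P}=\mathfrak{C}_i\Lambda_\mathfrak{P}$. To check this I would reuse the specialization machinery from the proof of Theorem \ref{highFittLambda}. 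Pass to the primes $\mathfrak{P}_j$ for $j\geq N(\mathfrak{P})$ and apply Proposition \ref{fittingspec}, in the form used for $b_j$, to $\Lambda/\mathfrak{D}_i$ and $\Lambda/\mathfrak{C}_i$. It then suffices to show that $\length_{\mathcal{O}_j}\bigl(\mathcal{O}_j/s_j(\mathfrak{D}_i)\mathcal{O}_j\bigr)\sim\length_{\mathcal{O}_j}\bigl(\mathcal{O}_j/s_j(\mathfrak{C}_i)\mathcal{O}_j\bigr)$ as functions of $j$.

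\textbf{Step 1 (reduction to a fixed level).} I would prove the analogue of Lemma \ref{kj0} for $\mathfrak{D}_i$, using the same compactness argument. This lets the length for $\mathfrak{D}_i$ be computed at a single large level $k_j$. Since $\mathcal{O}_j$ is a DVR, $s_j(\mathfrak{D}_i(k_j))$ is generated by a single element $f(\kappa^{(k_j)}_{j,n})$ of minimal valuation. As $f$ ranges over all homomorphisms, the minimal valuation of $f(\kappa_n)$ over $f$ is precisely the divisibility index $\ind(\kappa_{j,n})$ of the specialized class. The one extra input is that homomorphisms out of $H^1(K,\mathbf{T}/\pi^k)$, once specialized, can detect the index of $s_j(\kappa_n)$ in $\Sel_{\mathcal{F}(n)}(K,T_j/\pi_j^{k})$ up to bounded error. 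I expect this to follow from freeness, Lemma \ref{freeness}, together with control (Assumption \ref{assLambda}). The upshot is that the $\mathfrak{D}_i$-length is $\partial^{(i)}(\boldsymbol{\kappa}_j^{(k_j)})$ up to $O(1)$, because $\partial^{(t)}(\kappa)$ is non-increasing in $t$, which lets us take $\nu(n)=i$.

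\textbf{Step 2 (compare $\kappa$ and $\lambda$ in the DVR).} Theorem \ref{artKappa} gives $\partial^{(i)}(\kappa)=\partial^{(i+1)}(\lambda)$ at each artinian level. Passing to the limit yields $\delta^{(i)}(\kappa_j)=\delta^{(i+1)}(\lambda_j)$. The proof of Theorem \ref{highFittLambda} showed that the $\mathfrak{C}_i$-length equals $\delta^{(i+1)}(\lambda_j)$, so the two lengths agree up to bounded error. Proposition \ref{fittingspec} then gives matching $\mathfrak{P}$-adic valuations of $\mathfrak{D}_i$ and $\mathfrak{C}_i$, and hence $\mathfrak{D}_i\sim\mathfrak{C}_i$. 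Squaring gives $\mathfrak{D}_i^2\sim\mathfrak{C}_i^2\sim\Fitt_i(X)$.

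I expect Step 1 to be the main obstacle. The $\Lambda$-linear functionals on $H^1(K,\mathbf{T}/\pi^k)$ have to produce, after specialization, a functional realizing the divisibility index of $\kappa_{j,n}$ inside the Selmer group over $\mathcal{O}_j/\pi_j^{k}$, and the error must stay bounded in $j$. My first attempt would use Assumption \ref{assLambda}(4) and \cite[Lemma 2.3.3]{howard2012bipartite} to choose an admissible $\ell$ with $\loc_\ell$ an isomorphism on the free part. Composing with $\phi_{\unr}$ then gives a global functional. Via the second reciprocity law, $\ind(\loc_\ell\kappa_n)=\ind(\lambda_{n\ell})$, which links the $\mathfrak{D}_i$-generators directly to $\mathfrak{C}_i$-generators and gives the inclusion $\mathfrak{C}_i\subset\mathfrak{D}_i$ modulo $\pi^k$ essentially for free. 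The reverse comparison uses $\ind(f(\kappa_n))\geq\ind(\kappa_n)$ together with Theorem \ref{artKappa}.
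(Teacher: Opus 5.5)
Your proposal is sound, but it is organized differently from the paper's proof.

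\textbf{The paper's route.} The paper proves Theorem \ref{lambdaKappa} by rerunning the proof of Theorem \ref{highFittLambda} with $\kappa$ in place of $\lambda$. Theorem \ref{dvrKappa} replaces Theorem \ref{thmDVR} to give $a_j\sim 2\delta^{(i)}(\kappa_j)-2\delta(\kappa_j)$. The specialized length of $\mathfrak{D}_i^2$ is tacitly identified with $2\delta^{(i)}(\kappa_j)$, and $\delta(\kappa_j)$ has to be bounded.

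\textbf{Your route.} You keep Theorem \ref{highFittLambda} as a black box and prove $\mathfrak{D}_i\sim\mathfrak{C}_i$. You feed in directly the identity $\partial^{(i)}(\kappa)=\partial^{(i+1)}(\lambda)$ from the proof of Theorem \ref{artKappa}, which is the same identity that underlies Theorem \ref{dvrKappa}.

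\textbf{What your decomposition buys.} The inclusion $\mathfrak{C}_i\subset\mathfrak{D}_i$ holds exactly in $\Lambda$, with no specialization. For $\ell\in\mathcal{P}_{2k}$, compose $\loc_\ell$ with the projection onto $H^1_{\unr}(K_\ell,\mathbf{T}/\pi^k\mathbf{T})\simeq\Lambda/(\pi^k)$; this is an allowed functional. The second reciprocity law then puts every generator $\lambda^{(k)}_{n\ell}$ of $\mathfrak{C}_i(k)$ into $\mathfrak{D}_i(k)$. Consequently you need only one half of your Step 1: specialized functionals cannot beat $\ind(\kappa_{j,n})$ by more than a constant independent of $j$. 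The boundedness of $\delta$ is inherited from Theorem \ref{highFittLambda}. The paper's direct route needs both halves of that comparison (the lower bound, and a functional realizing the index) and leaves both implicit.

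\textbf{A caution on the half you still need.} It does not follow from freeness. Nor does it follow from the control theorem in Assumption \ref{assLambda}, which concerns $\Sel_{\mathcal{F}}(K,\mathbf{T})$, not $H^1(K,\mathbf{T}/\pi^k\mathbf{T})$ with the structures $\mathcal{F}(n)$. It is an extension argument:
\begin{enumerate}
\item A generator of $\mathfrak{P}_j$ is a non-zero-divisor on $\mathbf{T}/\pi^k\mathbf{T}$. Hence $H^1(K,\mathbf{T}/\pi^k\mathbf{T})/\mathfrak{P}_j$ injects into $H^1\bigl(K,\mathbf{T}\otimes_\Lambda\Lambda/(\pi^k,\mathfrak{P}_j)\bigr)$.
\item The artinian complete intersection $\Lambda/(\pi^k,\mathfrak{P}_j)$ is self-injective, so $f$ mod $\mathfrak{P}_j$ extends to the larger module.
\item The conductor of $\Lambda/\mathfrak{P}_j$ in $\mathcal{O}_j$ does not depend on $j$. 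So passing to $T_j/\pi^kT_j$ costs a bounded amount, and divisibility of $\kappa_{j,n}$ in the Selmer group implies divisibility in $H^1$.
\end{enumerate}
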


\begin{proof} Proceed as in the proof of Theorem \ref{highFittLambda}, replacing Theorem \ref{dvrKappa} with Theorem \ref{thmDVR}. \end{proof}

\section{Elliptic curves} \label{sec::5}

In this section, we specialize our previous results to (the Pontryagin duals of) Selmer and Shafarevich--Tate groups of elliptic curves.

Let $E$ be an elliptic curve over $\Q$ of conductor $N$. Let $K$ be an imaginary quadratic field of discriminant $D_K$ coprime with $N$ and class number $h_K$. Let us write $N=N^+N^-$ where $N^+$ is divisible only by primes that split in $K$ and $N^-$ is divisible only by primes that are inert in $K$. We assume that $N^-$ is square-free and set
\begin{equation} \label{e-elliptic-eq}
e\defeq\begin{cases}0&\text{if $N^-$ is divisible by an \emph{odd} number of primes},\\[2mm]1&\text{if $N^-$ is divisible by an \emph{even} number of primes}. \end{cases}
\end{equation}
The first (respectively, second) case in \eqref{e-elliptic-eq} is called \emph{definite} (respectively, \emph{indefinite}). From here on, fix a prime number $p$ such that
\begin{itemize}
\item $p\nmid6ND_Kh_K$ (in particular, $E$ has good reduction at $p$);
\item $E$ has \emph{ordinary} reduction at $p$ (\emph{i.e.}, $\#\bar E_p(\F_p)\not\equiv1\pmod{p}$, where $\bar E_p$ denotes the reduction of $E$ at $p$).
\end{itemize}
We write $T=T_p(E)$ for the $p$-adic Tate module of $E$ and let $A\defeq E[p^\infty]$ be its $p$-primary torsion subgroup. Then $A\simeq \Hom(T,\Bmu_{p^\infty})$ and for every integer $k\geq 0$ there is a canonical isomorphism of $G_\Q$-modules $T_k\simeq A_k$, 
where $T_k\defeq T/p^kT$ and $A_k\defeq A[p^k]=E[p^k]$ is the $p^k$-torsion subgroup of $E$. Now set 
\[ 
\mathbf{T}\defeq\varprojlim_m\mathrm{Ind}_{K_m/K}(T),
\] 
where $\mathrm{Ind}_{K_m/K}(T)$ is the $G_K$-module induced from the $G_{K_m}$-module $T$ and the inverse limit is taken with respect to the corestriction maps. We also put 
\[
\mathbf{A}\defeq\varinjlim_{m}\mathrm{Ind}_{K_m/K}(A),
\]
the direct limit being computed with respect to the restriction maps. 

Let $\bar{\rho}_{E,p}:G_\Q \rightarrow \mathrm{Aut}_{\F_p}(E[p])$ be the residual mod $p$ representation attached to $E$ and, as customary, set $a_p(E)\defeq1+p-\#\bar E_p(\F_p)\in\Z$. 

\begin{assumption} \label{asselliptic}
\begin{enumerate}
\item $\bar{\rho}_{E,p}$ is surjective;
\item $\bar{\rho}_{E,p}$ is ramified at $q$ for any $q|N^+$;
\item if $q\,|\,N^-$ and $q\equiv \pm 1 \pmod p$, then $\bar{\rho}_{E,p}$ is ramified at $q$;
\item $a_p(E) \not\equiv 1 \pmod p $ if $p$ splits in $K$ and $a_p(E) \not\equiv \pm 1 \pmod p$ if $p$ is inert in $K$.
\end{enumerate}
\end{assumption}
Observe that, by the ``open image theorem'' of Serre (\cite{serre}), condition (1) in Assumption \ref{asselliptic} rules out only finitely many prime numbers $p$. With notation as above, from now until the end of this paper we take $\mathcal O=\Z_p$, so that $\Lambda=\Z_p[\![T]\!]$.

\subsection{Selmer groups}

For a prime number $q\,|\,N^-$, let $\mathfrak{q}$ be the only prime of $K$ lying over $q$ and write $K_\mathfrak{q}$ for the completion of $K$ at $\mathfrak{q}$; the group $G_{K_\mathfrak{q}}\defeq\Gal(\overline{K}_\mathfrak{q}/K_\mathfrak{q})$ acts on $T$ as
$\bigl(\begin{smallmatrix}\chi_p&*\\0&1\end{smallmatrix}\bigr)$, where $\chi_p$ is the $p$-adic cyclotomic character. Let $F_q^+T\subset T$ be the $\Z_p$-line on which $G_{K_\mathfrak{q}}$ acts as $\chi_p$ and, given a finite extension $L/K_\mathfrak{q}$, set
\[
H^1_{\ord}(L,T)\defeq\im\Bigl(H^1(L,F_q^+T) \longrightarrow H^1(L,T)\!\Bigr).
\]
Since $E$ has good ordinary reduction at $p$, there is a $\Z_p$-line $F_p^+T\subset T$ on which the inertia subgroup $I_p$ of $\Gal(\overline{\Q}_p/\Q_p)$ acts as $\chi_p$. Again, for any finite extension $L$ of $\Q_p$ we define
\[
H^1_{\ord}(L,T)\defeq\im (H^1(L,F_p^+T) \longrightarrow H^1(L,T)).
\]
The \emph{Greenberg Selmer structure} on $\mathbf{T}$ is
\[
H^1_{\mathcal{F}_{\Gr}}(K_v,\mathbf{T})\defeq\begin{cases}
    \varprojlim_m H^1_{\unr}(K_{m,v},T) & \text{if $v\nmid N^-p$},\\[3mm]
    \varprojlim_m H^1_{\ord}(K_{m,v},T) & \text{if $v\,|\,N^-p$}, 
\end{cases}
\]
where $H^1(K_{m,v},T)\defeq\bigoplus_{w|v} H^1(K_{m,w},T)$ the isomorphism $H^1(K_v,\mathbf{T}) \simeq \varprojlim_m H^1(K_{m,v},T)$ is a consequence of Shapiro's lemma. Denote by $\Sel(K,\mathbf{T})$ the Selmer group defined by the local conditions $H^1_{\mathcal{F}_{\Gr}}(K_v,\mathbf{T})$. Similarly, we set 
\[
H^1_{\mathcal{F}_{\Gr}}(K_v,\mathbf{A})\defeq\begin{cases}
    \varinjlim_m H^1_{\unr}(K_{m,v},A) & \text{if $v \nmid N^-p$}, \\[3mm]
    \varinjlim_m H^1_{\ord}(K_{m,v},A) & \text{if $v\,|\,N^-p$}, 
\end{cases}
\]
and denote by $\Sel(K,\mathbf{A})$ the Selmer group defined by the local conditions 
$H^1_{\mathcal{F}_{\Gr}}(K_v,\mathbf{A})$. By \cite[Proposition 2.2.4]{HoHeeg}, there is an isomorphism $\mathbf{A} \simeq \Hom_{\mathrm{cont}}(\mathbf{T}^\iota,\Bmu_{p^\infty})$; moreover, for all primes $v$ of $K$ the group $H^1_{\mathcal{F}_{\Gr}}(K_v,\mathbf{A})$ is the orthogonal complement of $H^1_{\mathcal{F}_{\Gr}}(K_{\bar{v}},\mathbf{T})$ under the local Tate pairing.

Let $\mathfrak{P}$ be a height $1$ prime ideal of $\Lambda$. Let $F_\mathfrak{P}$ be the fraction field of $\mathcal{O}_\mathfrak{P}$ and let $\pi_\mathfrak{P}$ be a uniformizer of $\mathcal{O}_\mathfrak{P}$. Define $T_{\mathfrak P}$, $V_{\mathfrak P}$, $A_{\mathfrak P}$ as in \eqref{T-V-A-eq}. For each prime number $q\,|\,N^-p$ and each prime $\mathfrak{q}$ of $K$ lying over $q$, set
\[
H^1_{\ord}(K_\mathfrak{q},V_\mathfrak{P})\defeq\im\Bigl(H^1(K_\mathfrak{q},F^+_qT \otimes_{\mathcal{O}_\mathfrak{P}}F_\mathfrak{P})\longrightarrow H^1(K_\mathfrak{q},V_\mathfrak{P})\!\Bigr).
\]
The \emph{Greenberg Selmer structure} on $V_\mathfrak{P}$ is
\[
H^1_{\mathcal{F}_{\Gr,\mathfrak{P}}}(K_v,V_\mathfrak{P})\defeq\begin{cases}
    H^1_{\unr}(K_v,V_\mathfrak{P}) & \text{if $v \nmid N^-p$},\\[3mm]
    H^1_{\ord}(K_v,V_\mathfrak{P}) & \text{if $v\,|\,N^-p$},
\end{cases}
\]
and we also define $H^1_{\mathcal{F}_{\Gr,\mathfrak{P}}}(K_v,T_\mathfrak{P})$ and $H^1_{\mathcal{F}_{\Gr,\mathfrak{P}}}(K_v,A_\mathfrak{P})$ by propagation. Thus, we obtain Selmer groups $\Sel_{\Gr}(K,V_\mathfrak{P})$, $\Sel_{\Gr}(K,T_\mathfrak{P})$ and $\Sel_{\Gr}(K,A_\mathfrak{P})$. 

\subsection{Bipartite Euler systems for elliptic curves}

Let $\mathcal{P}$ denote the set of admissible primes for $\mathbf{T}$ with respect to $\mathcal{F}_\mathrm{Gr}$, as in Definition \ref{admissibleprimes}, and let $\mathcal{N}$ be the set of square-free products of admissible primes. For any $n \in \mathcal{N}$ we denote by the same symbol $I_n$ the ideals defined in \S\ref{bipartite} for $T$ and $\mathbf{T}$; in our case, since $\ell\mathcal{O}_K$ splits completely in $K_\infty$, $I_n=(p^k)$ where $p^k$ divides $a_\ell\pm(\ell+1)$ exactly. Under Assumption \ref{asselliptic}, the work of Bertolini--Darmon on the Iwasawa main conjectures over anticyclotomic $\Z_p$-extensions (\cite{BD-IMC}) produces a bipartite Euler system $(\kappa,\lambda)$ using the theory of congruences between modular forms over the rings $\Z/p^k\Z$. The reader is referred to \cite[Section 7]{BD-IMC} and \cite{BLV} for details; to facilitate the comparison with \cite{BLV}, notice that the construction of the elements $\lambda_n$ (denoted by $\mathcal{L}_{g,n}$ in \cite{BLV}) is treated in \cite[\S 4.2]{BLV}, while the construction of the elements $\kappa_n$ (denoted by $\kappa_n(g)$ in \cite{BLV}) is discussed in \cite[\S 6.1.1]{BLV} (in \cite{BLV}, for a given $n\in\mathcal{N}$ with $I_n=(p^k)$, the symbol $g$ indicates a modular form on a suitable quaternion algebra that is congruent modulo $p^k$ to the newform attached to $E$: see \cite[\S3]{BLV} for details). For a discussion of the explicit reciprocity laws satisfied 
by this bipartite Euler system, see \cite[Theorems 4.1 and 4.2]{BD-IMC} and \cite[\S6.2]{BLV}. Observe that, in particular, Assumption \ref{assBESLambda} is satisfied: if $e=0$, then $\lambda_1\neq0$ by work of Vatsal (\cite{vatsal2002uniform}), while if $e=1$, then $\kappa_1\neq0$ thanks to results of Cornut (\cite{cornut2002mazur}) and Vatsal (\cite{vatsal2003special}).

\subsection{Structure of Selmer groups of anticyclotomic twists}

Fix a height $1$ prime ideal $\mathfrak{P}$ of $\Lambda$. Equip $T_\mathfrak{P}$ with the pairing
\[
{(\cdot,\cdot)}_\mathfrak{P}:T_\mathfrak{P} \times T_\mathfrak{P} \longrightarrow \mathcal{O}_\mathfrak{P}(1)
\]
defined as
\[
{(x \otimes \alpha,y \otimes \beta)}_\mathfrak{P}\defeq\alpha \beta{(x,y^\tau)}_{\Weil},
\]
where $x,y \in T$, $\alpha,\beta\in\mathcal{O}_\mathfrak{P}$ and ${(\cdot, \cdot)}_{\Weil}$ is the Weil pairing on $T$.

\begin{proposition} \label{assTwists}
The triple $(T_\mathfrak{P},\mathcal{F}_\mathfrak{P},\mathcal{P})$ satisfies Assumption \ref{ass}. 
\end{proposition}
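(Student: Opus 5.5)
The plan is to check the five conditions of Assumption \ref{ass} one at a time for $T_\mathfrak{P}=\mathbf{T}\otimes_\Lambda\mathcal{O}_\mathfrak{P}$. The guiding observation is that, as a $G_K$-module, $T_\mathfrak{P}\simeq T\otimes_{\Z_p}\mathcal{O}_\mathfrak{P}(\chi_\mathfrak{P})$. Here $\chi_\mathfrak{P}:G_K\twoheadrightarrow\Gamma_\infty\to\mathcal{O}_\mathfrak{P}^\times$ is the anticyclotomic character obtained by composing the tautological character with $s_\mathfrak{P}$. Because $\chi_\mathfrak{P}$ is anticyclotomic, $\chi_\mathfrak{P}(\tau\sigma\tau)=\chi_\mathfrak{P}(\sigma)^{-1}$.

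\textbf{Condition (1).} The residual representation $\overline{T}_\mathfrak{P}$ is $E[p]\otimes\bar\chi_\mathfrak{P}$ over the residue field of $\mathcal{O}_\mathfrak{P}$. Since $\chi_\mathfrak{P}$ factors through a pro-$p$ group, $\bar\chi_\mathfrak{P}$ is trivial. Surjectivity of $\bar\rho_{E,p}$ (Assumption \ref{asselliptic}(1)), together with $p\nmid D_K$, shows that $\bar\rho_{E,p}(G_K)$ still contains $\mathrm{SL}_2(\F_p)$. Hence $\overline{T}_\mathfrak{P}$ is absolutely irreducible.

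\textbf{Condition (2).} Take the pairing $(x\otimes\alpha,y\otimes\beta)_\mathfrak{P}=\alpha\beta(x,y^\tau)_{\Weil}$. It is perfect and $\mathcal{O}_\mathfrak{P}$-bilinear because the Weil pairing is. For symmetry, alternation of the Weil pairing and the action of $\tau$ (which acts on $\Bmu_{p^\infty}$ by inversion) give $(y,x^\tau)_{\Weil}=(x,y^\tau)_{\Weil}$. For the twisted equivariance $(x^\sigma,y^{\tau\sigma\tau})=(x,y)^\sigma$, compute directly: the character factors contribute $\chi_\mathfrak{P}(\sigma)\chi_\mathfrak{P}(\tau\sigma\tau)=1$, and the Weil pairing is Galois equivariant. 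This is the standard check, as in \cite{howard2012bipartite} and \cite{HoHeeg}.

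\textbf{Condition (3).} The Greenberg local conditions are defined by propagation from $V_\mathfrak{P}$, so they are cartesian by the argument of \cite[\S1.2]{HoHeeg} and \cite[Lemma 3.7.1]{MR}. For self-duality, at primes above $N^-p$ the ordinary condition is the image of $H^1$ of a rank-one line $F^+$. That line is isotropic for the pairing, and its orthogonal is itself, so it is maximal isotropic by local Tate duality and an Euler-characteristic count. At the remaining primes of $\Sigma$ the unramified condition is its own annihilator; at primes dividing $N^+$ this uses Assumption \ref{asselliptic}(2), which makes the relevant local cohomology vanish residually.

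\textbf{Condition (4).} This is the Chebotarev argument of \cite[\S3]{BD-IMC} and \cite[Lemma 2.3.3]{howard2012bipartite}. Given $c\neq0$ in $H^1(K,\overline{T}_\mathfrak{P})=H^1(K,E[p])\otimes\mathbb{F}$, restrict it to $L=K(E[p^k])$. The restriction remains nonzero because $H^1(\Gal(L/K),E[p])=0$, which follows from the large image and the presence of a scalar of order prime to $p$. Then choose Frobenius elements of the form $\tau g$ that satisfy the $k$-admissibility congruences and on which the cocycle is nontrivial. The hypotheses $p\ge5$ and Assumption \ref{asselliptic}(4) guarantee that such elements exist.

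\textbf{Condition (5).} The control theorem $\Sel(K,A_{\mathfrak{P},k})\simeq\Sel(K,A_\mathfrak{P})[\mathfrak{m}^k]$ should follow from $H^0(K,\overline{T}_\mathfrak{P})=0$ together with a comparison of local conditions. I expect the main obstacle here: at primes dividing $N^-$ and at $p$, the image of the ordinary condition may differ from the $\mathfrak{m}^k$-torsion of the ordinary condition. I would use Assumption \ref{asselliptic}(3) to handle primes $q\mid N^-$ with $q\equiv\pm1\pmod p$, and Assumption \ref{asselliptic}(4) to get $H^0(K_v,\overline{A}^-)=0$ at $v\mid p$. These make the relevant local $H^0$ terms vanish, so both the kernel and the cokernel of the comparison map are trivial.
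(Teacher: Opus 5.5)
Your verification is sound, but it takes a more hands-on route than the paper. The paper's proof is two lines: conditions (1)--(4) are quoted from \cite[Lemma 3.3.4]{howard2012bipartite}, and (5) is deduced from the irreducibility of $E[p]$ as a $G_K$-module, which comes from Assumption \ref{asselliptic}(1). You instead reconstruct what that citation contains:
\begin{itemize}
\item for (1), triviality of $\bar\chi_\mathfrak{P}$ together with the large image of $\bar\rho_{E,p}$;
\item for (2), the cancellation $\chi_\mathfrak{P}(\sigma)\chi_\mathfrak{P}(\tau\sigma\tau)=1$ and the fact that $\tau$ acts by $-1$ on $\Z_p(1)$;
\item for (3), torsion-freeness of the singular quotients of the propagated conditions (cartesianness) and isotropy of the line $F^+$ (self-duality);
\item for (4), a Bertolini--Darmon Chebotarev argument.
\end{itemize}
What your route buys is transparency: it shows that only Assumption \ref{asselliptic}(1) and $p\geq5$ are really used. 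The paper's route buys brevity. For the same reason, a few of your attributions are off:
\begin{itemize}
\item (4) does not need Assumption \ref{asselliptic}(4);
\item self-duality of the propagated unramified condition at primes dividing $N^+$ does not need Assumption \ref{asselliptic}(2);
\item $\bar\rho_{E,p}(G_K)\supseteq\mathrm{SL}_2(\F_p)$ holds without $p\nmid D_K$, because $\bar\rho_{E,p}(G_K)$ is normal of index at most $2$ in $\GL_2(\F_p)$.
\end{itemize}

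On (5), the obstacle you anticipate does not arise, and your proposed workaround would not work as written. In this paper the local conditions on $A_k=A_\mathfrak{P}[\mathfrak{m}^k]$ are \emph{defined} by propagation via the inclusion $A_k\subset A_\mathfrak{P}$. So once $H^0(K,\overline{T}_\mathfrak{P})=0$ gives $H^0(K,A_\mathfrak{P})=0$, you get $H^1(K,A_k)\simeq H^1(K,A_\mathfrak{P})[\mathfrak{m}^k]$, and the isomorphism of Selmer groups is then formal. That is exactly the paper's argument, and your first sentence on (5) already contains it.

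If one instead compared with ``natural'' finite-level ordinary conditions, your claim that the relevant local $H^0$ terms vanish fails at primes $q\mid N^-$. There $\mathfrak{q}=q\mathcal{O}_K$ splits completely in $K_\infty$, so $\chi_\mathfrak{P}$ is trivial on $G_{K_\mathfrak{q}}$. Also, $G_{K_\mathfrak{q}}$ acts trivially on $T/F_q^+T$. Hence the ordinary quotient has nonzero invariants. You should simply drop that part of the argument rather than try to repair it.
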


\begin{proof} Conditions (1)--(4) in Assumption \ref{ass} follow from \cite[Lemma 3.3.4]{howard2012bipartite}, while (5) is a consequence of condition (1) in Assumption \ref{asselliptic}, which implies that $E[p]$ is an irreducible $G_K$-module. \end{proof}

By Proposition \ref{non-triv}, there is a finite set $\Sigma_{\mathrm{EX}}$ of height $1$ prime ideals of $\Lambda$ such that for all $\mathfrak{P} \notin \Sigma_{\mathrm{EX}}$ the pair $(\kappa,\lambda)$ induces a bipartite Euler system $(\kappa_\mathfrak{P},\lambda_\mathfrak{P})$ with $\lambda_{\mathfrak{P},1}\neq 0$ if $e=0$ and $\kappa_{\mathfrak{P},1}\neq0$ if $e=1$.

\begin{theorem} \label{mainthmDVR}
Assume $\mathfrak{P} \notin \Sigma_{\mathrm{EX}}$. There is an isomorphism
\[
\Sel_{\Gr}(K,A_\mathfrak{P}) \simeq (F_\mathfrak{P}/\mathcal{O}_\mathfrak{P})^e \oplus \bigoplus_{i\geq0}\bigl(\mathcal{O}_\mathfrak{P}\big/\pi_\mathfrak{P}^{d_i} \mathcal{O}_\mathfrak{P}\bigr)^2,
\]
where $d_i\defeq\delta^{(2i+e)}(\lambda_\mathfrak{P})-\delta^{(2i+2+e)}(\lambda_\mathfrak{P})$. Moreover, if $e=1$, then for every $i\geq0$ the integer $d_i$ is equal to $\delta^{(2i)}(\kappa_\mathfrak{P})-\delta^{(2i+2)}(\kappa_\mathfrak{P})$. Finally, there is an isomorphism 
\[
X(T_\mathfrak{P})\simeq\bigoplus_{i\geq0}\bigl(\mathcal{O}_\mathfrak{P}\big/\pi_\mathfrak{P}^{d_i} \mathcal{O}_\mathfrak{P}\bigr)^2.
\]
\end{theorem}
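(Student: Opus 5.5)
The plan is to obtain Theorem \ref{mainthmDVR} as a specialization of the abstract DVR results, namely Theorem \ref{thmDVR} and Theorem \ref{dvrKappa}, applied to the triple $(T_\mathfrak{P},\mathcal{F}_{\Gr,\mathfrak{P}},\mathcal{P})$ and the bipartite Euler system $(\kappa_\mathfrak{P},\lambda_\mathfrak{P})$.

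The first step is to check the hypotheses of those theorems.
\begin{enumerate}
\item By Proposition \ref{assTwists}, the triple satisfies Assumption \ref{ass}.
\item By Proposition \ref{non-triv}, since $\mathfrak{P}\notin\Sigma_{\mathrm{EX}}$, the pair $(\kappa_\mathfrak{P},\lambda_\mathfrak{P})$ is a non-trivial bipartite Euler system of parity $\epsilon=1-e$. Moreover $\lambda_{\mathfrak{P},1}\neq0$ when $e=0$ and $\kappa_{\mathfrak{P},1}\neq0$ when $e=1$. These are exactly the non-vanishing hypotheses of Theorem \ref{thmDVR}.
\item One must identify the integer $e$ of \eqref{e-elliptic-eq} with $e(1)$ from Lemma \ref{lemma-indep}, i.e.\ the parity of the rank of $\Sel_{\mathcal{F}}(K,T_{\mathfrak{P},k})$. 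I would do this through part (3) of Proposition \ref{non-triv}, which computes the $\mathcal{O}_\mathfrak{P}$-corank as $e$. The definite/indefinite dichotomy then matches $1\in\mathcal{N}^{\deff}$ or $1\in\mathcal{N}^{\ind}$.
\end{enumerate}

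With these checks done, Theorem \ref{thmDVR} gives the isomorphism for $\Sel_{\Gr}(K,A_\mathfrak{P})$ directly, with $d_i=\delta^{(2i+e)}(\lambda_\mathfrak{P})-\delta^{(2i+2+e)}(\lambda_\mathfrak{P})$.

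When $e=1$ the parity is $0$ and $\kappa_{\mathfrak{P},1}\neq0$, so Theorem \ref{dvrKappa} applies as well. It yields the same kind of decomposition with exponents $\delta^{(2i)}(\kappa_\mathfrak{P})-\delta^{(2i+2)}(\kappa_\mathfrak{P})$. The two sequences are non-increasing, and the multiset of exponents in a decomposition of a finitely generated module over a DVR is unique (Theorem \ref{DVR-isom-thm}). Hence the two sequences coincide term by term. One could instead argue via the identity $\partial^{(j)}(\kappa)=\partial^{(j+1)}(\lambda)$ from the proof of Theorem \ref{artKappa}, but passing through uniqueness of the decomposition is cleaner.

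For the last claim, the maximal divisible subgroup of $\Sel_{\Gr}(K,A_\mathfrak{P})$ is exactly $(F_\mathfrak{P}/\mathcal{O}_\mathfrak{P})^e$. So $\Sha$ is the finite part $\bigoplus_i(\mathcal{O}_\mathfrak{P}/\pi_\mathfrak{P}^{d_i})^2$. A finite $\mathcal{O}_\mathfrak{P}$-module is isomorphic to its Pontryagin dual, and this gives the stated description of $X(T_\mathfrak{P})$, as in \eqref{ShaX(T)}.

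I expect the main obstacle to be the bookkeeping in the hypothesis check rather than any new argument. Specifically, one has to confirm two things:
\begin{itemize}
\item the Greenberg structure on $V_\mathfrak{P}$, propagated to $T_\mathfrak{P}$ and $A_\mathfrak{P}$, is the structure for which Assumption \ref{ass} (cartesian, self-dual, control theorem) is verified in Proposition \ref{assTwists}, via \cite[Lemma 3.3.4]{howard2012bipartite};
\item the specialized classes $\kappa_{\mathfrak{P},n}$ and $\lambda_{\mathfrak{P},n}$ still satisfy both reciprocity laws with respect to $\mathcal{F}_{\Gr,\mathfrak{P}}(n)$.
\end{itemize}
Both should follow formally from the compatibility in Assumption \ref{assLambda}(2) and from functoriality of localization under $s_\mathfrak{P}$.
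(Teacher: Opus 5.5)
Your proposal is correct and is essentially the paper's proof: the paper simply deduces the statement from Theorems \ref{thmDVR} and \ref{dvrKappa} via Proposition \ref{assTwists}, with $\mathfrak{P}\notin\Sigma_{\mathrm{EX}}$ supplying the non-vanishing of $\lambda_{\mathfrak{P},1}$ or $\kappa_{\mathfrak{P},1}$, and your uniqueness-of-decomposition comparison of the two exponent sequences and the duality step for $X(T_\mathfrak{P})$ just make explicit what the paper leaves implicit. One caveat: Proposition \ref{non-triv}(3) is itself proved in the paper by invoking Theorem \ref{thmDVR}, so it cannot independently certify that the Euler system's parity matches the Selmer parity $e(1)$; like the paper, you are in effect taking that matching as part of the setup.
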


\begin{proof} Using Proposition \ref{assTwists}, this follows from Theorems \ref{thmDVR} and \ref{dvrKappa}. \end{proof}

We immediately obtain

\begin{corollary} \label{mainthmK}
Assume $(T) \notin \Sigma_{\mathrm{EX}}$. There is an isomorphism
\[
\Sel_{\Gr}\bigl(K,E[p^\infty]\bigr) \simeq (\Q_p/\Z_p)^e \oplus \bigoplus_{i \geq 0} (\Z_p/p^{d_i} \Z_p)^2,
\]
where $d_i\defeq\delta^{(2i+e)}(\lambda_{(T)})-\delta^{(2i+2+e)}(\lambda_{(T)})$.
Moreover, if $e=1$, then for every $i\geq0$ the integer $d_i$ is equal to $\delta^{(2i)}(\kappa_{(T)})-\delta^{(2i+2)}(\kappa_{(T)})$. Finally, there is an isomorphism
\[
\Sha_{\Gr}\bigl(K,E[p^\infty]\bigr)\simeq\bigoplus_{i \geq 0} (\Z_p/p^{d_i}\Z_p)^2.
\]
\end{corollary}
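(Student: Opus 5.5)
The plan is to specialize Theorem \ref{mainthmDVR} at the height $1$ prime $\mathfrak{P}=(T)$ of $\Lambda=\Z_p[\![T]\!]$, and to identify all the objects attached to $(T)$ with the corresponding objects over $K$ itself. First, $\Lambda/(T)\simeq\Z_p$ is already integrally closed, so $\mathcal{O}_{(T)}=\Z_p$ and $\pi_{(T)}=p$. Second, the specialization map $s_{(T)}\colon\Lambda\to\Z_p$ is the augmentation map $\gamma\mapsto1$ on $\Gamma_\infty$. Consequently
\[
T_{(T)}=\mathbf{T}\otimes_\Lambda\Lambda/(T)\simeq T,
\]
because the coinvariants of $\varprojlim_m\mathrm{Ind}_{K_m/K}(T)$ under $\Gamma_\infty$ recover $T$ via corestriction down to the bottom layer. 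From this $V_{(T)}=V_p(E)$ and $A_{(T)}=E[p^\infty]$. I will also check that the pairing $(\cdot,\cdot)_{(T)}$ reduces to the twisted Weil pairing. Since the Selmer conditions on $V_\mathfrak{P}$ were defined by the same unramified and ordinary recipe, with $F^+_q$ at $q\mid N^-p$, the Selmer structure $\mathcal{F}_{\Gr,(T)}$ coincides with the usual Greenberg structure on $V_p(E)$ over $K$. Hence $\Sel_{\Gr}(K,A_{(T)})=\Sel_{\Gr}(K,E[p^\infty])$.

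With these identifications in hand, and since $(T)\notin\Sigma_{\mathrm{EX}}$ by hypothesis, Theorem \ref{mainthmDVR} applies directly. It gives
\[
\Sel_{\Gr}\bigl(K,E[p^\infty]\bigr)\simeq(\Q_p/\Z_p)^e\oplus\bigoplus_{i\ge0}(\Z_p/p^{d_i}\Z_p)^2,
\]
with $d_i$ expressed through $\delta^{(\cdot)}(\lambda_{(T)})$, and, when $e=1$, also through $\delta^{(\cdot)}(\kappa_{(T)})$. For the last claim I will use that the maximal divisible subgroup of this module is exactly the $(\Q_p/\Z_p)^e$ summand, because the other summands are finite. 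Therefore $\Sha_{\Gr}(K,E[p^\infty])$, defined as the quotient by the maximal divisible subgroup, is $\bigoplus_i(\Z_p/p^{d_i})^2$. Equivalently, I can dualize the isomorphism for $X(T_{(T)})$ in Theorem \ref{mainthmDVR}, noting that finite $\Z_p$-modules are isomorphic to their Pontryagin duals.

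I expect the main obstacle to be the identification $\mathbf{T}/T\mathbf{T}\simeq T$ as Galois modules, together with the compatibility of the local conditions at primes above $p$ and $N^-$ under this specialization. For the first point I will use Shapiro's lemma and the fact that the induced modules are $\Z_p[\Gamma_m]\otimes T$ with the evident transition maps. For the local conditions, the point to check is that the ordinary lines $F^+_qT$ and $F^+_pT$ tensored with $\Lambda$ specialize to themselves. Everything else is a direct rewriting of Theorem \ref{mainthmDVR}.
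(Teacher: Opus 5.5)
Your proposal is correct and takes the same approach as the paper, whose proof consists solely of applying Theorem \ref{mainthmDVR} with $\mathfrak{P}=(T)$. The identifications you spell out are exactly what the paper leaves implicit: $\mathcal{O}_{(T)}=\Z_p$ with uniformizer $p$, $T_{(T)}\simeq T$ and $A_{(T)}\simeq E[p^\infty]$, matching Greenberg local conditions, and $\Sha$ obtained by removing the divisible summand $(\Q_p/\Z_p)^e$.
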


\begin{proof} Apply Theorem \ref{mainthmDVR} with $\mathfrak{P}=(T)$. \end{proof}


\begin{remark}
When $e=0$, Kim proved an analogous result (\cite[Theorem 4.24]{kim2024higher}) without the assumption $\lambda_{(T),1}\neq0$ under the hypothesis that all projections of the bipartite Euler system are free, which is not the case in general; Corollary \ref{mainthmK} holds without this freeness condition and therefore extends \cite[Theorem 4.24]{kim2024higher} under the assumption $\lambda_{(T),1}\neq0$. Generalizing the arguments developed in this paper, it might be possible to remove the freeness assumption in Kim's results.
\end{remark}

\begin{remark} 
When $e=1$, Corollary \ref{mainthmK} can be seen as an analogue of a result of 
Kolyvagin (\cite{KolStructure}; \emph{cf.} also \cite{McCallum}), which is stronger than ours in that it does not assume $\kappa_{(T),1}\neq0$, but only requires the non-triviality of the collection of Kolyvagin classes (\emph{Kolyvagin's conjecture}); under suitable technical conditions, Kolyvagin's conjecture was proved by W. Zhang (\cite{Zhang}). The formulations in terms of the elements $\delta^{(2j)}(\lambda_{(T)})$ and of the elements $\delta^{(2j)}(\kappa_{(T)})$ are both new, but while the second is somehow reminiscent of that of Kolyvagin (because both use Heegner points, albeit arising from different Shimura curves), the first is genuinely novel. 
\end{remark}

\subsection{Structure of anticyclotomic Shafarevich--Tate groups}

Under Assumption \ref{asselliptic}, we describe the higher Fitting ideals of the Pontryagin duals of Selmer and Shafarevich--Tate groups of $E$ over the anticyclotomic $\Z_p$-extension of $K$. 

We begin by showing that Assumptions \ref{assLambda} and \ref{kolyvagin} are satisfied in this case.

\begin{proposition}
Assumption \ref{assLambda} is satisfied.
\end{proposition}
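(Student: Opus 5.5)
We have to verify the three parts of Assumption \ref{assLambda} for the Greenberg Selmer structures $\mathcal{F}_{\Gr}$ on $\mathbf{T}$ and $\mathcal{F}_{\Gr,\mathfrak{P}}$ on $T_\mathfrak{P}$, for every height $1$ prime $\mathfrak{P}$ of $\Lambda$.

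Part (1) is Proposition \ref{assTwists}, so nothing further is needed there.

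Part (2) is the compatibility of local conditions. For $v\nmid N^-p$, I will show that $s_\mathfrak{P}$ carries $\varprojlim_m H^1_{\unr}(K_{m,v},T)$ into $H^1_{\unr}(K_v,T_\mathfrak{P})$. This holds because $T_\mathfrak{P}=\mathbf{T}\otimes_\Lambda\mathcal{O}_\mathfrak{P}$ and specialization commutes with restriction to inertia. Primes of $K$ not dividing $p$ are finitely decomposed in $K_\infty/K$, so inertia is unchanged. For $v\mid N^-p$, the ordinary submodule of $\mathbf{T}$ is $F_q^+T\otimes\Lambda$, so its image under $s_\mathfrak{P}$ lands in $F_q^+T\otimes\mathcal{O}_\mathfrak{P}$. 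Hence $s_\mathfrak{P}$ maps $H^1_{\ord}$ to $H^1_{\ord}$, and the propagated condition on $T_\mathfrak{P}$ contains this image. The statement for $i_\mathfrak{P}$ on the $\mathbf{A}$-side follows by local Tate duality. The conditions on $\mathbf{A}$ and on $A_\mathfrak{P}$ are the orthogonal complements of the conditions on $\mathbf{T}$ and $T_\mathfrak{P}$ (by \cite[Proposition 2.2.4]{HoHeeg} and self-duality), and $i_\mathfrak{P}$ is dual to $s^\iota_{\mathfrak{P}^\iota}$. Applying the previous argument to $\mathfrak{P}^\iota$ gives the claim, and the image lies in the $\mathfrak{P}$-torsion because $\mathfrak{P}$ annihilates $A_\mathfrak{P}$.

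Part (3) is the control theorem, which is the main work. I will follow \cite[Proposition 5.3.14 and Lemma 5.3.13]{MR} and \cite[\S2.2]{HoHeeg}. The starting point is the short exact sequence $0\to\mathbf{T}\xrightarrow{F}\mathbf{T}\to\mathbf{T}/\mathfrak{P}\mathbf{T}\to0$ with $\mathfrak{P}=(F)$. Its cohomology sequence, together with $H^0(K,\mathbf{T}/\mathfrak{P})=0$ (which follows from residual irreducibility, Assumption \ref{asselliptic}(1)), gives $H^1(K,\mathbf{T})/F\hookrightarrow H^1(K,\mathbf{T}/\mathfrak{P})$. Next I compare $\mathbf{T}/\mathfrak{P}$ with $T_\mathfrak{P}$; the two differ by the finite quotient $\mathcal{O}_\mathfrak{P}/(\Lambda/\mathfrak{P})$. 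This gives the injectivity of \eqref{sp}, up to checking that the Selmer conditions pull back correctly. On the $\mathbf{A}$-side, the global comparison map $H^1(K,A_\mathfrak{P})\to H^1(K,\mathbf{A})[\mathfrak{P}]$ has kernel and cokernel controlled by $H^0$ terms, and these vanish again by irreducibility. The remaining error terms are the local ones, of the form $\ker\bigl(H^1(K_v,A_\mathfrak{P})/H^1_{\mathcal{F}}\to H^1(K_v,\mathbf{A})/H^1_{\mathcal{F}}\bigr)$ for $v\in\Sigma$.

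The hard step is bounding these local terms uniformly in $\mathfrak{P}$. At $v\mid N^+$ there is ramification of $E[p]$ by Assumption \ref{asselliptic}(2), and at $v\mid N^-$ the hypotheses in Assumption \ref{asselliptic}(3) hold. At such $v$ the local cohomology $H^0(K_v,\cdot)$ is controlled, and the unramified/ordinary quotients are finite with size depending only on the residual representation and on $[\mathcal{O}_\mathfrak{P}:\Lambda/\mathfrak{P}]$. At $p$, Assumption \ref{asselliptic}(4) gives $H^0(K_{\mathfrak{p}},\bar E_p[p])=0$ on the relevant graded pieces, and I will use Greenberg's standard argument for ordinary conditions to show the ordinary local conditions are exactly compatible. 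The primes where this fails are those $\mathfrak{P}$ dividing the local Euler-factor elements, for example $\mathfrak{P}$ containing $\Frob_v-1$ acting on invariants. These are finitely many, and they constitute $\Sigma_{\mathrm{CT}}$. For all other $\mathfrak{P}$, the kernel and cokernel orders depend only on the finite index $[\mathcal{O}_\mathfrak{P}:\Lambda/\mathfrak{P}]$ through the Tor terms, as in Lemma \ref{extScalars}.
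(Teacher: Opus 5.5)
Part (1) is handled exactly as in the paper. For (2) and (3), though, the paper gives no argument of its own: it invokes \cite[Proposition 3.3.1]{howard2012bipartite} both for the existence of $s_\mathfrak{P}$ and $i_\mathfrak{P}$ and for the control theorem. You instead try to re-derive Howard's result by a Mazur--Rubin style control argument. Done in full, that would make the paper self-contained and show where hypotheses (2)--(4) of Assumption \ref{asselliptic} enter. Your treatment of (2) is essentially complete: specialization commutes with restriction to inertia and with the ordinary filtrations, and the $\mathbf{A}$-side follows by adjointness of the local pairings together with self-duality. Part (3), however, is only an outline, and as written it has a gap.

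The gap is the uniformity of the bounds. This is the real content of Assumption \ref{assLambda}(3), and it is what the paper uses along sequences $\mathfrak{P}_j$ accumulating at a possibly exceptional prime (Lemma \ref{extScalars} and its uses in Theorem \ref{highFittLambda}). Excluding the divisors of local Euler factors makes each local error term finite. A bound independent of $\mathfrak{P}$, however, requires showing that the relevant local kernels are governed by fixed modules. For instance, at a finitely decomposed $v\mid N^+$ the relevant kernel is $H^0(K_v,\mathbf{A})/\mathfrak{P}$, whose order is bounded by that of $H^0(K_v,\mathbf{A})$ modulo its maximal divisible subgroup once $\mathfrak{P}$ avoids finitely many primes. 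What must be ruled out is dependence on terms such as $H^0(K_v,A_\mathfrak{P})$, which grow as $\mathfrak{P}$ approaches an excluded prime. The Tor argument of Lemma \ref{extScalars} only compares $\Lambda/\mathfrak{P}$ with $\mathcal{O}_\mathfrak{P}$ and cannot supply this. Until these local estimates are carried out, your (3) rests on the same literature the paper cites, and it is simpler to quote \cite[Proposition 3.3.1]{howard2012bipartite} directly.

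Two further points need fixing.

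First, primes of $K$ away from $p$ are not all finitely decomposed in $K_\infty$. Primes above rational primes inert in $K$, in particular every $v\mid N^-$, split completely; the paper uses this for admissible primes. In (2) this is harmless, since all you need is that $K_\infty/K$ is unramified outside $p$. In (3), however, it means that at $v\mid N^-$ one has $H^1(K_v,\mathbf{T})\simeq H^1(K_v,T)\otimes_{\Z_p}\Lambda$, with no $\Lambda$-adic Euler factor to exclude. The local arguments you borrow from \cite{MR} and \cite{HoHeeg} (where $N^-=1$) concern finitely decomposed primes and do not cover this case. It must be treated separately; it is in fact easier, since $T_\mathfrak{P}$ restricted to $G_{K_v}$ is just a base change of $T$.

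Second, the global comparison is not controlled by $H^0$ terms alone. The maps $\mathbf{T}/\mathfrak{P}\mathbf{T}\hookrightarrow T_\mathfrak{P}$ and $A_\mathfrak{P}\twoheadrightarrow\mathbf{A}[\mathfrak{P}]$ differ by finite modules built from $\mathcal{O}_\mathfrak{P}/(\Lambda/\mathfrak{P})$. Their $H^1$ and $H^2$ need not vanish and are only bounded in terms of $[\mathcal{O}_\mathfrak{P}:\Lambda/\mathfrak{P}]$; irreducibility, by d\'evissage to $\overline{T}$, kills only their $H^0$. That vanishing gives injectivity of $H^1(K,\mathbf{T}/\mathfrak{P})\to H^1(K,T_\mathfrak{P})$. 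For injectivity of $s_\mathfrak{P}$ you still need $\Sel(K,\mathbf{T})\cap\mathfrak{P}H^1(K,\mathbf{T})=\mathfrak{P}\Sel(K,\mathbf{T})$, for example via the absence of $\mathfrak{P}$-torsion in $H^1(K_v,\mathbf{T})/H^1_{\mathcal{F}_{\Gr}}(K_v,\mathbf{T})$. This is what your phrase ``the Selmer conditions pull back correctly'' hides, and it must hold for every $\mathfrak{P}$, not only outside $\Sigma_{\mathrm{CT}}$.
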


\begin{proof} Proposition \ref{assTwists} shows that condition (1) in Assumption \ref{assLambda} is satisfied. On the other hand, conditions (2) and (3), \emph{i.e.}, the existence of the induced maps $s_\mathfrak{P}$, $i_\mathfrak{P}$ and the control theorem, follow from \cite[Proposition 3.3.1]{howard2012bipartite}. \end{proof}

\begin{proposition}
The system $(\kappa,\lambda)$ satisfies Assumption \ref{kolyvagin}.
\end{proposition}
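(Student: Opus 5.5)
The plan is to verify Assumption \ref{kolyvagin} uniformly in $\mathfrak{P}$. I would show more than what is required: for every $k\geq1$ there is some $n\in\mathcal{N}_k^{\deff}$ such that $\lambda_n$ is a \emph{unit} in $\Lambda/I_n\Lambda=\Lambda/p^{k'}\Lambda$, where $I_n=(p^{k'})$ with $k'\geq k$. If this holds, take $k_\mathfrak{P}\defeq1$ for every $\mathfrak{P}$. The image of a unit is a unit in every non-zero quotient. Both $\Lambda/(\mathfrak{P}+(p))$ and $\Lambda/(\mathfrak{P}+(T))$ are non-zero, because $\Lambda$ is local and these ideals are proper. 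Hence the image is non-trivial in both the case $\mathfrak{P}\neq(\pi)$ and the case $\mathfrak{P}=(\pi)$. Since $\Lambda$ is local, $\lambda_n$ is a unit in $\Lambda/p^{k'}$ if and only if its reduction $\bar\lambda_n\in\F_p[\![T]\!]$ is a unit, i.e. $\bar\lambda_n(0)\neq0$. So everything reduces to a residual (mod $p$) statement.

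The first step is to choose $n$. Work with the residual representation $\bar T=E[p]$ over $K$ and with the residual Selmer groups $\Sel_{\mathcal{F}(n)}(K,\bar T)$, where $\bar T$ is the specialization at $\mathfrak{m}_\Lambda$. I would apply \cite[Lemma 2.3.3]{howard2012bipartite} repeatedly, in the form of Lemmas \ref{lemmaind} and \ref{lemmadef} with $R=\F_p$. This gives $k$-admissible primes $\ell_1,\dots,\ell_r$ such that each one cuts down the residual Selmer group in turn. Using Assumption \ref{asselliptic}(1) to get absolute irreducibility and the Chebotarev condition (4) of Assumption \ref{ass}, each $\ell_i$ can be taken $k$-admissible. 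Length bookkeeping exactly as in Step 3 of the proof of Theorem \ref{artSel} then gives $\Sel_{\mathcal{F}(n)}(K,\bar T)=0$ for $n=\ell_1\cdots\ell_r$, with the parity of $r$ chosen so that $n\in\mathcal{N}_k^{\deff}$. (When $e=1$, one additional prime first kills the free rank.)

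The second step is to pass from the vanishing of this Selmer group over $K$ to the vanishing of $\Sel_{\mathcal{F}(n)}(K_\infty,E[p])$. The residual control theorem from Assumption \ref{assLambda}(3), specialized at $\mathfrak{m}_\Lambda$, together with Nakayama's lemma, shows that the dual $\Lambda/p$-module is zero. This holds because its reduction modulo $T$ injects into $\Sel_{\mathcal{F}(n)}(K,\bar T)^\vee=0$.

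The third step is to deduce that $\bar\lambda_n(0)\neq0$. This is the step I expect to be the main obstacle, since it is the ``upper bound'' direction: it needs the theta element to be large, not merely to divide the characteristic ideal. I would invoke the mod $p$ structure of the Bertolini--Darmon construction. The element $\lambda_n\bmod p$ is the theta element attached to a mod $p$ eigenform on the definite quaternion algebra of discriminant $N^-n$, and its value at $T=0$ is the sum of that form over the Gross points of conductor $1$. Combining the first and second explicit reciprocity laws (Definition \ref{defBES}) with the vanishing of the residual Selmer group, I would show as follows that $\lambda_n$ cannot lie in $\mathfrak{m}_\Lambda$. Consider the ideal generated by $\lambda_n$ together with the images $\phi_{\ord}(\loc_\ell(\kappa_{n\ell}))$ over all admissible $\ell\nmid n$. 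This ideal equals $(\lambda_n)$ by the first reciprocity law. By the residual Euler system bound this ideal controls $\Fitt_0$ of the residual Selmer dual, and that dual is trivial. To make this precise I would follow the argument in \cite[\S\S 4--5]{BD-IMC} and the ``rigidity'' statement behind \cite[Lemma 3.3.6]{howard2012bipartite}: when the residual Selmer group vanishes, Ihara's lemma and multiplicity one give that the mod $p$ form is non-zero on the relevant Gross points. If that route proves too delicate, the fallback is Vatsal's $\mu=0$ result \cite{vatsal2002uniform} applied at level $N^-n$, combined with the bound on the $T$-adic valuation of $\bar\lambda_n$ coming from the vanishing of $\Sel_{\mathcal{F}(n)}(K_\infty,E[p])$. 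This fallback still yields a unit, and then the reduction in the first paragraph finishes the proof.
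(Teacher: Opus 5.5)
\textbf{There is a genuine gap in your third step, and it cannot be closed with the tools you cite.}

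\emph{What agrees with the paper.} Your reduction is sound: if some $\lambda_n$ with $n\in\mathcal{N}_k^{\deff}$ is a unit, then $k_\mathfrak{P}=1$ works for every $\mathfrak{P}$. Your first step (choosing $n$ with $\Sel_{\mathcal{F}(n)}(K,E[p])=0$ and $\nu(n)\equiv e\pmod 2$) matches the paper. Your second step, passing to $K_\infty$, is unnecessary, since only the constant term $\lambda_{(T),n}$ matters.

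\emph{Why the Euler system axioms cannot give a unit.} No argument built only from the explicit reciprocity laws, the Euler system bound and the vanishing of Selmer groups can show that $\lambda_n$ is a unit. If $(\kappa,\lambda)$ is a bipartite Euler system, so is $(T^a\kappa,T^a\lambda)$ on $\mathbf{T}/p$, or $(p\kappa,p\lambda)$. Both reciprocity laws are equalities of ideals, so they survive the rescaling. The Selmer groups are unchanged, and $\bar\lambda_n\neq0$ is preserved by the $T^a$-rescaling, yet no $\lambda_n$ is a unit any more.

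Concretely, the Euler system bound runs in the wrong direction: small $\lambda_n$ forces a small Selmer group, not conversely. Howard's Theorem 2.5.1 (cf. Step 1 of the proof of Theorem \ref{artSel}) gives only $\ind(\lambda_n)=\delta+\length(M_n)$, where $\delta=\min_m\ind(\lambda_m)$ is unknown. Assumption \ref{kolyvagin} is precisely the assertion that $\delta$ stays bounded, so your ``ideal generated by $\lambda_n$ and the $\phi_{\ord}(\loc_\ell(\kappa_{n\ell}))$'' argument is circular: that ideal is just $(\lambda_n)$, and $\Fitt_0=R$ says nothing about it.

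\emph{Why the rigidity claim and the fallback fail.} Ihara's lemma and multiplicity one show that the mod $p$ quaternionic form is non-zero. Vatsal's results give $\bar\lambda_n\neq0$ in $\F_p[\![T]\!]$, which concerns Gross points of large $p$-power conductor. Neither says anything about the conductor-$1$ toric period. By Gross's formula, non-vanishing of that period mod $p$ is equivalent to $p$-indivisibility of $L^{\mathrm{alg}}(f_n/K,1)$ up to Tamagawa factors, which is a rank-zero BSD-type statement. Your fallback fails for the same reason. Vanishing of the Selmer group over $K_\infty$ would bound the $T$-adic valuation of $\bar\lambda_n$ only through the reverse divisibility of the main conjecture, which the Euler system does not supply.

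\emph{The missing input: the paper's route.} The missing ingredient is that reverse divisibility for the level-raised form, which the paper imports from outside the Euler system formalism.
\begin{enumerate}
\item Let $f_n$ be the $n$-th level raising of $f$ (via \cite[Theorem 3.3]{BLV} and \cite[Lemma 6.1]{DS}), and take $n$ with $r_n=0$.
\item Skinner--Urban \cite{SU} is used in the form $\ord L^{\mathrm{alg}}(f_n/K,1)=\length\Sel+\sum_{q\mid nN}t_q$. Assumption \ref{asselliptic}(2) kills $t_q$ for $q\mid N^+$, so this gives $\ord L^{\mathrm{alg}}(f_n/K,1)=\sum_{q\mid nN^-}t_q$.
\item Gross's formula \cite{berti2015congruences}, combined with \cite[Lemma 4.2]{BLV} and Assumption \ref{asselliptic}(4), gives $\ord L^{\mathrm{alg}}(f_n/K,1)=2\ord(\lambda_{(T),n})+\sum_{q\mid nN^-}t_q$.
\end{enumerate}
Comparing the two formulas shows that $\lambda_{(T),n}$ is a $p$-adic unit, which is exactly what your first paragraph needs. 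Without an $L$-value/Selmer comparison of this kind, which pins down the normalization of $\lambda_n$, your third step does not go through.
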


\begin{proof} This is proved, with a different terminology, in \cite[\S7.2.4]{BLV} as a combination of Gross's formula for special values of $L$-functions, results by Skinner--Urban and level raising/rank lowering arguments (notice that such a result is claimed in \cite[Lemma 3.6]{BCK} too; however, as far as we understand, the proof in \cite{BCK} seems to be incomplete). For the reader's convenience, we replicate those arguments in our notation. 

Consider the newform $f\in S_2(\Gamma_0(N))$ attached to $E$ by modularity. For any $n \in \mathcal{N}_1$ we denote by $f_n \in S_2(\Gamma_0(Nn))$ the $n$-th level raising of $f$; the eigenform $f_n$ is obtained by combining \cite[Theorem 3.3]{BLV}, which allows us to first obtain an eigenform with coefficients in $\F_p=\Z/p\Z$, with \cite[Lemma 6.1]{DS}, which lifts our eigenform with coefficients in $\F_p$ to an eigenform in characteristic $0$ with coefficients in a finite extension $\mathcal{O}$ of $\Z_p$. In particular, if $\mathfrak{m}_\mathcal{O}$ is the maximal ideal of $\mathcal O$, then there is a congruence $f\equiv f_n\pmod{\mathfrak{m}_\mathcal{O}}$. Let $\F\defeq\mathcal{O}/\mathfrak m_{\mathcal O}$ be the residue field of $\mathcal O$, which is a finite extension of $\F_p$, and fix a uniformizer $\pi_{\mathcal O}$ of $\mathcal{O}$.
 
Let $A^0_{f_n}$ be the abelian variety attached to $f_n$ by the Eichler--Shimura construction and denote by $A_{f_n}$ be the abelian variety in the isogeny class of $A^0_{f_n}$ having real multiplication by $\mathcal{O}$. The Brauer--Nesbitt theorem, the Eichler--Shimura relation and the irreducibility of $E[p]$ show that there is an isomorphism of $\F[G_K]$-modules $E[p]\otimes_{\F_p}\F\simeq A_{f_n}[\pi_{\mathcal O}]$. 
Then there are equalities
\[
\dim_{\F_p}\Sel_{\mathcal{F}(n)}(K,T/pT)=
\dim_{\F}\Sel_{\mathcal{F}(n)}\bigl(K,E[p]\otimes_{\F_p}\F\bigr)=\dim_{\F}\Sel_\mathcal{F}\bigl(K,A_{f_n}[\pi_{\mathcal O}]\bigr),
\]
where $\mathcal{F}\defeq\mathcal{F}_{\Gr}$ (and we use the isomorphism 
$E[p]\otimes_{\F_p}\F\simeq A_{f_n}[\pi_{\mathcal O}]$ to define the Selmer structure on $A_{f_n}$).
Set 
\[
r_n \defeq \dim_{\F}\Sel_\mathcal{F}\bigl(K,A_{f_n}[\pi_{\mathcal O}]\bigr).
\]
Observe that, by Theorem \ref{artSel}, the integers $r_1$ and $e$ have the same parity. We will use the following fact that is a consequence of Lemma \ref{lemmaind}, Lemma \ref{lemmadef} and \cite[Lemma 2.3.3]{howard2012bipartite}:
\begin{equation} \label{lr}
\text{for all integers $k\geq1$ and all $n\in\mathcal{N}_k$, there is $\ell\in \mathcal{P}_k$ such that $r_{n\ell}=r_n-1$.}
\end{equation}
Fix an integer $k\geq1$: we claim that there exists $n \in \mathcal{N}_k$ such that $\lambda_n$ is non-trivial in $\Lambda/\mathfrak{m}_\Lambda$. If $r_1>0$, then iterating \eqref{lr} shows that there is $n \in \mathcal{N}_k$ with $\nu(n) \equiv r_1 \equiv e \pmod 2$ such that $r_n=0$. Denote by
\[
L^{\mathrm{alg}}(f_n/K,s)\defeq\frac{L(f_n/K,s)}{\Omega_{f_n}^{\mathrm{can}}}
\]
the algebraic $p$-adic $L$-function of $f_n$ over $K$, where $\Omega_{f_n}^{\mathrm{can}}$ is the canonical period of $f_n$ (\cite[(6.3)]{Zhang}, \cite[Section 4]{BBV}). 
By a result of Skinner--Urban (\cite[Theorem 2]{SU}), extended to modular abelian varieties (see, \emph{e.g.}, \cite[Theorem 7.1]{Zhang} or \cite[(7.20)]{BLV}), there is an equality 
\begin{equation} \label{L-alg-eq}
\ord_{\pi_{\mathcal O}}\bigl(L^{\mathrm{alg}}(f_n/K,1)\bigr)=\length_{\mathcal{O}}\Bigl(\Sel_\mathcal{F}\bigl(K,A_{f_n}[\pi_{\mathcal O}^\infty]\bigr)\!\Bigr) + \sum_{q\,|\,nN}t_q,
\end{equation}
where $t_q$ is the Tamagawa exponent of $A_{f_n}$ at $q$. Since, by condition (1) in Assumption \ref{asselliptic}, $A_{f_n}[\pi_{\mathcal O}]$ is an irreducible $G_K$-module, we have an isomorphism 
\[
\Sel_\mathcal{F}\bigl(K,A_{f_n}[\pi_{\mathcal O}^\infty]\bigr)[\pi_{\mathcal O}]\simeq\Sel_\mathcal{F}\bigl(K,A_{f_n}[\pi_{\mathcal O}]\bigr).
\]
Since $r_n=0$, we have $\Sel_\mathcal{F}\bigl(K,A_{f_n}[\pi_{\mathcal O}^\infty]\bigr)=0$ and, by condition (2) in Assumption \ref{asselliptic}, $t_q=0$ for each $q\,|\,N^+$. Therefore, equality \eqref{L-alg-eq} can be rewritten as
\begin{equation} \label{eqfinal1}
\ord_{\pi_{\mathcal O}}\bigl(L^{\mathrm{alg}}(f_n/K,1)\bigr)=\sum_{q\,|\,nN^-}t_q.
\end{equation}
Combining Gross's formula in \cite[Theorem 3.2]{berti2015congruences} with \cite[Lemma 4.2]{BLV}, and using condition (4) in Assumption \ref{asselliptic}, we obtain the equality 
\begin{equation} \label{eqfinal2}
\ord_{\pi_{\mathcal O}}\bigl(L^{\mathrm{alg}}(f_n/K,1)\bigr)=2\cdot\ord_{\pi_{\mathcal O}}(\lambda_{(T),n})+\sum_{q|nN^-} t_q.
\end{equation}
Finally, \eqref{eqfinal1} and \eqref{eqfinal2} show that $\lambda_{(T),n}$ is a $p$-adic unit, so $\lambda_n\not=0$ in $\Lambda/\mathfrak{m}_\Lambda$. \end{proof}

For all even integers $i\geq0$, define
\[
\mathfrak{C}_i(k)\defeq\bigl(\lambda_n^{(k)}\bigr)_{n\in\mathcal{N}_{2k},\nu(n)\leq i+e},\quad\mathfrak{C}_i\defeq\varprojlim_k \mathfrak{C}_i(k),
\]
so that $\mathfrak{C}_i$ is an ideal of $\Lambda$. Let $\mathcal X$ be the $\Lambda$-torsion submodule of the Pontryagin dual of $\Sel_{\Gr}\bigl(K_\infty,E[p^\infty]\bigr)$ and consider the (tautological) short exact sequence of $\Lambda$-modules
\begin{equation} \label{sha-iw-eq}
0\rightarrow\divv_{\mathfrak{m}_\Lambda}\!\Bigl(\Sel_{\Gr}\bigl(K_\infty,E[p^\infty]\bigr)\!\Bigr)\longrightarrow\Sel_{\Gr}\bigl(K_\infty,E[p^\infty]\bigr)\longrightarrow\Sha_{\Gr}\bigl(K_\infty,E[p^\infty]\bigr)\rightarrow0. 
\end{equation}
Taking Pontryagin duals in \eqref{sha-iw-eq} yields a canonical isomorphism
\[
\mathcal X\simeq\Sha_{\Gr}\bigl(K_\infty,E[p^\infty]\bigr)^\vee,
\]
which can be viewed as an identification.

\begin{theorem} \label{final-thm1}
The equivalence $\Fitt_i(\mathcal X)\sim\mathfrak{C}_i^2$ holds for all even integers $i\geq0$.
\end{theorem}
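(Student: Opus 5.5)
The plan is to deduce the statement from Theorem \ref{highFittLambda}, applied with $\mathcal{O}=\Z_p$, $\mathbf{T}=\varprojlim_m\mathrm{Ind}_{K_m/K}(T)$, $\mathcal{F}=\mathcal{F}_{\Gr}$ and $\mathcal{P}$ the admissible primes. The bipartite Euler system is the $(\kappa,\lambda)$ of Bertolini--Darmon and Bertolini--Longo--Venerucci, with parity $\epsilon=1-e$. By definition, the ideal $\mathfrak{C}_i$ in the statement coincides with the ideal built in \S\ref{ss:construction}. It then remains to check three things: the hypotheses of Theorem \ref{highFittLambda} hold; the module $X$ of \eqref{X-sha-eq} is identified with $\mathcal X$; and the parity conventions match.

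\textbf{Hypotheses.} Assumption \ref{assLambda} and Assumption \ref{kolyvagin} have been verified in the two propositions immediately preceding the statement. Assumption \ref{assBESLambda} holds by the results recalled earlier in this section: if $e=0$, Vatsal gives $\lambda_1\neq0$; if $e=1$, Cornut and Vatsal give $\kappa_1\neq0$.

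\textbf{Identification of $X$ with $\mathcal X$.} By Shapiro's lemma, $H^1(K,\mathbf{A})\simeq\varinjlim_m H^1(K_m,A)$, and the local conditions $H^1_{\mathcal{F}_{\Gr}}(K_v,\mathbf{A})$ are exactly the direct limits of the Greenberg conditions over the layers $K_m$. Hence $\Sel_{\mathcal F}(K,\mathbf A)\simeq\Sel_{\Gr}\bigl(K_\infty,E[p^\infty]\bigr)$ as $\Lambda$-modules, after possibly twisting by the involution $\iota$. This twist is harmless, since $\iota$ preserves the relation $\sim$, and the ideals attached to the Euler system are $\iota$-stable up to the usual functional equation. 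Taking the quotient by the maximal $\mathfrak m_\Lambda$-divisible submodule, we get $\Sha_{\mathcal F}(K,\mathbf A)\simeq\Sha_{\Gr}\bigl(K_\infty,E[p^\infty]\bigr)$. By \eqref{sha-iw-eq}, $X$ is therefore the $\Lambda$-torsion submodule $\mathcal X$ of the dual Selmer group; this uses Corollary \ref{corolambdarank}, which says that the divisible part has $\Lambda$-corank $e$ with torsion-free dual. Theorem \ref{highFittLambda} then gives $\Fitt_i(\mathcal X)\sim\mathfrak C_i^2$ for every even $i$.

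\textbf{Main obstacle.} I expect the delicate point to be bookkeeping rather than a new argument: the indices must match between the two sections. This means $\nu(n)\leq i+e$ in both definitions of $\mathfrak C_i(k)$, with $e$ as in \eqref{e-elliptic-eq} and $e=1-\epsilon$ as in Section \ref{sec::4}. In addition, the projections $\lambda^{(k)}_n$ taken along $\mathcal N_{2k}$ must be compatible with the free systems $\boldsymbol\lambda^{(k)}$ used in Theorem \ref{highFittLambda}. Since $I_n=(p^k)$ in the elliptic setting, $k$-admissibility in the sense of \S\ref{BESLambda} agrees with that of \cite{BD-IMC}, and this compatibility should follow directly from the definitions.
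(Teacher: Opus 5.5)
Your proposal is correct and follows the paper's route exactly: the paper proves this statement as a direct consequence of Theorem \ref{highFittLambda}. Assumptions \ref{assLambda}, \ref{assBESLambda} and \ref{kolyvagin} are verified in the surrounding text of Section \ref{sec::5}, and $X$ is identified with $\mathcal X$ via \eqref{sha-iw-eq}. Two minor remarks: your hedge about an $\iota$-twist is unnecessary, because the paper identifies $\mathbf A$ with $\Hom_{\mathrm{cont}}(\mathbf T^\iota,\Bmu_{p^\infty})$, so $\Sel_{\mathcal F}(K,\mathbf A)$ is literally $\Sel_{\Gr}(K_\infty,E[p^\infty])$. Also, the torsion-freeness of the dual of the divisible part is asserted in the proof of Proposition \ref{CTSha1}, not in Corollary \ref{corolambdarank}, which only gives the corank.
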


\begin{proof} This is a consequence of Theorem \ref{highFittLambda}. \end{proof}

In the indefinite setting, \emph{i.e.}, when $e=1$, we can also offer a result in terms of the classes $\kappa$. Namely, for all even integers $i\geq0$ put
\[
\begin{split}
\mathfrak{D}_i(k)&\defeq\Bigl(\Bigl\{f\bigl(\kappa_n^{(k)}\bigr)\;\Big|\;\text{$f \in \Hom_{\Lambda/(\pi^k)}\bigl(H^1(K,\mathbf{T}/\pi^k \mathbf{T}),\Lambda/(\pi^k)\bigr)$, $n \in \mathcal{N}^{2k}$, $\nu(n) \leq i$}\Bigr\}\Bigr)\\ &\;\subset\Lambda\big/(\pi^k).
\end{split}
\]
Moreover, define the ideal $\mathfrak{D}_i\defeq\varprojlim_k \mathfrak{D}_i(k)$ of $\Lambda$.

\begin{theorem} \label{final-thm2}
Suppose $e=1$. The equivalence $\Fitt_i(\mathcal X)\sim\mathfrak{D}_i^2$ holds for all even integers $i\geq0$.
\end{theorem}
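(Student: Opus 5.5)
The plan is to deduce the statement from Theorem \ref{lambdaKappa}, in the same way that Theorem \ref{final-thm1} was deduced from Theorem \ref{highFittLambda}. All the hypotheses of Theorem \ref{lambdaKappa} have already been checked in this section.

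\begin{itemize}
\item Assumption \ref{assLambda} holds by the proposition above, which rests on Proposition \ref{assTwists} and \cite[Proposition 3.3.1]{howard2012bipartite}.
\item Assumption \ref{kolyvagin} holds by the argument replicated from \cite[\S7.2.4]{BLV}.
\item Assumption \ref{assBESLambda} holds because $\kappa_1\neq0$ when $e=1$, by Cornut and Vatsal.
\item The parity convention agrees with the paper's: $e=1$ corresponds to $\epsilon=0$.
\end{itemize}

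Finally, the identification $\mathcal X\simeq\Sha_{\Gr}\bigl(K_\infty,E[p^\infty]\bigr)^\vee$ coming from \eqref{sha-iw-eq} shows that $\mathcal X$ is exactly the module $X$ of \eqref{X-sha-eq} attached to $\mathbf{T}$ with the Greenberg structure. Theorem \ref{lambdaKappa} then gives $\Fitt_i(\mathcal X)\sim\mathfrak D_i^2$ for all even $i$.

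Since the proof of Theorem \ref{lambdaKappa} is only indicated, I would spell it out. Fix a height $1$ prime $\mathfrak P$ and the specializations $\mathfrak P_j$. As in \eqref{a_jb_jsim}--\eqref{maineq2}, the exponent $a_j$ of $\Fitt_i(X\otimes\mathcal O_j)$ satisfies
\[
a_j\sim 2\delta^{(i)}(\kappa_j)-2\delta(\kappa_j).
\]
This uses Theorem \ref{dvrKappa} (through Theorem \ref{mainthmDVR}) instead of Theorem \ref{thmDVR}, together with \eqref{fittingDVR} rewritten with $\kappa$. On the other side, let $b_j$ be the length of $(\Lambda/\mathfrak D_i^2)\otimes\mathcal O_j$; it is asymptotic to $\beta_ie_{\mathfrak P}j$ by Proposition \ref{fittingspec}. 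I would run the compactness argument of Lemma \ref{kj0} with $\mathfrak D_i(k)$ in place of $\mathfrak C_i(k)$. This reduces $b_j$ to twice the length of $\mathcal O_j/s_j(\mathfrak D_i(k_j))$.

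The key identification is that, after specialization, the ideal generated by $\{f(\kappa_n)\}$ is $\mathfrak m^{\ind(\kappa_{j,n})}$. This holds because $\kappa_{j,n}$ lies in a free rank-one summand (Lemma \ref{freeness}), and a functional on that summand extends to the whole Selmer group. It also requires comparing homomorphisms out of $H^1(K,\mathbf T/\pi^k)$ with those out of $H^1(K,T_j/\pi_j^{k})$. With this, the specialized ideal has colength $\partial^{(i)}(\boldsymbol\kappa_j^{(k)})=\delta^{(i)}(\kappa_j)$, using monotonicity in $\nu(n)$ as in \eqref{eqIndPart}. This yields $\mathfrak D_i^2\prec\Fitt_i(X)$.

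For the reverse relation one needs $\delta(\kappa_j)$ to be bounded in $j$. By Theorem \ref{artKappa}, $\partial^{(j)}(\kappa)=\partial^{(j+1)}(\lambda)$, so $\delta(\kappa_j)=\delta(\lambda_j)$, which is bounded by the argument in Theorem \ref{highFittLambda} using Assumption \ref{kolyvagin}.

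I expect the main obstacle to be the comparison of functionals under specialization. One must show that the ideal $\{f(\kappa_n)\}$ computed over $\Lambda/\pi^k$ specializes, up to bounded error, to the divisibility index of $\kappa_{j,n}$ in $\Sel_{\mathcal F(n)}(K,T_j/\pi_j^{k})$. Surjectivity of specialization on $\Hom$ is not automatic. I would handle it using the control theorem in Assumption \ref{assLambda}(3), which bounds the kernel and cokernel of $s_{\mathfrak P_j}$ uniformly in $j$. A bounded error in lengths is harmless for the relation $\sim$ of sequences.
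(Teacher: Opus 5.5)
Your proposal is correct and matches the paper's proof. The paper also just invokes Theorem~\ref{lambdaKappa}, after the required hypotheses (Assumptions~\ref{assLambda}, \ref{assBESLambda} and \ref{kolyvagin}, with $e=1$ meaning $\epsilon=0$) have been verified in Section~\ref{sec::5} and $\mathcal X$ has been identified with $X$. Your sketch of Theorem~\ref{lambdaKappa} follows the paper's indicated route, namely rerunning the proof of Theorem~\ref{highFittLambda} with Theorem~\ref{dvrKappa} in place of Theorem~\ref{thmDVR}; the obstacle you flag (comparing the functionals on $H^1(K,\mathbf{T}/\pi^k\mathbf{T})$ with those after specialization) is not treated in the paper either. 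Note, however, that Assumption~\ref{assLambda}(3) only controls Selmer groups, not the full $H^1$, so it would not settle that point by itself.
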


\begin{proof} This is a consequence of Theorem \ref{lambdaKappa}. \end{proof}

We collect a consequence of Theorems \ref{final-thm1} and \ref{final-thm2} that describes the $\sim$-equivalence class $\bigl[\Fitt_i(\mathcal X)\bigr]$ of $\Fitt_i(\mathcal X)$ for all integers $i\geq0$ (\emph{cf.} Corollary \ref{pseudo-M-coro}). In the statement below, bear Remark \ref{pseudo-rem} in mind.

\begin{corollary} \label{final-coro}
Let $i\geq0$ be an integer. Then
\[
\bigl[\Fitt_i(\mathcal X)\bigr]=\begin{cases}\bigl[\mathfrak{C}_i^2\bigr]&\text{if $i$ is even},\\[3mm] \bigl[\mathfrak{C}_{i-1}\cdot\mathfrak{C}_{i+1}\bigr]&\text{if $i$ is odd}. \end{cases}
\]
Moreover, if $e=1$, then
\[
\bigl[\Fitt_i(\mathcal X)\bigr]=\begin{cases}\bigl[\mathfrak{D}_i^2\bigr]&\text{if $i$ is even},\\[3mm] \bigl[\mathfrak{D}_{i-1}\cdot\mathfrak{D}_{i+1}\bigr]&\text{if $i$ is odd}. \end{cases}
\]
\end{corollary}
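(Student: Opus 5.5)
The even case is exactly Theorem \ref{final-thm1} (respectively Theorem \ref{final-thm2} when $e=1$), so the work is in the odd case. I would combine those theorems with Proposition \ref{even-odd-prop}, using that $\mathcal X$ is pseudo-isomorphic to $M\oplus M$ for a torsion $\Lambda$-module $M$. This pseudo-isomorphism is Proposition \ref{propX=M+M}: identify $\mathcal X$ with the module $X$ of \eqref{X-sha-eq} for $\mathbf T$ with the Greenberg structure, where Assumption \ref{assLambda} has been verified above.

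\textbf{Step 1 (reduction to $Y=M\oplus M$).} By \texttt{Step 1} in the proof of Theorem \ref{Thm-pseudo}, pseudo-isomorphic modules have $\sim$-equivalent Fitting ideals, so $[\Fitt_i(\mathcal X)]=[\Fitt_i(Y)]$ for all $i\geq0$. If $\mathcal X$ is pseudo-null, then all classes are $[\Lambda]$. In that case each $\mathfrak C_i$ (or $\mathfrak D_i$) is $\sim\Lambda$, because its square is, and the formulas are trivial. So assume $M\neq0$.

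\textbf{Step 2 (odd indices).} Let $i=2t+1$ be odd. Proposition \ref{even-odd-prop} gives
\[
\Fitt_{i}(Y)^2\sim\Fitt_{i-1}(Y)\cdot\Fitt_{i+1}(Y).
\]
Since $i\pm1$ are even, Theorem \ref{final-thm1} applies and
\[
\Fitt_{i-1}(Y)\Fitt_{i+1}(Y)\sim\mathfrak C_{i-1}^2\mathfrak C_{i+1}^2=(\mathfrak C_{i-1}\mathfrak C_{i+1})^2,
\]
using that $\sim$ respects products. So $\Fitt_i(Y)^2\sim(\mathfrak C_{i-1}\mathfrak C_{i+1})^2$. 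Both sides therefore admit pseudo-square roots, and by Proposition \ref{square-prop} (equivalently Remark \ref{pseudo-rem}) we get $[\Fitt_i(Y)]=[\mathfrak C_{i-1}\mathfrak C_{i+1}]$. For $e=1$ the same argument with Theorem \ref{final-thm2} gives the $\mathfrak D$ version.

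\textbf{Main obstacle: non-vanishing.} Proposition \ref{square-prop} needs the ideals involved to be non-zero. Each even Fitting ideal of a torsion module contains $\Fitt_0(\mathcal X)\neq0$, so it is non-zero. Then $\mathfrak C_j^2\sim\Fitt_j(\mathcal X)\neq0$ for even $j$, and $\mathfrak C_j=0$ would force $\Fitt_j(\mathcal X)\sim0$, which is impossible. Hence $\mathfrak C_{i\pm1}\neq0$, and since $\Lambda$ is a domain their product is non-zero. The edge case $i=1$ with $\mathfrak C_0$ is covered by the same argument.
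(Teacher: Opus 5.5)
Your proposal is correct and follows the paper's own proof, which simply combines Theorems \ref{final-thm1} and \ref{final-thm2} with Propositions \ref{even-odd-prop} and \ref{propX=M+M}. Your extra checks make explicit what the paper leaves tacit: transferring Fitting classes along the pseudo-isomorphism via \texttt{Step 1} of Theorem \ref{Thm-pseudo}, treating the pseudo-null case separately, and verifying the non-vanishing needed to apply Proposition \ref{square-prop}.
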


\begin{proof} Combine Theorems \ref{final-thm1} and \ref{final-thm2} with Propositions \ref{even-odd-prop} and \ref{propX=M+M}. \end{proof}

Notice that Corollary \ref{final-coro} recovers Theorems A and B in the introduction.

\begin{remark}
We expect that, building on \cite{LPV} and \cite{LV}, analogous Iwasawa-theoretic results can be proved for the $p$-adic Galois representations attached by Deligne to newforms of higher (even) weight (\cite{Del-Bourbaki}): we plan to come back to these questions in a future project.
\end{remark}

\bibliographystyle{amsplain}
\bibliography{Bibliography}

\end{document}